\documentclass[11pt, a4paper]{article}

%packages
%format
\usepackage[utf8]{inputenc}
\usepackage[english]{babel}
\usepackage[T1]{fontenc}
\usepackage[colorlinks = true,
            linkcolor = blue,
            urlcolor  = blue,
            citecolor = blue,
            anchorcolor = blue]{hyperref}
\usepackage[a4paper,width=150mm,top=25mm,bottom=25mm]{geometry}
\usepackage{fancyhdr}
%\geometry{a4paper,left=30mm,right=30mm, bottom=40mm}

%mathematics
\usepackage{amsmath,amsthm,amssymb, enumitem, mathtools, mathrsfs}
\usepackage{siunitx}
\setitemize{noitemsep}

%figures/tables
\usepackage{tikz}
\usepackage{tikz-cd}
\usepackage{caption}
\usepackage[all]{xy}
\usepackage{graphicx}
\usepackage{chngcntr}
\usepackage{booktabs}
\usepackage[labelformat=simple]{subcaption}

% \counterwithin{figure}{section}

%misc
\usepackage{pdfpages}

%bibliography
\usepackage{csquotes}
\usepackage[
backend=biber,
style=alphabetic,
sorting=anyt
]{biblatex}
\addbibresource{biblio.bib}

\setcounter{biburlnumpenalty}{100}
\setcounter{biburlucpenalty}{100}
\setcounter{biburllcpenalty}{100}

%include images folder
\graphicspath{ {images/} }

%theoremstyle
\theoremstyle{plain}
\newtheorem{theorem}{Theorem}[section]
\newtheorem*{theorem*}{Theorem}
\newtheorem*{satz*}{Satz}
\newtheorem{proposition}[theorem]{Proposition}
\newtheorem{lemma}[theorem]{Lemma}
\newtheorem{corollary}[theorem]{Corollary}
\theoremstyle{definition}
\newtheorem{definition}[theorem]{Definition}
\newtheorem{assumption}[theorem]{Assumption}

\theoremstyle{remark}
\newtheorem{remark}[theorem]{Remark}

%numbering
\numberwithin{equation}{section}

%definitions

\DeclareMathOperator{\supp}{supp}

\DeclareMathOperator{\trace}{tr}

\DeclareMathOperator{\leb}{Leb}

\newcommand{\ev}{\mathbb{E}}
\newcommand{\pr}{\mathbb{P}}

\newcommand{\R}{\mathbb{R}}
\renewcommand{\P}{\mathcal{P}}
\newcommand{\F}{\mathcal{F}}

\renewcommand{\L}{\mathcal{L}}
\newcommand{\M}{\mathcal{M}}
\renewcommand{\d}{\mathrm{d}}

\newcommand{\define}{\mathpunct{:}}
\newcommand{\bb}[1]{\mathbb{#1}}
\renewcommand{\bf}[1]{\mathbf{#1}}
\renewcommand{\cal}[1]{\mathcal{#1}}

\title{Title}
\author{Philipp Jettkant}

\begin{document}

% \maketitle

% \pagenumbering{roman}

\noindent
\begin{center}
    \Large
    \textbf{Semilinear BSPDEs and Applications to McKean-Vlasov Control with Killing}

    \vspace{1em}

    \normalsize
    Ben Hambly and Philipp Jettkant
    
    \textit{Mathematical Institute, University of Oxford}

\end{center}

\vspace{1em}

\begin{abstract}
\thispagestyle{plain}
\small
We introduce a novel class of semilinear nonlocal backward stochastic partial differential equations (BSPDE) on half-spaces driven by an infinite-dimensional c\`adl\`ag martingale. The equations exhibit a degeneracy and have no explicit condition at the boundary of the half-space. To treat the existence and uniqueness of such BSPDEs we establish a generalisation of It\^o's formula for infinite-dimensional c\`adl\`ag semimartingales, which addresses the occurrence of boundary terms.
Next, we employ these BSPDEs to study the McKean--Vlasov control problem with killing and common noise proposed in \cite{hambly_mvcp_arxiv_2023}. The particles in this control model live on the real line and are killed at a positive intensity whenever they are in the negative half-line. Accordingly, the interaction between particles occurs through the subprobability distribution of the living particles. We establish the existence of an optimal semiclosed-loop control that only depends on the particles' location and not their cumulative intensity. This problem cannot be addressed through classical mimicking arguments, because the particles' subprobability distribution cannot be reconstructed from their location alone. Instead, we represent optimal controls in terms of the solutions to BSPDEs of the above type and show those solutions do not depend on the intensity variable.
\end{abstract}

\normalsize

\section{Introduction} \label{sec:introduction}

The aim of this article is two-fold: first, we introduce a novel class of semilinear backward stochastic partial differential equations (BSPDE) driven by an infinite-dimensional c\`adl\`ag martingale. Then, we employ these BSPDEs to establish the equivalence between the open- and closed-loop formulation of the McKean--Vlasov control problem with killing at a state-dependent intensity recently proposed in \cite{hambly_mvcp_arxiv_2023}. In this control model, the location of the representative particle is described by a diffusion process $X = (X_t)_{0 \leq t \leq T}$ on the real line with McKean--Vlasov dynamics that feature a common noise $W$. Whenever the particle is located in the negative half-line, its cumulative intensity process $\Lambda = (\Lambda_t)_{0 \leq t \leq T}$ increases at a positive rate $\lambda(t, X_t)$. Once the accumulated intensity $\Lambda_t$ exceeds a standard exponential random variable $\theta$, the particle is killed. Accordingly, the particles do not interact through the conditional law $\L(X_t \vert W)$ of all particles but rather the conditional subprobability distribution $\pr(X_t \in \cdot, \, \theta > \Lambda_t \vert W)$ of the living particles. 

In \cite{hambly_mvcp_arxiv_2023}, we establish the existence of an optimal weak control $\gamma = (\gamma_t)_{0 \leq t \leq T}$ of the form $\gamma_t = g_t(X_t, \Lambda_t)$ for an $\bb{F}$-progressively measurable random function $g \define [0, T] \times \Omega \times \R \times [0, \infty) \to G$. Here $\bb{F} = (\F_t)_{0 \leq t \leq T}$ extends the filtration generated by the common noise and $G \subset \R^{d_G}$ is the control space. The need for an enlarged filtration comes from the tightness arguments used to establish the existence of the optimal weak control $\gamma$. In this weak formulation of the control problem, the common noise in the conditioning of the subprobability of the living particles is replaced by the larger $\sigma$-algebra $\F_T$. That is, the particles interact through the term $\pr(X_t \in \cdot, \, \theta > \Lambda_t \vert \F_T)$. The function $g$ is called a semiclosed-loop control, because while it can only depend on the idiosyncratic information through the state $(X_t, \Lambda_t)$, it has access to the entire history of the common noise $W$. For simplicity, we will nonetheless refer to such types of control as closed-loop in this work, as we will not encounter proper closed-loop controls.

Because the exponential random variable $\theta$ has the memoryless property, we would not expect that it is necessary to keep track of the cumulative intensity $\Lambda_t$ to optimally control the system. Only the increments of $\Lambda_t$, which are completely determined by the particle's location $X_t$, are relevant. To make this intuition precise, it is advantageous to reformulate the mean-field control (MFC) problem as the control of the nonlinear nonlocal stochastic Fokker--Planck equation satisfied by the conditional joint law $\mu_t = \L(X_t, \Lambda_t \vert \F_T)$. We derive a necessary stochastic maximum principle (SMP) for this infinite-dimensional control problem, which allows us to express the optimal closed-loop control $g$ from above in terms of a suitable adjoint process $\tilde{u} \define [0, T] \times \Omega \times \R \times [0, \infty) \to \R$, which depends on the particle's location $x$ and cumulative intensity $y$. The adjoint process solves a linear BSPDE with nonlocality in $\partial_x \tilde{u}_t$, arising from the nonlocalities of the stochastic Fokker--Planck equation, and a degeneracy in the intensity variable $y$. Moreover, because $g$ and, therefore, the conditional joint law $\mu_t$ are not necessarily adapted to the filtration of the common noise but the potentially larger filtration $\bb{F}$, to guarantee adaptedness of $\tilde{u}_t$ an additional martingale driver may appear next to $W$ in the BSPDE. This martingale is an infinite-dimensional c\`adl\`ag process adapted to $\bb{F}$. Now, by inserting the expression for the optimal closed-loop control $g$ in terms of $\partial_x \tilde{u}_t(x, y)$ into the BSPDE satisfied by the adjoint process $\tilde{u}$, we deduce a semilinear BSPDE for $\tilde{u}$. The last step in our analysis is to prove that the unique solution to this semilinear BSPDE takes the form $\tilde{u}_t(x, y) = e^{-y}u_t(x)$ for a random function $u \define [0, T] \times \Omega \times \R \to \R$. In view of the SMP, this implies that $g$ does not depend on the particle's intensity $y$. 

The coupled forward-backward system of stochastic Fokker--Planck equation and semilinear BSPDE can be naturally interpreted as a mean-field game (MFG) system \cite{lasry_mfg_fh_2006}. From this point of view the BSPDE can be understood as a stochastic Hamilton--Jacobi--Bellman (HJB) equation. Stochastic HJB equations were first studied by \cite{peng_stochastic_hjb_1992} and our analysis follows a similar strategy: we view the stochastic HJB equation as a backward stochastic evolution equation on the Hilbert space $L^2(\R \times [0, \infty))$. This requires rather restrictive growth assumptions on the coefficients and cost functions, but we are able to lift them through a limit procedure. The main novelty in the analysis of this evolution equation are the infinite-dimensional c\`adl\`ag martingale driver and the degeneracy in the intensity variable which appears together with a seemingly underspecified boundary at $y = 0$. To deal with these challenges, we establish a generalisation of It\^o's formula for $L^2(\R^d)$- and $L^2(\R^d \times [0, \infty))$-valued processes, which is of independent interest.

\subsection{Related Work}

Linear BSPDEs were initially studied in the context of the optimal control of stochastic partial differential equations, particularly those arising in the filtering of partially observed diffusions \cite{benoussan_mp_dp_1983, nagase_oc_spde_1990, pardoux_bspde_1980, zhou_nec_spde_1993, zhou_duality_1992}. The solutions to these BSPDEs serve as adjoint processes that appear in the stochastic maximum principles formulated for these control problems. Specifically, the necessary SMP appearing in \cite{benoussan_mp_dp_1983} closely resembles the one established in this work, although unlike \cite{benoussan_mp_dp_1983} we have to deal with the nonlocalities of the stochastic Fokker--Planck equation as well the c\`adl\`ag martingale driver. For this, we again crucially draw on our generalisation of It\^o's formula. Structurally closest to ours are the nonlinear BSPDEs analysed in \cite{hu_semilinear_bspde_1991, peng_stochastic_hjb_1992}, while BSPDEs with nondegeneracies were studied in \cite{ma_bspde_1997, jin_1999_bspde, hu_semilinear_bspde_2002}. In this work, we make frequent use of the BSPDE estimates derived by Ma and Yong \cite{jin_1999_bspde}. To our knowledge the only article that considers a similar class of BSPDEs in a non-Brownian filtration is \cite{al_hussein_bspde_2006, al_hussein_bspde_2009}. However, their work explicitly assumes that the filtration only generates continuous martingales in order to avoid jumps in the dynamics.

MFC and MFGs have seen a surge of interest in recent years and have been studied both from the probabilistic and analytic perspective. Particularly for MFGs, the analytic approach, which leads to a coupled forward-backward PDE consisting of the Fokker--Planck equation and the Hamilton--Jacobi--Bellman equation, has received a lot of attention \cite{lasry_mfg_fh_2006, lasry_mfg_2007, cardaliaguet_mfg_2014, gomes_mfg_2016}. The corresponding HJB in the MFC setting is nonlocal and, thus, more challenging to analyse, so the literature is comparatively sparse \cite{bensoussan_mfg_mfc_2013, achdou_mfc_pde_2015}. An additional challenge we face in this work is the stochasticity in the PDE system generated by the common noise. One trick that has been exploited in \cite{cardaliaguet_master_2019} and, more recently, in \cite{cardaliaguet_mfg_common_2022} is to tackle stochastic MFG PDE systems is to apply a time-dependent random shift to the particles' conditional law to remove the stochastic integral appearing in the stochastic Fokker--Planck equation, leading to a PDE with random coefficients. The correspondingly transformed stochastic HJB equation exhibits a simpler structure and can be analysed more easily. Exploring the applicability of this technique in our setting is an interesting avenue for future research.

Equivalence of open- and closed loop formulations of (stochastic) control problems is a classical topic in the control literature. Usually, equivalence is established either through a probabilistic \cite{davis_oc_1973, fleming_soc_1975, bismut_ctrl_1976, el_karoui_control_1979} or analytic \cite{krylov_control_1980, lions_dpp_1983, lions_viscosity_1983} study of the value function or Markovian selection and mimicking theorems \cite{haussmann_mark_control_1986, el_karoui_optimal_ctrl_1987, haussmann_oc_1990, lacker_mfg_controlled_mgale_2015}. The analysis of the value function in MFC and the associated HJB equation on the space of probability measures is an ongoing and challenging area of research, particularly in the common noise setting, so the most general equivalence results for MFC draw on mimicking theorems, see \cite{lacker_mimicking_2020}. However, a straightforward application of the results in \cite{lacker_mimicking_2020} only establishes the existence of optimal closed-loop controls that depend both on the particle's location and cumulative intensity. To approach the more subtle notion of equivalence we consider in this work, new ideas are required.

The equivalence problem was previously studied in the related context of MFGs with absorption by Campi and coauthors \cite{campi_mfg_absorption_2018, campi_mfg_hitting_2021}. In an MFG with absorption, a particle is removed as soon as it leaves a prescribed domain $D$. The McKean--Vlasov dynamics with killing at state-dependent intensity described above can be interpreted as a regularisation of such a system: instead of killing particles immediately when they leave the domain, they accumulate intensity whenever they are located outside of $D$. The framework with immediate killing is recovered in the limit as the intensity tends to infinity (cf.\@ \cite[Theorem 2.14]{hambly_mvcp_arxiv_2023}). Campi and Fischer \cite{campi_mfg_absorption_2018} consider a conditional MFG framework with interaction through the conditional law of the living particles instead of their subprobability distribution. Here the conditioning is not with respect to a common noise, which is absent in \cite{campi_mfg_absorption_2018}, but rather the event that a particle has not been killed yet. They establish the existence of a Nash equilibrium using the weak formulation of MFGs analysed in \cite{carmona_weak_form_mfg_2015}. A verification argument shows that this equilibrium is in fact Markovian, which corresponds to an optimal closed-loop control in the MFC setting. The weak formulation and the related verification argument are not applicable to MFC, so this strategy does not apply in our context.
% By stopping the solution to a McKean--Vlasov SDE with absorption once it hits the absorbing boundary, a particle's status (alive or killed) can be read from the particle's location. This enables the application of equivalence results via Markovian projection alluded to above to MFGs with absorption (and the same can likely be achieved for MFC problems with absorption, though that has not yet been discussed in the literature). The regularised model is more challenging as the particle's location reveals little about the cumulative intensity. Consequently, we cannot simply project the latter away.

Concurrently with our work, Carmona, Lauri\`ere, and Lions \cite{carmona_nssc_2023} addressed the equivalence problem for the control of conditional processes, originally introduced by Lions in a lecture at the Coll\`ege de France \cite{lions_cond_proc_2016}. In this problem, the running and terminal cost are computed not from the subprobability distribution of the living particles but their conditional law. This is closely related to the framework in \cite{campi_mfg_absorption_2018} discussed above, though in the latter the conditional law only appears in the interaction between particles while the cost is computed based on the particles' subprobability distribution. \cite{carmona_nssc_2023} employs a strategy similar to the one followed in this article to establish the equivalence of the open- and closed-loop formulation for the control of conditional processes, albeit in a setting with constant coefficient and without common noise. Their article also considers regularised dynamics, where particles are killed according to a state-dependent intensity instead of simply being killed at the boundary of a given domain $D$. The original problem proposed by Lions was formulated for dynamics with immediate killing. Diffusions with immediate killing are less regular and consequently more challenging to deal with analytically, which means that the strategy developed in this article (and \cite{carmona_nssc_2023}) cannot be straightforwardly extended to cover this case. However, in contrast to the regularised framework, by absorbing, i.e.\@ stopping, the state process in the model with immediate killing
% of an absorbing diffusion 
once it hits the boundary of the domain $D$, it is possible to read a particle's status (alive or killed) from its location. This observation enables one to apply the mimicking theorem, from which one can easily establish the equivalence of the open- and closed-loop formulation for the control of conditional processes in the case with immediate killing. See Appendix \ref{app:cond_proc} for details. Compare this with the regularised model, in which a particle's location reveals little about its cumulative intensity. 

\subsection{Main Contributions and Structure of the Paper}

In Section \ref{sec:main_results} we present the main results of the paper. In the course of this we introduce a novel class of BSPDEs with semilinear nonlocal coefficients, a degeneracy and underspecified boundary condition, as well as an infinite-dimensional c\`adl\`ag martingale driver. 

The analysis of this BSPDE is provided in Section \ref{sec:bspde}. We first establish a generalisation of It\^o's formula for c\`adl\`ag semimartingales with values in $L^2(\R^d)$ and $L^2(\R^d \times (0, \infty))$, see Theorems \ref{thm:ito} and Theorem \ref{thm:ito_2_dim}, and then prove existence and uniqueness for the BSPDE based on a Galerkin approximation and fixed-point arguments. 

In Section \ref{sec:equivalence} we discuss the equivalence of the open- and closed loop formulation for the McKean--Vlasov control problem with killing at a state-dependent intensity. First, we reformulate this control problem as the control of the associated stochastic Fokker--Planck equation, see Subsection \ref{sec:sfpe}. Next, in Subsection \ref{sec:smp}, we derive the necessary SMP by computing the Gateaux derivative of the value function. Subsequently, we study the adjoint process $\tilde{u}_t$ through the lens of the BSPDE theory developed in Section \ref{sec:bspde} and, thereby, demonstrate its separable structure $\tilde{u}_t(x, y) = e^{-y} u_t(x)$ (Subsection \ref{sec:bspde_1_2}). We conclude Section \ref{sec:equivalence} by extending the equivalence result to more general coefficients and cost functions. To achieve this, in Subsection \ref{sec:extending}, we reformulate stochastic Fokker--Planck equation satisfied by the conditional subprobability distribution of the living particles as a martingale problem and show the stability of the martingale problem under perturbations of the coefficients. As a byproduct we introduce a relaxed formulation for the control of stochastic Fokker--Planck equations, which is of independent interest.

\section{Main Results and Outline of the Equivalence Proof} \label{sec:main_results}

In this section we state the main results of this paper and provide an outline of the proof of equivalence for the open- and closed loop formulation of the McKean--Vlasov control problem discussed in the introduction. In what follows we will use $\nabla$ and $\nabla^2$ to denote the gradient and the Hessian, and write $a \cdot b = a^{\top} b$ for $a$, $b \in \R^d$ as well as $A : B = \trace(A^{\top}B)$ for $A$, $B \in \R^{d \times d}$, where $(\cdot)^{\top}$ is the transpose.

\subsection{Semilinear BSPDE with C\`adl\`ag Noise} \label{sec:main_results_bspde}

% TODO: mention that if we leave $\cal{H}$ in spaces L_{bb{F}}^2, \cal{M}_{\bb{F}}^2, etc, then we mean \cal{H} = \R
% TODO: do we need a complete probability space here? Why? We e.g. use it to derive that hitting times on the set [k, \infty) of a cadlag process are stopping times, but could circumvent that
% TODO: comment on the fact that the BSPDE is defined on the half-plane but does not come with a boundary condition
Let $(\Omega, \F, \pr)$ be a complete probability space equipped with a complete filtration $\bb{F} = (\F_t)_{0 \leq t \leq T}$ and a $d$-dimensional $\bb{F}$-Brownian motion $W = (W_t)_{0 \leq t \leq T}$. We study two types of semilinear backward SPDEs. First, for the domain $D = \R^d \times (\ell, \infty)$, with $\ell \in \R$, we consider the BSPDE
\begin{align} \label{eq:bspde}
\begin{split}
    \d u_t(x, y) &= -\Bigl(a_t(x, y) : \nabla_x^2 u_t(x, y) + H_t\bigl(x, y, u_t(x, y), \nabla_x u_t(x, y)\bigr) + F_t(x, y, u_t, \nabla_x u_t)\Bigr) \, \d t \\
    &\ \ \ - \lambda_t(x) \partial_y u_t(x, y) \, \d t - \bigl(\alpha_t : \nabla_x q_t(x, y) + \beta_t(x, y) \cdot q_t(x, y)\bigr) \, \d t \\
    &\ \ \ + q_t(x, y) \cdot \d W_t + \d m_t(x, y)
\end{split}
\end{align}
for $(x, y) \in D$ with terminal condition $u_T(x, y) = \psi(x, y)$, where $\psi \in L^2(\Omega, \F_T, \pr; L^2(D))$. Here $x \in \R^d$ denotes the first $d$ variables and $y \in (\ell, \infty)$ the remaining one. The process $m = (m_t)_{0 \leq t \leq T}$ is a c\`adl\`ag $\bb{F}$-martingale with values in the Hilbert space $L^2(D)$ and together with $q$ ensures that the solution $(u, q, m)$ is $\bb{F}$-progressively measurable. Note that the BSPDE is only defined on a half-plane, even though we have not introduced any boundary condition. Since we assume that $\lambda$ is nonnegative, this does not cause any issues. Indeed, since the SPDE proceeds backward in time, the nonnegativity of $\lambda$ implies that the value of $u_t(x, y)$ for $(t, \omega, x, y) \in [0, T] \times \Omega \times D$ only depends on the future values $u_s$, $q_s$, and $m_s$, $(s, \omega) \in [t, T] \times \Omega$, on $\R^d \times [y, \infty)$. Hence, we do not need to specify the behaviour of $u$ on the boundary $\R^d \times \{\ell\}$.

The second type of equation has a similar structure but is not defined on the half-plane $\R^d \times [\ell, \infty)$ but instead on the full space $D = \R^d$. It reads
\begin{align} \label{eq:bspde_1d}
\begin{split}
    \d u_t(x) &= -\Bigl(a_t(x) : \nabla_x^2 u_t(x) + H_t\bigl(x, u_t(x), \nabla_x u_t(x)\bigr) + F_t(x, u_t, \nabla_x u_t)\Bigr) \, \d t \\
    &\ \ \ - \bigl(\alpha_t : \nabla_x q_t(x) + \beta_t(x) \cdot q_t(x)\bigr) \, \d t  + q_t(x) \cdot \d W_t + \d m_t(x).
\end{split}
\end{align}
In both cases the coefficients are $\bb{F}$-progressively measurable random functions $a \define [0, T] \times \Omega \times \R \times D \to \bb{S}^d$, $H \define [0, T] \times \Omega \times D \times \R \times \R^d \to \R$, $F \define [0, T] \times \Omega \times D \times L^2(D) \times L^2(D; \R^d) \to \R$, $\lambda \define [0, T] \times \Omega \times \R^d \to \R$, $\alpha \define [0, T] \times \Omega \to \R^{d \times d}$, and $\beta \define [0, T] \times \Omega \times D \to \R^d$, where $\bb{S}^d$ denotes the space of symmetric $d \times d$ matrices. We will formulate appropriate assumptions on those coefficients below. The BSPDEs are nonlinear, nonlocal, and have a noise that takes values in an infinite-dimensional Hilbert space. 

To define our notion of solutions to BSPDE \eqref{eq:bspde}, we first need to introduce some parabolic function spaces. Let $\cal{H}$ be a separable Hilbert space with scalar product $\langle \cdot, \cdot \rangle_{\cal{H}}$. If $\cal{H}$ is an an $L^2$-space, we will drop the subscript $\cal{H}$ from the scalar product. Then, we let $L^2_{\bb{F}}([0, T]; \cal{H})$ denote the space of $\cal{H}$-valued $\bb{F}$-progressively measurable stochastic processes $h = (h_t)_{0 \leq t \leq T}$ such that $\int_0^T \ev \langle h_t, h_t\rangle_{\cal{H}} \, \d t < \infty$. The function $(h^1, h^2) \mapsto \int_0^T \ev \langle h^1_t, h^2_t\rangle_{\cal{H}} \, \d t$ for $h^1$, $h^2 \in L^2_{\bb{F}}([0, T]; \cal{H})$ defines an inner product and turns $L^2_{\bb{F}}([0, T]; \cal{H})$ into a Hilbert space. Next, we let $C_{\bb{F}}^2([0, T]; \cal{H})$ and $D_{\bb{F}}^2([0, T]; \cal{H})$ be the space of $\bb{F}$-adapted continuous and c\`adl\`ag processes $h$ with values in $\cal{H}$, such that $\ev \sup_{0 \leq t \leq T} \lVert h_t\rVert_{\cal{H}}^2 < \infty$. We endow both spaces with the norm
\begin{equation*}
    h \mapsto \bigl(\ev \sup_{0 \leq t \leq T} \lVert h_t\rVert_{\cal{H}}^2\bigr)^{1/2}
\end{equation*}
which turns them into Banach spaces.

We let $\cal{M}_{\bb{F}}^2([0, T]; \cal{H})$ be the space of $\cal{H}$-valued $\bb{F}$-martingales $m = (m_t)_{0 \leq t \leq T}$ with a.s.\@ c\`adl\`ag trajectories for which $\ev[\langle m_T, m_T\rangle_{\cal{H}}] < \infty$. We equip $\cal{M}_{\bb{F}}^2([0, T]; \cal{H})$ with the inner product $(m^1, m^2) \mapsto \ev \langle m^1_T, m^2_T\rangle_{\cal{H}}$ which turns $\cal{M}_{\bb{F}}^2([0, T]; \cal{H})$ into a Hilbert space as well. 
% Indeed, the only property that is not obvious is completeness. So let $(m^n)_n$ be a Cauchy sequence in $\cal{M}_{\bb{F}}^2([0, T]; \cal{H})$. 
% % so that $\ev \lVert m^k_T - m^{\ell}_T\rVert_{\cal{H}}^2 \to 0$. This implies that $(m^n_T)_n$ is a Cauchy sequence in the Hilbert space $L^2(\Omega, \F_T, \pr)$, so there exists a limit $X$. Let us now define the process $m = (m_t)_{0 \leq t \leq T}$ by $m_t = \ev[X \vert \F_t]$. Then $m$ is clearly a square-integrable martingale, so it remains to show that $m$ has a.s.\@ c\`adl\`ag trajectories. 
% Then, since $\lVert m^k - m^{\ell}\rVert_{\cal{H}}$ is a submartingale, Doob's inequality implies that $\ev \sup_{0 \leq t \leq T} \lVert m^k_t - m^{\ell}_t\rVert_{\cal{H}}^2 \leq 4 \ev \lVert m^k_T - m^{\ell}_T\rVert_{\cal{H}}^2 \to 0$. But the space of $\bb{F}$-adapted $\cal{H}$-valued c\`adl\`ag stochastic process $h$ with $\ev \sup_{0 \leq t \leq T} \lVert h_t\rVert_{\cal{H}}^2 < \infty$ is a Banach space and, clearly, $(m^n)_n$ is a Cauchy sequence in this space. Thus, there exists a limit $m$  in this space, which is easily seen to be a martingale. Moreover, it holds that $\ev \lVert m^n_T - m_T\rVert_{\cal{H}}^2 \leq \ev \sup_{0 \leq t \leq T} \lVert m^n_t - m_t\rVert_{\cal{H}}^2 \to 0$, so that $m$ is the limit of $(m^n)_n$ in $\cal{M}_{\bb{F}}^2([0, T]; \cal{H})$ as required.
We say that two martingales $m^1 \in \cal{M}_{\bb{F}}^2([0, T]; \cal{H}_1)$, $m^2 \in \cal{M}_{\bb{F}}^2([0, T]; \cal{H}_2)$ with values in separable Hilbert spaces $\cal{H}_1$ and $\cal{H}_2$, respectively, are \textit{very strongly orthogonal} if for all elements $h_1 \in \cal{H}_1$, $h_2 \in \cal{H}_2$ the real-valued c\`adl\`ag martingales $\langle m^1, h_1\rangle_{\cal{H}_1}$ and $\langle m^2, h_2\rangle_{\cal{H}_2}$ are strongly orthogonal, i.e.\@ $\bigl[\langle m^1, h_1\rangle_{\cal{H}_1}, \langle m^2, h_2\rangle_{\cal{H}_2}\bigr]_t = 0$ for all $t \in [0, T]$.

Lastly, we define 
\begin{equation*}
    \bb{H}(D) = L^2_{\bb{F}}([0, T]; H^{1, 0}(D)) \times L^2_{\bb{F}}([0, T]; L^2(D; \R^d)) \times \cal{M}_{\bb{F}}^2([0, T]; L^2(D))
\end{equation*}
if the domain $D$ is $\R^d \times (\ell, \infty)$ and
\begin{equation*}
    \bb{H}(D) = L^2_{\bb{F}}([0, T]; H^1(D)) \times L^2_{\bb{F}}([0, T]; L^2(D; \R^d)) \times \cal{M}_{\bb{F}}^2([0, T]; L^2(D))
\end{equation*}
if $D = \R^d$. Here $H^{1, 0}(D)$ denotes the space of functions $f \in L^2(D)$ that have a weak derivative $\nabla_x f \define D \to \R$ in the first $d$ variables $x$, such that $\lVert \nabla_x f\rVert_{L^2} < \infty$. It becomes a Hilbert space when equipped with the inner product
\begin{equation*}
    (f_1, f_2) \mapsto \int_D f_1(x, y)f_2(x, y) \, \d x \d y + \int_D \nabla_x f_1(x, y) \cdot \nabla_x f_2(x, y) \, \d x \d y.
\end{equation*}
We denote its dual space by $H^{-1, 0}(D)$. Furthermore, for $k \geq 1$, $H^k(D)$ is the classical Sobolev space with dual $H^{-k}(D)$ and $H^1_0(D)$ denotes the subspace of elements in $H^1(D)$ that vanish on the boundary of $D$.

\begin{definition} \label{def:bspde_solution}
A \textit{solution} to BSPDE \eqref{eq:bspde} consists of a triple $(u, q, m) \in \bb{H}(D)$ such that $m$ is started from zero and is very strongly orthogonal to $W$, and
\begin{align} \label{eq:bspde_solution}
\begin{split}
    \langle u_t, \varphi\rangle &= \langle \psi, \varphi\rangle + \int_t^T \Bigl\langle H_s\bigl(\cdot, \cdot, u_s(\cdot, \cdot), \nabla_x u_s(\cdot, \cdot)\bigr) + F_s(\cdot, \cdot, u_s, \nabla_x u_s) + \beta_s \cdot q_s, \varphi \Bigr\rangle \, \d s \\
    &\ \ \ - \int_t^T \Bigl(\bigl\langle \nabla_x u_s, \nabla_x \cdot (a_s \varphi)\bigr\rangle + \langle u_s, \lambda_s \partial_y \varphi\rangle + \langle \alpha_s q_s, \nabla_x \varphi\rangle\Bigr) \, \d s \\
    &\ \ \ - \int_t^T \langle q_s, \varphi\rangle \, \d W_s - \langle m_T - m_t, \varphi\rangle
\end{split}
\end{align}
for all $\varphi \in H^1_0(D)$ and $\textup{Leb} \otimes \pr$-a.e.\@ $(t, \omega) \in [0, T] \times \Omega$. A \textit{solution} to BSPDE \eqref{eq:bspde_1d} is defined in an analogous manner but with test function from $H^1(D)$. 

We call a solution $(u, q, m)$ to BSPDE \eqref{eq:bspde} or BSPDE \eqref{eq:bspde_1d}, respectively, \textit{unique} if for all other solutions $(u', q', m')$ to BSPDE \eqref{eq:bspde} or BSPDE \eqref{eq:bspde_1d}, respectively, it holds that $(u, q, m) = (u', q', m')$ as elements of $\bb{H}(D)$.
\end{definition}

In Equation \eqref{eq:bspde_solution} we write $H_s\bigl(\cdot, \cdot, u_s(\cdot, \cdot), \nabla_x u_s(\cdot, \cdot)\bigr)$ to indicate that we integrate the function $(x, y) \mapsto H_s\bigl(x, y, u_s(x, y), \nabla_x u_s(x, y)\bigr)$ in the scalar product. In contrast, $F_s$ depends on $u_s$ and $\nabla_x u_s$ nonlocally, so that we integrate $(x, y) \mapsto F_s(x, y, u_s, \nabla_x u_s)$ in the scalar product. The test functions for BSPDE \eqref{eq:bspde} are in $H^1_0(D)$ to avoid boundary terms from appearing. Note that Definition \ref{def:bspde_solution} implicitly assumes that the coefficient $a$ is differentiable in $x$ and the coefficient $\lambda$ is differentiable in $y$. We shall always assume that this is the case.

\begin{assumption} \label{ass:bspde}
We assume that $\psi \in L^2(\Omega, \F_T, \pr; L^2(D))$ and that there exist $c_{a, \alpha} > 0$, $C > 0$, and $h_H$, $h_F \in L^2(D)$ such that
\begin{enumerate}[noitemsep, label = (\roman*)]
    \item \label{it:lipschitz_bspde} for all $(t, \omega, z) \in [0, T] \times \Omega \times D$, $(r, r', p, p') \in \R^2 \times (\R^d)^2$, and $(u, u', v, v') \in L^2(D)^2 \times L^2(D; \R^d)^2$ we have
    \begin{align*}
        \lvert H_t(z, r, p) - H_t(z, r', p')\rvert &\leq C(\lvert r - r'\rvert + \lvert p - p'\rvert), \\
        \lvert F_t(z, u, v) - F_t(z, u', v')\rvert &\leq h_F(z) (\lVert u - u'\rVert_{L^2} + \lVert v - v'\rVert_{L^2});
    \end{align*}
    \item \label{it:diff_reg} the map $z \mapsto a_t(z)$ has a weak derivative in the first $d$ variables and $z \mapsto \lambda_t(z)$ has a weak derivative in the last variable for $\pr \otimes \leb$-a.e.\@ $(t, \omega)$;
    \item \label{it:growth_bspde} the coefficients $a$, $\nabla_x \cdot a$, $\alpha$, $\beta$, $\lambda$, and $\partial_y \lambda$ are bounded by $C$ and for all $(t, \omega, z)$, $(r, p) \in \R \times \R^d$, and $(u, v) \in L^2(D) \times L^2(D; \R^d)$ we have
    \begin{align*}
        \lvert H_t(z, r, p)\rvert &\leq C(h_H(z) + \lvert r\rvert + \lvert p\rvert), \\
        \lvert F_t(z, u, v)\rvert &\leq h_F(z)(1 + \lVert u\rVert_{L^2} + \lVert v\rVert_{L^2});
    \end{align*}
    \item \label{it:nondeg_bspde} for all $(t, \omega, z)$ we have $a_t(z) - \frac{\alpha_t \alpha_t^{\top}}{2} \geq c_{a, \alpha}I_d$.
\end{enumerate}
Here $I_d$ is the identity matrix in $d$ dimensions and for two matrices $A$, $B \in \bb{S}^d$ we write $A \geq B$ if $A - B$ is positive semidefinite.
\end{assumption}

Note that Assumption \ref{ass:bspde} covers both BSPDE \eqref{eq:bspde} and BSPDE \eqref{eq:bspde_1d}. Of course, the regularity and boundedness condition on $\lambda$ is not needed for the latter, as the coefficient $\lambda$ does not enter this BSPDE.

\begin{theorem} \label{thm:bspde}
Let Assumption \ref{ass:bspde} be satisfied. Then BSPDE \eqref{eq:bspde} and BSPDE \eqref{eq:bspde_1d} have a unique strong solution $(u, q, m)$ for which $u \in D_{\bb{F}}^2([0, T]; L^2(D))$. Moreover, it holds that
\begin{equation} \label{eq:bspde_semilinear_estimate}
    \sup_{t \in [0, T]}\ev\lVert u_t\rVert_{L^2}^2 + \ev\lVert m_T\rVert_{L^2}^2 + \ev\int_0^T \lVert \nabla_x u_t \rVert_{L^2}^2 + \lVert q_t \rVert_{L^2}^2 \, \d t \leq C\bigl(1 + \ev\lVert \psi \rVert_{L^2}^2\bigr)
\end{equation}
for a constant $C > 0$ that only depends on the coefficients and $T$.
\end{theorem}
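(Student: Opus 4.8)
The plan is to treat the BSPDE as a backward stochastic evolution equation on the Hilbert space $L^2(D)$ and obtain the solution via a Galerkin approximation combined with a fixed-point argument for the semilinear and nonlocal terms. First I would isolate the genuinely nonlinear and nonlocal parts: given a pair $(\bar u, \bar v) \in L^2_{\bb{F}}([0,T]; L^2(D)) \times L^2_{\bb{F}}([0,T]; L^2(D;\R^d))$, freeze the arguments in $H_s(\cdot,\cdot,\bar u_s, \nabla_x \bar u_s \leftarrow \bar v_s)$ and $F_s(\cdot,\cdot,\bar u_s, \bar v_s)$ to obtain a \emph{linear} BSPDE with an $L^2_{\bb{F}}([0,T];L^2(D))$ source term. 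For this linear equation, existence and uniqueness of $(u,q,m) \in \bb{H}(D)$ follows from a finite-dimensional Galerkin scheme: project onto $\spanset\{e_1,\dots,e_n\}$ for an orthonormal basis of $L^2(D)$ (chosen inside $H^1_0(D)$, or $H^1(D)$ in the full-space case), solve the resulting finite system of backward SDEs driven by $W$ using the martingale representation theorem adapted to the possibly discontinuous filtration $\bb{F}$ — this is where the extra c\`adl\`ag martingale driver $m^n$ appears, arising from the orthogonal complement of the stable subspace generated by $W$ — and then pass to the limit $n \to \infty$. The nondegeneracy Assumption \ref{ass:bspde}\ref{it:nondeg_bspde}, namely $a_t - \tfrac12 \alpha_t\alpha_t^\top \geq c_{a,\alpha} I_d$, is exactly what yields the uniform-in-$n$ energy estimate: applying the finite-dimensional It\^o formula to $\lVert u^n_t\rVert_{L^2}^2$ produces the "good" term $\langle \nabla_x u^n, \nabla_x\cdot(a_s \nabla_x u^n)\rangle$ and a "bad" cross term from $\langle \alpha_s q^n, \nabla_x u^n\rangle$ together with the It\^o correction $\lVert q^n\rVert^2$; completing the square and using the spectral gap absorbs $\tfrac12 \alpha\alpha^\top : \nabla_x^2$ against part of the diffusion, leaving $c_{a,\alpha}\lVert \nabla_x u^n\rVert^2 + \tfrac12\lVert q^n\rVert^2$ on the left. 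This, with Gronwall and the lower-order bounds from \ref{it:growth_bspde}, gives weak compactness of $(u^n, \nabla_x u^n, q^n, m^n)$ in the relevant spaces; the limit solves the weak formulation \eqref{eq:bspde_solution}, and the time-regularity $u \in D^2_{\bb{F}}([0,T];L^2(D))$ together with \eqref{eq:bspde_semilinear_estimate} comes from Theorem \ref{thm:ito} (the generalised It\^o formula), which provides the c\`adl\`ag modification and the supremum bound on $\ev\lVert u_t\rVert^2$.

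Next I would set up the fixed-point map $\Phi \define (\bar u, \bar v) \mapsto (u, \nabla_x u, q)$ (projecting out $q$ and the derivative) on the space $L^2_{\bb{F}}([0,T]; H^{1,0}(D)) \times L^2_{\bb{F}}([0,T];L^2(D;\R^d))$, or more conveniently on $L^2_{\bb{F}}([0,T];L^2(D))$ equipped with an exponentially weighted norm $\int_0^T e^{\kappa t}\ev\lVert \cdot\rVert^2\,\d t$. Using the Lipschitz bounds \ref{it:lipschitz_bspde} — with $H$ Lipschitz with constant $C$ in $(r,p)$ and $F$ Lipschitz with the $L^2$-function weight $h_F$ in $(u,v)$ — and the linear a priori estimate \eqref{eq:bspde_semilinear_estimate} applied to the difference of two linear problems, one shows $\Phi$ is a contraction for $\kappa$ large enough; Banach's fixed-point theorem then yields the unique solution. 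Uniqueness in the sense of Definition \ref{def:bspde_solution} follows either directly from the contraction or, more robustly, by applying the It\^o formula of Theorem \ref{thm:ito} to the difference of two solutions $u - u'$ and running the same energy estimate (now with zero terminal data and zero source), which forces $u = u'$, hence $\nabla_x u = \nabla_x u'$, $q = q'$, and finally $m = m'$ since $m_t - m_t' = (u_0 - u_0') - (\text{drift} + \text{$W$-integral})$ is simultaneously a martingale and of bounded variation with matching brackets.

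The main obstacle I anticipate is the rigorous treatment of the c\`adl\`ag martingale driver $m$ together with the degeneracy in $y$ and the absence of a boundary condition at $\R^d \times \{\ell\}$. Concretely: (i) the martingale representation in a non-quasi-left-continuous filtration must be invoked in the form "every $L^2$-martingale decomposes as a stochastic integral against $W$ plus a part very strongly orthogonal to $W$," and one must check this orthogonality survives the Galerkin limit — here the notion of \emph{very strong orthogonality} defined in the excerpt is precisely engineered to make the It\^o cross-bracket terms between $\int q\,\d W$ and $m$ vanish componentwise; (ii) applying It\^o's formula to $\lVert u_t\rVert_{L^2}^2$ for a merely $H^{1,0}(D)$-valued process (only the $x$-derivative controlled, nothing in $y$) is non-standard, which is exactly why Theorem \ref{thm:ito_2_dim} is needed — one has to verify its hypotheses, in particular that the $\partial_y$ transport term $\lambda_t(x)\partial_y u_t$ pairs with $u_t$ to give $\tfrac12\langle \partial_y\lambda_t, u_t^2\rangle$ (an $L^\infty$-bounded lower-order term by \ref{it:growth_bspde}) with \emph{no} boundary contribution at $y = \ell$, thanks to $\lambda \geq 0$ and the backward direction of time as explained after \eqref{eq:bspde}. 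Once the generalised It\^o formula is granted, the estimate \eqref{eq:bspde_semilinear_estimate} is a fairly mechanical Gronwall argument; the real content is establishing that formula in the degenerate half-space setting, which the paper defers to Section \ref{sec:bspde}.
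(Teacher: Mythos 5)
Your proposal tracks the paper's proof precisely: Proposition \ref{prop:bspde_linear} treats the linear problem by a Galerkin scheme using basis elements in $C^\infty_c(D) \subset H^1_0(D)$ together with the generalised It\^o formulas, and Theorem \ref{thm:bspde} then follows by a Banach fixed-point iteration on $\bb{H}(D)$ with an exponentially weighted norm, using the linear estimate \eqref{eq:bspde_linear_estimate} and the Lipschitz conditions in Assumption \ref{ass:bspde}\ref{it:lipschitz_bspde}, exactly as you outline. One small slip in your description of the mechanism: since $\lambda$ depends on $x$ alone, $\partial_y\lambda \equiv 0$, so pairing $\lambda_s\partial_y u_s$ against $u_s$ and integrating by parts in $y$ produces \emph{only} a boundary term $-\tfrac12\int_{\R^d}\lambda_s(x)\,u_s(x,\ell)^2\,\d x$, not the bulk term $\tfrac12\langle\partial_y\lambda, u^2\rangle$ you mention; this boundary term is nonpositive by $\lambda\geq 0$, vanishes identically for the Galerkin approximants because the basis elements are compactly supported in the open half-space, and for a genuine solution (which need not vanish at $y=\ell$) it is not zero but is precisely what the nondecreasing process $\xi$ in Theorem \ref{thm:ito_2_dim} encodes and what is dropped with the favourable sign in the uniqueness estimate.
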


\subsection{Application to a McKean--Vlasov Control Problem with Killing at a State-Dependent Intensity} \label{sec:main_results_application}

From now on we set $d = 1$. Let us now assume that on $(\Omega, \F, \pr)$ there exists another one-dimensional Brownian motion $B$ as well as two real-valued random variables $\xi$ and $\zeta \geq 0$ with joint distribution $\L(\xi, \zeta) \in \P^2(\R^2)$, such that the quadruple $(\xi, \zeta, B, W)$ is independent. 
% Assume further that there is a complete filtration $\bb{G} = (\cal{G}_t)_{0 \leq t \leq T}$ such that $\xi$ is $\cal{G}_0$-measurable and $B$ and $W$ are $\bb{G}$-Brownian motions. Further, we require that $(\xi, B)$ is independent of $\F_T$ and $\pr(A \vert \F_t) = \pr(A \vert \F_T)$ a.s.\@ for any $A \in \cal{G}_t \lor \sigma(B)$. 
Then we consider the McKean--Vlasov SDE
\begin{equation} \label{eq:mfl}
    \d X_t = b(t, X_t, \nu_t, \gamma_t) \, \d t + \sigma(t, X_t) \, \d B_t + \sigma_0(t) \, \d W_t, \quad \d \Lambda_t = \lambda(t, X_t) \, \d t
\end{equation}
with initial conditions $X_0 = \xi$, $\Lambda_0 = \zeta$, and \textit{conditional subprobability measure} $\nu_t$ defined through $\langle \nu_t, \varphi\rangle = \ev[e^{-\Lambda_t}\varphi(X_t) \vert W]$ for $\varphi \in C_b(\R)$. The control process $\gamma = (\gamma_t)_{0 \leq t \leq T}$ takes values in some nonempty, compact, and convex subset $G$ of $\R^{d_G}$ with $d_G \geq 1$. We assume that $\gamma$ is $\bb{F}^{\xi, \zeta, B, W}$-progressively measurable. Next, we introduce the \textit{cost functional}
\begin{equation}
    J(\gamma) = \ev\biggl[\int_0^T e^{-\Lambda_t} f(t, X_t, \nu_t, \gamma_t) \, \d t + \psi(\nu_T)\biggr]
\end{equation}
and denote by $V$ the infimum of $J$ over control processes $\gamma$ with the properties outlined above. We call $V$ the \textit{value} of the control problem. We refer to this control problem as the \textit{strong formulation} for reasons that will become clear later on.

We studied the limit theory of a generalisation of this McKean--Vlasov control problem in a previous paper \cite{hambly_mvcp_arxiv_2023}. In particular, we showed that the McKean--Vlasov control problem is the limit of a control problem for a suitable particle approximation of McKean--Vlasov SDE \eqref{eq:mfl} as the number of particles tends to infinity. What we are interested in here is whether we can replace the progressively measurable \textit{open-loop control} $\gamma_t$ by a \textit{semiclosed-loop control} of the form $g_t(X_t)$, for a $\bb{F}^W$-progressively measurable random function $g \define [0, T] \times \Omega \times \R \to G$, without increasing the value $V$. Now, if $x \mapsto g_t(x)$ is only measurable, it is not clear if McKean--Vlasov SDE \eqref{eq:mfl}, with the control $g$ as input, admits a strong solution. Consequently, we need to work with a weakened formulation of the control problem. In the weaker formulation, $\nu_t$ is the conditional subprobability with respect to a $\sigma$-algebra $\F_T$ for a filtration $\bb{F} = (\F_t)_{0 \leq t \leq T}$ that suitably extends that of the common noise $W$, and $g$ may be progressively measurable with respect to $\bb{F}$ instead of $\bb{F}^W$. % We give a rigorous account of this concept in Definition \ref{def:weak_control} below.

However, even if we admit this change, one might expect that the value over weak semiclosed-loop controls is larger than $V$ unless we allow the ``weak'' semiclosed-loop controls to also depend on $\Lambda_t$. But as stated in the introduction, this is in fact not the case.
% But this is not the case. Indeed, in this work we prove that because of the specific form of McKean--Vlasov SDE \eqref{eq:mfl}, where (i) $\Lambda$ enters the state of $X$ only through the exponential factor $e^{-\Lambda_t}$ in the definition of $\nu_t$ and (ii) the dynamics of $\Lambda$ depend on $\Lambda_t$ only through $X_t$ and $\nu_t$, the infimum of $J$ over ``weak'' semiclosed-loop controls $g_t(X_t)$ that are independent of $\Lambda_t$ coincides with $V$. 
Before we make this more precise, let us discuss the coefficients of McKean--Vlasov SDE \eqref{eq:mfl} as well as the cost functions. For $p \geq 1$, let $\cal{M}^2_{\leq 1}(\R)$ denote the space of measures $v$ on $\R$ such that $v(\R) \leq 1$ and $M_p^p(v) = \int_{\R} \lvert x\rvert^p \, \d v(x) < \infty$. We equip $\cal{M}^2_{\leq 1}(\R)$ with the metric
\begin{equation} \label{eq:subprob_metric}
    d_p(v_1, v_2) = W_p(\eta_1, \eta_2) + \lvert \nu_1(\R) - \nu_2(\R)\rvert,
\end{equation}
where $\eta_i = v_i + (1 - v_i(\R))\delta_0$, $W_p$ denotes the $p$-Wasserstein distance for probability measures on $\R$, and $v_1$, $v_2 \in \M^p_{\leq 1}(\R)$. By the Rubinstein-Kantorovich duality, we can rewrite $d_1$ as $d_1(v_1, v_2) = \sup_{\lVert \varphi\rVert_{\text{Lip}} \leq 1} \langle v_1 - v_2, \varphi\rangle$, where for a Lipschitz continuous function $\varphi \define \R \to \R$ we denote by $\lVert \varphi\rVert_{\text{Lip}}$ the maximum between the Lipschitz constant of $\varphi$ and $\lvert \varphi(0)\rvert$. This motivates us to further introduce the metric $d_0$ defined as $d_0(v_1, v_2) = \sup_{\lVert \varphi\rVert_{\text{bLip}} \leq 1} \langle v_1 - v_2, \varphi\rangle$, where now $\lVert \varphi\rVert_{\text{bLip}}$ denotes the maximum between the Lipschitz constant and $\lVert \varphi\rVert_{\infty}$ for a bounded Lipschitz continuous function $\varphi \define \R \to \R$. 

With this in mind, we let $b$ and $f \define [0, T] \times \R \times \cal{M}^2_{\leq 1}(\R) \times G \to \R$, $\sigma$ and $\lambda \define [0, T] \times \R \to \R$, $\sigma_0 \define [0, T] \to \R$, and $\psi \define \cal{M}^2_{\leq 1}(\R) \to \R$ be measurable functions. Further, we assume that there exist measurable maps $b_0$ and $f_0 \define [0, T] \times \R \times \cal{M}^2_{\leq 1}(\R) \to \R$, $b_1$ and $f_1 \define [0, T] \times \R \times G \to \R$ such that $b(t, x, v, g) = b_0(t, x, v) + b_1(t, x, g)$ and $f(t, x, v, g) = f_0(t, x, v) + f_1(t, x, g)$ for all $(t, x, v, g) \in [0, T] \times \R \times \cal{M}^2_{\leq 1}(\R) \times G \to \R$.

\begin{assumption} \label{ass:mfl}
We assume there exists constant $C > 0$ and $c > 0$ such that
\begin{enumerate}[noitemsep, label = (\roman*)]
    \item \label{it:continuity_coeff_mfl} the map $b_1$ is continuous in $g$ and for $(t, x, x', v, v', g)$ and $h \in \{b_1(\cdot, \cdot, g), \sigma, \lambda\}$ we have
    \begin{align*}
        \lvert b_0(t, x, v) - b_0(t, x', \nu')\rvert + \lvert h(t, x) - h(t, x')\rvert \leq C(\lvert x - x'\rvert + d_1(v, v'));
    \end{align*}
    \item \label{it:continuity_cost_mfl} the maps $(x, v) \mapsto f_0(t, x, v)$ and $(x, g) \mapsto f_1(t, x, g)$ are uniformly continuous for $(t, x, v, g)$ in compact subsets of $[0, T] \times \R \times \cal{M}^2_{\leq 1}(\R) \times G$, and $\psi$ is continuous;
    \item \label{it:growth_mfl} the coefficients $\sigma$ and $\sigma_0$ are bounded by $C$ and for all $(t, x, v, g)$ we have
    \begin{align*}
        \lvert b_0(t, x, v) \rvert + \lvert b_1(t, x, g) \rvert + \lvert \lambda(t, x)\rvert &\leq C(1 + \lvert x \rvert + M_2(v) + \lvert g\rvert), \\
        \lvert f_0(t, x, v) \rvert + \lvert f_1(t, x, g) \rvert + \lvert \psi(v)\rvert &\leq C(1 + \lvert x \rvert^2 + M_2^2(\nu));
    \end{align*}
    \item \label{it:nondeg_mfl} for all $(t, x)$ we have $\sigma^2(t, x) \geq c$;
    \item \label{it:intensity_mfl} for all $(t, x)$ we have $\lambda(t, x)$ if $x < 0$ and $\lambda(t, x) = 0$ otherwise.
\end{enumerate}
\end{assumption}

\begin{definition} \label{def:weak_setup}
A \textit{weak setup} is a tuple $\bb{S} = (\Omega, \F, \pr, \bb{G}, \bb{F}, \xi, \zeta, B, W)$ such that
\begin{enumerate}[noitemsep, label = (\roman*)]
    \item $(\Omega, \F, \bb{G}, \pr)$ is a complete filtered probability space and $\bb{F}$ is a complete subfiltration of $\bb{G}$;
    \item $\xi$ and $\zeta$ are $\cal{G}_0$-measurable random variables, $B$ and $W$ are $\bb{G}$-Brownian motions, and $W$ is $\bb{F}$-adapted;
    \item $\xi$, $\zeta$, $B$, and $\F_T$ are independent and for $t \in [0, T]$ we have $\pr(A \vert \F_t) = \pr(A \vert \F_T)$ a.s.\@ for any $A \in \cal{G}_t \lor \sigma(B)$.
\end{enumerate}
By an \textit{extension} of a weak setup $\bb{S} = (\Omega, \F, \pr, \bb{G}, \bb{F}, \xi, \zeta, B, W)$ we mean a weak setup $(\tilde{\Omega}, \tilde{\F}, \tilde{\pr}, \tilde{\bb{G}}, \tilde{\bb{F}}, \tilde{\xi}, \tilde{\zeta}, \tilde{B}, \tilde{W})$ such that $\tilde{\Omega} = \Omega \times \Omega'$, $\tilde{\F} = \F \otimes \F'$, $\bb{G} = (\cal{G}_t \otimes \cal{G}'_t)_{0 \leq t \leq T}$, and $\bb{F} = (\F_t \otimes \{\varnothing, \Omega'\})_{0 \leq t \leq T}$, where $(\cal{G}'_t)_{0 \leq t \leq T}$ is a filtration on the measurable space $(\Omega', \F')$, and the random variables $\tilde{\xi}$, $\tilde{\zeta}$, $\tilde{B}$, and $\tilde{W}$ on $\tilde{\Omega}$ are defined by $\tilde{\xi}(\omega, \omega') = \xi(\omega)$, $\tilde{\zeta}(\omega, \omega') = \zeta(\omega)$, $\tilde{B}(\omega, \omega') = B(\omega)$, and $\tilde{W}(\omega, \omega') = W(\omega)$ for $(\omega, \omega') \in \tilde{\Omega}$. The probability $\tilde{\pr}$ on $(\tilde{\Omega}, \tilde{\F})$ satisfies $\tilde{\pr}(A \times \Omega') = \pr(A)$ for all $A \in \F$.
\end{definition}

For a given weak setup $\bb{S} = (\Omega, \F, \pr, \bb{G}, \bb{F}, \xi, \zeta, B, W)$ and every $\bb{G}$-progressively measurable $G$-valued process $\gamma = (\gamma_t)_{0 \leq t \leq T}$, referred to as an $\bb{S}$\textit{-admissible control}, we can consider the McKean--Vlasov SDE
\begin{equation} \label{eq:mfl_weak}
    \d X_t = b(t, X_t, \nu_t, \gamma_t) \, \d t + \sigma(t, X_t) \, \d B_t + \sigma_0(t) \, \d W_t, \quad \d \Lambda_t = \lambda(t, X_t, \nu_t) \, \d t
\end{equation}
with initial conditions $X_0 = \xi$, $\Lambda_0 = \zeta$, and $\nu_t$ defined through $\langle \nu_t, \varphi\rangle = \ev[e^{-\Lambda_t}\varphi(X_t) \vert \F_T]$ for $\varphi \in C_b(\R)$. In \cite[Proposition 3.12]{hambly_mvcp_arxiv_2023} we proved that if Assumption \ref{ass:mfl} is in place, then McKean--Vlasov SDE \eqref{eq:mfl_weak} has a unique strong solution for any $\bb{S}$-admissible control $\gamma$. We say that the control $\gamma$ is a \textit{semiclosed-loop control} if there exists an $\bb{F}$-progressively measurable random function $g \define [0, T] \times \Omega \times \R^2 \to G$ such that $\gamma_t = g_t(X_t, \Lambda_t)$. We call $g$ the \textit{feedback function} associated to $\gamma$. 
% but we will often refer to $g$ itself as a semiclosed-loop control. 
Next, we define the \textit{cost functional}
\begin{equation}
    J_{\bb{S}}(\gamma) = \ev\biggl[\int_0^T e^{-\Lambda_t} f(t, X_t, \nu_t, \gamma_t) \, \d t + \psi(\nu_T)\biggr]
\end{equation}
and the corresponding \textit{value} $V_{\bb{S}}$, i.e.\@ the infimum of $J_{\bb{S}}$ over all $\bb{S}$-admissible controls. We say that an $\bb{S}$-admissible control $\gamma$ is \textit{optimal} if $J_{\bb{S}}(\gamma) = V_{\bb{S}}$. % TODO: provide more detail on where in reference we can find statement
According to \cite[Appendix A.2]{hambly_mvcp_arxiv_2023}, if Assumption \ref{ass:mfl} is in place, the value $V_{\bb{S}}$ is independent of the setup $\bb{S}$ and is, in fact, equal to $V$. We call this control problem the \textit{weak formulation}.

With all this in mind, we can now formulate our central claim: there exists an optimal semiclosed-loop control $\gamma^{\ast}$ on some weak setup $\bb{S}$, such that the corresponding feedback function $\tilde{g}^{\ast}$ does not depend on the intensity, i.e.\@ there exists an $\bb{F}$-progressively measurable random function $g^{\ast} \define [0, T] \times \Omega \times \R \to G$ such that $\tilde{g}^{\ast}_t(x, y) = g^{\ast}_t(x)$ for a.e.\@ $(x, y) \in \R^2$ and $\pr \otimes \text{Leb}$-a.e.\@ $(t, \omega) \in [0, T] \times \Omega$. 
% Since $g$ does not depend on $y$, with slight abuse of notation, we can write $g_t(x)$ instead of $g_t(x, y)$. 
In what follows, we will provide an outline that explains how and under what assumptions we prove this result.

\begin{assumption} \label{ass:convexity}
The map $g \mapsto b_1(t, x, g)$ is linear and the map $g \mapsto f_1(t, x, g)$ is convex for all $(t, x)$.
\end{assumption}

If Assumption \ref{ass:convexity} holds in addition to Assumption \ref{ass:mfl}, then \cite[Proposition A.5]{hambly_mvcp_arxiv_2023} states that there exists a weak setup $\bb{S} = (\Omega, \F, \pr, \bb{G}, \bb{F}, \xi, \zeta, B, W)$ which admits an optimal $\bb{S}$-admissible semiclosed-loop control. We will fix this weak setup from now on.

If $g$ is any admissible semiclosed-loop control on $\bb{S}$, then the \textit{conditional joint law} $\mu_t = \L(X_t, \Lambda_t \vert \F_T)$ of $X$ and $\Lambda$ satisfies the stochastic Fokker--Planck equation 
\begin{equation} \label{eq:sfpe_joint}
    \d \langle \mu_t, \varphi\rangle = \bigl\langle \mu_t, \L_2\varphi(t, \cdot, \cdot, \nu_t, g_t)\bigr\rangle\, \d t + \langle \mu_t, \sigma_0(t) \partial_x \varphi \rangle \, \d W_t
\end{equation}
for $\varphi \in C_c^2(\R^2)$ with initial condition $\mu_0 = \L(\xi, \zeta)$. Here $\L_2$ denotes the generator of the two-dimensional process $(X, \Lambda)$ which is given by
\begin{equation} \label{eq:generator_sfpe_joint}
    \L_2 \varphi(t, x, y, v, h) = b(t, x, v, h)\partial_x \varphi(x, y) + \lambda(t, x, v)\partial_y \varphi(x, y) + a_t(x) \partial_x^2 \varphi(x, y)
\end{equation}
for $\varphi \in C_b^2(\R^2)$ and $(t, x, y, v, h) \in [0, T] \times \R^2 \times \cal{M}^2_{\leq 1}(\R) \times G$. The function $a \define [0, T] \times \R \to \R$ is defined by $a_t(x) = \frac{1}{2}(\sigma^2(t, x) + \sigma_0^2(t))$. Note, that $\nu_t$ is determined by the stochastic Fokker--Planck equation, so that the SPDE \eqref{eq:sfpe_joint} is nonlinear. Indeed, we can introduce the function $\cal{S} \define \P(\R \times [0, \infty)) \to \cal{M}_{\leq 1}(\R)$ defined by
\begin{equation} \label{eq:subprobability_function}
    \langle \cal{S}(m), \varphi\rangle = \int_{\R \times [0, \infty)} e^{-y} \varphi(x) \, \d m(x, y)
\end{equation}
for $\varphi \in C_b(\R)$, which allows us to write $\nu_t = \cal{S}(\mu_t)$.

Since on our chosen setup $\bb{S}$ the infimum of $J_{\bb{S}}$ over arbitrary $\bb{S}$-admissible controls and semiclosed-loop controls coincides, we can consider the control problem for McKean--Vlasov SDE \eqref{eq:mfl_weak} from the point of view of the stochastic Fokker--Planck equation for $\mu = (\mu_t)_{0 \leq t \leq T}$. That is, we study the optimal control of SPDE \eqref{eq:sfpe_joint}. We will adopt $L^2(\R^2)$ as a state space for this control problem. We justify this with Proposition \ref{prop:sfpe_joint} below. But first we need to introduce some stronger assumptions.

\begin{assumption} \label{ass:spfe_joint}
The coefficients $b$, $\sigma$, $\sigma_0$, and $\lambda$ and costs $f$ and $\psi$ satisfy Assumption \ref{ass:mfl} and there exists a constant $C > 0$ such that
\begin{enumerate}[noitemsep, label = (\roman*)]
    \item \label{it:spde_ic} the initial distribution $\L(\xi, \zeta)$ has a density in $L^2(\R^2)$;
    \item \label{it:spde_growth} the coefficients $b_0$, $b_1$, and $\lambda$ are bounded by $C$;
    \item \label{it:spde_lipschitz} for all $v$, $v' \in \cal{M}^2_{\leq 1}(\R) \cap L^2(\R)$ and $(t, x)$ we have $\lvert b_0(t, x, v) - b_0(t, x, v')\rvert \leq C \lVert v - v'\rVert_{L^2}$.
\end{enumerate}
\end{assumption}

In Assumption \ref{ass:spfe_joint} we identified an element of $\cal{M}^2_{\leq 1}(\R)$ with its density in $L^2(\R)$. Let $\cal{G}_{\bb{F}}^k$ denote the space of $\bb{F}$-progressively measurable random functions $[0, T] \times \Omega \times \R^k \to G$.

% TODO: maybe clarify what you mean by a solution; showing that \mu has continuous trajectories in \P^2 is not trivial
% TODO: could extend proposition by saying that on some possibly different weak setup extending (\Omega, \F, \pr) there exists a solution to the McKean--Vlasov SDE sth \L(X_t, \Lambda_t \vert \F_T) = \mu_t (or rather the only thing we might have to choose differently is \bb{G}); this implies that semiclosed-loop controls and \bb{F}-progressively measurable random functions g with values in G are essentially the same

\begin{proposition} \label{prop:sfpe_joint}
Let Assumption \ref{ass:mfl} be satisfied. Then for any semiclosed-loop control $g$, the process $\mu = (\mu_t)_{0 \leq t \leq T}$ defined by $\mu_t = \L(X_t, \Lambda_t \vert \F_T)$ solves SPDE \eqref{eq:sfpe_joint}. 

Next, if Assumption \ref{ass:spfe_joint} holds and $g \in \cal{G}_{\bb{F}}^2$, then SPDE \eqref{eq:sfpe_joint} has a unique strong solution $\mu = (\mu_t)_{0 \leq t \leq T}$ with values in $C([0, T]; \P^2(\R^2))$ such that $\ev \sup_{0 \leq t \leq T} M_2^2(\mu_t) < \infty$ and $\supp \mu_t \subset \R \times [0, \infty)$. Moreover, $\pr$-a.s.\@ the probability measure $\mu_t$ has a density $\rho_t$ in $L^2(\R^2)$ for all $t \in [0, T]$ and $\rho = (\rho_t)_{0 \leq t \leq T} \in C_{\bb{F}}^2([0, T]; L^2(\R^2)) \cap L_{\bb{F}}^2([0, T]; H^{1, 0}(\R^2))$ with
\begin{equation} \label{eq:sfpe_uniform_bound}
    \sup_{0 \leq t \leq T} \lVert \rho_t\rVert_{L^2}^2 + \int_0^T \lVert \rho_t\rVert_{H^{1, 0}}^2 \, \d t \leq C\lVert \rho_0\rVert_{L^2}^2
\end{equation}
for a constant $C > 0$ depending only on the coefficients and $T$.

Finally, for $g$ and $\mu$ as above, extending the weak setup if necessary, there exists a solution $(X, \Lambda)$ of McKean--Vlasov SDE \eqref{eq:mfl_weak} with semiclosed-loop control $\gamma = (\gamma_t)_{0 \leq t \leq T}$ given by $\gamma_t = g_t(X_t)$ and such that $\L(X_t, \Lambda_t \vert \F_T) = \mu_t$ for $t \in [0, T]$.
\end{proposition}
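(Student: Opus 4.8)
The plan is to treat the three assertions in turn, relying on the well-posedness of McKean--Vlasov SDE \eqref{eq:mfl_weak} from \cite[Proposition 3.12]{hambly_mvcp_arxiv_2023} and, for the linear theory, on the generalised It\^o formula of Section \ref{sec:bspde}.

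\emph{The Fokker--Planck property.} Given a semiclosed-loop control $g$, let $(X,\Lambda)$ be the associated strong solution of \eqref{eq:mfl_weak} and apply the finite-dimensional It\^o formula to $\varphi(X_t,\Lambda_t)$ for $\varphi\in C_c^2(\R^2)$; the drift is $\L_2\varphi(t,X_t,\Lambda_t,\nu_t,\gamma_t)$ and the martingale part is $\int_0^\cdot\partial_x\varphi(X_s,\Lambda_s)[\sigma(s,X_s)\,\d B_s+\sigma_0(s)\,\d W_s]$. Then take $\ev[\,\cdot\,\vert\F_T]$. Two consequences of the weak-setup axioms in Definition \ref{def:weak_setup} do the work: $B$ remains a Brownian motion in the initially enlarged filtration $(\cal G_t\lor\F_T)_{t}$, so the $\d B$-part has vanishing $\F_T$-conditional expectation; and the compatibility condition $\pr(A\vert\F_t)=\pr(A\vert\F_T)$ for $A\in\cal G_t$ forces $\L(X_s,\Lambda_s\vert\F_T)=\L(X_s,\Lambda_s\vert\F_s)=\mu_s$, so $\mu$ is $\bb F$-adapted. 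The latter fact lets me pass $\ev[\,\cdot\,\vert\F_T]$ through Riemann-sum approximations of $\int_0^t\sigma_0(s)\partial_x\varphi(X_s,\Lambda_s)\,\d W_s$ (a conditional stochastic Fubini argument) to recover $\int_0^t\langle\mu_s,\sigma_0\partial_x\varphi\rangle\,\d W_s$ in the limit, which is exactly the noise term of \eqref{eq:sfpe_joint}. Integrability throughout is immediate from the compact support of $\varphi$, the boundedness of $\sigma,\sigma_0$, and the second moments of $(X,\Lambda)$.

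\emph{Well-posedness of the SPDE.} Fix $g\in\cal G_{\bb F}^2$ and, temporarily, the measure flow $\nu$; in density form \eqref{eq:sfpe_joint} reads
\begin{equation*}
    \d\rho_t=\bigl(\partial_x^2(a_t\rho_t)-\partial_x(b_t\rho_t)-\lambda_t\partial_y\rho_t\bigr)\,\d t-\sigma_0(t)\partial_x\rho_t\,\d W_t,
\end{equation*}
which is parabolic in $x$ but merely transported in $y$. This degeneracy is the main obstacle: it breaks the variational framework on $H^{1,0}(\R^2)\subset L^2(\R^2)\subset H^{-1,0}(\R^2)$ since $\partial_y(\lambda_t\rho)\notin H^{-1,0}(\R^2)$. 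I would restore ellipticity by adding $\varepsilon\partial_y^2\rho^\varepsilon$, solve the resulting non-degenerate linear SPDE by standard $L^2$-theory (e.g.\ \cite{jin_1999_bspde} or Krylov--Rozovskii), and derive an $\varepsilon$-uniform energy estimate from the It\^o formula for $\lVert\rho_t^\varepsilon\rVert_{L^2}^2$: the crucial cancellations are $\int(\partial_y\lambda_t)(\rho_t^\varepsilon)^2=0$ (as $\lambda$ is $y$-independent) and the combination of the elliptic term with the It\^o correction, $-\!\int(2a_t-\sigma_0^2(t))|\partial_x\rho_t^\varepsilon|^2=-\!\int\sigma^2(t,\cdot)|\partial_x\rho_t^\varepsilon|^2\le -c\lVert\partial_x\rho_t^\varepsilon\rVert_{L^2}^2$ by Assumption \ref{ass:mfl}\ref{it:nondeg_mfl}, while $b$ is bounded by Assumption \ref{ass:spfe_joint}\ref{it:spde_growth} and the extra dissipation $-2\varepsilon\lVert\partial_y\rho_t^\varepsilon\rVert_{L^2}^2$ only helps. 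Passing $\varepsilon\to0$ via weak compactness in $L^2_{\bb F}([0,T];H^{1,0}(\R^2))$ produces a solution $\rho$, and uniqueness follows by applying the generalised It\^o formula (Theorem \ref{thm:ito}) to $\lVert\rho_t^1-\rho_t^2\rVert_{L^2}^2$ together with the same estimate. The nonlinearity in $\nu$ is then removed by a fixed-point/Picard argument: since $\cal S\define L^2(\R^2)\to L^2(\R)$ satisfies $\lVert\cal S(\rho)\rVert_{L^2}\le 2^{-1/2}\lVert\rho\rVert_{L^2}$ and $b_0$ is $L^2$-Lipschitz in its measure argument (Assumption \ref{ass:spfe_joint}\ref{it:spde_lipschitz}), the map $\nu\mapsto\cal S(\mu[\nu])$ is a contraction on $L^2_{\bb F}([0,\delta];L^2(\R))$ for small $\delta$, iterated over $[0,T]$ using that the energy estimate keeps $\lVert\rho_{k\delta}\rVert_{L^2}$ under control. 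Finally $\mu_t:=\rho_t\,\d x\,\d y$ is a probability measure by non-negativity (parabolic maximum principle, or a posteriori from the next paragraph) and conservation of mass (test against cut-offs of $1$, bounding the remainders by the second moments); the bounds $\ev\sup_t M_2^2(\mu_t)<\infty$ and $\supp\mu_t\subset\R\times[0,\infty)$ follow by testing \eqref{eq:sfpe_joint} against $1+|x|^2+|y|^2$ and against $(y^-)^2$ respectively, using that $\lambda$ is bounded and non-negative and $\zeta\ge0$, and continuity in $\P^2(\R^2)$ is routine. Uniqueness among $\P^2$-valued solutions reduces to the $L^2$ uniqueness once one knows such a solution has an $L^2$-density, which I would obtain from parabolic smoothing in $x$ (the $y$-transport being harmless) or, more robustly, from the superposition principle, identifying any solution with the conditional law of a solution to the martingale problem of Subsection \ref{sec:extending}, whose uniqueness follows from non-degeneracy in dimension one.

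\emph{The McKean--Vlasov representation and the main difficulty.} With $\mu$, hence $\nu=\cal S(\mu)$, now fixed and $\bb F$-adapted, \eqref{eq:mfl_weak} with control $\gamma_t=g_t(X_t,\Lambda_t)$ is an ordinary SDE whose drift is bounded and measurable in $x$, random but $\F_T$-measurable, with Lipschitz and uniformly elliptic diffusion $\sigma$ and the additive term $\sigma_0\,\d W$. Conditioning on $\F_T$ turns $\int_0^\cdot\sigma_0\,\d W$ into a fixed continuous path that can be subtracted off, leaving a one-dimensional SDE with bounded measurable drift and Lipschitz non-degenerate diffusion; by the Veretennikov--Zvonkin theory this has a pathwise unique strong solution, and a measurable selection in the $\F_T$-data (after extending the weak setup to carry it, if necessary) yields $(X,\Lambda)$. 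By the first assertion, $\L(X_t,\Lambda_t\vert\F_T)$ solves the linear SPDE with this $\nu$, so by the uniqueness of the second assertion it equals $\mu_t$; in particular $\cal S(\L(X_t,\Lambda_t\vert\F_T))=\nu_t$, so $(X,\Lambda)$ indeed solves \eqref{eq:mfl_weak} with the prescribed semiclosed-loop control. I expect the genuine difficulty to lie entirely in the second assertion — existence and, above all, uniqueness for the stochastic Fokker--Planck equation despite the degeneracy in the intensity variable and the common-noise driver — which is precisely where the generalised It\^o formula and the martingale-problem stability developed elsewhere in the paper are needed.
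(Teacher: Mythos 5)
Your treatment of the first assertion is essentially the paper's (the paper cites \cite{hammersley_weak_ex_uni_2021} and skips the computation; your direct argument via It\^o plus conditioning and the weak-setup compatibility axioms is the same calculation). For the second assertion you use a vanishing-viscosity regularisation $\varepsilon\partial_y^2$ in place of the paper's Galerkin scheme; the cancellations you identify are the right ones and both routes lead to the same energy estimate and Banach fixed point, so this is a legitimate alternative. The point where you would actually have to be careful is the one the paper spends a paragraph on and you wave at: proving uniqueness among $C([0,T];\P^2(\R^2))$-valued solutions requires showing a priori that any such solution has an $L^2$ density. Your suggested ``parabolic smoothing in $x$'' is delicate here precisely because the $y$-direction is not smoothing at all — the paper instead mollifies and uses the $L^2$ estimates from Step 1.

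The genuine gap is in the third assertion. You propose to solve SDE \eqref{eq:mfl_weak} directly by subtracting the common-noise path after conditioning on $\F_T$ and then invoking Veretennikov--Zvonkin to get a pathwise unique strong solution. But the system being solved is the two-dimensional $(X,\Lambda)$ SDE with noise only in the $X$-coordinate, and the drift of $X$ is $b\bigl(t,X_t,\nu_t,g_t(X_t,\Lambda_t)\bigr)$ with $g$ merely measurable in both arguments. This is a hypoelliptic system whose drift is discontinuous in the degenerate coordinate $\Lambda$; Veretennikov--Zvonkin (which requires uniform nondegeneracy in all variables appearing measurably in the drift) does not apply, and the known extensions to kinetic-type SDEs require H\"older or Sobolev regularity in the degenerate direction. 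Rewriting as a path-dependent SDE for $X$ alone with drift $b\bigl(t,X_t,\nu_t,g_t(X_t,\int_0^t\lambda(s,X_s)\,\d s)\bigr)$ does not help — the path-dependence is again through a merely measurable function. The paper avoids this entirely by invoking the mimicking theorem \cite[Theorem 1.3]{lacker_mimicking_2020}, which does not need any well-posedness of the $(X,\Lambda)$ SDE: it produces, after an extension of the probability space, a \emph{weak} solution whose conditional marginals are prescribed to be $\mu$, and that is all the proposition requires. Your construction, if patched so as only to deliver a weak solution matched to $\mu$, would in effect reproduce the mimicking theorem — so the cleanest fix is to use it.
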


From now on we will no longer distinguish between $\mu_t$ and its density and simply write $\mu \in C_{\bb{F}}^2([0, T]; L^2(\R^2)) \cap L_{\bb{F}}^2([0, T]; H^{1, 0}(\R^2))$.

Note that the last statement of Proposition \ref{prop:sfpe_joint} implies that under Assumption \ref{ass:spfe_joint}, up to changing the weak setup, any $g \in \cal{G}_{\bb{F}}^2$ induces a semiclosed-loop control and vice versa. Thus, the control problem for the stochastic Fokker--Planck equation \eqref{eq:sfpe_joint} with controls $g \in \cal{G}_{\bb{F}}^2$, which we refer to as the \textit{SPDE formulation}, is equivalent to the control problem for McKean--Vlasov SDE \eqref{eq:mfl_weak}, i.e.\@ the weak formulation, with semiclosed-loop controls on an arbitrary weak setup. We can now introduce a cost functional $J_{\text{cl}}$ for controls $g \in \cal{G}_{\bb{F}}^2$ defined by
\begin{equation} \label{eq:cost_func_closed}
    J_{\text{cl}}(g) = \ev\biggl[\int_0^T \bigl\langle \mu_t, \tilde{f}(t, \cdot, \cdot, \nu_t, g_t)\bigr\rangle + \psi(\nu_T)\biggr],
\end{equation}
where $\tilde{f} \define [0, T] \times \R^2 \times \cal{M}^2_{\leq 1}(\R) \times G \to \R$ is given by $\tilde{f}(t, x, y, v, h) = e^{-y}f(t, x, v, h)$. We similarly define $\tilde{f}_0$ and $\tilde{f}_1$. Note that because of the equivalence alluded to above, the feedback function $\tilde{g}^{\ast} \in \cal{G}_{\bb{F}}^2$ of the optimal semiclosed-loop control $\gamma^{\ast}$, whose existence is guaranteed under Assumption 
\ref{ass:convexity}, is an optimal control for the SPDE formulation. That is, $J_{\text{cl}}(\tilde{g}^{\ast}) = \inf_{g \in \cal{G}_{\bb{F}}^2}J_{\text{cl}}(g)$.

For the control problem for the stochastic Fokker--Planck equation, we can derive a necessary stochastic maximum principle (SMP). Indeed, let $g \in \cal{G}_{\bb{F}}^2$ and let $\mu$ denote the corresponding solution to SPDE \eqref{eq:sfpe_joint} by $\mu$. Then we define the functions $\tilde{K}^{\mu} \define [0, T] \times \Omega \times \R \times (0, \infty) \times \R \times G \to \R$ and $\tilde{F}^{\mu} \define [0, T] \times \Omega \times \R \times [0,\infty) \times L^2(\R \times (0, \infty)) \to \R$ by
\begin{align*}
    \tilde{K}^{\mu}_t(x, y, p, h) = b(t, x, \nu_t, h)p + \tilde{f}(t, x, y, \nu_t, h)
\end{align*}
for $(t, \omega, x, y, p) \in [0, T] \times \Omega \times \R \times (0, \infty) \times \R \times G$ and
\begin{align*}
    \tilde{F}^{\mu}_t(x, y, p) &= \bigl\langle \mu_t, e^{-y} Db_0(t, \cdot, \nu_t)(x) p\bigr\rangle + \bigl\langle \mu_t, e^{-y} D\tilde{f}_0(t, \cdot, \cdot, \nu_t)(x) \bigr\rangle
\end{align*}
for $(t, \omega, x, y, p) \in [0, T] \times \Omega \times \R \times (0, \infty) \times L^2(\R \times (0,\infty))$. Here $D$ denotes the linear functional derivative of a function $\cal{M}^2_{\leq 1}(\R) \to \R$, which we define next.

\begin{definition} \label{def:linear_functional_derivatve}
Let $E \subset \R^d$ be closed and assume that $\cal{E}$ is a closed and convex subset of $\cal{M}^p(E)$ for $p \geq 1$. A map $F \define \cal{E} \to \R$ is said to have a \textit{linear functional derivative} if there exists a continuous function $DF \define \cal{E} \times E \to \R$ such that 
\begin{enumerate}[noitemsep, label = (\roman*)]
    \item \label{it:growth_lfd} for any bounded subset $\cal{K} \subset \cal{E}$ the function $x \mapsto DF(m)(x)$ has at most polynomial growth of order $p$ in $x \in E$ uniformly in $m \in \cal{K}$;
    \item \label{eq:ftc_lfd} for any $m$, $m' \in \cal{E}$ it holds that
    \begin{equation}
        F(m) - F(m') = \int_0^1 \int_E DF(t m + (1 - t) m')(x) \, \d (m - m')(x) \, \d t.
    \end{equation}
\end{enumerate}
\end{definition}
With this at hand, we consider the adjoint process
\begin{align} \label{eq:adjoint}
\begin{split}
    \d \tilde{u}_t(x, y) &= -\Bigl(\tilde{K}^{\mu}_t\bigl(x, y, \partial_x \tilde{u}_t(x, y), g_t(x, y)\bigr) + \lambda_t(x)\partial_y \tilde{u}_t(x, y) + \tilde{F}^{\mu}_t\bigl(x, y, \partial_x \tilde{u}_t\bigr)\Bigr) \, \d t \\
    &\ \ \ - \bigl(a_t(x) \partial_x^2 \tilde{u}_t(x, y) + \sigma_0(t) \partial_x \tilde{q}_t(x, y)\bigr) \, \d t + \tilde{q}_t(x, y) \, \d W_t + \d \tilde{m}_t(x, y)
\end{split}
\end{align}
with terminal condition $\tilde{u}_T(x, y) = e^{-y}D\psi(\nu_T)(x)$ for $(x, y) \in \R \times (\ell, \infty)$, where $\ell \in \R$. Here we set $\lambda_t(x) = \lambda(t, x)$. Usually, we will assume that $\ell = 0$, but for the proof of the SMP it is convenient to have a solution that is defined in an arbitrarily small extension of the interval $(0, \infty)$. We interpret BSPDE \eqref{eq:adjoint} in the sense of Definition \ref{def:bspde_solution}. Before we can state an existence result for this BSPDE, we have to impose additional assumptions.

\begin{assumption} \label{ass:adjoint}
We assume that there exist $C > 0$ and $h_b$, $h_f$ and $h_{\psi} \in L^2(\R)$ such that
\begin{enumerate}[noitemsep, label = (\roman*)]
    \item \label{it:cost_l2} for all $(t, x, v, g)$ we have $\lvert f_0(t, x, v)\rvert + \lvert f_1(t, x, g)\rvert \leq h_f(x)$;
    \item \label{it:lfd} for all $(t, x)$ the maps $v \mapsto b_0(t, x, v)$, $v \mapsto f_0(t, x, v)$, and $v \mapsto \psi(v)$ have linear functional derivatives $Db_0$, $Df_0$ and $D\psi$, which are measurable functions $[0, T] \times \R \times \cal{M}^2_{\leq 1}(\R) \times \R \to \R$ and $\cal{M}^2_{\leq 1}(\R) \times \R \to \R$, respectively;
    \item \label{it:lfd_l2} for all $(t, x, v)$ and $y$ we have
    \begin{equation*}
        \lvert Db_0(t, x, v)(y)\rvert \leq h_b(y), \qquad \lvert Df_0(t, x, v)(y)\rvert \leq h_f(y), \qquad \lvert D\psi(v)(y)\rvert \leq h_{\psi}(y).
    \end{equation*}
\end{enumerate}
\end{assumption}

Under Assumptions \ref{ass:spfe_joint} and \ref{ass:adjoint} the existence of solutions to BSPDE \eqref{eq:adjoint} is a simple application of Theorem \ref{thm:bspde}.

\begin{proposition} \label{prop:adjoint_existence}
Let Assumptions \ref{ass:spfe_joint} and \ref{ass:adjoint} be satisfied. Then, for any $g \in \cal{G}_{\bb{F}}^2$, BSPDE \eqref{eq:adjoint} has a unique strong solution $(\tilde{u}, \tilde{q}, \tilde{m})$.
\end{proposition}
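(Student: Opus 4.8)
The plan is to recognise BSPDE \eqref{eq:adjoint} as a special instance of the abstract equation \eqref{eq:bspde} with spatial dimension $d = 1$ and domain $D = \R \times (\ell, \infty)$, and then to apply Theorem \ref{thm:bspde}; this reduces the whole proof to checking that Assumption \ref{ass:bspde} holds for the coefficients of \eqref{eq:adjoint}. Let $\mu$ be the solution of SPDE \eqref{eq:sfpe_joint} associated with the given $g \in \cal{G}_{\bb{F}}^2$, furnished by Proposition \ref{prop:sfpe_joint}, and set $\nu_t = \cal{S}(\mu_t)$. Matching terms, one takes $a_t(x) = \tfrac{1}{2}(\sigma^2(t, x) + \sigma_0^2(t))$ (which is independent of $y$), $\lambda_t(x) = \lambda(t, x)$, $\alpha_t = \sigma_0(t)$, $\beta_t \equiv 0$, the terminal datum $e^{-y} D\psi(\nu_T)(x)$ in place of $\psi$, and the $\bb{F}$-progressively measurable random functions
\[
    H_t(x, y, r, p) = b(t, x, \nu_t, g_t(x, y))\,p + \tilde{f}(t, x, y, \nu_t, g_t(x, y)), \qquad F_t(x, y, u, v) = \tilde{F}^{\mu}_t(x, y, v).
\]
With these identifications \eqref{eq:adjoint} is exactly \eqref{eq:bspde}, and a solution in the sense of Definition \ref{def:bspde_solution} is precisely what the proposition asserts.

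Most of the conditions in Assumption \ref{ass:bspde} are immediate. The coefficient $\lambda$ is nonnegative and bounded by Assumptions \ref{ass:mfl}\ref{it:intensity_mfl} and \ref{ass:spfe_joint}\ref{it:spde_growth}, with $\partial_y \lambda \equiv 0$; $\alpha = \sigma_0$ is bounded and $\beta = 0$; and since $\sigma$ is bounded and Lipschitz in $x$ while $\sigma_0$ is bounded (Assumption \ref{ass:mfl}), the function $a$ has a weak $x$-derivative and $a$, $\nabla_x \cdot a$ are bounded. The nondegeneracy $a_t(x) - \tfrac{1}{2}\alpha_t \alpha_t^{\top} = \tfrac{1}{2}\sigma^2(t, x) \geq \tfrac{c}{2} I_1$ follows from Assumption \ref{ass:mfl}\ref{it:nondeg_mfl}, so $c_{a, \alpha} = c/2$. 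The map $H$ is affine in $p$ with slope $b(t, x, \nu_t, g_t(x, y))$, which is bounded because $b = b_0 + b_1$ and $b_0$, $b_1$ are bounded by Assumption \ref{ass:spfe_joint}\ref{it:spde_growth}; this yields the Lipschitz estimate in Assumption \ref{ass:bspde}\ref{it:lipschitz_bspde}. Progressive measurability of $H$ and $F$ is inherited from that of $g$ (by definition of $\cal{G}_{\bb{F}}^2$), of $t \mapsto \mu_t$ and $t \mapsto \nu_t$ (Proposition \ref{prop:sfpe_joint}), and from measurability of the deterministic data $b$, $f$, $Db_0$, $Df_0$, $D\psi$.

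The two points that genuinely need care are the $L^2(D)$-integrability of the weight functions $h_H$, $h_F$ and of the terminal datum, and the nonlocal coefficient $F = \tilde{F}^{\mu}$. The structural feature that makes this work is the factor $e^{-y}$ in $\tilde{f}(t, x, y, v, h) = e^{-y} f(t, x, v, h)$, in $\tilde{F}^{\mu}$, and in the terminal condition, combined with $\int_{\ell}^{\infty} e^{-2y}\,\d y < \infty$ on $D = \R \times (\ell, \infty)$. Since $\lvert f_0 \rvert + \lvert f_1 \rvert \leq h_f$ and $D\psi$ is dominated by $h_{\psi}$, both in $L^2(\R)$ by Assumption \ref{ass:adjoint}, one may take $h_H(x, y) = e^{-y} h_f(x) \in L^2(D)$; this bounds the $(r, p)$-independent part of $H$ and gives $e^{-y} D\psi(\nu_T) \in L^2(\Omega, \F_T, \pr; L^2(D))$, so Assumption \ref{ass:bspde}\ref{it:growth_bspde} holds for $H$. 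For $F = \tilde{F}^{\mu}$, which is affine in $v$, Cauchy--Schwarz together with $\lvert Db_0 \rvert \leq h_b \in L^2(\R)$ and $\lvert Df_0 \rvert \leq h_f$ (Assumption \ref{ass:adjoint}) bounds the $v$-dependent part by $e^{-y} h_b(x)\,\lVert \mu_t \rVert_{L^2}\,\lVert v \rVert_{L^2}$ and the $v$-independent part by $e^{-y} h_f(x)$, using that $\mu_t$ is a probability density. The key input here is the \emph{deterministic} bound $\sup_{0 \leq t \leq T} \lVert \mu_t \rVert_{L^2} \leq C \lVert \rho_0 \rVert_{L^2}$ from estimate \eqref{eq:sfpe_uniform_bound} in Proposition \ref{prop:sfpe_joint}: it makes $h_F(x, y) := C e^{-y}(h_b(x) + h_f(x))$ a genuine deterministic element of $L^2(D)$, so both the Lipschitz bound in Assumption \ref{ass:bspde}\ref{it:lipschitz_bspde} and the growth bound in \ref{it:growth_bspde} hold for $F$ (the dependence on $u$ being vacuous).

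With Assumption \ref{ass:bspde} in force, Theorem \ref{thm:bspde} furnishes a unique strong solution $(\tilde{u}, \tilde{q}, \tilde{m})$ of \eqref{eq:adjoint} with $\tilde{u} \in D_{\bb{F}}^2([0, T]; L^2(D))$, together with the a priori estimate \eqref{eq:bspde_semilinear_estimate}; this is precisely the assertion. The only genuine obstacle is organisational: one must keep track of how the $e^{-y}$-weights on the half-space and the deterministic $L^2$-bound on $\mu_t$ combine to produce admissible weight functions $h_H, h_F \in L^2(D)$ for the nonlocal coefficient, since without either ingredient the data of \eqref{eq:adjoint} would not satisfy the standing hypotheses of Theorem \ref{thm:bspde}.
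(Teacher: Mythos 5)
Your proof is correct and takes essentially the same route as the paper: both identify BSPDE \eqref{eq:adjoint} as an instance of BSPDE \eqref{eq:bspde} on $D = \R \times (\ell, \infty)$ with $H = \tilde{K}^{\mu}(\cdot,\cdot,\cdot,g)$, $F = \tilde{F}^{\mu}$, $\alpha = \sigma_0$, $\beta = 0$, verify Assumption \ref{ass:bspde} using the $e^{-y}$-weights and the deterministic $L^2$ bound on $\mu_t$ from Proposition \ref{prop:sfpe_joint}, and invoke Theorem \ref{thm:bspde}. Your write-up is slightly more explicit on the nondegeneracy constant, weak differentiability of $a$, and progressive measurability, all of which the paper dismisses as obvious.
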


Note that since $\mu_t(x, y) = 0$ if $y \leq 0$, the uniqueness implies that if we have two distinct boundaries $\ell < \ell' \leq 0$ but use the same control $g \in \cal{G}_{\bb{F}}^2$, then the solutions to BSPDE \eqref{eq:adjoint} on the domains $\R \times (\ell, \infty)$ and $\R \times (\ell', \infty)$ coincide on the smaller half-plane $\R \times (\ell', \infty)$. Hence, the choice of $\ell$ is irrelevant and we will suppress it from now on. 

% TODO: we assume that G is compact, so it is not clear what differentiability on the boundary means; what we need is that the FTC holds even up to the boundary; we can ensure that by assuming that b and f can be extended in a differential way to an open subset including G
% TODO: do we need continuity of derivatives in x as well?
% TODO: change results on variation process to reflect structure wrt b
\begin{assumption} \label{ass:smp}
We assume that the drift coefficient $b_1$ and the running cost $f_1$ satisfy Assumption \ref{ass:convexity} and that there exists a $C > 0$ such that
\begin{enumerate}[noitemsep, label = (\roman*)]
    % \item if $v$, $v' \in \cal{M}^2_{\leq 1}(\R)$ have densities $\rho$, $\rho' \in L^2(\R)$, then for all $(t, x)$ and $y$ we have
    % \begin{equation*}
    %     \lvert D b_0(t, x, v)(y) - D b_0(t, x, v')(y)\rvert \leq \omega_b(\lVert \rho - \rho'\rVert_{L^2});
    % \end{equation*}
    \item \label{it:smp_cont} for all $(t, x)$ and $y$ the maps $\cal{M}^2_{\leq 1}(\R) \cap L^2(\R) \ni v \mapsto Db_0(t, x, v)(y)$, $\cal{M}^2_{\leq 1}(\R) \cap L^2(\R) \ni v \mapsto Df_0(t, x, v)(y)$ and $\cal{M}^2_{\leq 1}(\R) \cap L^2(\R) \ni v \mapsto D\psi(v)(y)$ are continuous with respect to $\lVert \cdot\rVert_{L^2}$;
    \item \label{it:smp_ext} there exists an extension of $f_1$ (also denoted by $f_1$) to a measurable function $[0, T] \times \R \times O \to \R$ for some open set $O \subset \R^{d_G}$ that includes $G$, such that for all $(t, x)$ the map $O \ni g \mapsto f_1(t, x, g)$ is convex and continuously differentiable and for all $\partial_g f_1$ is bounded by $C$.
\end{enumerate}
\end{assumption}

We can now state the SMP.

\begin{theorem} \label{thm:smp}
Let Assumptions \ref{ass:spfe_joint}, \ref{ass:adjoint}, and \ref{ass:smp} be satisfied. Suppose further that $g \in \cal{G}_{\bb{F}}^2$ is an optimal control and let $(\tilde{u}, \tilde{q}, \tilde{m})$ be a solution to BSPDE \eqref{eq:adjoint}. Then it holds that
\begin{equation} \label{eq:smp}
    \tilde{K}^{\mu}_t\bigl(x, y, \partial_x \tilde{u}_t(x, y), g_t(x, y)\bigr) = \inf_{h \in G} \tilde{K}^{\mu}_t\bigl(x, y, \partial_x \tilde{u}_t(x, y), h\bigr)
\end{equation}
for $\mu_t$-a.e.\@ $(x, y) \in \R \times (0, \infty)$ and $\textup{Leb} \otimes \pr$-a.e.\@ $(t, \omega) \in [0, T] \times \Omega$.
\end{theorem}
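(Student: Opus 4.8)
The plan is to derive the stochastic maximum principle \eqref{eq:smp} by a standard needle/convex perturbation argument adapted to the infinite-dimensional SPDE formulation, using the adjoint process $\tilde{u}$ to represent the Gateaux derivative of the cost functional $J_{\text{cl}}$ at the optimal control $g$. Concretely, fix an optimal $g \in \cal{G}_{\bb{F}}^2$ with associated Fokker--Planck solution $\mu$, and for any competitor control $g' \in \cal{G}_{\bb{F}}^2$ and $\varepsilon \in (0, 1]$ consider the convex combination $g^{\varepsilon} = g + \varepsilon(g' - g)$, which is admissible because $G$ is convex. Let $\mu^{\varepsilon}$ be the corresponding solution of SPDE \eqref{eq:sfpe_joint}. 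The first step is to establish the first-order expansion $\mu^{\varepsilon} = \mu + \varepsilon \vartheta + o(\varepsilon)$ in an appropriate norm (e.g.\ $C_{\bb{F}}^2([0,T]; L^2(\R^2))$), where the derivative process $\vartheta = (\vartheta_t)_{0 \le t \le T}$ solves the linearised stochastic Fokker--Planck equation obtained by differentiating \eqref{eq:sfpe_joint} in the control direction; here one must carefully track the dependence of the coefficients $b$, $\lambda$, $\tilde f$ on both the direct control argument and on $\nu_t = \cal{S}(\mu_t)$, which contributes the nonlocal terms involving the linear functional derivatives $Db_0$, $D\tilde f_0$. The relevant well-posedness and stability estimates for these linear SPDEs follow from (a linearised version of) Proposition \ref{prop:sfpe_joint}, using the boundedness/Lipschitz hypotheses in Assumption \ref{ass:spfe_joint} and the $L^2$-bounds on the functional derivatives in Assumptions \ref{ass:adjoint} and \ref{ass:smp}.

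The second step is to compute $\frac{d}{d\varepsilon}\big|_{\varepsilon = 0^+} J_{\text{cl}}(g^{\varepsilon})$ using this expansion, which produces a sum of a term pairing $\vartheta_t$ against the running and terminal cost (including the functional-derivative contributions from $\nu_t$) and a term coming from the explicit $\varepsilon$-dependence of $\tilde f(t, \cdot, \cdot, \nu_t, g^{\varepsilon}_t)$ — the latter is exactly $\ev \int_0^T \langle \mu_t, \partial_g \tilde f_1(t, \cdot, \cdot, g_t)\cdot(g'_t - g_t)\rangle\, dt$, using Assumption \ref{ass:smp}\ref{it:smp_ext} and dominated convergence. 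The third step is the duality computation: apply the It\^o formula developed in this paper (Theorem \ref{thm:ito_2_dim}, in its $L^2(\R^d \times (0,\infty))$ form) to the pairing $\langle \vartheta_t, \tilde u_t\rangle$, using that $\vartheta$ solves the (forward) linearised Fokker--Planck equation and $(\tilde u, \tilde q, \tilde m)$ solves the (backward) adjoint BSPDE \eqref{eq:adjoint}. The martingale terms $\tilde q \cdot \d W$ and $\d \tilde m$ integrate out against $\vartheta$ and its driving Brownian part by very strong orthogonality of $\tilde m$ to $W$ (and the fact that $\vartheta$ is driven only by $W$), the second-order terms $a_t \partial_x^2$ cancel against the corresponding terms in the forward equation after integration by parts, and the $\lambda_t \partial_y$ terms match up; crucially, the nonlocal $\tilde F^{\mu}$ term in the BSPDE is designed precisely to absorb the functional-derivative contributions from differentiating $\nu_t$ in the forward expansion. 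One must check that there are no boundary contributions at $y = 0$ — this is where the generalised It\^o formula with boundary terms is needed, and the degeneracy $\lambda_t(x) = 0$ for $x \ge 0$ together with $\supp\mu_t \subset \R\times[0,\infty)$ ensures the boundary term vanishes (the coefficient $\lambda$ carries no flux across $\{y=0\}$ where it would matter). The upshot is that $\langle \vartheta_T, \tilde u_T\rangle - \langle \vartheta_0, \tilde u_0\rangle$ equals (minus) the "costate" part of the derivative of $J_{\text{cl}}$, so that after recombining, $\frac{d}{d\varepsilon}\big|_{0^+} J_{\text{cl}}(g^{\varepsilon}) = \ev\int_0^T \langle \mu_t, \partial_g \tilde K^{\mu}_t(\cdot, \cdot, \partial_x \tilde u_t, g_t)\cdot(g'_t - g_t)\rangle\, dt$, where $\partial_g \tilde K^{\mu}_t(x,y,p,h) = \partial_g b_1(t,x,h)p + \partial_g \tilde f_1(t,x,y,h)$ (using linearity of $b_1$ in $g$).

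The final step is to convert this variational inequality into the pointwise minimality \eqref{eq:smp}. Since $g$ is optimal, $\frac{d}{d\varepsilon}\big|_{0^+} J_{\text{cl}}(g^{\varepsilon}) \ge 0$ for every competitor $g'$, hence $\ev\int_0^T \langle \mu_t, \partial_g \tilde K^{\mu}_t(\cdot,\cdot,\partial_x\tilde u_t, g_t)\cdot(g'_t - g_t)\rangle\, dt \ge 0$ for all $g' \in \cal{G}_{\bb{F}}^2$. By a measurable-selection argument (choosing $g'_t(x,y)$ pointwise from $G$ to point in a descent direction on a hypothetical positive-measure set where minimality fails), this forces $\langle \mu_t, \partial_g \tilde K^{\mu}_t(\cdot,\cdot,\partial_x \tilde u_t, g_t)\cdot(h - g_t)\rangle \ge 0$ for $\mu_t$-a.e.\ $(x,y)$, all $h \in G$, and $\text{Leb}\otimes\pr$-a.e.\ $(t,\omega)$; that is, $g_t(x,y)$ minimises the directional derivative of $h \mapsto \tilde K^{\mu}_t(x,y,\partial_x\tilde u_t(x,y), h)$ over the convex set $G$. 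Since $h \mapsto \tilde K^{\mu}_t(x,y,p,h) = b(t,x,\nu_t,h)p + \tilde f(t,x,y,\nu_t,h)$ is convex in $h$ (affine part from $b_1$ plus the convex $\tilde f_1 = e^{-y}f_1$), this first-order condition is equivalent to global minimality over $G$, giving \eqref{eq:smp}. I expect the main obstacle to be the duality computation in the second/third step: establishing the linearised Fokker--Planck well-posedness with the nonlocal $\nu_t$-terms, justifying the It\^o pairing $\langle \vartheta_t, \tilde u_t\rangle$ rigorously at the level of regularity available (only $H^{1,0}$ in $x$, no $y$-derivative control, and only $L^2$ rather than $H^1$ test functions near the boundary), and in particular verifying that the boundary term at $y=0$ from the generalised It\^o formula genuinely vanishes — together with bookkeeping that the nonlocal term $\tilde F^{\mu}$ exactly matches the functional-derivative contributions so that nothing is left over.
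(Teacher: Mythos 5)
Your plan matches the paper's proof almost step for step: (i) form $\cal{V}^{\epsilon} = \epsilon^{-1}(\mu^{\epsilon}-\mu)$ and show it converges to a solution $\cal{V}$ of a linearised stochastic Fokker--Planck equation (Proposition \ref{prop:variation}); (ii) pair $\langle \cal{V}_t, \tilde u_t\rangle$ via Theorem \ref{thm:ito_2_dim} to obtain the Gateaux-derivative representation \eqref{eq:gateaux_derivative} (Proposition \ref{eq:cost_derivative}); and (iii) combine optimality, convexity of $h\mapsto\tilde K^\mu_t$, and a needle variation over progressively measurable sets to localise to the pointwise condition \eqref{eq:smp}. The slight reordering in your last step (first passing to a pointwise first-order condition, then invoking convexity) is an inessential variant of the paper's argument, which instead uses convexity at the integral stage and then the needle selection with a constant competitor $h_\ast\in G$.

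One point in your reasoning is wrong as stated. You claim the boundary term at $y = 0$ in the It\^o pairing vanishes because $\lambda_t(x)=0$ for $x\ge 0$ and $\supp \mu_t\subset\R\times[0,\infty)$. Neither suffices: $\lambda$ is generally nonzero for $x<0$, and $\supp\cal{V}_t\subset\R\times[0,\infty)$ does not force $\cal{V}_t$ to vanish at $y=0$. The paper's actual device is to solve the adjoint BSPDE \eqref{eq:adjoint} on the enlarged half-plane $\R\times(\ell,\infty)$ with $\ell<0$ (the proof of Proposition \ref{eq:cost_derivative} takes $\ell=-1$); then $\supp\cal{V}_t\subset\R\times[0,\infty)\subset\R\times[\ell+\theta,\infty)$ for some $\theta>0$, which is precisely hypothesis (i) of Theorem \ref{thm:ito_2_dim} for the $u^1$-slot, so no boundary term arises. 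This is why BSPDE \eqref{eq:adjoint} is deliberately posed on $\R\times(\ell,\infty)$ with a free parameter $\ell$ rather than only on $\R\times(0,\infty)$. Without this domain extension, the pairing would produce a boundary contribution that you cannot discard with the reasoning you give.
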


We would like to use the SMP to derive a semilinear BSPDE from BSPDE \eqref{eq:adjoint} satisfied by the adjoint process. However, this is not completely straightforward, since the equality in \eqref{eq:smp} only holds $\mu_t$-a.e.\@ and there is no guarantee that $\mu_t$ is equivalent to the Lebesgue measure on $\R \times (0, \infty)$. To deal with this issue, we need to modify $g_t$ on the set $\{(x, y) \in \R \times (0, \infty) \define \mu_t(x, y) = 0\}$. Of course, this has no impact on $\mu$, i.e.\@ $\mu$ will solve SPDE \eqref{eq:sfpe_joint} with the modification $g^{\ast}$ of $g$ as an input, but the adjoint process corresponding to $g^{\ast}$ may differ. To describe the modification, we have to consider the one-dimensional semilinear BSPDE 
\begin{align} \label{eq:adjoint_1d}
\begin{split}
    \d u_t(x) &= -\Bigl(a_t(x) \partial_x^2 u_t(x) + H^{\nu}_t\bigl(x, u_t(x), \partial_x u_t(x)\bigr) + F^{\nu}_t\bigl(x, \partial_x u_t\bigr)\Bigr) \, \d t \\
    &\ \ \ - \sigma_0(t) \partial_x q_t(x) \, \d t + q_t(x) \, \d W_t + \d m_t(x)
\end{split}
\end{align}
with terminal condition $u_T(x) = D\psi(\nu_T)(x)$ for $x \in \R$. Here, the functions $H \define [0, T] \times \Omega \times \R^3 \to \R$ and $F \define [0, T] \times \Omega \times \R \times L^2(\R) \to \R$ are given by
\begin{equation}
    H^{\nu}_t(x, p, q) = \inf_{h \in G} \bigl(b(t, x, \nu_t, h) q + f(t, x, \nu_t, h)\bigr) - \lambda_t(x) p
\end{equation}
for $(t, \omega, x, p, q) \in [0, T] \times \Omega \times \R^3$ and
\begin{align*}
    F^{\nu}_t(x, p) = \bigl\langle \nu_t, Db_0(t, \cdot, \nu_t)(x)p\bigr\rangle + \bigl\langle \nu_t, Df_0(t, \cdot, \nu_t)(x)\bigr\rangle
\end{align*}
for $(t, \omega, x, p) \in [0, T] \times \Omega \times \R \times L^2(\R)$. BSPDE \eqref{eq:adjoint_1d} is understood in the sense of Definition \ref{def:bspde_solution} and under the following additional assumptions we can deduce existence and uniqueness of solutions from Theorem \ref{thm:bspde}.

\begin{proposition} \label{prop:bspde_1}
Let Assumptions \ref{ass:spfe_joint} and \ref{ass:adjoint} be satisfied. Then BSPDE \eqref{eq:adjoint_1d} has a unique strong solution $(u, q, m)$.
\end{proposition}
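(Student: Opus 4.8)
The plan is to derive Proposition~\ref{prop:bspde_1} as a direct application of Theorem~\ref{thm:bspde} with the domain $D = \R$ and $d = 1$. Concretely, I would check that the coefficients of BSPDE~\eqref{eq:adjoint_1d} satisfy Assumption~\ref{ass:bspde} with the identifications $a_t(x) = \tfrac12(\sigma^2(t,x) + \sigma_0^2(t))$, $\alpha_t = \sigma_0(t)$, $\beta_t \equiv 0$ (there is no zeroth-order term in $q$), $H_t(x, r, p) := H^\nu_t(x, r, p)$ and $F_t(x, u, v) := F^\nu_t(x, v)$, and terminal condition $\psi := D\psi(\nu_T)$. The nondegeneracy condition \ref{it:nondeg_bspde} is exactly $a_t(x) - \tfrac12\sigma_0^2(t) = \tfrac12\sigma^2(t,x) \geq \tfrac{c}{2} I_1$, which holds by Assumption~\ref{ass:mfl}\ref{it:nondeg_mfl}; boundedness of $a$ and $\alpha$ follows from the boundedness of $\sigma$, $\sigma_0$ in Assumption~\ref{ass:mfl}\ref{it:growth_mfl}, and the bound on $\nabla_x a = \partial_x a$ reduces to a Lipschitz bound on $x \mapsto \sigma^2(t,x)$, which I get from the boundedness and Lipschitz continuity of $\sigma$ in Assumption~\ref{ass:mfl}\ref{it:continuity_coeff_mfl}.

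The two remaining points to verify are the Lipschitz/growth estimates on $H^\nu$ and $F^\nu$ (Assumption~\ref{ass:bspde}\ref{it:lipschitz_bspde} and \ref{it:growth_bspde}) and the measurability of the terminal condition, i.e.\ $D\psi(\nu_T) \in L^2(\Omega, \F_T, \pr; L^2(\R))$. For $H^\nu$, I would write $H^\nu_t(x, p, q) = \inf_{h \in G}(b(t, x, \nu_t, h) q + f(t, x, \nu_t, h)) - \lambda_t(x) p$ and use that an infimum over a common index set $G$ of functions that are each uniformly (in $h$) Lipschitz in $(p, q)$ is again Lipschitz: since $g \mapsto b_1(t,x,g)$ takes values in a bounded set (as $G$ is compact and $b_1$ is continuous in $g$) and $b_0$ is bounded by Assumption~\ref{ass:spfe_joint}\ref{it:spde_growth}, the map $q \mapsto b(t,x,\nu_t,h) q$ is Lipschitz in $q$ uniformly in $h$; the $-\lambda_t(x) p$ term is Lipschitz in $p$ with constant bounded by Assumption~\ref{ass:mfl}\ref{it:growth_mfl} or \ref{ass:bspde}-type bounds on $\lambda$ (noting $\lambda$ is bounded under Assumption~\ref{ass:spfe_joint}\ref{it:spde_growth}); and $f(t,x,\nu_t,h)$ does not depend on $(p,q)$ at all. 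Linear growth of $H^\nu$ in $(r,p)$ with an $L^2$ ``constant'' $h_H$ then follows from Assumption~\ref{ass:adjoint}\ref{it:cost_l2}, which gives $|f(t,x,\nu_t,h)| \leq h_f(x) \in L^2(\R)$; this $h_f$ serves as (part of) $h_H$. For $F^\nu_t(x,p) = \langle \nu_t, Db_0(t,\cdot,\nu_t)(x)p\rangle + \langle \nu_t, Df_0(t,\cdot,\nu_t)(x)\rangle$, the key point is that $\nu_t = \cal{S}(\mu_t)$ is a subprobability measure, so $|\langle \nu_t, \phi\rangle| \leq \sup|\phi|$; combined with the uniform bounds $|Db_0(t,x,v)(y)| \leq h_b(y)$ and $|Df_0(t,x,v)(y)| \leq h_f(y)$ from Assumption~\ref{ass:adjoint}\ref{it:lfd_l2}, I get $|F^\nu_t(x,p)| \leq \|h_b\|_{L^1}\, |p|$ — wait, this needs care: the bound $h_b$ is in $L^2(\R)$, not $L^1$, and $\nu_t$ may have total mass up to $1$ but need not integrate $h_b$. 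Here I would instead use Assumption~\ref{ass:adjoint}\ref{it:lfd_l2} together with the fact that, since $\nu_t = \cal{S}(\mu_t)$ and $\mu_t$ has an $L^2$ density $\rho_t$ with $\langle\nu_t,\phi\rangle = \langle \rho_t, e^{-y}\phi\rangle$, one can write $\langle \nu_t, Db_0(t,\cdot,\nu_t)(x)\rangle$ in terms of the $L^2$ pairing of $\rho_t$ against an $L^2$ function, controlling $|F^\nu_t(x,p)|$ by $(1 + |p|)$ times an $x$-independent $L^2_\omega$ random variable times $\|\rho_t\|_{L^2}$; absorbing $\sup_{[0,T]}\|\rho_t\|_{L^2} \in L^2(\Omega)$ (finite by \eqref{eq:sfpe_uniform_bound}) this matches the Lipschitz-in-$L^2(\R)$ form required of $F$ with some $h_F \in L^2(\R)$, up to the fact that $F^\nu$ is affine in $p$ rather than genuinely acting on $L^2(\R)$-valued $v$. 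Actually since $F_t$ in Assumption~\ref{ass:bspde} is allowed to depend on $\nabla_x u$ via $v$, and $F^\nu_t$ depends on $p = \partial_x u_t(x)$ only pointwise, I would simply treat $F^\nu$ as a nonlocal term of the permitted type (it is in fact local in $\partial_x u$, which is a special case) — the relevant structural requirement is the $L^2(\R)$-type Lipschitz bound, which I verify as above.

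For the terminal condition, I would note $|D\psi(\nu_T)(x)| \leq h_\psi(x) \in L^2(\R)$ by Assumption~\ref{ass:adjoint}\ref{it:lfd_l2}, hence $\|D\psi(\nu_T)\|_{L^2(\R)}^2 \leq \|h_\psi\|_{L^2}^2$ deterministically, so $D\psi(\nu_T) \in L^2(\Omega, \F_T, \pr; L^2(\R))$; $\F_T$-measurability is inherited from $\F_T$-measurability of $\nu_T$ (Proposition~\ref{prop:sfpe_joint} gives $\mu \in C_{\bb F}^2$, so $\mu_T$ and thus $\nu_T = \cal{S}(\mu_T)$ are $\F_T$-measurable) together with continuity of $D\psi$ from Definition~\ref{def:linear_functional_derivatve}. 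Once all items of Assumption~\ref{ass:bspde} are in place, Theorem~\ref{thm:bspde} applied to BSPDE~\eqref{eq:bspde_1d} immediately yields the unique strong solution $(u,q,m)$ and concludes the proof. I expect the only genuinely delicate step to be the growth/Lipschitz verification for $F^\nu$: one must be careful that the bounds $h_b, h_f, h_\psi$ live in $L^2(\R)$ while $\nu_t$ is only a (sub)probability measure, so the pairings $\langle \nu_t, Db_0(t,\cdot,\nu_t)(x)\rangle$ etc.\ are finite only because $\nu_t$ has an $L^2$ density coming from Proposition~\ref{prop:sfpe_joint} and Assumption~\ref{ass:spfe_joint}\ref{it:spde_ic} — this is where the interplay between the SPDE well-posedness and the BSPDE well-posedness is used.
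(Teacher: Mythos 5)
Your strategy is exactly the paper's: verify Assumption~\ref{ass:bspde} for the coefficients of BSPDE~\eqref{eq:adjoint_1d} (identifying $a_t$, $\alpha_t = \sigma_0(t)$, $\beta \equiv 0$, $H = H^\nu$, $F = F^\nu$, $\psi = D\psi(\nu_T)$) and then invoke Theorem~\ref{thm:bspde}. The paper checks $H^\nu$ the same way you do (bounded $b$ and $\lambda$ from Assumption~\ref{ass:spfe_joint}\ref{it:spde_growth}, $h_f \in L^2(\R)$ from Assumption~\ref{ass:adjoint}\ref{it:cost_l2}, and the elementary Lipschitz estimate for an infimum over the compact $G$), and for $F^\nu$ it refers to the proof of Proposition~\ref{prop:adjoint_existence}, which carries out exactly the Cauchy--Schwarz pairing against the $L^2$ density of $\mu_t$ that you describe.

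One imprecision worth correcting. You justify the $F^\nu$ bound by ``absorbing $\sup_{[0,T]}\|\rho_t\|_{L^2} \in L^2(\Omega)$'' into the Lipschitz constant $h_F$, but Assumption~\ref{ass:bspde}\ref{it:lipschitz_bspde} requires a \emph{deterministic} $h_F \in L^2(D)$, uniform in $(t, \omega)$; a random factor that is merely square-integrable over $\Omega$ cannot be absorbed into such an $h_F$. What actually makes the argument work is that \eqref{eq:sfpe_uniform_bound} gives the stronger a.s.\@ pathwise bound $\sup_{t}\|\rho_t\|_{L^2}^2 \leq C\|\rho_0\|_{L^2}^2$ with $\rho_0 = \L(\xi,\zeta) \in L^2(\R^2)$ fixed, so $\|\nu_t\|_{L^2} \leq \|\mu_t\|_{L^2}$ is bounded by a deterministic constant and $h_F(x) = C h_b(x)$ is admissible — this is precisely what the paper uses (``$\lVert\mu_t\rVert_{L^2} \leq C\lVert\mu_0\rVert_{L^2}$''). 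Your conclusion is correct, but the stated justification is weaker than what is needed and weaker than what the cited estimate provides. As a very minor aside, $F^\nu_t(x,p) = \langle \nu_t, Db_0(t,\cdot,\nu_t)(x)\,p\rangle + \cdots$ is genuinely nonlocal in $p = \partial_x u_t$ (it integrates $p$ against $\nu_t$), not local as your parenthetical remark suggests; this does not affect the verification since you check the $L^2$-Lipschitz form in any case.
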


% TODO: what additional assumptions do we need? Adjust Lemma \ref{lem:minimiser} to reflect our assumption that $b$ is linear
Assumption \ref{ass:convexity} guarantees the existence of a measurable function $g_{\ast} \define [0, T] \times \R \times \R \to G$ such that $g_{\ast}(t, x, p)$ is the minimiser of the map $h \mapsto b_1(t, x, h)p + f_1(t, x, h)$ for $(t, x, p) \in [0, T] \times \R \times \R$. 
% See Lemma \ref{lem:minimiser}. 
Let us define the control $g^{\ast} \in \cal{G}_{\bb{F}}^2$ by
\begin{equation}
    g^{\ast}_t(x, y) = \bf{1}_{\mu_t(x, y) > 0} g_t(x, y) + \bf{1}_{\mu_t(x, y) = 0} g_{\ast}\bigl(t, x, \partial_x u_t(x)\bigr).
\end{equation}
Next, we introduce the function $\tilde{H}^{\mu} \define [0, T] \times \Omega \times \R \times (0, \infty) \times \R \to \R$ defined by
\begin{equation} \label{eq:hamiltonian_2d}
    \tilde{H}^{\mu}_t(x, y, p) = \bf{1}_{\mu_t(x, y) > 0}\inf_{h \in G} \tilde{K}^{\mu}_t(x, y, p, h) + \bf{1}_{\mu_t(x, y) = 0}\tilde{K}^{\mu}_t\bigl(x, y, p, g_{\ast}\bigl(t, x, \partial_x u_t(x)\bigr)\bigr).
\end{equation}
Then, in view of Theorem \ref{thm:smp}, if $g$ and, therefore, $g^{\ast}$ are optimal, then for the adjoint process $(\tilde{u}, \tilde{q}, \tilde{m})$ corresponding to $g^{\ast}$, we obtain
\begin{align*}
    \tilde{K}^{\mu}_t\bigl(x, y, \partial_x \tilde{u}_t(x, y), g^{\ast}_t(x, y)\bigr) &= \bf{1}_{\mu_t(x, y) > 0} \inf_{h \in G}\tilde{K}^{\mu}_t\bigl(x, y, \partial_x \tilde{u}_t(x, y), h\bigr) \\
    &\ \ \ + \bf{1}_{\mu_t(x, y) = 0}\tilde{K}^{\mu}_t\Bigl(x, y, \partial_x \tilde{u}_t(x, y), g_{\ast}\bigl(t, x, \partial_x u_t(x)\bigr)\Bigr) \\
    &= \tilde{H}^{\mu}_t\bigl(x, y, \partial_x \tilde{u}_t(x, y)\bigr),
\end{align*}
so that $(\tilde{u}, \tilde{q}, \tilde{m})$ satisfies the semilinear BSPDE
\begin{align} \label{eq:bspde_2d}
\begin{split}
    \d \tilde{u}_t(x, y) &= -\Bigl(a_t(x) \partial_x^2 \tilde{u}_t(x, y) + \tilde{H}^{\mu}_t\bigl(x, y, \partial_x \tilde{u}_t(x, y)\bigr) + \tilde{F}^{\mu}_t\bigl(x, y, \partial_x \tilde{u}_t\bigr)\Bigr) \, \d t \\
    &\ \ \ - \bigl(\lambda_t(x)\partial_y \tilde{u}_t(x, y) + \sigma_0(t) \partial_x \tilde{q}_t(x, y)\bigr) \, \d t + \tilde{q}_t(x, y) \, \d W_t + \d \tilde{m}_t(x, y)
\end{split}
\end{align}
with terminal condition $\tilde{u}_t(x, y) = e^{-y}D\psi(\nu_T)(x)$ for $(x, y) \in \R \times (0, \infty)$. Again, we interpret this BSPDE in the sense of Definition \ref{def:bspde_solution}. 

\begin{proposition} \label{prop:bspde_2_from_1}
Let Assumptions \ref{ass:spfe_joint} and \ref{ass:adjoint} be satisfied and let $(u, q, m)$ be the unique strong solution to BSPDE \eqref{eq:bspde_1d}. Then the unique strong solution $(\tilde{u}, \tilde{q}, \tilde{m})$ of BSPDE \eqref{eq:bspde_2d} is given by $\tilde{u}_t(x, y) = e^{-y}u_t(x)$, $\tilde{q}_t(x, y) = e^{-y}q_t(x)$, $\tilde{m}_t(x, y) = e^{-y}m_t(x)$ for $(t, \omega, x, y) \in [0, T] \times \Omega \times \R \times (0, \infty)$.
\end{proposition}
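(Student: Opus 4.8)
The plan is to verify directly that the separable triple $(\tilde u, \tilde q, \tilde m)$ with $\tilde u_t(x,y) = e^{-y} u_t(x)$, $\tilde q_t(x,y) = e^{-y} q_t(x)$, $\tilde m_t(x,y) = e^{-y} m_t(x)$ solves BSPDE \eqref{eq:bspde_2d} in the sense of Definition \ref{def:bspde_solution}, and then invoke the uniqueness part of Theorem \ref{thm:bspde}. The first thing to check is that the candidate actually lies in the space $\bb{H}(\R \times (0,\infty))$: since $e^{-y}$ and $\partial_y e^{-y} = -e^{-y}$ are bounded on $(0,\infty)$ and square-integrable in $y$, multiplication by $e^{-y}$ maps $L^2(\R)$ into $L^2(\R \times (0,\infty))$ and $H^1(\R)$ into $H^{1,0}(\R\times(0,\infty))$, with all the relevant norms controlled by those of $(u,q,m)$; also $\tilde m$ inherits the martingale property, starting from zero, and very strong orthogonality to $W$ from $m$, because testing $\tilde m$ against $\varphi \in L^2(\R\times(0,\infty))$ amounts to testing $m$ against the $L^2(\R)$-function $x \mapsto \int_0^\infty e^{-y}\varphi(x,y)\,\d y$.

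Next I would plug the ansatz into the weak formulation \eqref{eq:bspde_solution} for domain $D = \R\times(0,\infty)$. For a test function $\varphi \in H^1_0(D)$, every term in \eqref{eq:bspde_solution} for BSPDE \eqref{eq:bspde_2d} should reduce, after carrying out the $y$-integration, to the corresponding term in the weak formulation of BSPDE \eqref{eq:bspde_1d} tested against $\varphi_u(x) := \int_0^\infty e^{-y}\varphi(x,y)\,\d y \in H^1(\R)$ — \emph{except} for the two genuinely two-dimensional pieces, namely the $y$-derivative term $\langle u_s, \lambda_s \partial_y \varphi\rangle$ (recall this is how $\lambda_t(x)\partial_y \tilde u_t$ enters weakly, via integration by parts onto $\varphi$) and the Hamiltonian term, which must be reconciled with the $-\lambda_t(x)p$ summand built into $H^\nu$. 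The key identities are: (i) $\partial_y \tilde u_t(x,y) = -e^{-y} u_t(x) = -\tilde u_t(x,y)$, so $-\lambda_t(x)\partial_y\tilde u_t(x,y) = \lambda_t(x)\tilde u_t(x,y)$; (ii) $\tilde H^\mu_t(x,y,\partial_x\tilde u_t(x,y)) + \lambda_t(x)\tilde u_t(x,y) = e^{-y}\big(H^\nu_t(x,u_t(x),\partial_x u_t(x)) + \lambda_t(x) u_t(x)\big) + \lambda_t(x) e^{-y} u_t(x)$? — here one must use the definition \eqref{eq:hamiltonian_2d} of $\tilde H^\mu$ together with the explicit forms of $\tilde K^\mu$ and $g_\ast$, plus the fact that $\mu_t(x,y) = 0$ for $y \le 0$ so that on $\operatorname{supp}\mu_t$ both the $\bf{1}_{\mu_t>0}$ and $\bf{1}_{\mu_t=0}$ branches collapse to $e^{-y}$ times the one-dimensional Hamiltonian $H^\nu_t(x,u_t(x),\partial_x u_t(x))$ (the $-\lambda_t(x) p$ term in $H^\nu$ is exactly designed to absorb the $\partial_y$ contribution); and (iii) $\tilde F^\mu_t(x,y,\partial_x\tilde u_t) = e^{-y} F^\nu_t(x,\partial_x u_t)$, which follows because $\tilde F^\mu$ pairs $\mu_t$ against $e^{-y}(\cdots)$, and by definition of $\cal{S}$ in \eqref{eq:subprobability_function} the pairing $\langle\mu_t, e^{-y}\phi(x)\rangle = \langle\nu_t,\phi\rangle$, together with $\partial_x\tilde u_t(x,y) = e^{-y}\partial_x u_t(x)$ — here I need that $F^\nu$ depends on its $p$-argument only through the pattern that makes this substitution consistent, i.e. I should check $Db_0(t,\cdot,\nu_t)(x)$ pairs against $e^{-y}\partial_x\tilde u_t$ to give $e^{-y}$ times the one-dimensional expression. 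For the $a_t(x)\partial_x^2$, $\sigma_0(t)\partial_x q_t$, and $q_t\,\d W_t$ terms the factorisation is immediate since $a_t$, $\sigma_0$ do not depend on $y$ and $e^{-y}$ is a constant in $x$.

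Carrying this out, \eqref{eq:bspde_solution} for $(\tilde u,\tilde q,\tilde m)$ against $\varphi$ becomes \eqref{eq:bspde_solution} for $(u,q,m)$ against $\varphi_u$; since $(u,q,m)$ solves BSPDE \eqref{eq:bspde_1d} weakly against \emph{every} test function in $H^1(\R)$ and $\varphi_u \in H^1(\R)$, the identity holds, so $(\tilde u,\tilde q,\tilde m)$ is a solution of BSPDE \eqref{eq:bspde_2d}. Finally, BSPDE \eqref{eq:bspde_2d} falls under the scope of Theorem \ref{thm:bspde} (with $H = \tilde H^\mu$, $F = \tilde F^\mu$; Assumptions \ref{ass:spfe_joint} and \ref{ass:adjoint} supply the Lipschitz, growth and nondegeneracy requirements of Assumption \ref{ass:bspde}, as already used in Proposition \ref{prop:bspde_1} and the construction of \eqref{eq:adjoint}), so its strong solution is unique in $\bb{H}(D)$; hence the separable triple is \emph{the} solution. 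I expect the main obstacle to be the bookkeeping in step (ii): one has to be careful that the indicator decomposition in $\tilde H^\mu$ and the replacement of $g^\ast$ by $g_\ast(t,x,\partial_x u_t(x))$ genuinely produce $e^{-y}$ times the one-dimensional Hamiltonian $H^\nu$ for \emph{all} $(x,y)$ (not only $\mu_t$-a.e.), since the weak formulation integrates against Lebesgue measure; this works precisely because, off $\operatorname{supp}\mu_t$, $g^\ast$ was \emph{defined} to be $g_\ast(t,x,\partial_x u_t(x))$, which makes $\tilde K^\mu_t(x,y,e^{-y}\partial_x u_t(x),\cdot)$ attain, after factoring $e^{-y}$, exactly the infimum defining $H^\nu_t(x,u_t(x),\partial_x u_t(x))$ — one should state this as a short lemma-style computation and also double-check the compatibility of the $-\lambda_t(x)p$ term, which is the one place a sign error would be fatal.
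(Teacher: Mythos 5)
Your proposal follows the same strategy as the paper: plug the separable ansatz into the weak formulation \eqref{eq:bspde_solution}, reduce every term to the corresponding term of BSPDE \eqref{eq:bspde_1d} tested against $\varphi_u(x)=\int_0^\infty e^{-y}\varphi(x,y)\,\d y$ (the paper's $\varphi_0$), and then invoke the uniqueness guaranteed by Theorem \ref{thm:bspde}, verifying Assumption \ref{ass:bspde} for $\tilde H^\mu$ and $\tilde F^\mu$ exactly as the paper does. The key observations you flag — that $\partial_y\tilde u_t = -\tilde u_t$ so the $\lambda_t\partial_y\tilde u_t$ term combines with $\tilde H^\mu$ to produce $e^{-y}H^\nu_t$ (which already contains the $-\lambda_t(x)p$ piece), and that on the Lebesgue-null complement of $\supp\mu_t$ the $\bf{1}_{\mu_t=0}$ branch was engineered via $g_\ast(t,x,\partial_x u_t(x))$ so that $\tilde K^\mu$ still factors to $e^{-y}$ times the one-dimensional infimum — are precisely the content of the paper's verification between \eqref{eq:rewriting_h} and the display ending with $\tilde H^\mu_s\bigl(x,y,\partial_x\tilde u_s(x,y)\bigr)$, so the argument is sound and not a genuinely different route.
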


Let us assume that $h \mapsto f_1(t, x, h)$ is strictly convex, so that $h \mapsto b_1(t, x, h)p + f_1(t, x, h)$ has a unique minimum in $G$ for all $(t, x, p) \in [0, T] \times \R \times \R$. Then combining Proposition \ref{prop:bspde_2_from_1} with the necessary SMP, Equation \eqref{eq:smp}, implies that on the set $\{(x, y) \in \R \times (0, \infty) \define \mu_t(x, y) > 0\}$ it holds that
\begin{align*}
    \tilde{K}^{\mu}_t\bigl(x, y, \partial_x \tilde{u}_t(x, y), g^{\ast}_t(x, y)\bigr)
    &= \inf_{h \in G} \tilde{K}^{\mu}_t\bigl(x, y, \partial_x \tilde{u}_t(x, y), h\bigr) \\
    &= \inf_{h \in G} e^{-y} \bigl(b(t, x, \nu_t, h) \partial_x u_t(x) + f(t, x, \nu_t, h)\bigr)
\end{align*}
for $\pr \otimes \text{Leb}$-a.e.\@ $(t, \omega) \in [0, T] \times \Omega$. The infimum on the right-hand side is uniquely attained at $g_{\ast}\bigl(t, x, \partial_x u_t(x)\bigr)$, so we deduce $g^{\ast}_t(x, y) = g_{\ast}\bigl(t, x, \partial_x u_t(x)\bigr)$. On the set $\{(x, y) \in \R \times (0, \infty) \define \mu_t(x, y) = 0\}$ this equality holds by definition of $g^{\ast}$, so altogether we find $g^{\ast}_t(x, y) = g_{\ast}\bigl(t, x, \partial_x u_t(x)\bigr)$ for $(x, y) \in \R \times (0, \infty)$ and $\pr \otimes \text{Leb}$-a.e.\@ $(t, \omega) \in [0, T] \times \Omega$. Hence, we have proved the following result.

\begin{corollary} \label{cor:equivalence_l2}
Let Assumptions \ref{ass:spfe_joint}, \ref{ass:adjoint}, and \ref{ass:smp} be satisfied and suppose that $h \mapsto f_1(t, x, h)$ is strictly convex. Then there exists an optimal control $g^{\ast} \in \cal{G}_{\bb{F}}^1$ of the form $g^{\ast}_t(x) = g_{\ast}\bigl(t, x, \partial_x u_t(x)\bigr)$ for $x \in \R$ and $\pr \otimes \text{Leb}$-a.e.\@ $(t, \omega) \in [0, T] \times \Omega$, where $(u, q, m)$ is the unique strong solution of BSPDE \eqref{eq:bspde_1d}. In fact, every optimal control is of this form.
\end{corollary}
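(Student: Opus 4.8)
The plan is to assemble the corollary from the machinery already set up: the existence of an optimal semiclosed-loop control under Assumption \ref{ass:convexity}, the necessary SMP (Theorem \ref{thm:smp}), the separability of the two-dimensional adjoint BSPDE (Proposition \ref{prop:bspde_2_from_1}), and the unique measurable minimiser $g_\ast$ guaranteed by strict convexity. The argument is essentially the one sketched in the paragraph preceding the corollary; I would organise it as follows. First, fix the weak setup $\bb{S}$ from Assumption \ref{ass:convexity} and an optimal semiclosed-loop control $\tilde g^{\ast}$ (this exists by \cite[Proposition A.5]{hambly_mvcp_arxiv_2023} since Assumption \ref{ass:smp} includes Assumption \ref{ass:convexity}); by the equivalence of the weak and SPDE formulations established via Proposition \ref{prop:sfpe_joint}, $\tilde g^{\ast}$ is optimal for the SPDE formulation, with associated conditional joint law $\mu$. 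Let $(u,q,m)$ be the unique strong solution to BSPDE \eqref{eq:adjoint_1d} (Proposition \ref{prop:bspde_1}), and define the modified control $g^{\ast}_t(x,y)$ and the Hamiltonian $\tilde H^{\mu}$ as in \eqref{eq:hamiltonian_2d}. Since $\mu$ is unchanged by modifying $g$ on the $\mu_t$-null set $\{\mu_t(x,y)=0\}$, the control $g^{\ast}$ is again optimal for the SPDE formulation, and its adjoint process $(\tilde u,\tilde q,\tilde m)$ (solution to \eqref{eq:adjoint} with input $g^{\ast}$) satisfies the semilinear BSPDE \eqref{eq:bspde_2d}, exactly as displayed in the excerpt.

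Next I would invoke Proposition \ref{prop:bspde_2_from_1} to conclude $\tilde u_t(x,y)=e^{-y}u_t(x)$, so that $\partial_x\tilde u_t(x,y)=e^{-y}\partial_x u_t(x)$. Then apply Theorem \ref{thm:smp} to $g^{\ast}$: for $\mu_t$-a.e.\ $(x,y)$ and $\leb\otimes\pr$-a.e.\ $(t,\omega)$,
\begin{equation*}
    \tilde K^{\mu}_t\bigl(x,y,e^{-y}\partial_x u_t(x), g^{\ast}_t(x,y)\bigr) = \inf_{h\in G}\tilde K^{\mu}_t\bigl(x,y,e^{-y}\partial_x u_t(x),h\bigr) = e^{-y}\inf_{h\in G}\bigl(b(t,x,\nu_t,h)\partial_x u_t(x)+f(t,x,\nu_t,h)\bigr),
\end{equation*}
using the explicit form of $\tilde K^{\mu}$ and $\tilde f(t,x,y,v,h)=e^{-y}f(t,x,v,h)$ together with the decompositions $b=b_0+b_1$, $f=f_0+f_1$ (the $v$-dependent parts do not affect the minimisation in $h$). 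Under strict convexity of $h\mapsto f_1(t,x,h)$ and linearity of $h\mapsto b_1(t,x,h)$, the map $h\mapsto b_1(t,x,h)p+f_1(t,x,h)$ has a unique minimiser, namely $g_\ast(t,x,p)$; hence on $\{\mu_t(x,y)>0\}$ we get $g^{\ast}_t(x,y)=g_\ast(t,x,\partial_x u_t(x))$, which is independent of $y$. On the complementary set $\{\mu_t(x,y)=0\}$ the identity $g^{\ast}_t(x,y)=g_\ast(t,x,\partial_x u_t(x))$ holds by the very definition of $g^{\ast}$. Combining, $g^{\ast}_t(x,y)=g_\ast(t,x,\partial_x u_t(x))$ for a.e.\ $(x,y)\in\R\times(0,\infty)$ and $\pr\otimes\leb$-a.e.\ $(t,\omega)$; since this is $y$-independent it defines an element of $\cal G^1_{\bb F}$, and by Proposition \ref{prop:sfpe_joint} it induces a genuine semiclosed-loop control with the same (optimal) cost, proving the first assertion.

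For the final sentence — that \emph{every} optimal control is of this form — I would run the same chain of implications starting from an arbitrary optimal $g\in\cal G^2_{\bb F}$ rather than the one furnished by Assumption \ref{ass:convexity}: optimality of $g$ gives optimality of its modification $g^{\ast}$, the SMP applies, Proposition \ref{prop:bspde_2_from_1} gives separability of the associated adjoint, and strict convexity forces $g_t(x,y)=g_\ast(t,x,\partial_x u_t(x))$ wherever $\mu_t(x,y)>0$, i.e.\ $\mu_t\otimes\leb\otimes\pr$-a.e.; off the support $\mu_t$ places no constraint, which is why uniqueness is only up to the natural equivalence (agreement $\mu_t$-a.e.). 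The main obstacle is bookkeeping rather than a deep new idea: one must be careful that modifying $g$ to $g^{\ast}$ genuinely leaves $\mu$ (and hence $\nu$, the functions $\tilde K^{\mu}, \tilde F^{\mu}, H^{\nu}, F^{\nu}$, and the terminal data) untouched, so that the solution $(u,q,m)$ of \eqref{eq:adjoint_1d} used to define $g^{\ast}$ is consistent with the adjoint process later shown to satisfy \eqref{eq:bspde_2d} — this is precisely the self-referential point that Proposition \ref{prop:bspde_2_from_1} is designed to close, so the proof amounts to citing it at the right moment and then reading off the minimiser.
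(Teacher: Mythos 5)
Your proposal is correct and follows essentially the same route as the paper's own argument, which is laid out in the paragraph preceding the corollary: modify an optimal control off the $\mu_t$-support, apply Theorem \ref{thm:smp} together with the separability $\tilde{u}_t(x,y)=e^{-y}u_t(x)$ from Proposition \ref{prop:bspde_2_from_1}, and read off the unique minimiser $g_\ast$ from strict convexity. Your parenthetical remark that the claim ``every optimal control is of this form'' holds only $\mu_t$-a.e.\ is a correct and welcome clarification of a point the paper glosses over.
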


The assumptions of Corollary \ref{cor:equivalence_l2} are rather strong, particularly the assumption that the functions $x \mapsto f(t, x, v, g)$, $x' \mapsto Db_0(t, x ,v, g)(x')$, $x' \mapsto Df_0(t, x, v, g)(x')$, and $x' \mapsto D\psi(v)(x')$ are in $L^2(\R)$. However, if we relax them, Proposition \ref{prop:sfpe_joint} is no longer applicable, so we do not know whether SPDE \eqref{eq:sfpe_joint} has a unique strong solution for any $g \in \cal{G}_{\bb{F}}^2$. Hence, the cost functional $J_{\text{cl}}$ is not necessarily defined for all $g \in \cal{G}_{\bb{F}}^2$ and, consequently, the SPDE formulation is not well-posed. Thus, if we weaken the assumptions, we have to revert back to the weak formulation. In that case we can establish the following theorem.

\begin{theorem} \label{thm:equivalence}
Let Assumptions \ref{ass:mfl} and \ref{ass:convexity} be satisfied and suppose that $\L(\xi) \in L^2(\R)$ and $\zeta = 0$. Then, changing the weak setup if necessary, there exists an optimal semiclosed-loop control that does not depend on the intensity.
\end{theorem}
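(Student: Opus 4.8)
The plan is to remove the restrictive integrability assumptions of Corollary \ref{cor:equivalence_l2} by an approximation argument at the level of the coefficients and cost functions, using the stability of the stochastic Fokker--Planck martingale problem announced in Subsection \ref{sec:extending}. First I would fix the optimal semiclosed-loop control $\gamma^{\ast}$ with feedback function $\tilde g^{\ast} \in \cal G_{\bb F}^2$ whose existence under Assumptions \ref{ass:mfl} and \ref{ass:convexity} is guaranteed by \cite[Proposition A.5]{hambly_mvcp_arxiv_2023}, together with the associated conditional joint law $\mu = (\mu_t)_{0 \le t \le T}$. The point is to construct a sequence of coefficients $(b^n, \sigma^n, \sigma_0^n, \lambda^n)$ and costs $(f^n, \psi^n)$ that satisfy the stronger Assumptions \ref{ass:spfe_joint}, \ref{ass:adjoint}, and \ref{ass:smp} (with strict convexity of $f_1^n$, obtained by adding a vanishing quadratic penalty $\varepsilon_n |h|^2$), and that converge to the original data in the sense required by the stability result: truncate the $L^2$-unbounded functions $x \mapsto f(t,x,v,g)$, $x' \mapsto Db_0(t,x,v)(x')$, $x' \mapsto Df_0(t,x,v)(x')$, $x' \mapsto D\psi(v)(x')$ by smooth cutoffs supported on $[-n,n]$, mollify to obtain the required differentiability and linear functional derivatives, and regularise the initial law (here $\L(\xi)\in L^2(\R)$ and $\zeta=0$ are exactly what is needed so that $\L(\xi,\zeta)=\L(\xi)\otimes\delta_0$ already sits in the closure of densities in $L^2(\R^2)$ after mollification in $y$).

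For each $n$, Corollary \ref{cor:equivalence_l2} applies to the approximate problem and yields an optimal control of the separated form $g^{n}_t(x) = g^{n}_{\ast}\bigl(t, x, \partial_x u^n_t(x)\bigr)$, where $(u^n, q^n, m^n)$ is the unique strong solution of BSPDE \eqref{eq:bspde_1d} built from the approximate Hamiltonian $H^{\nu^n}$ and forcing $F^{\nu^n}$. The uniform a priori estimate \eqref{eq:bspde_semilinear_estimate} from Theorem \ref{thm:bspde}, together with the uniform bound \eqref{eq:sfpe_uniform_bound} on the densities $\rho^n_t$, gives tightness of the family $(\mu^n, g^n, u^n, q^n, m^n)$ in a suitable product of spaces (weak topology on the $L^2_{\bb F}([0,T];\cdot)$ factors, and the relaxed/Young-measure topology on the controls, exactly the relaxed formulation of the control of stochastic Fokker--Planck equations introduced in Subsection \ref{sec:extending}). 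Passing to a subsequential limit and invoking the stability of the martingale problem, the limit $\mu^\infty$ solves the original SPDE \eqref{eq:sfpe_joint} with a limiting control, and lower semicontinuity of $J_{\bb S}$ under this convergence, combined with the convergence $V_{\bb S}^n \to V_{\bb S}$ of the approximate values, shows the limiting control is optimal for the original weak problem. Because each $g^n$ depends on $(x, \Lambda_t)$ only through $x$, the same is true of any limit point, so the limiting optimal control does not depend on the intensity; re-expressing it via Proposition \ref{prop:sfpe_joint} as a semiclosed-loop control on a (possibly extended) weak setup completes the argument.

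The main obstacle I anticipate is the passage to the limit in the controls and the identification of the limiting control as genuinely optimal rather than merely relaxed. Two points are delicate: first, establishing that the sequence $g^n_t(x) = g^n_\ast(t,x,\partial_x u^n_t(x))$ does not oscillate pathologically — this requires enough compactness on $\partial_x u^n$, which one gets from \eqref{eq:bspde_semilinear_estimate} only in $L^2_{\bb F}([0,T];L^2(\R))$, hence a priori only weak convergence, so the nonlinear map $p \mapsto g^n_\ast(t,x,p)$ must be handled through the relaxed formulation and a convexity argument (Assumption \ref{ass:convexity}: $b_1$ linear and $f_1$ convex in $g$) to show the relaxed limit is in fact a Dirac mass, i.e. a genuine (non-relaxed) control. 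Second, one must check that the limiting separated control, which still lives on the fixed setup $\bb S$, actually attains the value $V=V_{\bb S}$; this uses that $V_{\bb S}^n \to V_{\bb S}$ (continuity of the value under the coefficient perturbations, which follows again from the martingale-problem stability and the uniform moment bounds in Assumption \ref{ass:mfl}\ref{it:growth_mfl}) together with lower semicontinuity of the limiting cost. Once optimality is secured, the intensity-independence is automatic from the structure of the $g^n$, so the bulk of the work is entirely in the stability/tightness machinery rather than in any new estimate on the BSPDE.
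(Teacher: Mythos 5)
Your overall strategy matches the paper's: approximate $(b,\lambda,f,\psi)$ by coefficients satisfying the strong Assumptions \ref{ass:spfe_joint}, \ref{ass:adjoint}, \ref{ass:smp} (with a vanishing strict convexification of $f_1$), apply Corollary \ref{cor:equivalence_l2} at each level $n$ to get an optimal control $g^n\in\cal{G}_{\bb F}^1$ of separated form, then pass to a tight relaxed limit via the martingale problem for the stochastic Fokker--Planck equation and de-relax using Assumption \ref{ass:convexity}. The intensity-independence of the limit is indeed automatic once you work with relaxed controls on $\bb{M}^2_{\leq T}(\R\times G)$ rather than $\bb{M}^2_{\leq T}(\R^2\times G)$, which encodes control dependence on $x$ alone.

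The one place where you would run into genuine trouble is the proposed tightness of $(u^n,q^n,m^n)$ via the BSPDE estimate \eqref{eq:bspde_semilinear_estimate}. The constant $C$ there depends on the coefficients through $\lVert h_H\rVert_{L^2}$, $\lVert h_F\rVert_{L^2}$ and $\lVert\psi\rVert_{L^2}$, and these are exactly the quantities you are truncating: as $n\to\infty$ the approximate terminal condition $D\psi^n(\nu^n_T)$ and the $L^2$-bounds $h_f^n$, $h_b^n$, $h_\psi^n$ generally blow up, so the bound is not uniform in $n$ and gives no compactness. The paper avoids this entirely: once Corollary \ref{cor:equivalence_l2} produces the separated controls $g^n$, the BSPDE solutions are discarded and the passage to the limit is performed only on the forward objects $(\mathfrak{m}^n,\nu^n,W)$ (Proposition \ref{prop:stability_sfpe}), whose tightness follows from moment bounds on the underlying McKean--Vlasov dynamics (uniform in $n$ by Property (ii) and Assumption \ref{ass:mfl}\ref{it:growth_mfl}), not from BSPDE estimates. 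The limiting control is then recovered from the disintegration of $\mathfrak{m}$ (Lemma \ref{lem:stability_of_total_mass}) and a measurable selection using convexity as in \eqref{eq:measurable_selection}, rather than from a limit of the nonlinear selectors $g_\ast^n(t,x,\partial_x u^n_t(x))$. Optimality follows from $\liminf_n V^n\geq V$ (via Proposition \ref{prop:mv_representation} and the relaxed cost bound \eqref{eq:cost_bound}) together with $\limsup_n V^n\leq V$ (continuity of $J^n(\gamma)\to J(\gamma)$ for fixed $\gamma$); your ``lower semicontinuity plus $V^n\to V$'' phrasing is the same content. So the fix to your proposal is simply to drop the BSPDE sequence from the tightness argument and work exclusively with the forward Fokker--Planck martingale problem.
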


The proof of the theorem is based on tightness arguments, so changing the weak setup may be necessary to accommodate the subsequential limits of the tight sequence.
% The extension of the weak setup may be necessary to guarantee the existence of a solution $(X, \Lambda)$ to McKean--Vlasov SDE \eqref{eq:mfl_weak} with an optimal admissible control $\gamma^{\ast} = (\gamma^{\ast}_t)_{0 \leq t \leq T}$ given by $\gamma^{\ast}_t = g^{\ast}_t(X_t)$ for a feedback function $g^{\ast} \define [0, T] \times \Omega \times \R \to G$.

% Under the assumptions of Theorem \ref{thm:equivalence}, Proposition \ref{prop:sfpe_joint} is not applicable, so we no longer know whether SPDE \eqref{eq:sfpe_joint} has a unique strong solution. Consequently, the cost functional $J_{\text{cl}}$ is not necessarily well-defined. Nonetheless, 
% Theorem \ref{thm:equivalence} allows us to reformulate the control problem for SPDE \eqref{eq:sfpe_joint}. Indeed, 
Note that if $g \in \cal{G}_{\bb{F}}^1$ and $\mu = (\mu_t)_{0 \leq t \leq T}$ is a (possibly weak) solution to SPDE \eqref{eq:sfpe_joint} with input $g$, then $\nu = (\nu_t)_{0 \leq t \leq T}$ defined by $\nu_t = \cal{S}(\mu_t)$ satisfies the stochastic Fokker--Planck equation
\begin{equation} \label{eq:sfpe}
    \d \langle \nu_t, \varphi\rangle = \bigl\langle \nu_t, \L\varphi(t, \cdot, \cdot, \nu_t, g_t)\bigr\rangle\, \d t + \langle \nu_t, \sigma_0(t) \partial_x \varphi \rangle \, \d W_t
\end{equation}
for $\varphi \in C_c^2(\R)$ with initial condition $\nu_0 = \L(\xi)$. The generator $\L$ is defined by
\begin{equation} \label{eq:generator_sfpe}
    \L \varphi(t, x, v, h) = b(t, x, v, h)\partial_x \varphi(x) - \lambda(t, x) \varphi(x) + a_t(x) \partial_x^2 \varphi(x)
\end{equation}
for $\varphi \in C_b^2(\R)$ and $(t, x, v, h) \in [0, T] \times \R \times \cal{M}^2_{\leq 1}(\R) \times G$. 
% As in Proposition \ref{prop:sfpe_joint} we can show that $\nu$ is the unique strong solution to SPDE \eqref{eq:sfpe} and that $\nu$ takes values in $C([0, T]; \cal{M}^2_{\leq 1}(\R))$. 
Hence, we can rewrite the cost
\begin{equation*}
    \ev\biggl[\int_0^T \bigl\langle \mu_t, \tilde{f}(t, \cdot, \nu_t, g_t) \bigr\rangle \, \d t + \psi(\nu_T)\biggr] = \ev\biggl[\int_0^T \bigl\langle \nu_t, f(t, \cdot, \nu_t, g_t) \bigr\rangle \, \d t + \psi(\nu_T)\biggr].
\end{equation*}
% By Theorem \ref{thm:equivalence}, instead of optimising $J_{\text{cl}}$ over semiclosed-loop controls (that may depend on the intensity) subject to SPDE \eqref{eq:sfpe_joint}, we can optimise the right-hand side of \eqref{eq:cost_func_ind} over semiclosed-loop controls independent of the intensity subject to SPDE \eqref{eq:sfpe}. We have reduced the dimensionality of the state space of the SPDE from two to one.

\section{Semilinear BSPDEs with C\`adl\`ag Noise} \label{sec:bspde}

\subsection{C\`adl\`ag Martingales with Values in separable Hilbert Spaces} \label{sec:hilbert_space_martingale}

We recall a few notions from the theory of martingales with values in infinite-dimensional Hilbert spaces. For more information we refer to \cite[Section 26]{metivier_semimartingales_1982}. We fix a separable Hilbert space $\cal{H}$ with inner product $\langle \cdot, \cdot \rangle_{\cal{H}}$ and induced norm $\lVert \cdot \rVert_{\cal{H}}$. In Subsection \ref{sec:main_results_bspde}, we introduced the space $\cal{M}_{\bb{F}}^2([0, T]; \cal{H})$ of $\cal{H}$-valued $\bb{F}$-martingales $m$ with a.s.\@ c\`adl\`ag trajectories for which $\ev \lVert m_T\rVert_{\cal{H}}^2$. We denote the \textit{quadratic variation} of a martingale $m \in \cal{M}_{\bb{G}}^2([0, T]; \cal{H})$ by $[m]$ and its \textit{tensor quadratic variation} by $[[m]]$. The tensor quadratic variation is a process with values in the trace class operators from $\cal{H}$ to itself and is symmetric as such. Moreover, it holds that $[m]_t = \trace [[m]]_t$. We can define the \textit{quadratic covariation} $[m^1, m^2]$ and \textit{tensor quadratic covariation} $[[m^1, m^2]]$ of two martingales $m^1$, $m^2 \in \cal{M}_{\bb{G}}^2([0, T]; \cal{H})$ through polarisation. Note that the two martingales $m^1$, $m^2$ are very strongly orthogonal as defined in Section \ref{sec:main_results_bspde} if and only if $[[m^1, m^2]] = 0$.

For $m \in \cal{M}_{\bb{G}}^2([0, T]; \cal{H})$ let $\P_m^2$ denote the space of $\bb{F}$-progressively measurable $\cal{H}$-valued processes such that $\int_0^T \lVert h_t\rVert_{\cal{H}}^2 \, \d [m]_t < \infty$. For $h \in \P_m^2$ we let $\int_0^t \langle h_s, \d m_s\rangle_{\cal{H}}$ be the stochastic integral of $h$ against $m$ over the interval $(0, t]$, i.e.\@ we exclude the left endpoint. The indefinite stochastic integral $\int_0^{\cdot} \langle h_s, \d m_s\rangle_{\cal{H}}$ is a real-valued local martingale and its quadratic variation is given by $\int_0^{\cdot} \langle h_s, h_s \cdot \d [[m]]_s\rangle_{\cal{H}}$. Here again the integral excludes the left endpoint, and for a bounded operator $A \define \cal{H} \to \cal{H}$ and an element $h \in \cal{H}$ we write $h \cdot A$ for $Ah$.
Since the operator norm of a symmetric trace class operator $A$ is bounded from above by the trace, it holds that $\langle h, h \cdot A\rangle_{\cal{H}} \leq \lVert h\rVert_{\cal{H}} \trace A$. We can apply this inequality to simple integrands $h \in \P_m^2$ of the form $h = \sum_{i = 0}^{n - 1} h_i \bf{1}_{(t_i, t_{i + 1}]}$ for $h_i$ bounded and $\bb{F}_{t_i}$-measurable and $0 = t_0 < t_1 < \dots < t_n = T$ to see $\int_0^t \langle h_s, h_s \cdot \d [[m]]_s\rangle_{\cal{H}} \leq \int_0^t \lVert h_s\rVert_{\cal{H}}^2 \, \d [m]_s$. Then, we extend this inequality to arbitrary $h \in \P_m^2$ by density of the simple integrands. This implies the following Burkholder-Davis-Gundy type inequality.

\begin{proposition} \label{prop:bdg}
Let $m \in \cal{M}_{\bb{G}}^2([0, T]; H)$ and $h \in \cal{P}^2_m$. Then for all $p \geq 1$ and any stopping time $\tau$ with values in $[0, T]$ we have
\begin{equation}
    \ev \sup_{0 \leq t \leq \tau} \biggl\lvert\int_0^t \langle h_s, \d m_s\rangle_{\cal{H}}\biggr\rvert^p \leq C_p \ev\biggl(\int_0^{\tau} \lVert h_t\rVert_{\cal{H}}^2 \, \d [m]_t\biggr)^{p/2}
\end{equation}
for a constant $C_p > 0$ independent of $m$, $h$, and $\tau$.
\end{proposition}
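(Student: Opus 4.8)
The plan is to prove the Burkholder--Davis--Gundy inequality of Proposition \ref{prop:bdg} by reducing it to the classical real-valued BDG inequality applied to the real-valued local martingale $N_t = \int_0^t \langle h_s, \d m_s\rangle_{\cal{H}}$. The key observation, already recorded in the paragraph preceding the statement, is the pathwise estimate on the quadratic variation of $N$, namely $[N]_t = \int_0^t \langle h_s, h_s \cdot \d [[m]]_s\rangle_{\cal{H}} \leq \int_0^t \lVert h_s\rVert_{\cal{H}}^2 \, \d [m]_t$, which holds for simple integrands by the bound $\langle h, h \cdot A\rangle_{\cal{H}} \leq \lVert h\rVert_{\cal{H}}^2 \trace A$ valid for symmetric nonnegative trace-class $A$, and extends to all $h \in \cal{P}^2_m$ by density. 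So the structure of the argument is: (i) invoke classical BDG for real local martingales to get $\ev \sup_{0 \leq t \leq \tau} \lvert N_t\rvert^p \leq C_p \ev [N]_\tau^{p/2}$; (ii) use the pathwise domination $[N]_\tau \leq \int_0^\tau \lVert h_t\rVert_{\cal{H}}^2 \, \d [m]_t$ together with monotonicity of $s \mapsto s^{p/2}$ on $[0,\infty)$; (iii) take expectations.

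First I would fix $m \in \cal{M}^2_{\bb{G}}([0,T];\cal{H})$ and $h \in \cal{P}^2_m$, and recall that $N = \int_0^{\cdot}\langle h_s, \d m_s\rangle_{\cal{H}}$ is a well-defined real-valued local martingale whose quadratic variation is $[N]_t = \int_0^t \langle h_s, h_s \cdot \d [[m]]_s\rangle_{\cal{H}}$ (this is precisely the identification quoted in the excerpt). Then I would stop $N$ at $\tau$; since $\tau$ takes values in $[0,T]$, $N^\tau$ is again a real local martingale and $[N^\tau]_t = [N]_{t \wedge \tau}$. Applying the classical scalar BDG inequality to $N^\tau$ (valid for all $p \geq 1$) yields $\ev \sup_{0 \leq t \leq T}\lvert N^\tau_t\rvert^p = \ev \sup_{0 \leq t \leq \tau}\lvert N_t\rvert^p \leq C_p \ev [N]_\tau^{p/2}$, with $C_p$ the universal scalar BDG constant depending only on $p$. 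Next, from the pathwise inequality $[N]_\tau \leq \int_0^\tau \lVert h_t\rVert_{\cal{H}}^2 \, \d[m]_t$ and the fact that $x \mapsto x^{p/2}$ is nondecreasing on $[0,\infty)$, we get $[N]_\tau^{p/2} \leq \bigl(\int_0^\tau \lVert h_t\rVert_{\cal{H}}^2\,\d[m]_t\bigr)^{p/2}$ almost surely; taking expectations and chaining the two bounds gives exactly the claimed inequality with the same constant $C_p$, which clearly does not depend on $m$, $h$, or $\tau$.

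The one point that needs a little care, rather than being a genuine obstacle, is the reduction from local martingale to the integrability statement: a priori $\ev[N]_\tau^{p/2}$ could be infinite, in which case the inequality is trivially true; when it is finite, the scalar BDG inequality combined with the bound shows $N$ is in fact a true $L^p$-martingale on $[0,\tau]$, so there is no circularity. If one prefers to be fully rigorous about this, I would introduce a localising sequence of stopping times $\tau_n \uparrow T$ reducing $N$ to an $L^2$-bounded (hence uniformly integrable) martingale, apply the argument above to $N^{\tau_n}$ and $\tau \wedge \tau_n$, and pass to the limit using monotone convergence on the right-hand side (the integrands $\lVert h_t\rVert_{\cal{H}}^2\,\d[m]_t$ are nonnegative) and Fatou on the left-hand side via $\sup_{0 \leq t \leq \tau\wedge\tau_n}\lvert N_t\rvert \uparrow \sup_{0 \leq t \leq \tau}\lvert N_t\rvert$. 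I do not expect any difficulty beyond this standard localisation; the essential content is entirely contained in the pathwise trace estimate on $[N]$ that precedes the statement.
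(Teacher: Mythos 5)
Your proposal is correct and follows exactly the same route as the paper: apply the classical scalar BDG inequality to the real-valued local martingale $N_t = \int_0^t \langle h_s, \d m_s\rangle_{\cal{H}}$ and then use the pathwise trace estimate $[N]_t \leq \int_0^t \lVert h_s\rVert_{\cal{H}}^2 \, \d [m]_s$ established in the preceding paragraph. The localisation argument you sketch at the end is a sound (and implicit) supplement, but the essential content is identical to the paper's two-line proof.
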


\begin{proof}
By the classical Burkholder-Davis-Gundy inequality we have
\begin{equation*}
    \ev \sup_{0 \leq t \leq \tau} \biggl\lvert\int_0^t \langle h_s, \d m_s\rangle_{\cal{H}}\biggr\rvert^p \leq C_p \ev\biggl(\int_0^{\tau} \langle h_t, h_t \cdot \d [[m]]_t\rangle_{\cal{H}}\biggr)^{p/2}.
\end{equation*}
Then the bound $\int_0^t \langle h_s, h_s \cdot \d [[m]]_s\rangle_{\cal{H}} \leq \int_0^t \lVert h_s\rVert_{\cal{H}}^2 \, \d [m]_s$ allows us to conclude.
\end{proof}

\subsection{It\^o's formula for C\`adl\`ag Hilbert space-valued Semimartingales}

Our next goal is to derive a generalisation of It\^o's formula which is suitable for our analysis of c\`adl\`ag semimartingales with values in Hilbert spaces. For a separable Hilbert space $\cal{H}$, we let $B_2(\cal{H})$ be the space of Hilbert-Schmidt operators from $\cal{H}$ to itself.

% TODO: refer to paper by siska and gyongy
\begin{theorem} \label{thm:ito}
For $i = 1$,~\ldots, $k$, let $u^i \in L_{\bb{F}}^2([0, T]; H^1(\R^d))$, $\zeta^i \in L_{\bb{F}}^2([0, T]; H^{-1}(\R^d))$, and $m^i \in \cal{M}_{\bb{F}}^2([0, T]; L^2(\R^d))$ such that
\begin{equation*}
    \langle u^i_t, \varphi\rangle = \int_0^t \zeta^i_s(\varphi) \, \d s + \langle m^i_t, \varphi\rangle
\end{equation*}
for all $\varphi \in H^1(\R^d)$. Then $u^1$,~\ldots, $u^k$ lie in $D_{\bb{F}}^2([0, T]; L^2(\R^d))$ (or $C_{\bb{F}}^2([0, T]; L^2(\R))$ if $m^1$,~\ldots, $m^k$, respectively, have continuous trajectories).

Next, let $F \define L^2(\R^d; \R^k) \to \R$ be a twice continuously Fr\'echet differentiable function and assume that for all $1 \leq i, j \leq k$ it holds that
\begin{enumerate}[noitemsep, label = (\roman*)]
    \item \label{it:bounded_on_h_1} for each $h \in H^1(\R^d)$ the derivative $D_i F(h)$ is an element of $H^1(\R^d)$ and the map $H^1(\R^d) \to H^1(\R^d)$, $h \mapsto D_i F(h)$ is uniformly continuous on bounded sets of $H^1(\R^d)$;
    \item \label{it:hs_bounded} for each $h \in L^2(\R^d)$ the second derivative $D_{ij}^2 F(h)$ is a Hilbert-Schmidt operator and the map $L^2(\R^d) \to B_2(L^2(\R^d))$, $h \mapsto D_{ij}^2 F(h)$ is uniformly continuous on bounded sets of $L^2(\R^d)$.
\end{enumerate}
Then for any $t \in [0, T]$ it holds that
\begin{align} \label{eq:ito_formula}
\begin{split}
    F(u_t) &= F(m_0) + \sum_{i = 1}^k \biggl(\int_0^t \zeta^i_s(D_i F(u_s)) \, \d s + \int_0^t \langle D_i F(u_{s-}), \, \d m^i_s\rangle \biggr) \\
    &\ \ \ + \sum_{i, j = 1}^k \frac{1}{2}\int_0^t \trace\bigl(D^2_{ij}F(u_{s-}) 
    \, \d [[m^i, m^j]]^c_s\bigr) \\
    &\ \ \ + \sum_{0 < s \leq t} \biggl(F(u_{s-} + \Delta m_s) - F(u_{s-}) - \sum_{i = 1}^k \langle D_i F(u_{s-}), \Delta m^i_s\rangle\biggr).
\end{split}
\end{align}
\end{theorem}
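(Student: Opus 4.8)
The plan is to regularise by spatial mollification, reduce both assertions to the classical It\^o formula for c\`adl\`ag $L^2(\R^d)$-valued semimartingales whose finite-variation part is Bochner integrable (available, for instance, in \cite{metivier_semimartingales_1982}), and then pass to the limit. I would fix a standard mollifier, let $J_n$ denote convolution with it --- a self-adjoint operator on $L^2(\R^d)$, contractive on every $H^k(\R^d)$, converging strongly to $\id$ on $L^2(\R^d)$, and mapping $H^{-1}(\R^d)$ continuously into $L^2(\R^d)$ --- and set $u^{i,n} := J_n u^i$ and $m^{i,n} := J_n m^i$. Testing the defining identity against $J_n\varphi$ and reading it off at $t = 0$ (which forces $u^i_0 = m^i_0$) shows that $u^{i,n}_t = m^i_0 + \int_0^t J_n\zeta^i_s \, \d s + (m^{i,n}_t - m^i_0)$ is a genuine $L^2(\R^d)$-valued c\`adl\`ag semimartingale with Bochner-integrable drift.

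For the regularity claim I would apply the classical It\^o formula to $h \mapsto \lVert h\rVert_{L^2}^2$ and the difference $u^{i,n} - u^{i,m}$: the drift term is controlled by $\lVert\zeta^i_s\rVert_{H^{-1}}\lVert(J_n - J_m)u^i_s\rVert_{H^1}$, the martingale term is handled with Proposition \ref{prop:bdg} and Young's inequality, and the bracket and initial terms vanish because $J_n \to \id$ strongly on $L^2(\R^d)$ while $[[m^i]]_T$ is trace class. Since moreover $u^i \in L_{\bb{F}}^2([0, T]; H^1(\R^d))$ and $\zeta^i \in L_{\bb{F}}^2([0, T]; H^{-1}(\R^d))$, dominated convergence gives that $(u^{i,n})_n$ is Cauchy in $D_{\bb{F}}^2([0, T]; L^2(\R^d))$; its limit is the sought-after c\`adl\`ag modification, it agrees with $u^i$ for $\leb \otimes \pr$-a.e.\ $(t, \omega)$, and it is continuous whenever $m^i$ is, since $\Delta u^{i,n}_t = J_n\Delta m^i_t$. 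From here on I identify $u^i$ with this modification.

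For the It\^o formula itself I would apply the classical formula to $F$ and $u^n = (u^{1,n}, \dots, u^{k,n})$ on $L^2(\R^d; \R^k)$. Using self-adjointness of $J_n$ and the conjugation identity $[[m^{i,n}, m^{j,n}]] = J_n[[m^i, m^j]]J_n$, this produces \eqref{eq:ito_formula} for $u^n$, but with $D_iF(u^n_{s-})$ and $D^2_{ij}F(u^n_{s-})$ replaced by $J_nD_iF(u^n_{s-})$ and $J_nD^2_{ij}F(u^n_{s-})J_n$, and with drift pairing $\langle J_nD_iF(u^n_s), \zeta^i_s\rangle$. The task is then to let $n \to \infty$ termwise, localising along $\tau_R = \inf\{t : \lVert u_t\rVert_{L^2} \vee \int_0^t \lVert u_s\rVert_{H^1}^2 \, \d s \geq R\}$ to keep all quantities bounded. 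The left-hand side converges by the previous step and continuity of $F$. For the martingale terms, $J_nD_iF(u^n_{s-}) \to D_iF(u_{s-})$ in $L^2(\R^d)$ --- by continuity of $D_iF$ on $L^2(\R^d)$, strong convergence $J_n \to \id$, and, along a subsequence, uniform-in-$s$ convergence $u^n_\cdot \to u_\cdot$ --- with an $R$-dependent $L^2$-bound, so Proposition \ref{prop:bdg} and dominated convergence give convergence in probability. For the bracket terms, assumption \ref{it:hs_bounded} yields $J_nD^2_{ij}F(u^n_{s-})J_n \to D^2_{ij}F(u_{s-})$ in Hilbert--Schmidt norm, in which $C \mapsto \int_0^t \trace(C_{s-} \, \d[[m^i, m^j]]^c_s)$ is continuous; and for the jump sums a second-order Taylor estimate bounds each summand by $\tfrac12 \bigl(\sup_{\lVert h\rVert_{L^2} \leq R}\lVert D^2F(h)\rVert\bigr) \lVert\Delta m_s\rVert_{L^2}^2$, with $\sum_{s \leq t}\lVert\Delta m_s\rVert_{L^2}^2 \leq [m]_t < \infty$, so dominated convergence over the jump times closes it.

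The hard part will be the drift term $\int_0^t \langle J_nD_iF(u^n_s), \zeta^i_s\rangle \, \d s = \int_0^t \zeta^i_s\bigl(J_nD_iF(u^n_s)\bigr) \, \d s$, where the $L^2$-pairing has to be identified in the limit with the $H^{-1}$--$H^1$ duality pairing $\zeta^i_s(D_iF(u_s))$. This is exactly what assumption \ref{it:bounded_on_h_1} is for: $u^n_s = J_n u^i_s \to u^i_s$ in $H^1(\R^d)$ for a.e.\ $s$ with $\lVert u^n_s\rVert_{H^1} \leq \lVert u^i_s\rVert_{H^1}$, so uniform continuity of $D_iF$ on bounded subsets of $H^1(\R^d)$ forces $J_nD_iF(u^n_s) \to D_iF(u_s)$ in $H^1(\R^d)$ for a.e.\ $s$, whence $\zeta^i_s(J_nD_iF(u^n_s)) \to \zeta^i_s(D_iF(u_s))$; a domination of the form $\lVert\zeta^i_s\rVert_{H^{-1}} \sup_n\lVert D_iF(u^n_s)\rVert_{H^1}$ --- finite because \ref{it:bounded_on_h_1} makes $D_iF$ bounded on bounded subsets of $H^1(\R^d)$ --- together with the localisation then yields convergence of the time integrals. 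Reconciling the mollified $L^2$-pairing with this Gelfand-triple duality is the only place where the triple $H^1(\R^d) \hookrightarrow L^2(\R^d) \hookrightarrow H^{-1}(\R^d)$, rather than merely an $L^2(\R^d)$-valued semimartingale, genuinely enters; the remainder is the routine bookkeeping of the four classical It\^o terms under mollification.
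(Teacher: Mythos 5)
Your proposal is correct and follows essentially the same mollification-and-limit strategy as the paper: mollify to reduce to the classical Itô formula for c\`adl\`ag $L^2(\R^d)$-valued semimartingales from \cite{metivier_semimartingales_1982}, establish the $D_{\bb{F}}^2$-regularity by applying that formula to $\lVert \cdot\rVert_{L^2}^2$ of differences of the mollified processes, and then pass to the limit in each Itô term using Assumption \ref{it:bounded_on_h_1} for the drift, a Burkholder--Davis--Gundy bound (Proposition \ref{prop:bdg}) for the stochastic integral, and Assumption \ref{it:hs_bounded} with a second-order Taylor bound for the bracket and jump terms. The only presentational difference is that you move the mollifiers onto $D_i F$ and $D^2_{ij} F$ via self-adjointness and conjugation before taking limits, and you localise by a stopping time $\tau_R$, whereas the paper passes to the limit directly in the mollified stochastic integrals and brackets via Lemma \ref{lem:stoch_int_conv} (whose proof contains the analogous localisation by projection).
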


Here $\Delta m^{\epsilon}_s$ denotes the jump of $m^{\epsilon}$ at time $s$, not the Laplacian.

\begin{proof}[Proof of Theorem \ref{thm:ito}]
We proceed in two steps.

\textit{Step 1}: First, we assume that $\zeta^i$ is an element of $L_{\bb{F}}^2([0, T]; L^2(\R^d))$ for $i = 1$,~\ldots, $k$, so that $u^i_t = \int_0^t \zeta^i_s \, \d s + m^i_t$, where $\int_0^t \zeta^i_s \, \d s$ is understood as a Bochner integral. Then $u^1$,~\ldots, $u^k$ are $L^2(\R^d)$-valued c\`adl\`ag semimartingales, so we simply apply \cite[Theorem 27.2]{metivier_semimartingales_1982}, which gives precisely \eqref{eq:ito_formula}.

\textit{Step 2}: Now, we let $\zeta^i \in L_{\bb{F}}^2([0, T]; H^{-1}(\R^d))$. We will reduce to the previous case by mollification. Let $\eta \define \R^d \to \R$ be the standard mollifier defined by $\eta(x) = c_d e^{-1/(1 - \lvert x\rvert^2)}$ if $x$ is in the interior of the centered ball of radius $1$ in $\R^d$ and zero otherwise, and set $\eta_{\epsilon}(x) = \frac{1}{\epsilon^d} \eta(x/\epsilon)$. Here $c_d > 0$ is a normalisation constant which ensures that $\int_{\R^d} \eta(x) \, \d x = 1$. For a function $\varphi \in L^2(\R^d)$ we let $(\eta_{\epsilon} \ast \varphi)(x) = \int_{\R^d} \eta_{\epsilon}(x - y) \varphi(x) \, \d y$. Then we set $u^{i, \epsilon}_t = \eta_{\epsilon} \ast u^i_t$, and $m^{i, \epsilon}_t = \eta_{\epsilon} \ast m^i_t$, and define $\zeta^{i, \epsilon}$ via $\zeta^{i, \epsilon}_t(\varphi) = \zeta^i_t(\eta_{\epsilon} \ast \varphi)$. With this we obtain
\begin{equation} \label{eq:mollified_differential}
    \d \langle u^{i, \epsilon}_t, \varphi\rangle = \d \langle u^i_t, \eta_{\epsilon} \ast \varphi\rangle = \zeta^i_t(\eta_{\epsilon} \ast \varphi) \, \d t + \d \langle m^i_t, \eta_{\epsilon} \ast \varphi\rangle = \zeta^{i, \epsilon}_t(\varphi) \, \d t + \d \langle m^{i, \epsilon}_t, \varphi\rangle
\end{equation}
for $\varphi \in L^2(\R^d)$. Note that for any $\xi \in H^{-1}(\R^d)$ it holds that $\lvert \xi(\eta_{\epsilon} \ast \varphi) \rvert \leq \lVert \xi\rVert_{H^{-1}} \lVert \eta_{\epsilon} \ast \varphi \rVert_{H^1} \leq C_{\epsilon, d} \lVert \partial_x \eta_{\epsilon}\rVert_{\infty} \lVert \xi\rVert_{H^{-1}} \lVert \varphi\rVert_{L^2}$ for some constant $C_{\epsilon, d} > 0$. Hence, the mollification of $\xi$ is a bounded linear functional on $L^2(\R^d)$, so we may identify it with an element of $L^2(\R^d)$. Consequently, we may view $\zeta^i$ as a process in $L_{\bb{F}}^2([0, T]; L^2(\R^d))$. Thus, by Step 1 it holds that
\begin{align} \label{eq:ito_formula_epsilon}
\begin{split}
    F(u^{\epsilon}_t) &= F(m^{\epsilon}_0) + \sum_{i = 1}^k \biggl(\int_0^t \zeta^{i, \epsilon}_s(D_i F(u^{\epsilon}_s)) \, \d s + \int_0^t \langle D_i F(u^{\epsilon}_{s-}), \, \d m^{i, \epsilon}_s\rangle \biggr) \\
    &\ \ \ + \sum_{i, j = 1}^k \frac{1}{2}\int_0^t \trace\bigl(D^2_{ij}F(u^{\epsilon}_{s-}) 
    \, \d [[m^{i, \epsilon}, m^{j, \epsilon}]]^c_s\bigr) \\
    &\ \ \ + \sum_{0 < s \leq t} \biggl(F(u^{\epsilon}_{s-} + \Delta m^{\epsilon}_s) - F(u^{\epsilon}_{s-}) - \sum_{i = 1}^k \langle D_i F(u^{\epsilon}_{s-}), \Delta m^{i, \epsilon}_s\rangle\biggr).
\end{split}
\end{align}
Our goal is to pass to the limit $\epsilon \to 0$ in Equation \eqref{eq:ito_formula_epsilon}. However, while the sequences $(u^{\epsilon})_{\epsilon > 0}$, $(\zeta^{\epsilon})_{\epsilon > 0}$, and $(m^{\epsilon})_{\epsilon > 0}$ converge in their respective Hilbert spaces $L_{\bb{F}}^2([0, T]; H^1(\R^d))$, $L_{\bb{F}}^2([0, T]; H^{-1}(\R^d))$, and $\cal{M}_{\bb{F}}^2([0, T]; L^2(\R^d))$, we do not even know whether $(u^{\epsilon})_{\epsilon > 0}$ also converges as a sequence in $D_{\bb{F}}^2([0, T]; L^2(\R^d))$. But this is certainly necessary to pass to the limit in the stochastic integral term $\int_0^t \langle D_i F(u^{\epsilon}_{s-}), \, \d m^{i, \epsilon}_s\rangle$ and to make sense of its limit $\int_0^t \langle D_iF(u_{s-}), \d m^i_s\rangle$. 

To deduce the convergence in $D_{\bb{F}}^2([0, T]; L^2(\R^d))$, we first apply \eqref{eq:ito_formula_epsilon} to $u^{\epsilon, \delta} = u^{\epsilon} - u^{\delta}$ for the specific choice $F(\varphi) = \sum_{i = 1}^k\lVert \varphi^i\Vert_{L^2}^2$. This implies that for any $\epsilon$, $\delta > 0$ that
\begin{equation} \label{eq:l_2_app_ito}
    \sum_{i = 1}^k \lVert u^{i, \epsilon, \delta}_t \Vert_{L^2}^2 = \sum_{i = 1}^k \biggl(\lVert m^{i, \epsilon, \delta}_0 \Vert_{L^2}^2 + \int_0^t 2\zeta^{i, \epsilon, \delta}_s(u^{i, \epsilon, \delta}_s) \, \d s + \int_0^t 2\bigl\langle u^{i, \epsilon, \delta}_{s-}, \, \d m^{i, \epsilon, \delta}_s\bigr\rangle + [m^{i, \epsilon, \delta}]_t\biggr),
\end{equation}
where $\zeta^{i, \epsilon, \delta}$ and $m^{i, \epsilon, \delta}$ are defined in the obvious way. We take the supremum over $t \in [0, T]$ and then expectation on both sides. Further, we estimate with Proposition \ref{prop:bdg},
\begin{align*}
    \ev \sup_{t \in [0, T]} 2\biggl\lvert \int_0^t \bigl\langle u^{i, \epsilon, \delta}_{s-}, \, \d m^{i, \epsilon, \delta}_s\bigr\rangle\biggr\rvert &\leq 2C_1 \ev\biggl(\int_0^T \lVert u^{i, \epsilon, \delta}_{t-}\rVert_{L^2}^2 \, \d [m^{i, \epsilon, \delta}]_t\biggr)^{1/2} \\
    &\leq 2 C_1 \ev\biggl(\sup_{0 < t \leq T} \lVert u^{i, \epsilon, \delta}_{t-}\rVert_{L^2}^2 [m^{i, \epsilon, \delta}]_T\biggr)^{1/2} \\
    &\leq \frac{1}{2} \ev \sup_{0 \leq t \leq T} \lVert u^{i, \epsilon, \delta}_t\rVert_{L^2}^2 + 8 C_1^2 \ev [m^{i, \epsilon, \delta}]_T.
\end{align*}
Inserting this into \eqref{eq:l_2_app_ito} and rearranging gives
\begin{align*}
    \sum_{i = 1}^k \frac{1}{2} \ev \sup_{0 \leq t \leq T} \lVert u^{i, \epsilon, \delta}_t\rVert_{L^2}^2 \leq \sum_{i = 1}^k \biggl(&\lVert m^{i, \epsilon, \delta}_0 \Vert_{L^2}^2 + \ev\int_0^T 2 \lVert \zeta^{i, \epsilon, \delta}_t\rVert_{H^{-1}} \lVert u^{i, \epsilon, \delta}_t\rVert_{H^1} \, \d t \\
    &+ (2C_1 + 1) \ev[m^{i, \epsilon, \delta}]_T\biggr).
\end{align*}
The right-hand side clearly tends to zero as $\epsilon$, $\delta \to 0$, which means that $(u^{i, \epsilon})_{\epsilon > 0}$, for $i = 1$,~\ldots, $k$, is a Cauchy sequence in the space $D_{\bb{F}}^2([0, T]; L^2(\R^d))$. Hence, it has a unique limit, which must coincide with $u$, since $u^{\epsilon} \to u$ in $L_{\bb{F}}^2([0, T]; L^2(\R^d))$. Consequently, we have $\ev \sup_{0 \leq t \leq T} \lVert u^{\epsilon}_t - u_t\rVert_{L^2}^2 \to 0$ and $u$ lies in $D_{\bb{F}}^2([0, T]; L^2(\R^d))$.

Let us return to Equation \eqref{eq:ito_formula_epsilon}. We can clearly pass to the limit in probability in the terms $F(u^{\epsilon}_t)$, $F(m^{\epsilon}_0)$, and $\int_0^t \zeta^{i, \epsilon}_s(D_i F(u^{\epsilon}_s)) \, \d s$ resulting in the expressions $F(u_t)$, $F(m_0)$, and $\int_0^t \zeta^i_s(D_iF(u_s)) \, \d s$. Here we make use of Assumption \ref{it:bounded_on_h_1}. Next, by Lemma \ref{lem:stoch_int_conv}, the stochastic integrals and the quadratic variation terms converge to $\int_0^t \langle D_i F(u_{s-}), \d m^i_s\rangle$ and $\frac{1}{2}\int_0^t \trace\bigl(D_{ij}^2 F(u_{s-}) \, \d [[m^i, m^j]]^c_s\bigr)$, respectively, in probability as $\epsilon \to 0$. This leaves the jump terms. First note that the jumps of $m^{\epsilon}$ and $m$ align perfectly and, moreover, it holds that $\lVert \Delta m^{\epsilon}_s\rVert_{L^2} \leq \lVert \Delta m_s\rVert_{L^2}$. We can use Taylor's theorem to bound $F(u^{\epsilon}_{s-} + \Delta m^{\epsilon}_s) - F(u^{\epsilon}_{s-}) - \sum_{i = 1}^k \langle D_i F(u^{\epsilon}_{s-}), \Delta m^{i, \epsilon}_s\rangle$ by
\begin{align*}
    \sum_{i, j = 1}^k \bigl\langle \Delta m^{i, \epsilon}_s, D_{ij}^2 F(u^{\epsilon}_{s-}) m^{j, \epsilon}_s\bigr\rangle &\leq \sum_{i, j = 1}^k \frac{\lVert D_{ij}^2F(u^{\epsilon}_{s-})\rVert_{B_2}}{2} \bigl(\lVert \Delta m^{i, \epsilon}_s\rVert_{L^2}^2 + \lVert \Delta m^{j, \epsilon}_s\rVert_{L^2}^2\bigr) \\
    &\leq \sum_{i, j = 1}^k \frac{\lVert D_{ij}^2F(u^{\epsilon}_{s-})\rVert_{B_2}}{2} \bigl(\lVert \Delta m^i_s\rVert_{L^2}^2 + \lVert \Delta m^j_s\rVert_{L^2}^2\bigr),
\end{align*}
where $\lVert \cdot \rVert_{B_2}$ denotes the Hilbert-Schmidt norm. From above we know $\ev \sup_{0 < s \leq t} \lVert u^{\epsilon}_{s-} - u_{s-}\rVert_{L^2}^2 \to 0$, so since $D_{ij}^2 F$ is bounded on bounded sets of $L^2(\R^d)$ as a Hilbert-Schmidt operator by Assumption \ref{it:hs_bounded}, we deduce that $\lVert D_{ij}^2F(u^{\epsilon}_{s-})\rVert_{B_2}$ is bounded by some random constant $C > 0$ uniformly in $\epsilon$. Hence, we can upper bound $F(u^{\epsilon}_{s-} + \Delta m^{\epsilon}_s) - F(u^{\epsilon}_{s-}) - \sum_{i = 1}^k \langle D_i F(u^{\epsilon}_{s-}), \Delta m^{i, \epsilon}_s\rangle$ by the a.s.\@ summable expression $Ck \sum_{i = 1}^k \lVert \Delta m^i_s\rVert_{L^2}^2$. Thus, we can apply the dominated convergence theorem to pass to the a.s.\@ limit in the sum in the third line of Equation \eqref{eq:ito_formula_epsilon}. Putting all convergences together yields the desired formula \eqref{eq:ito_formula}.
\end{proof}

Note that Theorem \ref{thm:ito} is not applicable to solutions $(u, q, m)$ of BSPDE \eqref{eq:bspde}, since in general we would not expect $u$ to have a weak derivative in $y$. Consequently, we need a generalisation of Theorem \ref{thm:ito} to semimartingales in $L^2([0, T]; H^{1, 0}(D))$, where $D = \R^d \times (\ell, \infty)$ for $\ell \in \R$. For a separable Banach space $\cal{B}$ with norm $\lVert \cdot \rVert_{\cal{B}}$ we let $L_{\bb{F}}^{\infty}([0, T]; \cal{B})$ be the space of $\cal{B}$-valued $\bb{F}$-progressively measurable process $h$ such that $\lVert h_t\rVert_{\cal{B}}$ is $\text{Leb} \otimes \pr$-essentially bounded.

\begin{theorem} \label{thm:ito_2_dim}
For $i = 1$, $2$, let $u^i \in L_{\bb{F}}^2([0, T]; H^{1, 0}(D))$, $\zeta^i \in L_{\bb{F}}^2([0, T]; H^{-1, 0}(D))$, $m^i \in \cal{M}_{\bb{F}}^2([0, T]; L^2(D))$, and $h \in L_{\bb{F}}^{\infty}([0, T]; L^{\infty}(\R^d))$, where $h$ is nonnegative,
such that 
\begin{enumerate}[noitemsep, label = (\roman*)]
    % \item $u^2_T \in L^2(\Omega, \F, \pr; L^2(D))$;
    \item for $\leb \otimes \pr$-a.e.\@ $(t, \omega) \in [0, T] \times \Omega$ we have $\supp u^1_t \subset \R^d \times [\ell + \theta, \infty)$ for some $\theta > 0$; 
    % and $\supp u^2_t \subset \R \times [\ell, \infty)$;
    \item it holds that
    \begin{equation} \label{eq:differential_2_d}
        \langle u^i_t, \varphi\rangle = \int_0^t \bigl(\zeta^i_s(\varphi) + \langle u^i_s, h_s\partial_y \varphi\rangle\bigr) \, \d s + \langle m^i_t, \varphi\rangle
    \end{equation}
    for all $\varphi \in H^1(D)$ if $i = 1$ and for all $\varphi \in H^1_0(D)$ if $i = 2$.
\end{enumerate}
Then $u^1$ and $u^2$ are elements of $D_{\bb{F}}^2([0, T]; L^2(D))$ (or $C_{\bb{F}}^2([0, T]; L^2(D))$ if $m^1$ and $m^2$, respectively, have continuous trajectories) and for any $t \in [0, T]$ it holds that
\begin{align} \label{eq:ito_formula_2_dim}
\begin{split}
    \langle u^i_t, u^j_t\rangle &= \langle m^i_0, m^j_0\rangle + \int_0^t \zeta^i_s(u^2_s) + \zeta^2_s(u^j_s) \, \d s + \int_0^t \bigl(\langle u^i_{s-}, \d m^i_s\rangle + \langle u^j_{s-}, \d m^j_s\rangle\bigr) \\
    &\ \ \ + [m^i, m^j]_t + 2\bf{1}_{i = j = 2} \xi_t,
\end{split}
\end{align}
for some nondecreasing real-valued process $\xi = (\xi_t)_{0 \leq t \leq T}$ with $\ev \xi_t < \infty$ started from zero.
\end{theorem}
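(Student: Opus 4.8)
The plan is to follow the scheme of the proof of Theorem~\ref{thm:ito}: mollify so as to reduce to honest $L^2(D)$-valued c\`adl\`ag semimartingales, invoke the finite-dimensional It\^o formula of \cite[Theorem~27.2]{metivier_semimartingales_1982} with $F(\varphi^1,\varphi^2)=\langle\varphi^i,\varphi^j\rangle$ (which, being a bounded bilinear form, needs none of the differentiability hypotheses of Theorem~\ref{thm:ito}), and then pass to the limit while tracking the boundary contributions. I would first mollify in the $x$ variables with a standard mollifier $\eta_\epsilon$: this commutes with stochastic integration, preserves the support of $u^1$, and — as in the proof of Theorem~\ref{thm:ito} — turns $\zeta^i$ into an $L^2(D)$-valued process, since $\lVert\eta_\epsilon\ast_x\varphi\rVert_{H^{1,0}}\le C_\epsilon\lVert\varphi\rVert_{L^2}$. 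I would then mollify in the $y$ variable, where the boundary forces a choice. For $u^1$, supported in $\R^d\times[\ell+\theta,\infty)$, an ordinary symmetric mollifier of width $<\theta$ is harmless and keeps $u^{1,\epsilon}_t$ supported away from $\{y=\ell\}$. For $u^2$, whose weak equation \eqref{eq:differential_2_d} is only tested against $H^1_0(D)$, I would instead use a one-sided mollifier $\rho_\epsilon$ supported in $(-\epsilon,0)$, so that $u^{2,\epsilon}_t(x,y)=\int_0^\epsilon\rho_\epsilon(-w)u^2_t(x,y+w)\,\d w$ only averages $u^2_t$ over points still inside $D$; the virtue of this choice is that the adjoint convolution maps $H^1_0(D)$ into itself, so \eqref{eq:differential_2_d} can legitimately be applied to the mollified test function. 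Because $h$ depends only on $x$, mollification in $y$ commutes with multiplication by $h$, so the transport term mollifies to $\langle u^{i,\epsilon}_s,h_s\partial_y\varphi\rangle$ without a commutator.

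After both mollifications $u^{i,\epsilon}\in L^2_{\bb{F}}([0,T];H^1(D))$, $m^{i,\epsilon}\in\cal{M}^2_{\bb{F}}([0,T];L^2(D))$, and — up to a commutator in the $x$-mollification which vanishes in the limit by a Friedrichs-type argument — the drift, obtained by integrating the transport term by parts in $y$ (the boundary term at $y=\ell$ vanishing since $\varphi\in H^1_0(D)$ vanishes there), is $L^2(D)$-valued. Thus $u^{i,\epsilon}$ is an honest $L^2(D)$-valued c\`adl\`ag semimartingale and \cite[Theorem~27.2]{metivier_semimartingales_1982} applies. The only term beyond those in \eqref{eq:ito_formula} comes from the transport, and for it one uses the identity $\langle u^{i,\epsilon}_s,-h_s\partial_y u^{j,\epsilon}_s\rangle+\langle u^{j,\epsilon}_s,-h_s\partial_y u^{i,\epsilon}_s\rangle=\int_{\R^d}h_s(x)\,u^{i,\epsilon}_s(x,\ell)\,u^{j,\epsilon}_s(x,\ell)\,\d x$, obtained by an integration by parts in $y$, the $y$-independence of $h$, and the decay of $u^{i,\epsilon}_s$ at $y=\infty$. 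For $i=1$, and for the mixed term, the trace $u^{1,\epsilon}_s(\cdot,\ell)$ vanishes, since $u^{1,\epsilon}_s$ is supported in $\{y\ge\ell+\theta-\epsilon\}$, so no extra term appears; for $i=j=2$ one picks up the nonnegative, nondecreasing contribution $2\xi^\epsilon_t:=\int_0^t\int_{\R^d}h_s(x)\,u^{2,\epsilon}_s(x,\ell)^2\,\d x\,\d s$.

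It then remains to let $\epsilon\to0$. One has $u^{i,\epsilon}\to u^i$ in $L^2_{\bb{F}}([0,T];H^{1,0}(D))$, $\zeta^{i,\epsilon}\to\zeta^i$ in $L^2_{\bb{F}}([0,T];H^{-1,0}(D))$ and $m^{i,\epsilon}\to m^i$ in $\cal{M}^2_{\bb{F}}([0,T];L^2(D))$, so every term in the energy identity other than the stochastic integrals and $\xi^\epsilon$ converges to the expected limit. For $i=1$ and the mixed term the rest is exactly as in the proof of Theorem~\ref{thm:ito}: applying the energy identity to $u^{1,\epsilon}-u^{1,\delta}$, which carries no boundary term because both traces at $y=\ell$ vanish, and estimating the stochastic integral with Proposition~\ref{prop:bdg}, one finds $(u^{1,\epsilon})_\epsilon$ Cauchy in $D^2_{\bb{F}}([0,T];L^2(D))$; this places $u^1$ in $D^2_{\bb{F}}([0,T];L^2(D))$, allows passage to the limit in the stochastic integral and the quadratic variation exactly as in Theorem~\ref{thm:ito}, and yields \eqref{eq:ito_formula_2_dim} with $\xi\equiv0$.

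The genuinely new difficulty is the case $i=j=2$: the boundary term $2\xi^\epsilon_t\ge0$ appears on the right-hand side of the energy identity with the ``wrong'' sign, so the naive Cauchy estimate for $(u^{2,\epsilon})_\epsilon$ in $D^2_{\bb{F}}$ fails to close and the traces $u^{2,\epsilon}_s(\cdot,\ell)$ are not controlled by $\lVert u^2\rVert_{H^{1,0}}$ alone. My plan here is to extract from the mollified energy identity evaluated at $t=T$ — where the stochastic integral has mean zero — a uniform bound $\sup_\epsilon\ev\,\xi^\epsilon_T<\infty$, equivalently a uniform $D^2_{\bb{F}}$-bound on $u^{2,\epsilon}$. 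This both places $u^2$ in $D^2_{\bb{F}}([0,T];L^2(D))$ and, along a subsequence, forces $\xi^\epsilon_t$ to converge to a process $\xi_t$ which is nondecreasing, as a limit of nondecreasing processes, and satisfies $\ev\,\xi_t<\infty$ by Fatou, being the limit of a difference of terms already known to be integrable. Once $u^2\in D^2_{\bb{F}}([0,T];L^2(D))$ is in hand, the stochastic integral and the quadratic variation pass to the limit as before, and \eqref{eq:ito_formula_2_dim} follows. I expect this last step — securing the $D^2$-regularity of $u^2$ and the integrability of $\xi$ simultaneously, in the presence of the nonnegative boundary term — to be the main obstacle; the remainder is a careful but essentially routine adaptation of the proof of Theorem~\ref{thm:ito}.
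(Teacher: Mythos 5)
Your proposal is a genuinely different route from the paper's. The paper does \emph{not} mollify $u^2$ directly: it first multiplies by a cutoff $\kappa_\epsilon$ vanishing on a shrinking neighbourhood of $\{y=\ell\}$ (Step~2 of its proof). This puts $\kappa_\epsilon u^2$ into the hypotheses of a stronger intermediate statement (Step~1) in which the weak equation may be tested against all of $H^1(D)$ but carries an extra, explicit drift $\xi_s = u^2_s\,h_s\,\partial_y\kappa_\epsilon$ supported near the boundary. Step~1 then proceeds with a \emph{symmetric} mollifier and the oddness of $\partial_y\eta_\epsilon$, so no boundary term ever appears in the mollified energy identity; the eventual nondecreasing process is the limit of $\int_0^t\langle\xi_s,\kappa_\epsilon u^2_s\rangle\,\d s\ge0$. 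Your one-sided mollifier bypasses the cutoff and has the boundary flux appear \emph{literally} as the boundary integral of an integration by parts. Your direction for $\rho_\epsilon$ (supported in $(-\epsilon,0)$) is the correct one for $T_\epsilon$ to map $H^1_0(D)$ into $H^1_0(D)$, and your identity for the boundary term is right (note it equals $\tfrac12\int_{\R^d}h_s\,\lvert u^{2,\epsilon}_s(\cdot,\ell)\rvert^2\,\d x$ when $i=j=2$ — the factor of $\tfrac12$ per pair cancels the 2 from $F=\langle\cdot,\cdot\rangle$); after the $x$-mollification, $u^{2,\epsilon}$ is a genuine $L^2(D)$-valued c\`adl\`ag semimartingale, and the vanishing of the boundary term in the $i=1$ and mixed cases follows from the support of $u^1$.

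The gap lies exactly where you flag it, but your proposed fix does not close it. A uniform bound $\sup_\epsilon\ev\sup_t\lVert u^{2,\epsilon}_t\rVert_{L^2}^2<\infty$ does not by itself ``place $u^2$ in $D^2_{\bb{F}}([0,T];L^2(D))$'': the hypotheses only give $u^2$ as an $L^2_{\bb{F}}([0,T];H^{1,0})$-equivalence class, so c\`adl\`ag regularity must be established by producing a sequence of c\`adl\`ag processes converging to $u^2$ \emph{in $D^2_{\bb{F}}$}, not merely bounded there. And the same convergence, not just a bound, is what Lemma~\ref{lem:stoch_int_conv} needs in order to pass to the limit in $\int_0^t\langle u^{2,\epsilon}_{s-},\d m^{2,\epsilon}_s\rangle$ and in the quadratic variation; without it you cannot even identify the limit $\xi$ of $\xi^\epsilon$, since in the rearranged energy identity $2\xi^\epsilon_t$ equals a sum of terms each of which requires $D^2$-convergence of $u^{2,\epsilon}$ to converge itself. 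The Cauchy estimate for $u^{2,\epsilon}-u^{2,\delta}$ fails for precisely the reason you give: by polarisation the boundary term it contributes is $\int h(u^{2,\epsilon}(\cdot,\ell)-u^{2,\delta}(\cdot,\ell))^2\ge0$, on the wrong side. The paper escapes this because $u^{2,\epsilon}=\kappa_\epsilon u^2$ is a \emph{pointwise multiplier} of $u^2$: $\lVert u^{2,\epsilon}_t\rVert_{L^2}^2\uparrow\lVert u^2_t\rVert_{L^2}^2$ monotonically in $\epsilon\downarrow0$, so $\ev\sup_t\lVert u^2_t\rVert^2<\infty$ by monotone convergence once the right-hand side is controlled, and then $\ev\sup_t\lVert u^{2,\epsilon}_t-u^2_t\rVert^2=\ev\sup_t\lVert(1-\kappa_\epsilon)u^2_t\rVert^2\to0$ by dominated convergence. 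The convolution $\rho_\epsilon\ast u^2_t$ has no such pointwise monotone structure; to recover $D^2$-convergence you would need an additional argument (e.g.\ relative compactness of the c\`adl\`ag range of $u^2$ in $L^2(D)$ combined with strong operator convergence of $\rho_\epsilon\ast$), but that presupposes the c\`adl\`ag regularity you are trying to establish. This is the one place your route costs more than the paper's; everything else in your outline is a faithful — and cleaner — variant of the same scheme.
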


Note that the coefficient $h$ in the statement of Theorem \ref{thm:ito_2_dim} is a random function on $\R^d$ and not $D$, so it is independent of the last variable. Hence, the expression $\langle u^i_s, h_s\partial_y \varphi\rangle$ stands for the integral $\int_{D} u^i_s(x, y) h_s(x) \partial_y \varphi(x, y) \, \d x \d y$.

\begin{proof}
We proceed in two steps.

\textit{Step 1}: First, we assume that for $i = 2$ instead of \eqref{eq:differential_2_d}, it holds that
\begin{equation*}
    \langle u^2_t, \varphi\rangle = \int_0^t \bigl(\zeta^2_s(\varphi) + \langle u^2_s, h_s  \partial_y\varphi\rangle + \langle \xi_s, \varphi\bigr) \, \d s + \langle m^2_t, \varphi\rangle
\end{equation*}
for all $\varphi \in H^1(D)$, where $\xi \in L_{\bb{F}}^2([0, T]; L^2(D))$ satisfies $\supp \xi_t \subset \R \times [\ell, \ell + \theta/2]$ for $\leb \otimes \pr$-a.e.\@ $(t, \omega) \in [0, T] \times \Omega$. Let $\eta \define \R \to \R$ be the standard mollifier in one dimension defined by $\eta(x) = c e^{-1/(1 - x^2)}$ if $x \in (-1, 1)$ and zero otherwise, and set $\eta_{\epsilon}(x) = \frac{1}{\epsilon} \eta(x/\epsilon)$. Here $c > 0$ is a normalisation constant which ensures that $\int_{\R} \eta(x) \, \d x = 1$. For a function $\varphi \in L^2(D)$ we let $(\eta_{\epsilon} \ast_2 \varphi)(x, y) = \int_{\R} \eta_{\epsilon}(x, y - z) \varphi(x, z) \, \d z$ for $(x, y) \in \R^d \times \R$ be the mollification in the second component, where $\varphi \define D \to \R$ is extended to a function on all of $\R^d \times \R$ by zero. Then, we set $u^{i, \epsilon}_t = \eta_{\epsilon} \ast_2 u^i_t$, $m^{i, \epsilon}_t = \eta_{\epsilon} \ast_2 m^i_t$, $\xi^{\epsilon}_t = \eta_{\epsilon} \ast_2 \xi_t$, and define $\zeta^{i, \epsilon}$ via $\zeta^{i, \epsilon}_t(\varphi) = \zeta^i_t(\eta_{\epsilon} \ast_2 \varphi) + \bigl\langle u^i_t, h_t \partial_y (\eta_{\epsilon} \ast_2 \varphi)\bigr\rangle + \delta_{i2}\langle \xi^{\epsilon}_t, \varphi\rangle$. With this we obtain
\begin{align*}
    \d \langle u^{i, \epsilon}_t, \varphi\rangle &= \d \langle u^i_t, \eta_{\epsilon} \ast_2 \varphi\rangle \\
    &= \Bigl(\zeta^i_t(\eta_{\epsilon} \ast_2 \varphi) + \bigl\langle u^i_t, h_t \partial_y (\eta_{\epsilon} \ast_2 \varphi)\bigr\rangle + \delta_{i2}\langle \xi, \eta_{\epsilon} \ast_2 \varphi\rangle\Bigr) \, \d t + \d \langle m^i_t, \eta_{\epsilon} \ast_2 \varphi\rangle \\
    &= \zeta^{i, \epsilon}_t(\varphi) \, \d t + \d \langle m^{i, \epsilon}_t, \varphi\rangle
\end{align*}
for $\varphi \in H^1(D)$. Note that $u^{i, \epsilon} \in L_{\bb{F}}^2([0, T]; H^1(\R^d \times \R)$, $\zeta^{i, \epsilon} \in L_{\bb{F}}^2([0, T]; H^{-1}(\R^d \times \R))$, and $m^{i, \epsilon} \in \cal{M}_{\bb{F}}^2([0, T]; L^2(\R^d \times \R))$, so we can apply Theorem \ref{thm:ito}, whereby $u^{i, \epsilon}$ is an element of $D_{\bb{F}}^2([0, T]; L^2(\R^d \times \R))$. Moreover, applying \eqref{eq:ito_formula} with the choice $F(v) = \lVert v\rVert_{L^2}^2$ to the process $u^{i, \epsilon, \delta} = u^{i, \epsilon} - u^{i, \delta}$ yields
\begin{align*}
    \lVert u^{i, \epsilon, \delta}_t\rVert_{L^2}^2 = \lVert m^{i, \epsilon, \delta}_0\rVert_{L^2}^2 + \int_0^t 2 \zeta^{i, \epsilon, \delta}(u^{i, \epsilon, \delta}_s) \, \d s + \int_0^t 2 \bigl\langle u^{i, \epsilon, \delta}_{s-}, \, \d m^{i, \epsilon, \delta}_s\bigr\rangle + [m^{i, \epsilon, \delta}]_t.
\end{align*}
Now, it holds that 
\begin{align} \label{eq:second_var_vanish}
\begin{split}
    \bigl\langle u^i_s, h_s  \bigl(\partial_y (\eta_{\epsilon} - \eta_{\delta}) \ast_2 u^{i, \epsilon, \delta}_s\bigr)\bigr\rangle &= - \bigl\langle \partial_y (\eta_{\epsilon} - \eta_{\delta}) \ast_2 u^i_s, h_s u^{i, \epsilon, \delta}_s\bigr\rangle = - \bigl\langle \partial_y u^{i, \epsilon, \delta}_s, h_s u^{i, \epsilon, \delta}_s\bigr\rangle = 0,
\end{split}
\end{align}
where we used in the first equality that $\partial_y(\eta_{\epsilon} - \eta_{\delta})$ is an odd function and $h_s$ is independent of the last variable $y$.
% and that $h_t$ does not depend on the second variable $y$, so that $\bigl\langle \partial_y u^{i, \epsilon, \delta}_s, h_s u^{i, \epsilon, \delta}_s\bigr\rangle = \frac{1}{2}\bigl\langle \partial_y \lvert u^{i, \epsilon, \delta}_s\rvert^2, h_s\bigr\rangle = 0$. 
Hence, it holds that
\begin{align*}
    \zeta^{i, \epsilon, \delta}(u^{i, \epsilon, \delta}_s) &= \zeta^i_s\bigl((\eta_{\epsilon} - \eta_{\delta}) \ast_2 u^{i, \epsilon, \delta}_s\bigr) + \bigl\langle u^i_s, h_s \bigl(\partial_y (\eta_{\epsilon} - \eta_{\delta}) \ast_2 u^{i, \epsilon, \delta}_s\bigr)\bigr\rangle  \delta_{i2}\langle \xi^{\epsilon, \delta}_s, u^{2, \epsilon, \delta}_s\rangle \\
    &= \zeta^i_s\bigl((\eta_{\epsilon} - \eta_{\delta}) \ast_2 u^{i, \epsilon, \delta}_s\bigr) + \delta_{i2}\langle \xi^{\epsilon, \delta}_s, u^{2, \epsilon, \delta}_s\rangle,
\end{align*}
so we can estimate $\bigl\lvert \zeta^{i, \epsilon, \delta}(u^{i, \epsilon, \delta}_s)\bigr\rvert \leq 2\lVert \zeta^i_s\rVert_{H^{-1, 0}} \lVert u^{i, \epsilon, \delta}_s\rVert_{H^{1, 0}} + \delta_{i2} \lVert \xi^{\epsilon, \delta}_s\rVert_{L^2} \lVert u^{2, \epsilon, \delta}_s\rVert_{L^2}$. The right-hand side tends to zero as $\epsilon$, $\delta \to 0$. Hence, as in the proof of Theorem \ref{thm:ito}, we conclude that $\ev \sup_{0 \leq t \leq T} \lVert u^{i, \epsilon, \delta}_t\rVert_{L^2}^2 \to 0$ as $\epsilon$, $\delta \to 0$. Consequently, $(u^{i, \epsilon})_{\epsilon}$ is a Cauchy sequence in the space $D_{\bb{F}}^2([0, T]; L^2(\R^d \times \R))$ and its limit in $D_{\bb{F}}^2([0, T]; L^2(\R^d \times \R))$ is $u^i$.

Next, let us deduce Equation \eqref{eq:ito_formula_2_dim}. For that we apply It\^o's formula \eqref{eq:ito_formula} with the choice $F(v^1, v^2) = \langle v^1, v^2\rangle$ to the process $(u^{i, \epsilon}, u^{j, \epsilon})$ for $\epsilon < \theta/4$, whereby
\begin{align} \label{eq:ito_formula_2_dim_eps}
\begin{split}
    \langle u^{i, \epsilon}_t, u^{j, \epsilon}_t\rangle &= \langle m^{i, \epsilon}_0, m^{j, \epsilon}_0\rangle + \int_0^t \zeta^{i, \epsilon}_s(u^{j, \epsilon}_s) + \zeta^{j, \epsilon}_s(u^{i, \epsilon}_s) \, \d s \\
    &\ \ \ + \int_0^t \bigl(\langle u^{j, \epsilon}_{s-}, \d m^{i, \epsilon}_s\rangle + \langle u^{i, \epsilon}_{s-}, \d m^{j, \epsilon}_s\rangle\bigr) \, \d s + [m^{i, \epsilon}, m^{j, \epsilon}]_t.
\end{split}
\end{align}
Similarly to Equation \eqref{eq:second_var_vanish}, we compute
\begin{align*}
    \bigl\langle u^i_s, h_s (\partial_y \eta_{\epsilon} \ast_2 u^{j, \epsilon}_s)\bigr\rangle + \bigl\langle u^j_s, h_s (\partial_y \eta_{\epsilon} \ast_2 u^{i, \epsilon}_s)\bigr\rangle &= \bigl\langle \eta_{\epsilon} \ast_2 u^i_s, h_s \partial_y u^{j, \epsilon}_s)\bigr\rangle - \bigl\langle \partial_y \eta_{\epsilon} \ast_2 u^j_s, h_s  u^{i, \epsilon}_s\bigr\rangle \\
    &= \bigl\langle u^{i, \epsilon}_s, h_s \partial_y u^{j, \epsilon}_s\bigr\rangle - \bigl\langle \partial_y u^{j, \epsilon}_s, h_s  u^{i, \epsilon}_s\bigr\rangle \\
    &= 0,
\end{align*}
where we used in the first equality that $\eta_{\epsilon}$ is even, $\partial_y \eta_{\epsilon}$ is odd, and that $h_s$ does not depend on the last variable $y$. Further, if $i$ or $j$ are different from $2$, then since $\xi^{\epsilon}_t$ is supported in $\R \times [\ell, \ell + 3\theta/4)$ and $u^{1, \epsilon}_t$ is supported in $\R \times (\ell + 3\theta/4, \infty)$, we have $\delta_{2i} \langle \xi^{\epsilon}_s, u^{j, \epsilon}_s\rangle + \delta_{2j} \langle \xi^{\epsilon}_s, u^{i, \epsilon}_s\rangle = 0$. Consequently, we get that 
\begin{equation*}
    \zeta^{i, \epsilon}_s(u^{j, \epsilon}_s) + \zeta^{j, \epsilon}_s(u^{i, \epsilon}_s) = \zeta^i_s(\eta_{\epsilon} \ast_2 u^{j, \epsilon}_s) + \zeta^j_s(\eta_{\epsilon} \ast_2 u^{i, \epsilon}_s) + 2\bf{1}_{i = j = 1} \langle \xi^{\epsilon}_s, u^{2, \epsilon}_s\rangle.
\end{equation*}
This together with Lemma \ref{lem:stoch_int_conv} allows us to pass to the limit in probability in Equation \eqref{eq:ito_formula_2_dim_eps} as $\epsilon \to 0$. As a result, we obtain \eqref{eq:ito_formula_2_dim} with $\xi_t$ replaced by $\int_0^t 2\langle \xi_s, u^2_s\rangle \, \d s$. Note that in contrast to \eqref{eq:ito_formula_2_dim} this process is not necessarily nondecreasing.

\textit{Step 2}: Let us define $\kappa_{\epsilon}$ by $\kappa_{\epsilon}(y) = \int_{-\infty}^x \eta_{\epsilon}(z - \ell - \epsilon) \, \d z$, so that $\kappa_{\epsilon} \in C^{\infty}(\R)$ and $\kappa_{\epsilon}(y) = 0$ for $y \in (-\infty, \ell]$, $\kappa_{\epsilon}(y) \in [0, 1]$ for $y \in [\ell, \ell + 2\epsilon)$, and $\kappa_{\epsilon}(y) = 1$ for $y \in [\ell + 2\epsilon, \infty)$. We set $u^{2, \epsilon}_t(x, y) = \kappa_{\epsilon}(y) u^2_t(x, y)$, $m^{2, \epsilon}_t(x, y) = \kappa_{\epsilon}(y) m^2_t(x, y)$, and define $\zeta^{2, \epsilon}_t$ via $\zeta^{2, \epsilon}_t(\varphi) = \zeta^2_t(\varphi_{\epsilon})$, where $\varphi_{\epsilon}(x, y) = \kappa_{\epsilon}(y) \varphi(x, y)$ for $\varphi \in H^1(D)$. This implies that
\begin{equation*}
    \langle u^{2, \epsilon}_t, \varphi\rangle = \int_0^t \bigl(\zeta^{2, \epsilon}_s(\varphi) + \langle u^{2, \epsilon}_s, h_s \partial_y \varphi\rangle + \langle u^2_s, h_s \partial_y \kappa_{\epsilon} \varphi\rangle\bigr) \, \d s + \langle m^{2, \epsilon}_s, \varphi\rangle
\end{equation*}
for $\varphi \in H^1(D)$. Thus, $u^{2, \epsilon}$ satisfies the assumptions of Step 1 with $\xi_t = u^2_t h_t \partial_y \kappa_{\epsilon}$, so that $u^{2, \epsilon}$ is an element of $D_{\bb{F}}^2([0, T]; L^2(D)$, and \eqref{eq:ito_formula_2_dim} holds with the process $\xi$ replaced by $\int_0^t \langle u^2_s, h_s \partial_y \kappa_{\epsilon} u^{2, \epsilon}_s\rangle \, \d s$. Hence, we obtain
\begin{align*}
    \lVert u^{2, \epsilon}_T\rVert_{L^2}^2 &= \lVert u^{2, \epsilon}_t\rVert_{L^2}^2 + \int_t^T 2\bigl(\zeta^{2, \epsilon}_s(u^{2, \epsilon}_s) + \langle u^2_s, h_s \partial_y \kappa_{\epsilon} u^{2, \epsilon}_s\rangle\bigr) \, \d s \\
    &\ \ \ + \int_t^T 2 \langle u^{2, \epsilon}_{s-}, \d m^{2, \epsilon}_s\rangle + [m^{2, \epsilon}]_T - [m^{2, \epsilon}]_t.
\end{align*}
Since $\langle u^2_s, h_s \partial_y \kappa_{\epsilon} u^{2, \epsilon}_s\rangle$ and $[m^{2, \epsilon}]_T - [m^{2, \epsilon}]_t$ are nonnegative, we can bound
\begin{equation} \label{eq:bound_sup_1}
    \sup_{0 \leq t \leq T} \lVert u^{2, \epsilon}_t\rVert_{L^2}^2 \leq \lVert u^{2, \epsilon}_T\rVert_{L^2}^2 + \int_0^T 2\zeta^{2, \epsilon}_s(u^{2, \epsilon}_s) \, \d s + \sup_{0 \leq t \leq T} 4\int_0^t \langle u^{2, \epsilon}_{s-}, \d m^{2, \epsilon}_s\rangle.
\end{equation}
We take expectations on both sides, use Proposition \ref{prop:bdg} to estimate
\begin{align*}
    \ev \sup_{0 \leq t \leq T} 4\int_0^t \langle u^{2, \epsilon}_{s-}, \d m^{2, \epsilon}_s\rangle &\leq 4C_1 \ev \biggl(\int_0^T \lVert u^{2, \epsilon}_{t-}\rVert_{L^2}^2 \, \d [m^{2, \epsilon}]_t\biggr)^{1/2} \\
    &\leq \frac{1}{2}\ev\sup_{0 \leq t \leq T} \lVert u^{2, \epsilon}_t\rVert_{L^2}^2 + 8 C_1^2\ev [m^{2, \epsilon}]_T,
\end{align*}
and then rearrange \eqref{eq:bound_sup_1} to find
\begin{equation*}
    \ev\sup_{0 \leq t \leq T} \lVert u^{2, \epsilon}_t\rVert_{L^2}^2 \leq 2 \ev \lVert u^{2, \epsilon}_T\rVert_{L^2}^2 + 4\ev\int_0^T \lvert \zeta^{2, \epsilon}_t(u^{2, \epsilon}_t)\rvert \, \d t + 16C_1^2\ev [m^{2, \epsilon}]_T.
\end{equation*}
The sequence $(\lVert u^{2, \epsilon}_t\rVert_{L^2}^2)_{\epsilon > 0}$ increases to $\lVert u^2_t\rVert_{L^2}^2$ as $\epsilon \to 0$, so the left-hand side above converges by the monotone convergence theorem. For the right-hand side convergence follows from the assumptions on $u^2$, $u^2_T$, $\zeta^2$, and $m^2$, so we find that $\ev\sup_{0 \leq t \leq T} \lVert u^2_t\rVert_{L^2}^2 < \infty$. Thus, 
\begin{equation*}
    \ev\sup_{0 \leq t \leq T} \lVert u^{2, \epsilon}_t - u^2_t\rVert_{L^2}^2 \leq \ev\sup_{0 \leq t \leq T} \lVert (1 - \eta_{\epsilon}) u^2_t\rVert_{L^2}^2 \to 0
\end{equation*}
by the monotone convergence theorem. In particular, $u^2$ is an element of $D_{\bb{F}}^2([0, T]; L^2(D))$. Obtaining Equation \eqref{eq:ito_formula_2_dim} is now a simple limit procedure. Note that $\xi_t$ is the $L^1$-limit of the integral $\int_0^t \langle u^2_s, h_s \partial_y \kappa_{\epsilon} u^{2, \epsilon}_s\rangle \, \d s$, which is nondecreasing as a function of $t$, so that the same is true for $\xi = (\xi_t)_{0 \leq t \leq T}$.
\end{proof}

\subsection{Semilinear BSPDE}

In this subsection, we prove the existence and uniqueness for the semilinear BSPDEs introduced in Section \ref{sec:main_results_bspde}. The proofs for the two domains $D = \R^d \times (\ell, \infty)$ and $D = \R^d$ are rather similar, so we focus on $D = \R^d \times (\ell, \infty)$, in which case a few more technicalities arise. We obtain existence and uniqueness to a solution to BSPDE \eqref{eq:bspde} through a fixed point argument, so we first have to consider linear BSPDEs of the form
\begin{align} \label{eq:bspde_linear}
\begin{split}
    \d u_t(x, y) &= -\Bigl(a_t(x, y) : \nabla_x^2 u_t(x, y) + \alpha_t : \nabla_x q_t(x, y) + \beta_t(x, y) \cdot q_t(x, y)\Bigr) \, \d t \\
    &\ \ \ - \bigl(\lambda_t(x) \partial_y u_t(x, y) + h_t(x, y)\bigr) \, \d t + q_t(x, y) \, \d W_t + \d m_t(x, y)
\end{split}
\end{align}
for $(x, y) \in D$ with terminal condition $u_T(x, y) = \psi(x, y)$ for a random function $\psi \in L^2\bigl(\Omega, \F, \pr; L^2(D)\bigr)$. The process $h$ lies in $L^2_{\bb{F}}([0, T]; L^2(D))$. A solution $(u, q, m) \in \bb{H}(D)$ is understood in the sense of Definition \ref{def:bspde_solution}, so the test functions lie in the space $H^1_0(D)$. We construct a solution to BSPDE \eqref{eq:bspde_linear} through a Galerkin approximation. 

\begin{proposition} \label{prop:bspde_linear}
Let Assumption \ref{ass:bspde} be satisfied and let $h \in L^2_{\bb{F}}([0, T]; L^2(D))$. Then BSPDE \eqref{eq:bspde_linear} has a unique solution $(u, q, m)$ for which $u \in D_{\bb{F}}^2([0, T]; L^2(D))$. Moreover, for any $t \in [0, T]$, it holds that
\begin{align} \label{eq:bspde_linear_estimate}
\begin{split}
    \ev\lVert u_t\rVert_{L^2}^2 + \ev\lVert m_T - m_t\rVert_{L^2}^2 + \ev&\int_t^T \lVert \nabla_x u_s \rVert_{L^2}^2 + \lVert q_s \rVert_{L^2}^2 \, \d s \\
    &\leq C\biggl(\ev\lVert \psi \rVert_{L^2}^2 + \ev\int_t^T  \lvert \langle u_s, h_s\rangle\rvert \, \d s\biggr)
\end{split}
\end{align}
for a constant $C > 0$ independent of $h$ and $t$.
\end{proposition}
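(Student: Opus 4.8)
The plan is to obtain existence through a Galerkin approximation, and to extract both the a priori bound \eqref{eq:bspde_linear_estimate} and uniqueness from the generalised It\^o formula of Theorem \ref{thm:ito_2_dim}. Fix a countable family $(e_j)_{j \geq 1} \subset C_c^{\infty}(D)$ that is an orthonormal basis of $L^2(D)$ and whose linear span is dense in $H^1_0(D)$; such a family is produced by Gram--Schmidt in the $L^2(D)$ inner product from a countable, linearly independent, $H^1_0(D)$-dense subset of $C_c^{\infty}(D)$. Write $V_n = \spanset\{e_1, \dots, e_n\}$. Testing the weak form \eqref{eq:bspde_solution} (with $H + F$ replaced by the source $h$) against $e_1, \dots, e_n$, and using that $\langle q_t^n, e_j\rangle = q_t^{n,j}$ by orthonormality, one is led to a linear backward SDE for the coordinate processes $\bf{u}^n = (\langle u_t^n, e_j\rangle)_{j \leq n}$, $\bf{q}^n \in (\R^d)^n$, $\bf{m}^n \in \R^n$, driven by $W$ and carrying an additional martingale part orthogonal to $W$. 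Its coefficient matrices are built from $\langle \nabla_x e_i, \nabla_x \cdot (a_s e_j)\rangle$, $\langle e_i, \lambda_s \partial_y e_j\rangle$, $\langle \beta_s \cdot e_i, e_j\rangle$ and contractions of $\alpha_s$ against $\int_D e_i \nabla_x e_j$, hence, for each fixed $n$, they are $\leb \otimes \pr$-essentially bounded by Assumption \ref{ass:bspde}\ref{it:growth_bspde}; the standard theory of Lipschitz backward SDEs in a filtration carrying a Brownian motion (via the orthogonal decomposition of square-integrable $\bb{F}$-martingales) thus yields a unique solution $(\bf{u}^n, \bf{q}^n, \bf{m}^n)$ with $\bf{u}^n$ c\`adl\`ag and $L^2$-bounded, $\bf{q}^n \in L^2_{\bb{F}}$, and $\bf{m}^n$ a square-integrable $\bb{F}$-martingale started from zero and strongly orthogonal to $W$. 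Reassembling $u_t^n = \sum_{j \leq n} \langle u_t^n, e_j\rangle e_j \in V_n$ and, similarly, $q^n$, $m^n$ gives a $V_n$-valued approximate solution, and since $V_n \subset C_c^{\infty}(D) \subset H^1_0(D)$, each $u_s^n$ is itself an admissible test function.

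The core of the argument is an $n$-uniform energy estimate. Applying the finite-dimensional It\^o formula to $\lVert u_t^n\rVert_{L^2}^2 = |\bf{u}_t^n|^2$, and using that $\bf{m}^n$ is strongly orthogonal to $W$ (so the cross variation drops), gives, for every $t$,
\begin{align*}
    &\lVert u_t^n\rVert_{L^2}^2 + \int_t^T \lVert q_s^n\rVert_{L^2}^2 \, \d s + [m^n]_T - [m^n]_t + 2\int_t^T \langle \nabla_x u_s^n, a_s \nabla_x u_s^n\rangle \, \d s \\
    &\qquad = \lVert \psi\rVert_{L^2}^2 - 2\int_t^T R_s^n \, \d s - 2\int_t^T \langle u_{s-}^n, \, \d N_s^n\rangle,
\end{align*}
where $N^n = \int_0^{\cdot} \bf{q}_s^n \, \d W_s + \bf{m}^n$ and $R_s^n$ collects the first-order terms $\langle \nabla_x u_s^n, (\nabla_x \cdot a_s) u_s^n\rangle + \langle \alpha_s q_s^n, \nabla_x u_s^n\rangle - \langle \beta_s \cdot q_s^n, u_s^n\rangle + \langle u_s^n, \lambda_s \partial_y u_s^n\rangle - \langle h_s, u_s^n\rangle$ (and we used $\lVert u_T^n\rVert_{L^2} = \lVert \Pi_n\psi\rVert_{L^2} \leq \lVert \psi\rVert_{L^2}$). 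Here $\langle u_s^n, \lambda_s \partial_y u_s^n\rangle = 0$: since $\lambda_s$ does not depend on $y$ and $u_s^n$ is compactly supported in $D$, an integration by parts in $y$ reduces this to $\int_{\R^d} \lambda_s(x)\tfrac12\bigl[u_s^n(x,y)^2\bigr]_{y \to \ell}^{y \to \infty}\,\d x = 0$. For the coercive term we invoke Assumption \ref{ass:bspde}\ref{it:nondeg_bspde} in the sharp form $\langle \nabla_x u_s^n, a_s \nabla_x u_s^n\rangle \geq c_{a,\alpha}\lVert \nabla_x u_s^n\rVert_{L^2}^2 + \tfrac12 \lVert \alpha_s^{\top}\nabla_x u_s^n\rVert_{L^2}^2$ and bound the cross term by $2\lvert\langle \alpha_s q_s^n, \nabla_x u_s^n\rangle\rvert \leq (1+\varepsilon)\lVert \alpha_s^{\top}\nabla_x u_s^n\rVert_{L^2}^2 + \tfrac{1}{1+\varepsilon}\lVert q_s^n\rVert_{L^2}^2$; the surplus $\varepsilon\lVert \alpha_s^{\top}\nabla_x u_s^n\rVert_{L^2}^2 \leq \varepsilon C\lVert \nabla_x u_s^n\rVert_{L^2}^2$ is absorbed into $c_{a,\alpha}\lVert \nabla_x u_s^n\rVert_{L^2}^2$ for $\varepsilon$ small (using that $\alpha$ is bounded), while the remaining $\tfrac{1}{1+\varepsilon}\lVert q_s^n\rVert_{L^2}^2$ leaves a genuine positive fraction of $\lVert q_s^n\rVert_{L^2}^2$ on the left. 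The other terms in $R_s^n$ are handled by Young's inequality, producing $C\lVert u_s^n\rVert_{L^2}^2$ and $\lVert h_s\rVert_{L^2}^2$ (or $2\lvert\langle h_s, u_s^n\rangle\rvert$, if one prefers to keep that term) on the right. Taking suprema in $t$, estimating the martingale term with the Burkholder--Davis--Gundy inequality of Proposition \ref{prop:bdg}, and applying Gr\"onwall's lemma then bounds $\ev\sup_{0 \leq t \leq T}\lVert u_t^n\rVert_{L^2}^2 + \ev\int_0^T\bigl(\lVert \nabla_x u_s^n\rVert_{L^2}^2 + \lVert q_s^n\rVert_{L^2}^2\bigr)\,\d s + \ev[m^n]_T$ by $C\bigl(\ev\lVert \psi\rVert_{L^2}^2 + \ev\int_0^T\lVert h_s\rVert_{L^2}^2\,\d s\bigr)$, uniformly in $n$.

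By reflexivity of Hilbert spaces, extract a subsequence along which $u^n \rightharpoonup u$ in $L^2_{\bb{F}}([0,T]; H^{1,0}(D))$, $q^n \rightharpoonup q$ in $L^2_{\bb{F}}([0,T]; L^2(D;\R^d))$, and $m^n \rightharpoonup m$ in $\cal{M}^2_{\bb{F}}([0,T]; L^2(D))$. For each fixed $j$ and $n \geq j$, the approximate equation tested against $e_j$ is linear in $(u^n, \nabla_x u^n, q^n)$ with fixed coefficients, and the It\^o integral against $W$ and the evaluation $m \mapsto \langle m_{\cdot}, e_j\rangle$ are bounded linear (hence weakly continuous) operations, so passing to the limit shows that $(u,q,m)$ satisfies \eqref{eq:bspde_solution} for every $\varphi = e_j$, hence for all $\varphi$ in the span, hence by density for all $\varphi \in H^1_0(D)$; weak continuity of $m \mapsto m_0$ and of the functionals $m \mapsto \ev[\langle m_t, e_j\rangle W_t^l G]$ (for bounded $\F_t$-measurable $G$), together with the fact that $\langle m^n, e_j\rangle W^l$ is a true martingale for $n \geq j$, forces $m_0 = 0$ and that each $\langle m, e_j\rangle$ is strongly orthogonal to each $W^l$, i.e.\@ $m$ is very strongly orthogonal to $W$. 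Thus $(u,q,m) \in \bb{H}(D)$ is a solution in the sense of Definition \ref{def:bspde_solution}. That $u \in D_{\bb{F}}^2([0,T]; L^2(D))$ and the estimate \eqref{eq:bspde_linear_estimate} now follow by applying Theorem \ref{thm:ito_2_dim} directly to this solution: with $u^1 \equiv 0$, $u^2 = u$, coefficient $h = \lambda \geq 0$ (bounded, hence in $L^{\infty}_{\bb{F}}([0,T]; L^{\infty}(\R^d))$), $\zeta^2 \in L^2_{\bb{F}}([0,T]; H^{-1,0}(D))$ the first-order part of the drift, and martingale $N = \int_0^{\cdot}q_s\,\d W_s + m$, Equation \eqref{eq:ito_formula_2_dim} reproduces the identity above for $u$ in place of $u^n$, now with an extra nonnegative term $2(\xi_T - \xi_t)$ appearing on the coercive side, which is simply dropped; the same Young and Gr\"onwall estimates, this time keeping the $\langle h_s, u_s\rangle$ term, yield \eqref{eq:bspde_linear_estimate}. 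Uniqueness follows by applying the identical computation to the difference of two solutions, which solves the equation with $\psi = 0$ and $h = 0$: Gr\"onwall forces $\ev\lVert u_t - u_t'\rVert_{L^2}^2 = 0$ for all $t$, and then \eqref{eq:bspde_linear_estimate} forces $\nabla_x u = \nabla_x u'$, $q = q'$ in $L^2_{\bb{F}}$ and $m = m'$ in $\cal{M}^2_{\bb{F}}$.

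The main obstacle is the energy estimate, and within it the cross term $\langle \alpha_s q_s^n, \nabla_x u_s^n\rangle$: a naive Young's inequality generates a multiple of $\lVert q_s^n\rVert_{L^2}^2$ with coefficient at least $1$, which cannot be absorbed, and it is precisely the sharp nondegeneracy $a_t - \tfrac12\alpha_t\alpha_t^{\top} \geq c_{a,\alpha}I_d$ that leaves just enough coercivity in $\nabla_x u^n$ to soak up the resulting surplus. The more novel difficulty — the $y$-degeneracy together with the absent boundary condition — has already been absorbed into Theorem \ref{thm:ito_2_dim}, whose output $\xi$ conveniently appears with the sign that renders it harmless here; the remaining passage to the limit in the Galerkin scheme is routine functional analysis.
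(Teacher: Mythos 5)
Your proposal is correct and follows essentially the same route as the paper's proof: a Galerkin approximation built from a $C_c^{\infty}(D)$-orthonormal basis with $H^1_0(D)$-dense span, a finite-dimensional It\^o energy identity in which the $\lambda\,\partial_y$ term vanishes by parts, absorption of the cross term $\langle \alpha_s q^n_s, \nabla_x u^n_s\rangle$ using the sharp nondegeneracy $a - \tfrac12\alpha\alpha^{\top} \geq c_{a,\alpha}I_d$, weak extraction of a limit satisfying the weak formulation and the orthogonality properties, and finally an application of Theorem~\ref{thm:ito_2_dim} to the limiting solution to obtain the c\`adl\`ag $L^2$-regularity, the a~priori estimate~\eqref{eq:bspde_linear_estimate}, and uniqueness from the estimate with $\psi=0$, $h=0$. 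The only cosmetic difference is that you expand $\langle\nabla_x u, a\nabla_x u\rangle$ directly and weight the Young inequality by $(1+\varepsilon)$, whereas the paper completes the square $\lVert\alpha^{\top}\nabla_x u + q\rVert_{L^2}^2$; these are algebraically equivalent. One small imprecision: "taking suprema, BDG, and Gr\"onwall" in that order is not quite literal — one first takes expectations so the martingale drops, applies Gr\"onwall pointwise in $t$, and only then returns with BDG for the $\ev\sup$ bound — but this does not affect the argument.
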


\begin{proof}
\textit{Existence}: Our arguments mirror those given in the proof of \cite[Theorem 3.1]{jin_1999_bspde} and \cite[Lemma 4.5]{al_hussein_bspde_2009}. The main difference lies in the fact that BSPDE \eqref{eq:bspde_linear} is not defined on the whole space $\R^d \times \R$ but only on the half-plane $D = \R^d \times (\ell, \infty)$.

Let us fix an orthonormal basis $(e_n)_{n \geq 1}$ of $L^2(D)$ consisting of elements in $C^{\infty}_c(D)$, such that the linear span of $(e_n)_{n \geq 1}$ of $L^2(D)$ is dense in $H^1_0(D)$. Note that it is not possible to choose $(e_n)_{n \geq 1}$ from $C^{\infty}_c(D)$ with a linear span that is dense in $H^1(D)$, since limits in $H^1(D)$ of functions in $C^{\infty}_c(D)$ vanish on $\R \times \{\ell\}$. This explains why the test functions in Definition \ref{def:bspde_solution} are from $H^1_0(D)$. Now, for $n \geq 1$ we consider the BSDE
\begin{align*}
    \d u^{ni}_t &= -\sum_{j = 1}^n \Bigl(\bigl\langle a_t : \nabla_x^2 e_j + \lambda_t \partial_y e_j, e_i\bigr\rangle u^{nj}_t  + \langle \alpha_t : \nabla_x e_j + \beta_t \cdot e_j, e_i\rangle q^{nj}_t\Bigr) \, \d t \\
    &\ \ \ - \langle h_t, e_i\rangle\, \d t  + q^{ni}_t \, \d W_t + \d m^{ni}_t
\end{align*}
with terminal condition $u^{ni}_T = \langle \psi, e_i\rangle$ for $1 \leq i \leq n$. The existence of an $\bb{F}$-progressively measurable solution $u^{ni} \in D_{\bb{F}}^2([0, T])$, $q^{ni} \in L^2_{\bb{F}}([0, T]; \R^d)$, $m^{ni} \in \cal{M}^2_{\bb{F}}([0, T])$ for $1 \leq i \leq n$ is guaranteed by \cite[Example 1.20]{carmona_mfg_ii_2018}. % TODO: find different reference, e.g. Krause and Popier
Next, we define $u^n \in D_{\bb{F}}^2([0, T]; H^2(D))$ by $u^n_t(x, y) = \sum_{i = 1}^n u^{ni}_t(x, y) e_i(x, y)$ and, similarly, $q^n \in L_{\bb{F}}^2([0, T]; H^2(D; \R^d))$ and $m^n \in \cal{M}_{\bb{F}}^2([0, T]; H^2(D))$. Then let $\pi_n$ denote the orthogonal projection of $L^2(D)$ onto the linear span of $e_1$,~\ldots, $e_n$, which keeps $u^n_t$, $q^n_t$, and $m^n_t$ fixed. This allows us to rewrite the finite-dimensional linear BSDE above in the form
\begin{align} \label{eq:fd_adjoint}
\begin{split}
    \d u^n_t(x, y) &= -\pi_n\Bigl(a_t : \nabla_x^2 u^n_t + \lambda_t\partial_y u^n_t + h_t + \alpha_t : \nabla_x q^n_t + \beta_t \cdot q^n_t\Bigr)(x, y) \, \d t \\
    &\ \ \ + q^n_t(x, y) \, \d W_t + \d m^n_t(x, y)
\end{split}
\end{align}
for $(x, y) \in \R \times (\ell, \infty)$ with terminal condition $u^n_T(x, y) = (\pi_n\psi)(x, y)$. 

Next, we apply the generalisation of It\^o's formula from Theorem \ref{thm:ito} for Hilbert space-valued semimartingales with jumps to obtain
\begin{align} \label{eq:fd_galerkin_ito}
\begin{split}
    \lVert u^n_t\rVert_{L^2}^2 &= \lVert \pi_n \psi\rVert_{L^2}^2 + \int_t^T 2\Bigl(\bigl\langle \nabla_x \cdot (a_s \nabla_x u^n_s) + \beta_s \cdot q^n_s, u^n_s\bigr\rangle - \bigl\langle a_s \nabla_x u^n_s + \alpha_s q^n_s, \nabla_x u^n_s \bigr\rangle\Bigr) \, \d s \\
    &\ \ \ + \int_t^T \langle h_s, u^n_s\rangle \, \d s- \int_t^T 2\langle q^n_s, u^n_s\rangle \cdot \d W_s - \int_t^T 2 \langle u^n_{s-}, \d m^n_s\rangle \\
    &\ \ \ - \int_t^T \lVert q^n_s\rVert_{L^2}^2 \, \d s - ([m^n]_T - [m^n]_t),
\end{split}
\end{align}
where we use that $2\langle \lambda_s \partial_y u^n_s, u^n_s\rangle = \langle \partial_y \lvert u^n_s\rvert^2, \lambda_s\rangle = 0$ because $\lambda_s$ does not depend on $y$ and $u^n_s(x, y)$ vanishes for $y = \ell$. The stochastic integrals appearing in \eqref{eq:fd_galerkin_ito} are true martingales by Proposition \ref{prop:bdg}, so disappear upon taking expectations on both sides. Then we apply the same estimates as in the proof of \cite[Theorem 3.1]{jin_1999_bspde} to find that for any $\delta > 0$ it holds that
\begin{align} \label{eq:fd_galerkin_est}
\begin{split}
    \ev\lVert u^n_t\rVert_{L^2}^2 &\leq \ev\lVert \pi_n \psi\rVert_{L^2}^2 + \ev \int_t^T \Bigl(\delta \lVert \nabla_x u^n_s\rVert_{L^2}^2 + \tfrac{C_{\alpha}^2 + C_{\beta}^2 + \delta}{\delta} \lVert u^n_s\rVert_{L^2}^2 + \delta \lVert q^n_s\rVert_{L^2}^2 + \lVert h_s\rVert_{L^2}^2\Bigr) \, \d s \\
    &\ \ \ - \ev\int_t^T \Bigl(2 \lVert \sigma_s^{\top} \nabla_x u^n_s\rVert_{L^2}^2 + \lVert \alpha_s^{\top} \nabla_x u^n_s + q^n_s\rVert_{L^2}^2\Bigr) \, \d s - \ev([m^n]_T - [m^n]_t),
\end{split}
\end{align}
where $\sigma_s(x, y)\sigma_s^{\top}(x, y) = a_s(x, y) - \tfrac{\alpha_s \alpha_s^{\top}}{2}$ satisfies $\sigma_s(x, y)\sigma_s^{\top}(x, y) \geq c_{a, \alpha} I_d$ by Assumption \ref{ass:bspde} \ref{it:nondeg_bspde}. For any $\eta \in (0, 1)$ we have
\begin{equation*}
    -\lVert \alpha_s^{\top} \nabla_x u^n_s + q^n_s\rVert_{L^2}^2 \leq \frac{(1 - \eta)\lvert \alpha_s\rvert^2}{\eta} \lVert \nabla_x u^n_s\rVert_{L^2}^2 - (1 - \eta) \lVert q^n_s\rVert_{L^2}^2.
\end{equation*}
Now, we first choose $\eta$ sufficiently close to $1$ such that $\frac{(1 - \eta)\lvert \alpha_s\rvert^2}{\eta} \leq \frac{c_{a, \alpha}}{2}$, then we fix $\delta = \tfrac{c_{a, \alpha}}{2} \land \frac{1 - \eta}{2}$. In view of Equation \eqref{eq:fd_galerkin_est} this yields
\begin{align*}
    \ev\lVert u^n_t\rVert_{L^2}^2 + \ev([m^n]_T - [&m^n]_t) + \ev\int_t^T \lVert \nabla_x u^n_s\rVert_{L^2}^2 + \lVert q^n_s\rVert_{L^2}^2 \, \d s \\
    &\leq C \ev\lVert \pi_n \psi\rVert_{L^2}^2 + C \ev\int_t^T \lVert h_s \rVert + \lVert u^n_s\rVert_{L^2}^2 \, \d s
\end{align*}
for a positive constant $C > 0$ independent of $n \geq 1$ and $h$. Now, by Gr\"onwall's inequality, we can deduce that
\begin{align} \label{eq:galerkin_estimate_2}
\begin{split}
    \ev\lVert u^n_t\rVert_{L^2}^2 + \ev \lVert m^n_T - m^n_t&\rVert_{L^2}^2 + \ev\int_t^T \lVert \nabla_x u^n_s\rVert_{L^2}^2 + \lVert q^n_s\rVert_{L^2}^2 \, \d s \\
    &\leq e^{CT}\ev\lVert \pi_n \psi\rVert_{L^2}^2 + e^{CT}\ev \int_t^T \lVert h_t\rVert_{L^2}^2 \, \d t,
\end{split}
\end{align}
where we used that $\ev([m^n]_T - [m^n]_t) = \ev\bigl(\lVert m^n_T\rVert_{L^2}^2 - \lVert m^n_t\rVert_{L^2}^2\bigr) = \ev \lVert m^n_T - m^n_t\rVert_{L^2}^2$ because as a martingale $m^n$ satisfies $\ev\langle m^n_T, m^n_t\rangle = \ev \langle m^n_t, m^n_t\rangle = \ev \lVert m^n_t\rVert_{L^2}^2$. Consequently, the sequence $(u^n, q^n, m^n)_n$ is bounded in $\bb{H}(D)$ and, consequently, has a weakly convergent subsequence. Then we can proceed as in the proofs of \cite[Theorem 3.1]{jin_1999_bspde} and \cite[Lemma 4.5]{al_hussein_bspde_2009} to show that any weak limit $(u, q, m) \in \bb{H}(D)$ is a solution of BSPDE \eqref{eq:bspde_linear}. We will not provide the details and instead refer the reader to the cited references. Note that the test functions for the solutions lie in the closure of the linear of span of $(e_n)_{n \geq 1}$ in $H^1(D)$. By our choice of $(e_n)_{n \geq 1}$ this closure is given by $H^1_0(D)$, as desired. 

\textit{Uniqueness}: To prove uniqueness for BSPDE \eqref{eq:bspde_linear}, it is convenient to first establish \eqref{eq:bspde_linear_estimate} for arbitrary solutions to BSPDE \eqref{eq:bspde_linear}.
% Note that we cannot use \eqref{eq:galerkin_estimate_3} to establish uniqueness for BSPDE \eqref{eq:bspde_linear}, since it only holds for solutions obtained through Galerkin approximations. Hence, we must extend this estimate to arbitrary solutions of BSPDE \eqref{eq:bspde_linear}. 
So let us fix a solution $(u, q, m)$ of BSPDE \eqref{eq:bspde_linear}. We apply the generalisation of It\^o's formula from Theorem \ref{thm:ito_2_dim} to $u$, using that $\lambda$ is nonnegative, to obtain
\begin{align*}
    \lVert u_t\rVert_{L^2}^2 &\leq \lVert \psi\rVert_{L^2}^2 + \int_t^T 2\Bigl(\bigl\langle \nabla_x \cdot (a_s \nabla_x u_s) + \beta_s \cdot q_s + h_s, u_s\bigr\rangle - \bigl\langle a_s \nabla_x u_s + \alpha_s q_s, \nabla_x u_s \bigr\rangle\Bigr) \, \d s \\
    &\ \ \ - \int_t^T 2\langle q_s, u_s\rangle \cdot \d W_s - \int_t^T 2 \langle u_{s-}, \d m_s\rangle - \int_t^T \lVert q_s\rVert_{L^2}^2 \, \d s - ([m]_T - [m]_t),
\end{align*}
where we use that the process $\xi$ in \eqref{eq:ito_formula_2_dim} is nonnegative. Note that Theorem \ref{thm:ito_2_dim} also tells us that $u$ must be an element of $D_{\bb{F}}^2([0, T]; L^2(D))$. Then, we employ the same arguments as above to deduce the inequality
\begin{align*}
    \ev\lVert u_t\rVert_{L^2}^2 + \ev&([m]_T - [m]_t) + \ev\int_t^T \lVert \nabla_x u_s\rVert_{L^2}^2 + \lVert q_s\rVert_{L^2}^2 \, \d s \\
    &\leq C \ev\lVert \psi\rVert_{L^2}^2 + C \ev\int_t^T \lvert \langle h_s, u_s\rangle\rvert + \lVert u_s\rVert_{L^2}^2 \, \d s
\end{align*}
for a positive constant $C > 0$ independent of $h$. Then a simple application of Gr\"onwall's inequality yields the estimate \eqref{eq:bspde_linear_estimate} for $(u, q, m)$. Now, if $(u', q', m')$ is another solution to BSPDE \eqref{eq:bspde_linear} with the same terminal condition $\psi$ and input $h$, then the difference $(u - u', q - q', m - m')$ satisfies BSPDE \eqref{eq:bspde_linear} with vanishing initial condition and input. Hence, the just established bound from \eqref{eq:bspde_linear_estimate} immediately implies that the difference $(u - u', q - q', m - m')$ vanishes. Thus, BSPDE \eqref{eq:bspde_linear} has a unique solution.
\end{proof}

We can use Proposition \ref{prop:bspde_linear} and, in particular, the estimate \eqref{eq:bspde_linear_estimate} to obtain a solution to BSPDE \eqref{eq:bspde} via a fixed point argument. Let us note again that we will only prove Theorem \ref{thm:bspde} for the case $D = \R^d \times (\ell, \infty)$.

\begin{proof}[Proof of Theorem \ref{thm:bspde}]
We construct a map $\Phi \define \bb{H}(D) \to \bb{H}(D)$ as follows: for any $(u, q, m) \in \bb{H}(D)$ we define the random function $h^u \in L^2_{\bb{F}}([0, T]; L^2(D))$ by
\begin{align*}
    h^u_t(x, y) = H_t\bigl(x, y, u_t(x, y), \nabla_x u_t(x, y)\bigr) + F_t\bigl(x, y, u_t, \nabla_x u_t\bigr).
\end{align*}
Note that Assumption \ref{ass:bspde} \ref{it:growth_bspde} implies that
\begin{align} \label{eq:h_bound}
\begin{split}
    \ev \int_0^T \lVert h^u_t\rVert_{L^2}^2 \, \d t &\leq \bigl(3 C_H^2 + 2 \lVert h_F\rVert_{L^2}^2\bigr) \ev \int_0^T \lVert u_t\rVert_{L^2}^2 + \lVert \nabla_x u_t\rVert_{L^2}^2 \, \d t \\
    &\ \ \ + T\bigl(3C_H^2\lVert h_H\rVert_{L^2}^2 + 2\lVert h_F\rVert_{L^2}^2\bigr) \\
    &< \infty,
\end{split}
\end{align}
so that $h^u$ is indeed an element of $L^2_{\bb{F}}([0, T]; L^2(D))$. Then, we define $\Phi(u, q, m)$ as the unique solution $(u', q', m')$ of the linear BSPDE \eqref{eq:bspde_linear} with input $h$ replaced by $h^u$. 

Now, fix two elements $(u^1, q^1, m^1)$, $(u^2, q^2, m^2) \in \bb{H}(D)$. Then the difference $\Delta = (\Delta^u, \Delta^q, \Delta^m) = \Phi(u^1, q^1, m^1) - \Phi(u^2, q^2, m^2)$ solves the linear BSPDE \eqref{eq:bspde_linear} with input $h^{u^1} - h^{u^2}$ and initial condition zero. Hence, using \eqref{eq:bspde_linear_estimate} we get that
\begin{align} \label{eq:bspde_cutoff_estimate}
    \ev\lVert \Delta^u_t\rVert_{L^2}^2  + \ev\lVert \Delta^m_T - \Delta^m_t&\rVert_{L^2}^2 + \ev\int_t^T \lVert \nabla_x \Delta^u_s \rVert_{L^2}^2 + \lVert \Delta^q_s \rVert_{L^2}^2 \, \d s \notag \\
    &\leq C\ev\int_t^T \bigl\lvert \bigl\langle \Delta^u_s, h^{u^1}_s - h^{u^2}_s\bigr\rangle \bigr\rvert \, \d s \notag \\
    &\leq \frac{C}{2\epsilon} \ev \int_t^T \lVert \Delta^u_s\rVert_{L^2}^2 \, \d t + \frac{\epsilon}{2} \ev \int_t^T \bigl\lVert h^{u^1}_s - h^{u^2}_s\bigr\rVert_{L^2}^2 \, \d s.
\end{align}
Our goal is to estimate the term $\ev\bigl\lVert h^{u^1}_s - h^{u^2}_s \bigr\rVert_{L^2}^2$. First, by Assumption \ref{ass:bspde} \ref{it:lipschitz_bspde} on $H$ we have that
\begin{align*}
    \int_D \Bigl\lvert H_s&\bigl(x, y, u^1_s(x, y), \nabla_x u^1_s(x, y)\bigr) - H_s\bigl(x, y, u^2_s(x, y), \nabla_x u^2_s(x, y)\bigr)\Bigr\rvert^2 \, \d x\d y \\
    &\leq \int_D 2 C_H^2 \Bigl(\bigl\lvert u^1_s(x, y) - u^2_s(x, y)\bigr\rvert^2 + \bigl\lvert \nabla_x u^1_s(x, y) - \nabla_x u^2_s(x, y)\bigr\rvert^2\Bigr) \, \d x \d y \\
    &\leq 2 C_H^2 \lVert u^1_s - u^2_s\rVert_{H^{1, 0}}^2.
\end{align*}
Secondly, Assumption \ref{ass:bspde} \ref{it:lipschitz_bspde} on $F$ yields
\begin{align*}
    \int_D \bigl\lvert F_s&\bigl(x, y, u^1_s, \nabla_x u^1_s\bigr) - F_s\bigl(x, y, u^2_s, \nabla_x u^2_s\bigr)\bigr\rvert^2 \, \d x \d y \\
    &\leq 2\lVert h_F\rVert_{L^2}^2 \bigl(\lVert u^1_s - u^2_s \rVert_{L^2}^2 + \lVert \nabla_x u^1_s - \nabla_x u^2_s \rVert_{L^2}^2\bigr) \\
    &\leq 2 \lVert h_F\rVert_{L^2}^2 \lVert u^1_s - u^2_s\rVert_{H^{1, 0}}^2
\end{align*}
Combining the previous two displays implies that $\ev\bigl\lVert h^{u^1}_s - h^{u^2}_s \bigr\rVert_{L^2}^2 \leq C_0\ev \lVert u^1_s - u^2_s\rVert_{H^{1, 0}}^2$, where $C_0 = 3C_H^2(\lVert h_H\rVert_{L^2}^2 \lor 1) + 2 \lVert h_F\rVert_{L^2}^2$. We insert this inequality into \eqref{eq:bspde_cutoff_estimate} and then fix $\epsilon = C_0^{-1}$ to find that
\begin{align} \label{eq:bspde_estimate_1}
\begin{split}
    \ev\lVert \Delta^u_t\rVert_{L^2}^2  + \ev\lVert \Delta^m_T - \Delta^m_t&\rVert_{L^2}^2 + \ev\int_t^T \lVert \nabla_x \Delta^u_s \rVert_{L^2}^2 + \lVert \Delta^q_s \rVert_{L^2}^2 \, \d s \\
    &\leq \frac{C C_0}{2} \ev \int_t^T \lVert \Delta^u_s\rVert_{L^2}^2 \, \d t + \frac{1}{2} \ev \int_t^T \lVert u^1_s - u^2_s\rVert_{H^{1, 0}}^2 \, \d s.
\end{split}
\end{align}
Now, we proceed as in \cite[Example 1.20]{carmona_mfg_ii_2018}. We multiply this inequality by $e^{\eta t}$ for $\eta \geq 1$ and then integrate from over $[0, T]$ to obtain
\begin{align*}
    \int_0^T &e^{\eta t} \ev\lVert \Delta^u_t\rVert_{L^2}^2 \, \d t + \int_0^T \frac{1}{\eta}(e^{\eta s} - 1) \bigl(\ev\lVert \nabla_x \Delta^u_s \rVert_{L^2}^2 + \ev\lVert \Delta^q_s \rVert_{L^2}^2\bigr) \, \d s \\
    &\leq \frac{C C_0}{2\eta} \int_0^T e^{\eta s} \ev\lVert \Delta^u_s\rVert_{L^2}^2 \, \d s + \frac{1}{2} \int_0^T \frac{1}{\eta}(e^{\eta s} - 1) \ev \lVert u^1_s - u^2_s\rVert_{H^{1, 0}}^2\, \d s.
\end{align*}
Next, we evaluate Equation \eqref{eq:bspde_estimate_1} at $t = 0$, multiply it by $\frac{1}{2\eta}$, and add it to the inequality above. This yields
\begin{align*}
    \int_0^T e^{\eta t} \ev\lVert\Delta^u_t\rVert_{L^2}^2& \, \d t + \frac{1}{2\eta} \ev \lVert m_T\rVert_{L^2}^2 + \int_0^T \theta(s) \bigl(\ev\lVert \nabla_x \Delta^u_s \rVert_{L^2}^2 + \ev\lVert \Delta^q_s \rVert_{L^2}^2\bigr) \, \d s \\
    &\leq \frac{3C C_0}{4\eta} \int_0^T e^{\eta s} \ev\lVert \Delta^u_s\rVert_{L^2}^2 \, \d s + \frac{1}{2} \int_0^T \theta(s) \ev \lVert u^1_s - u^2_s\rVert_{H^{1, 0}}^2\, \d s,
\end{align*}
where $\theta(s) = \frac{1}{2\eta} + \frac{1}{\eta}(e^{\eta s} - 1)$.
Then we fix $\eta \geq 1$ large enough so that $\frac{3C C_0}{4\eta} < \frac{1}{4}$ and rearrange, which finally implies
\begin{equation} \label{eq:bspde_estimate_2}
    \int_0^T \theta(t) \bigl(\ev\lVert\Delta^u_t\rVert_{H^{1, 0}}^2 + \ev\lVert \Delta^q_t \rVert_{L^2}^2\bigr) \, \d t + \frac{2}{3\eta} \ev \lVert m_T\rVert_{L^2}^2 \leq \frac{2}{3} \int_0^T \theta(t) \ev \lVert u^1_t - u^2_t\rVert_{H^{1, 0}}^2\, \d t.
\end{equation}
Here we used that $\theta(t) \leq e^{\eta t}$. We can equipped $\bb{H}(D)$ with the norm
\begin{equation*}
    (u, q, m) \mapsto \int_0^T \theta(t) \bigl(\ev\lVert\Delta^u_t\rVert_{H^{1, 0}}^2 + \ev\lVert \Delta^q_t \rVert_{L^2}^2\bigr) \, \d t + \frac{2}{3\eta} \ev \lVert m_T\rVert_{L^2}^2,
\end{equation*}
which turns $\bb{H}(D)$ into a Banach space. Moreover, owing to \eqref{eq:bspde_estimate_2}, the map $\Phi$ is a contraction on this Banach space and by the Banach fixed point theorem has a unique fixed point $(u, q, m)$. This fixed point is a solution to BSPDE \eqref{eq:bspde} in the sense of Definition \eqref{def:bspde_solution}. Moreover, since any other solution to BSPDE \eqref{eq:bspde} is also a fixed point of $\Phi$, the solution $(u, q, m)$ is unique.

Lastly, we derive \eqref{eq:bspde_semilinear_estimate} for $(u, q, m)$. Since $(u, q, m)$ solves BSPDE \eqref{eq:bspde_linear} with input $h^u$, the estimate \eqref{eq:bspde_linear_estimate} yields
\begin{align*}
    \ev\lVert u_t\rVert_{L^2}^2 + \ev\lVert m_T - m_t\rVert_{L^2}^2 + \ev&\int_t^T \lVert \nabla_x u_s \rVert_{L^2}^2 + \lVert q_s \rVert_{L^2}^2 \, \d s \\
    &\leq C\biggl(\ev\lVert \psi \rVert_{L^2}^2 + \ev\int_t^T  \lvert \langle u_s, h^u_s\rangle\rvert \, \d s\biggr)
\end{align*}
Now, we proceed as above, using a bound analogous \eqref{eq:h_bound} to estimate $\ev \int_t^T \lVert h^u_s\rVert_{L^2}^2 \, \d s$, in order to establish \eqref{eq:bspde_semilinear_estimate}. We will not provide the details here. 
\end{proof}

\section{Application to a McKean--Vlasov Control Problem with Killing at a State-Dependent Intensity} \label{sec:equivalence}

\subsection{Control Problem for the Stochastic Fokker--Planck Equation} \label{sec:sfpe}

In this subsection we reformulate the McKean--Vlasov control problem introduced at the beginning of Subsection \ref{sec:main_results_application} as the control of the stochastic Fokker--Planck equation \eqref{eq:sfpe_joint}, which means proving Proposition \ref{prop:sfpe_joint}. Subsequently, we introduce a process that describes the variations of the state process $\mu$ under infinitesimal variations of the control $g$.

\begin{proof}[Proof of Proposition \ref{prop:sfpe_joint}]
We prove the three statements of the proposition in sequence. That $\mu = (\L(X_t, \Lambda_t \vert \F_T))_{0 \leq t \leq T}$ solves SPDE \eqref{eq:sfpe_joint} follows along the same lines as the proof of Theorem 1.9 in \cite{hammersley_weak_ex_uni_2021}, so we skip the details here and immediately move to the second statement of the proposition. 

Our next goal is to construct a solution to SPDE \eqref{eq:sfpe_joint}. We proceed in two steps. 

\textit{Step 1}: For arbitrary coefficients $\beta \in L_{\bb{F}}^{\infty}([0, T]; L^{\infty}(\R^2))$, $h \in L_{\bb{F}}^2([0, T]; H^{-1, 0}(\R^2))$ let us consider the linear SPDE
\begin{equation} \label{eq:sfpe_random}
    \d \langle \mu_t, \varphi\rangle = \bigl(\bigl\langle \mu_t, \beta_t \partial_x \varphi + \lambda_t \partial_y \varphi + a_t \partial_x^2 \varphi\bigr\rangle + h_t(\varphi)\bigr)\, \d t + \langle \mu_t, \sigma_0(t) \partial_x \varphi \rangle \, \d W_t
\end{equation}
for $\varphi \in C_c^2(\R^2)$ with initial condition $\mu_0 \in L^2(\R^2)$. 
% The generator $\L_h$ is defined by
% \begin{equation*} 
%     \L_h \varphi(t, x, y) = h_t(x, y)\partial_x \varphi(x, y) + \lambda_t(x)\partial_y \varphi(x, y) + a_t(x) \partial_x^2 \varphi(x, y)
% \end{equation*}
% for $\varphi \in C_b^2(\R^2)$ and $(t, x, y) \in [0, T] \times \R^2$. 
It is a straightforward exercise to obtain a unique strong solution $\mu \in C_{\bb{F}}^2([0, T]; L^2(\R^2)) \cap L_{\bb{F}}^2([0, T]; H^{1, 0}(\R^2))$ to SPDE \eqref{eq:sfpe_random} through a Galerkin approximation. The continuity of the trajectories of $\mu$ as a process with values in $L^2(\R^2)$ follows from Theorem \ref{thm:ito_2_dim}. Using the generalisation of It\^o's formula from Theorem \ref{thm:ito_2_dim} and the fact that the martingale term therein vanishes, since $\langle \mu_t, \sigma_0(t) \partial_x \mu_t\rangle = 0$, we obtain the standard estimate
\begin{equation} \label{eq:sfpe_estimate}
    \sup_{0 \leq s \leq t} \lVert \mu_s\rVert_{L^2}^2 + \int_0^t \lVert \mu_s\rVert_{H^{1, 0}}^2 \, \d s \leq C\lVert \mu_0\rVert_{L^2}^2 + C \int_0^t \bigl(\lvert \langle \mu_s, \beta_s \partial_x \mu_s\rangle\rvert + \lvert h_s(\mu_s)\rvert\bigr) \, \d s
\end{equation}
for $0 \leq t \leq T$ and a constant $C > 0$ depending on the coefficients $\lambda$, $a$, and $\sigma_0$ as well as the time horizon $T$. Moreover, if the initial condition $\mu_0$ is nonnegative with $M_2^2(\mu_0) < \infty$ and $h = 0$, then by enlarging the constant if necessary we find that $\ev \sup_{0 \leq t \leq T} M_2^2(\mu_t) \leq e^{C + C \lVert \beta \rVert_{L^{\infty}}}M_2^2(\mu_0)$.

\textit{Step 2}: Next, we employ a fixed point argument to find a solution for the nonlinear SPDE \eqref{eq:sfpe_joint}. Let us define $\bb{S}$ as the space of elements $\rho$ in $C_{\bb{F}}^2([0, T]; L^2(\R^2)) \cap L_{\bb{F}}^2([0, T]; H^{1, 0}(\R^2))$ such that $\ev \sup_{0 \leq t \leq T} M_2^2(\rho_t) \leq e^{C + C C_b} \ev[\lvert \xi\rvert^2 + \lvert \zeta\rvert^2]$, where $C$ is the constant from above and $C_b$ is the uniform bound on $b$ guaranteed by Assumption \ref{ass:spfe_joint} \ref{it:spde_growth}. The space $\bb{S}$ is a closed subset of the Banach space $C_{\bb{F}}^2([0, T]; L^2(\R^2)) \cap L_{\bb{F}}^2([0, T]; H^{1, 0}(\R^2))$ and, hence, a complete metric space. Consequently, we can apply the Banach fixed point theorem to contractions on $\bb{S}$. Moreover, we know that $\langle \rho_t - \rho_s, \varphi\rangle \to 0$ as $s \to t$ for any $\varphi \in L^2(\R^2)$, because $\rho$ has continuous trajectories. Since $\ev \sup_{0 \leq t \leq T} M_2^2(\rho_t) < \infty$, it is not difficult to deduce that a.s.\@ for all $f \in C_b(\R^2)$ it holds that $\langle \rho_t -  \rho_s, f\rangle \to 0$ as $s \to t$. However, the Borel $\sigma$-algebras on $\P^2(\R^2)$ induced by weak convergence and convergence in the $2$-Wasserstein distance coincide, so that $\rho$ is $\bb{F}$-progressively measurable as a process with values in $\P^2(\R^2)$. This ensure that for any $\rho \in \bb{S}$, the map $[0, T] \times \Omega \times \R^2 \mapsto (t, \omega, x, y) \mapsto b\bigl(t, x, \cal{S}(\rho_t), g_t(x, y)\bigr)$ is $\bb{F}$-progressively measurable.

Now, we construct a map $\Phi$ from $\bb{S}$ to itself as follows: for $\mu \in \bb{S}$ we define the process $\beta^{\mu}$ in $L_{\bb{F}}^{\infty}([0, T]; L^{\infty}(\R^2))$ by $\beta^{\mu}_t(x, y) = b\bigl(t, x, \cal{S}(\mu_t), g_t(x, y)\bigr)$ for $(t, \omega,x, y) \in [0, T] \times \Omega \times \R^2$. Then, we let $\Phi(\mu) \in C_{\bb{F}}^2([0, T]; L^2(\R^2)) \cap L_{\bb{F}}^2([0, T]; H^{1, 0}(\R^2))$ denote the unique strong solution to SPDE \eqref{eq:sfpe_random} with $\beta$ replaced by $\beta^{\mu}$, $h = 0$, and initial condition $\mu_0 = \L(\xi, \zeta)$, which is in $L^2(\R^2)$ by Assumption \ref{ass:spfe_joint} \ref{it:spde_ic}. From \eqref{eq:sfpe_estimate} and an application of Gr\"onwall's inequality, it follows that $\sup_{0 \leq t \leq T} \lVert \Phi(\mu)_t\rVert_{L^2}$ is bounded by a constant independent of $\mu$. Moreover, by the discussion below Equation \eqref{eq:sfpe_estimate}, $\ev \sup_{0 \leq t \leq T} M_2^2(\Phi(\mu)_t) \leq e^{C + C C_b} \ev[\lvert \xi\rvert^2 + \lvert \zeta\rvert^2]$, so that $\Phi(\mu) \in \bb{S}$. We will show that $\Phi$ is a contraction if $T$ is sufficiently small. For two flows $\mu^1$, $\mu^2 \in \bb{S}$ we can consider the difference $\Delta = (\Delta_t)_{0 \leq t \leq T}$ given by $\Delta_t = \Phi(\mu^1)_t - \Phi(\mu^2)_t$. The process $\Delta$ solves SPDE \eqref{eq:sfpe_random} with $\beta$ replaced by $\beta^{\mu_1}$ and $h$ defined by $h_t(\varphi) = \bigl\langle \Phi(\mu^2)_t, (\beta^{\mu_1} - \beta^{\mu_2}) \partial_x \varphi\bigr\rangle$. Thus, the estimate \eqref{eq:sfpe_estimate} implies
\begin{align} \label{eq:bounds_delta}
\begin{split}
    \sup_{0 \leq s \leq t} \lVert \Delta_t&\rVert_{L^2}^2 + \int_0^t \lVert \Delta_s\rVert_{H^{1, 0}}^2 \, \d s \\
    &\leq C \int_0^t \Bigl(\lvert \langle \Delta_s, \beta^{\mu_1}_s \partial_x \Delta_s\rangle\rvert + \bigl\lvert \bigl\langle \Phi(\mu^2)_s, (\beta^{\mu_1}_s - \beta^{\mu_2}_s) \partial_x \Delta_s\bigr\rangle\bigr\rvert\Bigr) \, \d s.
\end{split}
\end{align}
The quantity $\lvert \langle \Delta_s, \beta^{\mu_1}_s \partial_x \Delta_s\rangle\rvert$ is easily estimated by $\frac{C_b^2}{2\epsilon} \lVert \Delta_s\rVert_{L^2}^2 + \frac{\epsilon}{2} \lVert \Delta_s \rVert_{H^{1, 0}}^2$ for an $\epsilon > 0$ that we shall choose later. Next, by Assumption \ref{ass:spfe_joint} \ref{it:spde_lipschitz} it holds that
\begin{align*}
    \bigl\lvert \bigl\langle \Phi(\mu^2)_s, (\beta^{\mu_1}_s - \beta^{\mu_2}_s) \partial_x \Delta_s\bigr\rangle\bigr\rvert &\leq \lVert \Phi(\mu^2)_s\rVert_{L^2} \lVert \Delta_s\rVert_{H^{1, 0}} C_b \lVert \cal{S}(\mu^1_s) - \cal{S}(\mu^2_s)\rVert_{L^2} \\
    &\leq \frac{C_b^2 \lVert \Phi(\mu^2)_s\rVert_{L^2}^2}{2\epsilon}\lVert \mu^1_s - \mu^2_s\rVert_{L^2}^2 + \frac{\epsilon}{2}\lVert \Delta_s \rVert_{H^{1, 0}}^2.
\end{align*}
Here we used in the second inequality that the restriction of the map $\cal{S}$ defined in Equation \eqref{eq:subprobability_function} to $L^2(\R^2) \cap L^1(\R)$, which takes values in $L^2(\R)$, is $1$-Lipschitz with respect to $\lVert \cdot \rVert_{L^2}$. Inserting the previous two estimates into \eqref{eq:bounds_delta}, choosing $\epsilon = \frac{1}{2C}$, and rearranging implies that
\begin{equation*}
    \sup_{0 \leq s \leq t} \lVert \Delta_t\rVert_{L^2}^2 + \frac{1}{2}\int_0^t \lVert \Delta_s\rVert_{H^{1, 0}}^2 \, \d s \leq C^2 C_b^2 \int_0^t \lVert \Delta_s\rVert_{L^2}^2 + \lVert \Phi(\mu^2)_s\rVert_{L^2}^2\lVert \mu^1_s - \mu^2_s\rVert_{L^2}^2 \, \d s.
\end{equation*}
Then, we apply Gr\"onwall's inequality to obtain
\begin{equation*}
    \sup_{0 \leq s \leq t} \lVert \Delta_s\rVert_{L^2}^2 + \int_0^t \lVert \Delta_s\rVert_{H^{1, 0}}^2 \, \d s \leq e^{2 C^2C_b^2 t}t \sup_{0 \leq s \leq T}\lVert \Phi(\mu^2)_s\rVert_{L^2}^2\sup_{0 \leq s \leq t} \lVert \mu^1_s - \mu^2_s\rVert_{L^2}^2.
\end{equation*}
Since $\sup_{0 \leq s \leq T}\lVert \Phi(\mu^2)_s\rVert_{L^2}^2$ is bounded by a constant independent of $\mu^2$, we can choose $t_1 \in (0, T]$ small enough such that $\sup_{0 \leq s \leq t_1} \lVert \Delta_s\rVert_{L^2}^2 + \int_0^{t_1} \lVert \Delta_s\rVert_{H^{1, 0}}^2 \, \d s \leq \frac{1}{2}\sup_{0 \leq s \leq t_1} \lVert \mu^1_s - \mu^2_s\rVert_{L^2}^2$. Hence, $\Phi$ is a contraction when restricted to the time domain $[0, t_1]$ and so it possesses a unique fixed point $\mu$. This fixed point is clearly the unique solution to SPDE \eqref{eq:sfpe_joint} in $\bb{S}$ with input $g$ on the interval $[0, t_1]$. We can iteratively extend this solution $\mu$ to the intervals $[t_1, 2t_1 \land T]$, $[2t_1 \land T, 3t_1 \land T]$, and so forth, where \eqref{eq:sfpe_estimate} ensures that the expression $\sup_{0 \leq s \leq t} \lVert \mu_s\rVert_{L^2}^2 + \int_0^t \lVert \mu_s\rVert_{L^2}^2 \, \d s$ stays bounded for all $t \in [0, T]$. This procedure terminates after finitely many steps with a solution $\mu$ on the entire time domain $[0, T]$. The uniqueness of this solution in $\bb{S}$ follows from the uniqueness on the individual intervals $[kt_1 \land T, (k + 1)t_1 \land T]$. The estimate \eqref{eq:sfpe_uniform_bound} is a direct consequence of \eqref{eq:sfpe_estimate}.

Next, we check that the trajectories of $\mu$ lie in $C([0, T]; \P^2(\R^2))$. We already know from our definition of $\bb{S}$ that $\mu$ takes values in $C([0, T]; \P(\R^2))$. In addition, we can appeal to the SPDE satisfied by $\mu$ to show that a.s.\@ $M_2^2(\mu_s) \to M_2^2(\mu_t)$ as $s \to t$. Since convergence in $\P(\R^2)$ together with convergence of the second moment is equivalent to convergence in $\P^2(\R^2)$ (cf.\@ \cite[Definition 6.8]{villani_ot_2009}), we deduce that $\mu$ takes values in $C([0, T]; \P^2(\R^2))$.

% TODO: maybe provide more detail
So far we only proved uniqueness of solutions in the space $\bb{S}$, but we need to establish uniqueness within the space of $\bb{F}$-adapted $C([0, T]; \P^2(\R^2))$-valued processes. Assume we have another $\bb{F}$-adapted solution $\mu'$ of SPDE \eqref{eq:sfpe_joint} that takes values in $C([0, T]; \P^2(\R^2))$. Then, using a mollification argument together with standard SPDE estimates, we can show that $\mu'$ must actually take values in $C_{\bb{F}}^2([0, T]; L^2(\R^2)) \cap L_{\bb{F}}^2([0, T]; H^{1, 0}(\R^2))$ as well. % TODO: maybe give a reference for what we have in mind here
But then since $\mu'$ solves SPDE \eqref{eq:sfpe_joint} it follows from the discussion below \eqref{eq:sfpe_estimate} that $\ev \sup_{0 \leq t \leq T} M_2^2(\mu'_t) < e^{C + C C_b} \ev[\lvert \xi\rvert^2 + \lvert \zeta\rvert^2]$, so that $\mu' \in \bb{S}$. Then, uniqueness in $\bb{S}$ guarantees that $\mu' = \mu$. 

The third assertion of the proposition is a simple application of \cite[Theorem 1.3.]{lacker_mimicking_2020}.

\end{proof}

% TODO: mention that cost of any g is lower bounded by cost of optimal semiclosed loop control
Next, we want to study the variation of $\mu$ for infinitesimal perturbations of $g$. Let $h \define [0, T] \times \Omega \times \R^2 \to \R^{d_G}$ be a bounded $\bb{F}$-progressively measurable random function such that $g^{\epsilon} = g + \epsilon h$ maps into $G$ for all $\epsilon > 0$. Denote the unique solution to SPDE \eqref{eq:sfpe_joint} with input $g^{\epsilon}$ by $\mu^{\epsilon}$ and define $\cal{V}^{\epsilon} = \frac{1}{\epsilon} (\mu^{\epsilon} - \mu)$. We want to show that $\cal{V}^{\epsilon}$ converges in $C_{\bb{F}}^2([0, T]; L^2(\R^2)) \cap L_{\bb{F}}^2([0, T]; H^{1, 0}(\R^2))$ to the solution of the random linear SPDE
\begin{align} \label{eq:spde_variation}
\begin{split}
    \d \langle \cal{V}_t, \varphi\rangle &= \bigl\langle \cal{V}_t, \L_2\varphi(t, \cdot, \cdot, \nu_t, g_t)\big\rangle \, \d t + \langle \cal{V}_t, \sigma_0(t) \partial_x \varphi\rangle \, \d W_t + \bigl\langle \mu_t, \partial_g b_1(t, \cdot) h_t \partial_x \varphi\bigr\rangle \, \d t \\
    % &\ \ \ + \biggl(\int_{\R^2} e^{-\lvert y\rvert}\bigl\langle \mu_t, Db(t, \cdot, \nu_t, g_t)(x)\partial_x \varphi\bigr\rangle \cal{V}_t(x, y) \, \d x\d y\biggr) \, \d t
    &\ \ \ + \biggl(\int_{\R \times [0, \infty)} \bigl\langle \cal{V}_t, e^{-\cdot}Db_0\bigl(t, x, \nu_t\bigr)\bigr\rangle \partial_x \varphi(x, y) \, \d \mu_t(x, y)\biggr) \, \d t
\end{split}
\end{align}
for $\varphi \in H^1(\R^2)$ with initial condition $\cal{V}_0 = 0$. Here we recall that $b_0$ and $b_1$ are such that $b(t, x, v, g') = b_0(t, x, v) + b_1(t, x, g')$ for $(t, x, v, g') \in [0, T] \times \R \times \cal{M}^2_{\leq 1}(\R) \times G$. Note that $b_1$ is linear in $g'$, so that $\partial_g b_1$ does not depend on $g'$ and, therefore, we write $\partial_g b_1(t, x)$ instead of $\partial_g b_1(t, x, g')$. Let us begin with the following lemma.

\begin{lemma} \label{eq:lfd_s}
Assume that $F \define \cal{M}^2_{\leq 1}(\R) \to \R$ has a linear functional derivative. Then $F \circ \cal{S} \define \P^2(\R \times [0, \infty)) \to \R$ has a linear functional derivative $D(F \circ \cal{S})$ given by $(F \circ \cal{S})(m)(x, y) = e^{- y} DF(\cal{S}(m))(x)$ for $(x, y) \in \R \times [0, \infty)$ and $m \in \P^2(\R \times [0, \infty))$.
\end{lemma}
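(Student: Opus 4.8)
The plan is to verify the three requirements of Definition \ref{def:linear_functional_derivatve} for the candidate $D(F\circ\cal{S})(m)(x,y) := e^{-y}DF(\cal{S}(m))(x)$, exploiting that the map $\cal{S}$ from \eqref{eq:subprobability_function} is affine and behaves well with respect to second moments. First I would record the elementary properties of $\cal{S}$. From its definition, $\cal{S}(m)$ is the image (pushforward) of the finite measure $e^{-y}\,dm(x,y)$ under the projection $(x,y)\mapsto x$; hence $\cal{S}(m)(\R)\le m(\R\times[0,\infty))=1$ and, since $e^{-y}\le 1$, $M_p^p(\cal{S}(m))=\int_{\R\times[0,\infty)}e^{-y}|x|^p\,dm(x,y)\le M_p^p(m)$. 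In particular $\cal{S}$ maps $\P^2(\R\times[0,\infty))$ into $\cal{M}^2_{\leq 1}(\R)$, maps bounded sets to bounded sets, and is affine, so $\cal{S}(tm+(1-t)m')=t\,\cal{S}(m)+(1-t)\,\cal{S}(m')$. Moreover $\cal{S}$ is continuous from $\P^2$ (with the $2$-Wasserstein distance) to $\cal{M}^2_{\leq 1}(\R)$ (with $d_2$): if $m_n\to m$ in $W_2$, then $\langle\cal{S}(m_n),\varphi\rangle\to\langle\cal{S}(m),\varphi\rangle$ for every $\varphi\in C_b(\R)$ because $e^{-y}\varphi(x)$ is bounded continuous, while $M_2^2(\cal{S}(m_n))=\int e^{-y}|x|^2\,dm_n\to\int e^{-y}|x|^2\,dm$ because $W_2$-convergence forces convergence of integrals of continuous functions dominated by $|(x,y)|^2$; together with the obvious convergence of masses this yields $d_2(\cal{S}(m_n),\cal{S}(m))\to 0$.

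With these facts in place, the continuity of $D(F\circ\cal{S})$ on $\P^2(\R\times[0,\infty))\times(\R\times[0,\infty))$ follows by composition: $(m,x,y)\mapsto(\cal{S}(m),x)$ is continuous by the previous paragraph, $DF$ is continuous by hypothesis, and multiplication by the continuous factor $e^{-y}$ preserves continuity. The polynomial growth condition \ref{it:growth_lfd} of Definition \ref{def:linear_functional_derivatve}, of order $p=2$, is inherited from that of $DF$: if $\cal{K}\subset\P^2(\R\times[0,\infty))$ is bounded, then $\cal{S}(\cal{K})$ is a bounded subset of $\cal{M}^2_{\leq 1}(\R)$, so $|DF(\cal{S}(m))(x)|\le C(1+|x|^2)$ uniformly over $m\in\cal{K}$, and hence $|D(F\circ\cal{S})(m)(x,y)|=e^{-y}|DF(\cal{S}(m))(x)|\le C(1+|x|^2)\le C(1+|(x,y)|^2)$ uniformly over $m\in\cal{K}$.

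For the fundamental-theorem-of-calculus identity \ref{eq:ftc_lfd}, fix $m,m'\in\P^2(\R\times[0,\infty))$ and set $m_t=tm+(1-t)m'$, so that $\cal{S}(m_t)=t\,\cal{S}(m)+(1-t)\,\cal{S}(m')$ lies in the convex set $\cal{M}^2_{\leq 1}(\R)$. Applying the linear functional derivative of $F$ to the pair $\cal{S}(m),\cal{S}(m')\in\cal{M}^2_{\leq 1}(\R)$ gives
\begin{equation*}
    F(\cal{S}(m))-F(\cal{S}(m'))=\int_0^1\int_\R DF\bigl(\cal{S}(m_t)\bigr)(x)\,d\bigl(\cal{S}(m)-\cal{S}(m')\bigr)(x)\,dt.
\end{equation*}
Since $\cal{S}(\mu)$ is the pushforward of $e^{-y}\,d\mu$ under $(x,y)\mapsto x$, and $DF(\cal{S}(m_t))$ has at most quadratic growth while $M_2(\cal{S}(m))$ and $M_2(\cal{S}(m'))$ are finite, the change-of-variables formula yields $\int_\R DF(\cal{S}(m_t))(x)\,d\cal{S}(\mu)(x)=\int_{\R\times[0,\infty)}e^{-y}DF(\cal{S}(m_t))(x)\,d\mu(x,y)$ for $\mu\in\{m,m'\}$, hence
\begin{equation*}
    \int_\R DF\bigl(\cal{S}(m_t)\bigr)(x)\,d\bigl(\cal{S}(m)-\cal{S}(m')\bigr)(x)=\int_{\R\times[0,\infty)}D(F\circ\cal{S})(m_t)(x,y)\,d(m-m')(x,y).
\end{equation*}
Substituting this into the previous display gives exactly property \ref{eq:ftc_lfd} for $F\circ\cal{S}$, which completes the argument. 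I expect the only point requiring genuine care to be the continuity of $\cal{S}$ with respect to the second-moment part of $d_2$ — equivalently, the uniform integrability of $|x|^2$ along $W_2$-convergent sequences on $\R\times[0,\infty)$ — together with the (routine but worth stating) identification of $\cal{S}(\mu)$ as an image measure so that the defining identity \eqref{eq:subprobability_function} extends from $C_b(\R)$ to integrands of quadratic growth; everything else is bookkeeping against Definition \ref{def:linear_functional_derivatve}.
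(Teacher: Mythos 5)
Your proposal is correct and follows essentially the same route as the paper's proof: exploit that $\cal{S}$ is affine (so it interpolates linearly between $\cal{S}(m)$ and $\cal{S}(m')$), apply the linear functional derivative of $F$, and change variables via the pushforward structure of $\cal{S}$ to identify the candidate $D(F\circ\cal{S})(m)(x,y)=e^{-y}DF(\cal{S}(m))(x)$, then check the polynomial growth condition. You are somewhat more careful than the paper in explicitly verifying the continuity of $D(F\circ\cal{S})$ (via continuity of $\cal{S}$ from $\P^2$ to $(\cal{M}^2_{\leq 1}(\R),d_2)$ and composition with $DF$), a requirement of Definition \ref{def:linear_functional_derivatve} that the paper's proof does not spell out; that is a welcome addition, not a different approach.
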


\begin{proof}
Since $F$ has a linear functional derivative $DF$, we have for $m$, $m' \in \P^2(\R^2)$ that
\begin{align*}
    (F\circ \cal{S}&)(m) - (F\circ \cal{S})(m') \\
    &= \int_0^1 \int_{\R} DF\bigl(t \cal{S}(m) + (1 - t)\cal{S}(m')\bigr)(x) \, \d \bigl(\cal{S}(m) - \cal{S}(m')\bigr)(x) \, \d t \\
    &= \int_0^1 \int_{\R \times [0, \infty)} e^{- y} DF\bigl(\cal{S}(t m + (1 - t) m')\bigr)(x) \, \d (m - m')(x, y) \, \d t \\
    &= \int_0^1 \int_{\R \times [0, \infty)} D(F \circ \cal{S})(t m + (1 - t) m')(x, y) \, \d (m - m')(x, y) \, \d t,
\end{align*}
where we defined $D(F \circ \cal{S})(m)(x, y) = e^{-y} DF(\cal{S}(m))(x)$ for $(x, y) \in \R \times [0, \infty)$. Clearly, $(x, y) \mapsto D(F\circ \cal{S})(m)(x, y) = e^{-y}DF(\cal{S}(m))(x)$ has at most quadratic growth in $(x, y) \in \R \times [0, \infty)$ uniformly over $m$ in bounded subsets of $\P^2(\R \times [0, \infty))$ since $x \mapsto DF(p)(x)$ has at most quadratic growth in $x \in \R$ uniformly over $p$ in bounded subsets of $\cal{M}^2_{\leq 1}(\R)$. Hence, $D(F \circ \cal{S})$ is a linear functional derivative of $F \circ \cal{S}$.
\end{proof}

\begin{proposition} \label{prop:variation}
Let Assumptions \ref{ass:spfe_joint}, \ref{ass:adjoint}, and \ref{ass:smp} be satisfied. Fix $g \in \cal{G}_{\bb{F}}^2$ and a bounded $\bb{F}$-progressively measurable random function $h \define [0, T] \times \Omega \times \R^2 \to \R^{d_G}$, such that $g^{\epsilon} = g + \epsilon h$ maps into $G$ for all $\epsilon \geq 0$. Let $\mu^{\epsilon}$ be the solution to SPDE \eqref{eq:sfpe_joint} with input $g^{\epsilon}$ and define $\cal{V}^{\epsilon}_t = \frac{1}{\epsilon}(\mu^{\epsilon} - \mu)$, where $\mu = \mu^0$. Then $(\cal{V}^{\epsilon})_{\epsilon > 0}$ converges in $C_{\bb{F}}^2([0, T]; L^2(\R^2)) \cap L_{\bb{F}}^2([0, T]; H^{1, 0}(\R^2))$ to a solution $\cal{V}$ of SPDE \eqref{eq:spde_variation} such that $\supp \cal{V}_t \subset \R \times [0, \infty)$ for $\leb \otimes \pr$-a.e.\@ $(t, \omega) \in [0, T] \times \Omega$.
\end{proposition}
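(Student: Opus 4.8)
The plan is to differentiate the state map at $g$: (a) derive the linear SPDE solved by $\cal{V}^{\epsilon}$ by subtracting the equations for $\mu^{\epsilon}$ and $\mu$ and dividing by $\epsilon$; (b) obtain bounds on $\cal{V}^{\epsilon}$ in $C_{\bb{F}}^2([0,T];L^2(\R^2)) \cap L_{\bb{F}}^2([0,T];H^{1,0}(\R^2))$ that are uniform in $\epsilon$; (c) establish well-posedness of the limiting SPDE \eqref{eq:spde_variation}; and (d) estimate $\cal{V}^{\epsilon} - \cal{V}$ and let $\epsilon \to 0$. For (a), since $b_1$ is linear in the control we have $b_1(t,x,g^{\epsilon}_t(x,y)) - b_1(t,x,g_t(x,y)) = \epsilon\,\partial_g b_1(t,x)h_t(x,y)$, and the linear functional derivative of $b_0$ together with the identity $\nu^{\epsilon}_t - \nu_t = \cal{S}(\mu^{\epsilon}_t - \mu_t) = \epsilon\,\cal{S}(\cal{V}^{\epsilon}_t)$ and the definition of $\cal{S}$ (cf.\ Lemma \ref{eq:lfd_s}) gives $\tfrac1\epsilon\bigl(b_0(t,x,\nu^{\epsilon}_t) - b_0(t,x,\nu_t)\bigr) = \int_0^1 \langle \cal{V}^{\epsilon}_t, e^{-\cdot}Db_0(t,x,r\nu^{\epsilon}_t + (1-r)\nu_t)(\cdot)\rangle\,\d r$. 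Thus $\cal{V}^{\epsilon}$ solves the linear SPDE \eqref{eq:sfpe_random} (on a half-space $\R\times(-1,\infty)$ containing the supports of the solutions) with $\beta_t(x,y) = b(t,x,\nu_t,g_t(x,y))$ and $H^{-1,0}(\R^2)$-valued source
\[
    \varphi \mapsto \langle\mu^{\epsilon}_t, \partial_g b_1(t,\cdot)h_t\,\partial_x\varphi\rangle + \int_0^1\!\!\int_{\R^2} \mu^{\epsilon}_t(x,y)\langle\cal{V}^{\epsilon}_t, e^{-\cdot}Db_0(t,x,r\nu^{\epsilon}_t + (1-r)\nu_t)(\cdot)\rangle\,\partial_x\varphi(x,y)\,\d x\,\d y\,\d r,
\]
and since $\supp\mu^{\epsilon}_t, \supp\mu_t \subset \R\times[0,\infty)$ the same holds for $\supp\cal{V}^{\epsilon}_t$.

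For (b), I would run the a priori estimate \eqref{eq:sfpe_estimate} (obtained from Theorem \ref{thm:ito_2_dim}) for the SPDE just derived. Two ingredients make the estimate close: the bound $\sup_{\epsilon}\sup_t\lVert\mu^{\epsilon}_t\rVert_{L^2}^2 \leq C\lVert\mu_0\rVert_{L^2}^2$ from Proposition \ref{prop:sfpe_joint}, and the $x$-independent bound $\lVert e^{-\cdot}Db_0(t,x,v)(\cdot)\rVert_{L^2(\R\times[0,\infty))} \leq 2^{-1/2}\lVert h_b\rVert_{L^2}$ coming from Assumption \ref{ass:adjoint} \ref{it:lfd_l2}; together these control the source paired with $\partial_x\cal{V}^{\epsilon}_t$ by $C\lVert\cal{V}^{\epsilon}_t\rVert_{L^2}^2 + \tfrac{\delta}{2}\lVert\cal{V}^{\epsilon}_t\rVert_{H^{1,0}}^2$. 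Absorbing the $H^{1,0}$-terms and applying Grönwall yields $\sup_{\epsilon}\bigl(\ev\sup_t\lVert\cal{V}^{\epsilon}_t\rVert_{L^2}^2 + \ev\int_0^T\lVert\cal{V}^{\epsilon}_t\rVert_{H^{1,0}}^2\,\d t\bigr) < \infty$; in particular $\lVert\mu^{\epsilon}_t - \mu_t\rVert_{L^2} = \epsilon\lVert\cal{V}^{\epsilon}_t\rVert_{L^2}$ and $\lVert\nu^{\epsilon}_t - \nu_t\rVert_{L^2}$ are $O(\epsilon)$.

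Step (c) is routine: \eqref{eq:spde_variation} is a linear SPDE of the type \eqref{eq:sfpe_random} with bounded drift $\beta_t(x,y) = b(t,x,\nu_t,g_t(x,y))$ (Assumption \ref{ass:spfe_joint} \ref{it:spde_growth}) and both source terms in $L_{\bb{F}}^2([0,T];H^{-1,0}(\R^2))$ — the nonlocal one with $L^2$-norm $\leq C\lVert\cal{V}_t\rVert_{L^2}$ by the same $x$-independent bound on $e^{-\cdot}Db_0$ — so the Galerkin/fixed-point argument of Step 1 in the proof of Proposition \ref{prop:sfpe_joint} produces a unique solution $\cal{V}$ in the desired space, supported in $\R\times[0,\infty)$. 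For (d), $\cal{D}^{\epsilon} = \cal{V}^{\epsilon} - \cal{V}$ solves \eqref{eq:sfpe_random} with the same drift, zero initial data, and a source that splits into a part linear in $\cal{D}^{\epsilon}$, namely $\varphi \mapsto \int_{\R^2}\mu_t(x,y)\langle\cal{D}^{\epsilon}_t, e^{-\cdot}Db_0(t,x,\nu_t)(\cdot)\rangle\partial_x\varphi(x,y)\,\d x\,\d y$, which is absorbed exactly as in (b), plus a genuine remainder coming from replacing $(\mu^{\epsilon}_t,\nu^{\epsilon}_t,r\nu^{\epsilon}_t+(1-r)\nu_t)$ by $(\mu_t,\nu_t,\nu_t)$ and from the $O(\epsilon)$ term $\epsilon\langle\cal{V}^{\epsilon}_t, \partial_g b_1(t,\cdot)h_t\partial_x\varphi\rangle$. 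The hard part will be showing that the $L_{\bb{F}}^2([0,T];H^{-1,0}(\R^2))$-norm of this remainder tends to $0$ as $\epsilon \to 0$; here I would combine $\lVert\mu^{\epsilon}_t - \mu_t\rVert_{L^2} = O(\epsilon)$ and the uniform bound on $\cal{V}^{\epsilon}$ from (b) with the domination $\lvert Db_0(t,x,v)(z)\rvert \leq h_b(z)$ and the $\lVert\cdot\rVert_{L^2}$-continuity of $v \mapsto Db_0(t,x,v)(z)$ from Assumption \ref{ass:smp} \ref{it:smp_cont}, invoking dominated convergence. Then \eqref{eq:sfpe_estimate} and Grönwall give $\ev\sup_t\lVert\cal{D}^{\epsilon}_t\rVert_{L^2}^2 + \ev\int_0^T\lVert\cal{D}^{\epsilon}_t\rVert_{H^{1,0}}^2\,\d t \to 0$, which is the claimed convergence, and $\supp\cal{V}_t \subset \R\times[0,\infty)$ is inherited from $\supp\cal{V}^{\epsilon}_t$ under the $L^2$-limit.
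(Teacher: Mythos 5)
Your proposal is correct, and the decomposition in (a) and the uniform bound in (b) are essentially identical to the paper's. Where you genuinely diverge is in the logical order of (c)--(d): you propose to prove well-posedness of SPDE \eqref{eq:spde_variation} as a stand-alone fact (step (c)), producing the candidate limit $\cal{V}$ in advance, and only then estimate the difference $\cal{D}^{\epsilon} = \cal{V}^{\epsilon} - \cal{V}$. The paper instead shows directly that the family $(\cal{V}^{\epsilon})_{\epsilon > 0}$ is Cauchy in $C_{\bb{F}}^2([0,T];L^2(\R^2)) \cap L_{\bb{F}}^2([0,T];H^{1,0}(\R^2))$ --- by writing the SPDE satisfied by $\cal{V}^{\epsilon} - \cal{V}^{\delta}$ in the form \eqref{eq:sfpe_random} and closing the estimate using (b), the $O(\epsilon)$ convergence $\lVert\mu^{\epsilon}_t - \mu_t\rVert_{L^2} \leq \epsilon\lVert\cal{V}^{\epsilon}_t\rVert_{L^2}$, and dominated convergence combined with Assumption \ref{ass:smp}\ref{it:smp_cont} to kill the $Db_0$-remainder --- and then passes to the limit in \eqref{eq:var_eps_spde} to identify the Cauchy limit as a solution of \eqref{eq:spde_variation}. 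That way existence for \eqref{eq:spde_variation} comes for free as a byproduct, whereas your route requires you to first set up a separate Galerkin/fixed-point argument for the nonlocal linear SPDE \eqref{eq:spde_variation}. This is indeed doable (the nonlocal coupling in $\cal{V}_t$ is globally Lipschitz in the relevant norm, with constant controlled by $\lVert h_b\rVert_{L^2}\lVert\mu_t\rVert_{L^2}$, so a short-time contraction as in Step 2 of Proposition \ref{prop:sfpe_joint} works), but calling it a direct consequence of ``Step 1'' of that proof understates the work: Step 1 there treats a given exogenous source $h$, while here the source is itself linear in the unknown $\cal{V}_t$. The remaining ingredients of your step (d) --- linear part absorbed by Gr\"onwall, remainder sent to zero via the $O(\epsilon)$-bounds, the uniform bound from (b), the domination $\lvert Db_0(t,x,v)(z)\rvert \leq h_b(z)$, and the $L^2$-continuity of $v \mapsto Db_0(t,x,v)(z)$ --- coincide exactly with what the paper uses to bound its quantity $I^{\epsilon,\delta}_t$. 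So both routes work; the paper's is a touch leaner because it never needs a stand-alone well-posedness lemma for the limit equation.

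Two minor remarks. First, your source term has $\mu^{\epsilon}_t$ where the paper writes $\mu_t$; this is not an error --- it reflects a different split of $\langle \mu^{\epsilon}_t, \L_2\varphi(\nu^{\epsilon}_t,g^{\epsilon}_t)\rangle - \langle \mu_t, \L_2\varphi(\nu_t,g_t)\rangle$, freezing $\beta$ at the unperturbed drift $b(t,x,\nu_t,g_t)$ rather than the $\epsilon$-dependent one --- but you should be aware of the discrepancy when comparing. Second, the parenthetical about working on a half-space $\R\times(-1,\infty)$ is unnecessary: SPDE \eqref{eq:sfpe_random} and its a priori estimate \eqref{eq:sfpe_estimate} are already formulated on $\R^2$, and the support property $\supp\cal{V}^{\epsilon}_t \subset \R\times[0,\infty)$ follows immediately from the same property of $\mu^{\epsilon}_t$ and $\mu_t$, exactly as you state.
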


\begin{proof}
We set $\mathfrak{b}^{\epsilon}_t(x, y) = b_0\bigl(t, x, \nu^{\epsilon}_t, g^{\epsilon}_t(x, y)\bigr)$ for $(t, \omega, x, y) \in [0, T] \times \Omega \times \R^2$, where $\nu^{\epsilon}_t = \cal{S}(\mu^{\epsilon}_t)$. Our first objective is to derive an SPDE for $\cal{V}^{\epsilon}$. It holds for $\varphi \in H^1(\R^2)$ that
\begin{equation} \label{eq:differential_var}
    \d \langle \cal{V}^{\epsilon}, \varphi\rangle = \bigl\langle \cal{V}^{\epsilon}_t, \L_2\varphi(t, \cdot, \cdot, \nu^{\epsilon}_t, g^{\epsilon}_t)\big\rangle \, \d t + \langle \cal{V}^{\epsilon}_t, \sigma_0(t) \partial_x \varphi\rangle \, \d W^0_t + \bigl\langle \mu_t, \mathfrak{b}^{\epsilon}_t - \mathfrak{b}^0_t\bigr\rangle \, \d t.
\end{equation}
Let us compute the last term on the right-hand side further. We can decompose the difference $\mathfrak{b}^{\epsilon}_t(x, y) - \mathfrak{b}^0_t(x, y)$ as
\begin{align*}
    \bigl(\mathfrak{b}^{\epsilon}_t(x, y&) - b\bigl(t, x, \nu^{\epsilon}_t, g_t(x, y)\bigr)\bigr) + \bigl(b\bigl(t, x, \nu^{\epsilon}_t, g_t(x, y)\bigr) - \mathfrak{b}^0_t(x, y)\bigr) \\
    &= \bigl(b_1(t, x, g^{\epsilon}_t(x, y)) - b_1(t, x, g_t(x, y))\bigr) + \bigl(b_0(t, x, \nu^{\epsilon}_t) - b_0(t, x, \nu_t)\bigr).
\end{align*}
The function $b_1$ is linear in $g$ by Assumption \ref{ass:convexity}, which is in place by Assumption \ref{ass:smp}, so it holds that $b_1(t, x, g^{\epsilon}_t(x, y)) - b_1(t, x, g_t(x, y)) = \epsilon \partial_g b_1(t, x) h_t(x, y)$. Next, by Assumption \ref{ass:adjoint} \ref{it:lfd} and Lemma \ref{eq:lfd_s}, for $(t, x) \in [0, T] \times \R$ the map $\P^2(\R \times [0, \infty)) \ni m \mapsto b_0(t, x, \cal{S}(m))$ has the linear functional derivative $(x', y') \mapsto e^{- y'} Db_0(t, x, \cal{S}(m))(x')$, where $x' \mapsto Db_0(t, x, v)(x')$ denotes the linear functional derivative of the map $\cal{M}^2_{\leq 1}(\R) \ni v \mapsto Db_0(t, x, v)$. Thus, we obtain that
\begin{align} \label{eq:derivative_in_m}
    b_0(t, x, \nu^{\epsilon}_t) - b_0(t, x, \nu_t) &= \int_0^1 \Bigl\langle \mu^{\epsilon}_t - \mu_t, e^{- \cdot}Db_0\bigl(t, x, \eta \nu^{\epsilon}_t + (1 - \eta)\nu_t\bigr)\Bigr\rangle \, \d \eta \notag \\
    &= \int_0^1 \Bigl\langle \epsilon \cal{V}^{\epsilon}_t, e^{- \cdot}Db_0\bigl(t, x, \eta \nu^{\epsilon}_t + (1 - \eta)\nu_t\bigr)\Bigr\rangle \, \d \eta \notag \\
    &= \int_0^1 \bigl\langle \epsilon \cal{V}^{\epsilon}_t, B^{\epsilon, \eta}_t(x)\bigr\rangle \, \d \eta,
\end{align}
where $B^{\epsilon, \eta}_t(x)(x', y') = e^{- y'}Db_0\bigl(t, x, \eta \nu^{\epsilon}_t + (1 - \eta)\nu_t\bigr)(x')$ for $(t, \omega, x, x', y') \in [0, T] \times \Omega \times \R^3$. We insert the expressions $b_1(t, x, g^{\epsilon}_t(x, y)) - b_1(t, x, g_t(x, y)) = \epsilon \partial_g b_1(t, x) h_t(x, y)$ and \eqref{eq:derivative_in_m} into Equation \eqref{eq:differential_var}, which yields
\begin{align} \label{eq:var_eps_spde}
\begin{split}
    \d \langle \cal{V}^{\epsilon}_t, \varphi\rangle &= \bigl\langle \cal{V}^{\epsilon}_t, \L_2\varphi(t, \cdot, \cdot, \nu^{\epsilon}_t, g^{\epsilon}_t)\big\rangle \, \d t + \langle \cal{V}^{\epsilon}_t, \sigma_0(t) \partial_x \varphi\rangle \, \d W^0_t + \bigl\langle \mu_t, \partial_g b_1(t, \cdot) h_t \partial_x \varphi\bigr\rangle \, \d t \\
    &\ \ \ + \biggl(\int_0^1\biggl(\int_{\R \times [0, \infty)} \bigl\langle \cal{V}^{\epsilon}_t, B^{\epsilon, \eta}_t(x)\bigr\rangle \partial_x \varphi(x, y) \, \d \mu_t(x, y)\biggr) \d \eta\biggr) \, \d t
\end{split}
\end{align}
for $\varphi \in H^1(\R^2)$. Our goal is to show that the family $(\cal{V}^{\epsilon})_{\epsilon}$ is Cauchy in $C_{\bb{F}}^2([0, T]; L^2(\R^2)) \cap L_{\bb{F}}^2([0, T]; H^{1, 0}(\R^2))$. 

We begin with a uniform bound on $\sup_{0 \leq t \leq T} \lVert \cal{V}^{\epsilon}_t\rVert_{L^2}^2$. Clearly, the SPDE \eqref{eq:var_eps_spde} has the same form as SPDE \eqref{eq:sfpe_random} with $\beta$ and $h$ replaced by $\mathfrak{b}^{\epsilon}$ and
\begin{equation*}
    \mathfrak{h}^{\epsilon}_t(\varphi) = \bigl\langle \mu_t, \partial_g b_1(t, \cdot) h_t \partial_x \varphi\bigr\rangle \, \d \eta + \int_0^1\biggl(\int_{\R \times [0, \infty)} \bigl\langle \cal{V}^{\epsilon}_t, B^{\epsilon, \eta}_t(x)\bigr\rangle \partial_x \varphi(x, y) \, \d \mu_t(x, y)\biggr) \d \eta
\end{equation*}
for $\varphi \in H^{1, 0}(\R^2)$, respectively. Hence, we can apply the estimate \eqref{eq:sfpe_estimate} to see that
\begin{equation} \label{eq:var_eps_est}
    \sup_{0 \leq s \leq t} \lVert \cal{V}^{\epsilon}_s\rVert_{L^2}^2 + \int_0^t \lVert \cal{V}^{\epsilon}_s\rVert_{H^{1, 0}}^2 \, \d s \leq C \int_0^t \bigl(\lvert \langle \cal{V}^{\epsilon}_s, \mathfrak{b}^{\epsilon}_s \partial_x \cal{V}^{\epsilon}_s\rangle\rvert + \lvert \mathfrak{h}^{\epsilon}_s(\cal{V}^{\epsilon}_s)\rvert\bigr) \, \d s.
\end{equation}
Let us estimate the two terms on the right-hand side. For any $\delta > 0$ we find that $\lvert \langle \cal{V}^{\epsilon}_s, \mathfrak{b}^{\epsilon}_s \partial_x \cal{V}^{\epsilon}_s\rangle\rvert \leq \frac{C_b^2}{2\delta} \lVert \cal{V}^{\epsilon}_s\rVert_{L^2}^2 + \frac{\delta}{2} \lVert \cal{V}^{\epsilon}_s\rVert_{H^{1, 0}}^2$. Next, it holds that
\begin{align*}
    \lvert \mathfrak{h}^{\epsilon}_s(\cal{V}^{\epsilon}_s)\rvert &\leq \frac{C_b^2 \lVert h\rVert_{L^{\infty}}^2}{2 \delta} \lVert \mu_s\rVert_{L^2}^2 + \frac{\delta}{2}\lVert \cal{V}^{\epsilon}_s\rVert_{H^{1, 0}}^2 + \lVert \cal{V}^{\epsilon}_s\rVert_{L^2} \lVert \mu_s\rVert_{L^2} \lVert \partial_x \cal{V}^{\epsilon}_s\rVert_{L^2} \lVert h_b\rVert_{L^2} \\
    &\leq \frac{C_b^2 \lVert h\rVert_{L^{\infty}}^2 + \lVert h_b\rVert_{L^2}^2 \lVert \cal{V}^{\epsilon}_s\rVert_{L^2}^2}{2 \delta}\lVert \mu_s\rVert_{L^2}^2 + \delta\lVert \cal{V}^{\epsilon}_s\rVert_{H^{1, 0}}^2,
\end{align*}
where we used that $\bigl\langle \cal{V}^{\epsilon}_s, B^{\epsilon, \eta}_s(x)\bigr\rangle \leq \lVert \cal{V}^{\epsilon}_s\rVert_{L^2} \lVert h_b\rVert_{L^2}$ by Assumption \ref{ass:adjoint} \ref{it:lfd_l2}. Note that here $h_b$ does not refer to the random function $h \define [0, T] \times \Omega \times \R^2 \to \R^{d_G}$, but is the dominating function from Assumption \ref{ass:adjoint} \ref{it:lfd_l2}. Substituting this and the previous estimate into Equation \eqref{eq:var_eps_est}, choosing $\delta = \frac{1}{3C}$, rearranging, and finally applying Gr\"onwall's inequality implies that
\begin{equation*}
    \sup_{0 \leq t \leq T} \lVert \cal{V}^{\epsilon}_t\rVert_{L^2}^2 + \frac{1}{2}\int_0^T \lVert \cal{V}^{\epsilon}_t\rVert_{H^{1, 0}}^2 \, \d t \leq C
\end{equation*}
for a possibly enlarged constant $C$ independent of $\epsilon$.

Using the uniform bound on the family $(\cal{V}^{\epsilon})_{\epsilon > 0}$ we can show that it is Cauchy in $C_{\bb{F}}^2([0, T]; L^2(\R^2)) \cap L_{\bb{F}}^2([0, T]; H^{1, 0}(\R^2))$. Indeed, let us define $\Delta^{\epsilon, \delta} = \cal{V}^{\epsilon} - \cal{V}^{\delta}$. Note that again, the SPDE satisfied by $\Delta^{\epsilon, \delta}$ is of the same form as SPDE \eqref{eq:sfpe_random}. This time with $\beta$ and $h$ replaced by $\mathfrak{b}^{\epsilon}$ and
\begin{align*}
    \mathfrak{h}^{\epsilon, \delta}_t(\varphi) &= \mathfrak{h}^{\epsilon, \delta, 1}_t(\varphi) + \mathfrak{h}^{\epsilon, \delta, 2}_t(\varphi) + \mathfrak{h}^{\epsilon, \delta, 3}_t(\varphi)\\
    &= \bigl\langle \cal{V}^{\delta}_t, (\mathfrak{b}^{\epsilon}_t - \mathfrak{b}^{\delta}_t) \partial_x\varphi\bigr\rangle \\
    &\ \ \ + \int_0^1\biggl(\int_{\R \times [0, \infty)} \bigl\langle \Delta^{\epsilon, \delta}_t, B^{\epsilon, \eta}_t(x)\bigr\rangle \partial_x \varphi(x, y) \, \d \mu_t(x, y)\biggr) \d \eta \\
    &\ \ \ + \int_0^1\biggl(\int_{\R \times [0, \infty)} \bigl\langle \cal{V}^{\delta}_t, B^{\epsilon, \eta}_t(x) - B^{\epsilon, \eta}_t(x)\bigr\rangle \partial_x \varphi(x, y) \, \d \mu_t(x, y)\biggr) \d \eta.
\end{align*}
for $\varphi \in H^{1, 0}(\R^2)$, respectively. As before, we bound the terms $\lvert \langle \Delta^{\epsilon, \delta}_s, \mathfrak{b}^{\epsilon}_s \partial_x \Delta^{\epsilon, \delta}_s\rangle\rvert$ and $\lvert \mathfrak{h}^{\epsilon, \delta}_s(\Delta^{\epsilon, \delta}_s)\rvert$ appearing in the estimate for $\Delta^{\epsilon, \delta}$ provided by \eqref{eq:sfpe_estimate}. Analogous to the above, for any $\delta > 0$, we have $\lvert \langle \Delta^{\epsilon, \delta}_s, \mathfrak{b}^{\epsilon}_s \partial_x \Delta^{\epsilon, \delta}_s\rangle\rvert \leq \frac{C_b^2}{2\delta} \lVert \Delta^{\epsilon, \delta}_s\rVert_{L^2}^2 + \frac{\delta}{2} \lVert \Delta^{\epsilon, \delta}_s\rVert_{H^{1, 0}}^2$. Next, by the triangle inequality $\lvert \mathfrak{h}^{\epsilon, \delta}_s(\Delta^{\epsilon, \delta}_s)\rvert \leq \lvert \mathfrak{h}^{\epsilon, \delta, 1}_s(\Delta^{\epsilon, \delta}_s)\rvert + \lvert \mathfrak{h}^{\epsilon, \delta, 2}_s(\Delta^{\epsilon, \delta}_s)\rvert + \lvert \mathfrak{h}^{\epsilon, \delta, 3}_s(\Delta^{\epsilon, \delta}_s)\rvert$ and we estimate the three terms on the right-hand side separately. For the first, we get
\begin{align} \label{eq:theta_1_est}
    \lvert \mathfrak{h}^{\epsilon, \delta, 1}_s(\Delta^{\epsilon, \delta}_s)\rvert &\leq \frac{1}{2 \delta} \lVert \cal{V}^{\delta}_s (\mathfrak{b}^{\epsilon}_s - \mathfrak{b}^{\delta}_s) \rVert_{L^2}^2 + \frac{\delta}{2} \lVert \Delta^{\epsilon, \delta}_s\rVert_{H^{1, 0}}^2 \notag \\
    &\leq \frac{C_b^2\lVert \cal{V}^{\delta}_s\rVert_{L^2}^2}{2 \delta} \bigl(\lVert \mu^{\epsilon}_s - \mu^{\delta}_s\rVert_{L^2}^2 + \lVert h_s\rVert_{L^{\infty}}^2(\epsilon - \delta)^2 \bigr) + \frac{\delta}{2} \lVert \Delta^{\epsilon, \delta}_s\rVert_{H^{1, 0}}^2,
\end{align}
where we used the Lipschitz continuity of $b$ in $v$ (cf.\@ Assumption \ref{ass:mfl} \ref{it:continuity_coeff_mfl}, which holds by Assumption \ref{ass:spfe_joint}), the linearity of $b$ in the control argument, and that $\lVert \nu^{\epsilon}_s - \nu^{\delta}_s\rVert_{L^2} \leq \lVert \mu^{\epsilon}_s - \mu^{\delta}_s\rVert_{L^2}$. 
% Similarly, using the uniform continuity of $(v, g') \mapsto \partial_g b_1(t, x, g')$, we obtain
% \begin{align} \label{eq:theta_2_est}
% \begin{split}
%     \lvert \theta^2_s(\Delta^{\epsilon, \delta}_s)\rvert \leq \frac{\lVert \cal{V}^{\delta}_s\rVert_{L^2}^2}{2 \delta} \omega_b^2\Bigl(2C_b\lVert \mu^{\epsilon}_s - \mu^{\delta}_s\rVert_{L^2} + C_b\eta\lVert h_s\rVert_{L^{\infty}}(\epsilon - \delta) \Bigr) + \frac{\delta}{2} \lVert \Delta^{\epsilon, \delta}_s\rVert_{H^{1, 0}}^2.
% \end{split}
% \end{align}
For the second term, we estimate
\begin{align} \label{eq:theta_2_est}
    \lvert \mathfrak{h}^{\epsilon, \delta, 2}_s(\Delta^{\epsilon, \delta}_s)\rvert &\leq \lVert \Delta^{\epsilon, \delta}_s\rVert_{L^2} \lVert \mu_s\rVert_{L^2} \lVert \partial_x \Delta^{\epsilon, \delta}_s\rVert_{L^2} \lVert h_b\rVert_{L^2}^2 \notag \\
    &\leq \frac{\lVert h_b\rVert_{L^2}^2 \lVert \mu_s\rVert_{L^2}^2}{2\delta}\lVert \Delta^{\epsilon, \delta}_s\rVert_{L^2}^2 + \frac{\delta}{2}\lVert \Delta^{\epsilon, \delta}_s\rVert_{H^{1, 0}}^2,
\end{align}
where as above we used that $\bigl\langle \cal{V}^{\epsilon}_s, B^{\epsilon, \eta}_s(x)\bigr\rangle \leq \lVert \cal{V}^{\epsilon}_s\rVert_{L^2} \lVert h_b\rVert_{L^2}$ by Assumption \ref{ass:adjoint} \ref{it:lfd_l2}. Lastly, it holds that
\begin{align} \label{eq:theta_3_est}
    \lvert \mathfrak{h}^{\epsilon, \delta, 3}_s(\Delta^{\epsilon, \delta}_s)\rvert &\leq \frac{\lVert \cal{V}^{\delta}_s\rVert_{L^2}^2}{2 \delta} \int_0^1 \biggl(\int_{\R \times [0, \infty)} \bigl\lVert B^{\epsilon, \eta}_s(x) - B^{\delta, \eta}_s(x)\bigr\rVert_{L^2}^2 \lvert \mu_s(x, y)\rvert^2 \, \d x \d y\biggr) \, \d \eta \notag \\
    &\ \ \ + \frac{\delta}{2}\lVert \Delta^{\epsilon, \delta}_s\rVert_{H^{1, 0}}^2 \notag \\
    &= \frac{\lVert \cal{V}^{\delta}_s\rVert_{L^2}^2}{2 \delta} I^{\epsilon, \delta}_s + \frac{\delta}{2}\lVert \Delta^{\epsilon, \delta}_s\rVert_{H^{1, 0}}^2.
\end{align}
We fix $\delta = \frac{1}{4C}$ and insert \eqref{eq:theta_1_est}, \eqref{eq:theta_2_est}, \eqref{eq:theta_3_est}, % \eqref{eq:theta_4_est} 
and the bound on $\lvert \langle \Delta^{\epsilon, \delta}_s, \mathfrak{b}^{\epsilon}_s \partial_x \Delta^{\epsilon, \delta}_s\rangle\rvert$ into the estimate for $\Delta^{\epsilon, \delta}$ provided by \eqref{eq:sfpe_estimate}. Then we rearrange and apply Gr\"onwall's inequality to obtain
\begin{align} \label{eq:delta_var_estimate}
\begin{split}
    \sup_{0 \leq t \leq T} \lVert \Delta^{\epsilon, \delta}_s\rVert_{L^2}^2 + \frac{1}{2}\int_0^T \lVert \Delta^{\epsilon, \delta}_s\rVert_{H^{1, 0}}^2 \, \d t \leq C \int_0^T \Bigl(\lVert \mu^{\epsilon}_t - \mu^{\delta}_t\rVert_{L^2}^2 + (\epsilon - \delta)^2 + I^{\delta, \epsilon}_t \Bigr) \, \d t
\end{split}
\end{align}
for a possibly enlarged $C$. Let us show that the right-hand side tends to zero as $\epsilon$, $\delta \to 0$. First, we have that $\lVert \mu^{\epsilon}_t - \mu^{\delta}_t\rVert_{L^2} \leq \epsilon \lVert \cal{V}^{\epsilon}_t\rVert_{L^2} + \delta \lVert \cal{V}^{\delta}_t\rVert_{L^2}$, which is bounded uniformly in $\epsilon$ and $\delta$ and tends to zero as $\epsilon$, $\delta \to 0$. Hence, it remains to take care of the quantity $I^{\delta, \epsilon}_t$. It is bounded by $4 \lVert h_b\rVert_{L^2}^2 \lVert \mu_t\rVert_{L^2}^2$, so if we prove that $I^{\delta, \epsilon}_t$ vanishes as $\epsilon$, $\delta \to 0$, then the dominated convergence theorem allows us to conclude the same for the right-hand side of \eqref{eq:delta_var_estimate}. By Assumption \ref{ass:smp} \ref{it:smp_cont} the map $\cal{M}^2_{\leq 1}(\R) \cap L^2(\R) \ni v \mapsto Db_0(t, x, v)(x')$ is continuous with respect to $\lVert \cdot \rVert_{L^2}$, so by definition of $B^{\epsilon, \eta}_t$ we have $\bigl\lvert B^{\epsilon, \eta}_t(x)(x', y') - B^{\delta, \eta}_t(x)(x', y')\bigr\rvert \to 0$ as $\epsilon$, $\delta \to 0$. Since by Assumption \ref{ass:adjoint} \ref{it:lfd_l2} it holds that $\lvert B^{\epsilon, \eta}_t(x)(x', y')\rvert \leq e^{-y'} h_b(x')$ and $e^{-\cdot} h_b \in L^2(\R \times [0, \infty])$, we can conclude that $\bigl\lVert B^{\epsilon, \eta}_s(x) - B^{\delta, \eta}_s(x)\bigr\rVert_{L^2}^2 \to 0$. This readily implies $I^{\epsilon, \delta}_t \to 0$ as $\epsilon$ and $\delta$ are taken to zero. Thus, it follows that $(\cal{V}^{\epsilon})_{\epsilon > 0}$ is Cauchy in $C_{\bb{F}}^2([0, T]; L^2(\R^2)) \cap L_{\bb{F}}^2([0, T]; H^{1, 0}(\R^2))$.

Let $\cal{V}$ denote the limit of $(\cal{V}^{\epsilon})_{\epsilon > 0}$ in $C_{\bb{F}}^2([0, T]; L^2(\R^2)) \cap L_{\bb{F}}^2([0, T]; H^{1, 0}(\R^2))$. Using the convergence of $\cal{V}^{\epsilon}$ to $\cal{V}$, we can pass to the limit in SPDE \eqref{eq:var_eps_spde} to see that $\cal{V}$ solves SPDE \eqref{eq:spde_variation}. Since $\mu^{\epsilon}_t$ and $\mu_t$ are supported in $\R \times [0, \infty)$ the same holds for $\cal{V}^{\epsilon}_t$ and its limit $\cal{V}_t$. This concludes the proof.
\end{proof}

\subsection{Necessary Stochastic Maximum Principle} \label{sec:smp}

The goal of this subsection is to establish the necessary SMP. For that, we first show the existence of an adjoint process satisfying BSPDE \eqref{eq:adjoint}, i.e.\@ we prove Proposition \ref{prop:adjoint_existence}. Next, we use the adjoint process and the variation $\cal{V}$ from the previous subsection to derive an expression for the derivative of the cost $J_{\text{cl}}$ defined in \eqref{eq:cost_func_closed}. Inserting the convexity of $h \mapsto b(t, x, v, h)p + e^{-y}f(t, x, v, h)$ implied by Assumption \ref{ass:convexity} into this expression yields the necessary SMP.

% TODO: mention that $\langle \cdot, \cdot \rangle$ and $\lVert \cdot\rVert_{L^2}$ is always over $\R \times [0, \infty)$ if function has two arguments
\begin{proof}[Proof of Proposition \ref{prop:adjoint_existence}]
We intend to apply Theorem \ref{thm:bspde}, so we simply have to show that the coefficients $\tilde{a}$, $\tilde{H}$, $\tilde{F}$, $\lambda$, $\sigma_0$, and $\tilde{\beta}$ as well as the terminal condition $\tilde{\psi}$ defined by $\tilde{a}_t(x, y) = a_t(x)$, $\tilde{H}_t(x, y, r, p) = \tilde{K}^{\mu}_t\bigl(x, y, p, g_t(x, y)\bigr)$, $\tilde{F}_t(x, y, u, v) = \tilde{F}^{\mu}_t(x, y, v)$, $\tilde{\beta}_t(x, y) = 0$, and $\psi(x, y) = e^{-y}D\psi(\nu_T)(x)$ for $(t, \omega, x, y, r, p, u, v) \in [0, T] \times \Omega \times \R \times (\ell, \infty) \times \R^2 \times L^2(\R \times (\ell, \infty))^2$ satisfy the properties stated for $a$, $H$, $F$, $\lambda$, $\alpha$, $\beta$, and $\psi$ in Assumption \ref{ass:bspde}. This is obvious for $\tilde{a}$, $\lambda$, $\sigma_0$, $\beta$, and $\psi$ so we will concentrate on $\tilde{H}$ and $\tilde{F}$. Let us begin with the growth conditions Assumption \ref{ass:bspde} \ref{it:growth_bspde}. We have for $(t, \omega, x, y, r, p, u, v) \in [0, T] \times \Omega \times \R \times (\ell, \infty) \times \R^2 \times L^2(\R \times (\ell, \infty))^2$ that
\begin{align*}
    \lvert \tilde{H}_t(x, y, r, p)\rvert &\leq \bigl\lvert b\bigl(t, x, \nu_t, g_t(x, y)\bigr)p\bigr\rvert + \bigl\lvert e^{-y} f\bigl(t, x, \nu_t, g_t(x, y)\bigr)\bigr\rvert \\
    &\leq C_b \lvert p\rvert + e^{-y} h_f(x)
\end{align*}
for $h_f \in L^2(\R)$ by Assumption \ref{ass:adjoint} \ref{it:cost_l2} and
\begin{align*}
    \lvert \tilde{F}_t(x, y, u, v)\rvert &\leq \bigl\lvert \bigl\langle \mu_t, e^{-y} Db_0(t, \cdot, \nu_t)(x) v\bigr\rangle\bigr\rvert + \bigl\lvert \bigl\langle \mu_t, e^{-y} D\tilde{f}_0(t, \cdot, \cdot, \nu_t)(x) \bigr\rangle\bigr\rvert \\
    &\leq e^{-y}h_b(x) \lVert \mu_t\rVert_{L^2} \lVert v\rVert_{L^2} + e^{-y}h_f(x)
\end{align*}
for $h_b \in L^2(\R)$ by Assumption \ref{ass:adjoint} \ref{it:lfd_l2}. Since $\lVert \mu_t\rVert_{L^2} \leq C \lVert \mu_0\rVert_{L^2} < \infty$ by Proposition \ref{prop:sfpe_joint}, the right-hand side is of the desired form. Next, we verify the Lipschitz conditions from Assumption \ref{ass:bspde} \ref{it:lipschitz_bspde}. For $(t, \omega, x, y, r, p, u, v)$ as above and $(r', p', u', v') \in \R^2 \times L^2(\R \times (\ell, \infty))^2$, it holds that
\begin{align*}
    \lvert \tilde{H}_t(x, y, r, p) - \tilde{H}_t(x, y, r', p')\rvert = \bigl\lvert b\bigl(t, x, \nu_t, g_t(x, y)\bigr)p - b\bigl(t, x, \nu_t, g_t(x, y)\bigr)p'\bigr\rvert \leq C_b \lvert p - p'\rvert
\end{align*}
by the boundedness of $b$ and
\begin{align*}
    \lvert \tilde{F}_t(x, y, u, v) - \tilde{F}_t(x, y, u', v')\rvert &= \bigl\lvert \bigl\langle \mu_t, e^{-y} Db_0(t, \cdot, \nu_t)(x) v\bigr\rangle - \bigl\langle \mu_t, e^{-y} Db_0(t, \cdot, \nu_t)(x) v'\bigr\rangle\bigr\rvert \\
    &\leq e^{-y}h_b(x)\lVert \mu_t\rVert_{L^2} \lVert v - v'\rVert_{L^2}
\end{align*}
by Assumption \ref{ass:adjoint} \ref{it:lfd_l2}. These bounds are as desired, so we can apply Theorem \ref{thm:bspde}.
\end{proof}

We will use the adjoint process $(\tilde{u}, \tilde{q}, \tilde{m})$ determined by BSPDE \eqref{eq:adjoint} to derive an expression for the G\^ateaux derivative of the cost function $J_{\text{cl}}$.

\begin{proposition} \label{eq:cost_derivative}
Let Assumptions \ref{ass:spfe_joint}, \ref{ass:adjoint}, and \ref{ass:smp} be satisfied. Then for any control $g \in \cal{G}_{\bb{F}}^2$ and any $\bb{F}$-progressively measurable random function $h \define [0, T] \times \Omega \times \R^2 \to \R^{d_G}$, such that $g + \epsilon h$ maps into $G$, it holds that
\begin{equation} \label{eq:gateaux_derivative}
    \frac{\d}{\d \epsilon} J_{\textup{cl}}(g + \epsilon h)\Big\vert_{\epsilon = 0} = \ev \int_0^T \bigl\langle \mu_t, \partial_g \tilde{K}^{\mu}_t(\cdot, \cdot, \partial_x \tilde{u}_t, g_t)h_t\bigr\rangle \, \d t.
\end{equation}
\end{proposition}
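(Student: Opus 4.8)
The plan is to compute the G\^ateaux derivative of $J_{\textup{cl}}$ along $g^{\epsilon} = g + \epsilon h$ directly, and then to eliminate the terms containing the state variation $\cal{V}$ by means of a duality identity obtained from the It\^o product formula of Theorem \ref{thm:ito_2_dim} applied to $\langle \cal{V}_t, \tilde u_t\rangle$, where $(\tilde u, \tilde q, \tilde m)$ is the solution to BSPDE \eqref{eq:adjoint} associated with $g$ (Proposition \ref{prop:adjoint_existence}).

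\textit{Step 1 (differentiating the cost).} Write $\mu^{\epsilon}$, $\nu^{\epsilon} = \cal{S}(\mu^{\epsilon})$ for the state and subprobability attached to $g^{\epsilon}$ and $\cal{V}^{\epsilon} = \epsilon^{-1}(\mu^{\epsilon} - \mu)$; by Proposition \ref{prop:variation}, $\cal{V}^{\epsilon} \to \cal{V}$ in $C_{\bb{F}}^2([0, T]; L^2(\R^2)) \cap L_{\bb{F}}^2([0, T]; H^{1, 0}(\R^2))$, and in particular $\mu^{\epsilon}_t \to \mu_t$, $\nu^{\epsilon}_t \to \nu_t$ in $L^2$. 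I would decompose $\epsilon^{-1}\bigl(J_{\textup{cl}}(g^{\epsilon}) - J_{\textup{cl}}(g)\bigr)$ into the four pieces coming from, respectively, $\mu^{\epsilon} - \mu$ in the outer pairing of the running cost (which equals $\langle \cal{V}^{\epsilon}_t, \tilde f(t, \cdot, \cdot, \nu^{\epsilon}_t, g^{\epsilon}_t)\rangle$), the increment of $\tilde f_0$ in the measure argument, the increment of $\tilde f_1$ in the control argument, and the increment of $\psi$. For the $\tilde f_0$- and $\psi$-pieces I apply the fundamental theorem of calculus for the linear functional derivatives $Df_0$, $D\psi$ (Definition \ref{def:linear_functional_derivatve}), together with $\nu^{\epsilon}_t - \nu_t = \epsilon\,\cal{S}(\cal{V}^{\epsilon}_t)$ and the adjoint relation $\langle \cal{S}(m), \varphi\rangle = \langle m, e^{-\cdot}\varphi(\cdot)\rangle$ (equivalently Lemma \ref{eq:lfd_s}); for the $\tilde f_1$-piece I use the fundamental theorem of calculus in $g$, available since $g \mapsto f_1(t, x, g)$ is $C^1$ with bounded derivative on the open set $O \supset G$ (Assumption \ref{ass:smp} \ref{it:smp_ext}). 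Letting $\epsilon \to 0$, I pass to the limit using the $L^2$-continuity of $v \mapsto Df_0(t,x,v), D\psi(v)$ (Assumption \ref{ass:smp} \ref{it:smp_cont}), the continuity and boundedness of $\partial_g f_1$, the uniform bound $\lVert \mu_t\rVert_{L^2} \leq C\lVert \mu_0\rVert_{L^2}$ (Proposition \ref{prop:sfpe_joint}), the uniform $L^2$-bound on $(\cal{V}^{\epsilon})$, and the $L^2$-dominating functions $h_f, h_{\psi}$ of Assumption \ref{ass:adjoint} \ref{it:lfd_l2} (note $e^{-\cdot}h_f, e^{-\cdot}h_{\psi} \in L^2(\R \times [0, \infty))$), which supply the domination needed to carry the limit through $\ev \int_0^T$. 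This yields
\begin{equation*}
    \frac{\d}{\d \epsilon}J_{\textup{cl}}(g^{\epsilon})\Big\vert_{\epsilon = 0} = \ev\int_0^T \Bigl(\langle \cal{V}_t, \tilde f(t, \cdot, \cdot, \nu_t, g_t)\rangle + \langle \cal{V}_t, \tilde F^{\circ}_t\rangle + \langle \mu_t, e^{-\cdot}\partial_g f_1(t, \cdot, g_t)h_t\rangle\Bigr) \, \d t + \ev\langle \cal{V}_T, \tilde u_T\rangle,
\end{equation*}
where $\tilde F^{\circ}_t(x,y) = e^{-y}\langle \mu_t, D\tilde f_0(t, \cdot, \cdot, \nu_t)(x)\rangle$ is the part of $\tilde F^{\mu}_t(\cdot, \cdot, \partial_x \tilde u_t)$ that does not involve $\partial_x \tilde u_t$ and $\tilde u_T = e^{-\cdot}D\psi(\nu_T)(\cdot)$ is the terminal datum of \eqref{eq:adjoint}.

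\textit{Step 2 (duality).} I would apply Theorem \ref{thm:ito_2_dim} on $D = \R \times (\ell, \infty)$ for a fixed $\ell < 0$ with $\partial_y$-coefficient $h = \lambda$, taking $u^1 = \cal{V}$ and $u^2 = \tilde u$. Indeed $\cal{V} \in L_{\bb{F}}^2([0,T]; H^{1,0}(D))$ is continuous and supported in $\R \times [0, \infty)$ (so the support hypothesis holds with $\theta = -\ell$) and satisfies \eqref{eq:spde_variation} against test functions in $H^1(D)$, while $\tilde u \in L_{\bb{F}}^2([0,T]; H^{1,0}(D))$ satisfies \eqref{eq:adjoint} in the variational sense \eqref{eq:bspde_solution} against test functions in $H^1_0(D)$; its martingale part can be taken as $\hat m_t = \tilde u_0 + \int_0^t \tilde q_s \, \d W_s + \tilde m_t$, an $L^2(D)$-valued martingale with $\hat m_0 = \tilde u_0$. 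Since $i \neq j$ the correction $\xi_t$ is absent, and $\langle m^1_0, m^2_0\rangle = \langle \cal{V}_0, \tilde u_0\rangle = 0$ because $\cal{V}_0 = 0$. Taking expectations --- the stochastic integrals vanish by Proposition \ref{prop:bdg} and the available $L^2$-bounds --- gives
\begin{equation*}
    \ev\langle \cal{V}_T, \tilde u_T\rangle = \ev\int_0^T \bigl(\zeta^{\cal{V}}_s(\tilde u_s) + \tilde \zeta_s(\cal{V}_s)\bigr) \, \d s + \ev\bigl[m^{\cal{V}}, \hat m\bigr]_T,
\end{equation*}
with $\zeta^{\cal{V}}$, $\tilde \zeta$ the $H^{-1,0}(D)$-valued drifts of $\cal{V}$ and $\tilde u$ (apart from their $\lambda \partial_y$-terms) read off from \eqref{eq:spde_variation} and \eqref{eq:bspde_solution}, and $m^{\cal{V}} = -\int_0^{\cdot}\sigma_0(s)\partial_x \cal{V}_s \, \d W_s$.

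\textit{Step 3 (cancellations).} Expanding $\zeta^{\cal{V}}_s(\tilde u_s) + \tilde\zeta_s(\cal{V}_s) + \tfrac{\d}{\d s}[m^{\cal{V}}, \hat m]_s$, three pairs cancel: the second-order terms $-\langle \partial_x(a_s\cal{V}_s), \partial_x\tilde u_s\rangle$ from $\zeta^{\cal{V}}$ and $+\langle \partial_x\tilde u_s, \partial_x(a_s\cal{V}_s)\rangle$ from $\tilde\zeta$ (which agree because the forward equation integrates by parts the coefficient onto the density while the backward one integrates it onto the test function); the transport terms $\pm\langle \cal{V}_s, b(s,\cdot,\nu_s,g_s)\partial_x\tilde u_s\rangle$; and the terms $\pm\sigma_0(s)\langle \tilde q_s, \partial_x\cal{V}_s\rangle$, one from $\tilde\zeta$ and one from $[m^{\cal{V}}, \hat m]$, using that $\tilde m$ is very strongly orthogonal to $W$. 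Identifying the nonlocal term $\int_{\R \times [0,\infty)}\langle \cal{V}_s, e^{-\cdot}Db_0(s,x,\nu_s)\rangle\partial_x\tilde u_s(x,y)\,\d\mu_s(x,y)$ of $\zeta^{\cal{V}}_s(\tilde u_s)$ with the $\partial_x\tilde u_s$-part of $\langle \cal{V}_s, \tilde F^{\mu}_s(\cdot,\cdot,\partial_x\tilde u_s)\rangle$ of $\tilde\zeta_s(\cal{V}_s)$ --- a Fubini relabeling --- leaves
\begin{equation*}
    \ev\langle \cal{V}_T, \tilde u_T\rangle = \ev\int_0^T \Bigl(-\langle \cal{V}_s, \tilde f(s,\cdot,\cdot,\nu_s,g_s)\rangle - \langle \cal{V}_s, \tilde F^{\circ}_s\rangle + \langle \mu_s, \partial_g b_1(s,\cdot)h_s\partial_x\tilde u_s\rangle\Bigr) \, \d s.
\end{equation*}
Substituting into the Step 1 identity, the $\langle \cal{V}_{\cdot}, \tilde f\rangle$- and $\langle \cal{V}_{\cdot}, \tilde F^{\circ}\rangle$-terms cancel, leaving $\ev\int_0^T \langle \mu_t, (\partial_g b_1(t,\cdot)\partial_x\tilde u_t + e^{-\cdot}\partial_g f_1(t,\cdot,g_t))h_t\rangle\,\d t$; since $b_1$ is linear in the control (Assumption \ref{ass:convexity}) we have $\partial_g\tilde K^{\mu}_t(x,y,p,h) = \partial_g b_1(t,x)p + e^{-y}\partial_g f_1(t,x,h)$, so this is exactly the right-hand side of \eqref{eq:gateaux_derivative}.

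\textit{Main obstacle.} The limit passage of Step 1 is routine given Propositions \ref{prop:variation} and \ref{prop:sfpe_joint} and the continuity hypotheses in Assumption \ref{ass:smp}. The crux is Steps 2--3: to fit the forward SPDE for $\cal{V}$ and the backward SPDE for $\tilde u$ simultaneously into the framework of Theorem \ref{thm:ito_2_dim} one must choose $\ell < 0$ so that $\cal{V}$ lies strictly inside $D$, reconcile the two integration-by-parts conventions so that the second-order terms actually cancel, and absorb the (nonzero) initial value $\tilde u_0$ into the martingale part $\hat m$; and then one must carefully track all the cancellations, in particular the Fubini identity matching the $Db_0$-term of $\zeta^{\cal{V}}$ against the nonlocal part of $\tilde F^{\mu}$.
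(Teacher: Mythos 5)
Your proposal follows essentially the same route as the paper's proof: the same four-way decomposition of $\epsilon^{-1}(J_{\text{cl}}(g+\epsilon h) - J_{\text{cl}}(g))$ in Step 1, the same limit passage using Proposition \ref{prop:variation}, the linear-functional-derivative calculus of Lemma \ref{eq:lfd_s}, the $L^2$-dominations from Assumptions \ref{ass:adjoint} and \ref{ass:smp}; then the same duality identity obtained by applying Theorem \ref{thm:ito_2_dim} on $\R \times (\ell, \infty)$ with $\ell < 0$ (the paper fixes $\ell = -1$) to the pairing $\langle \cal{V}_t, \tilde u_t\rangle$, followed by the same cancellations of the second-order, transport, and $\sigma_0\langle \tilde q, \partial_x \cal{V}\rangle$ terms and the same Fubini identification of the $Db_0$-nonlocality. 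The argument is correct.
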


\begin{proof}
Set $g^{\epsilon} = g + \epsilon h$ and denote the solutions to SPDE \eqref{eq:sfpe_joint} corresponding to the inputs $g^{\epsilon}$ and $g$ by $\mu^{\epsilon}$ and $\mu$, respectively. Further, let $\cal{V}^{\epsilon} = \frac{1}{\epsilon}(\mu^{\epsilon} - \mu)$ and let $\cal{V}$ be the limit of $(\cal{V}^{\epsilon})_{\epsilon > 0}$ in $C_{\bb{F}}^2([0, T]; L^2(\R^2)) \cap L_{\bb{F}}^2([0, T]; H^{1, 0}(\R^2))$, which exists by Proposition \ref{prop:variation}. Then, we decompose
\begin{align} \label{eq:decomp_cost}
\begin{split}
    J_{\textup{cl}}(g^{\epsilon}&) - J_{\textup{cl}}(g) \\
    &= \ev \int_0^T \bigl\langle \mu^{\epsilon}_t - \mu_t, \tilde{f}(t, \cdot, \cdot, \nu^{\epsilon}_t, g^{\epsilon}_t)\bigr\rangle \, \d t + \ev\int_0^T \bigl\langle \mu_t, \tilde{f}_1(t, \cdot, \cdot, g^{\epsilon}_t) - \tilde{f}_1(t, \cdot, \cdot, g_t)\bigr\rangle \, \d t \\
    &\ \ \ + \ev\int_0^T \bigl\langle \mu_t, \tilde{f}_0(t, \cdot, \cdot, \nu^{\epsilon}_t) - \tilde{f}_0(t, \cdot, \cdot, \nu_t)\bigr\rangle \, \d t + \ev[\psi(\nu^{\epsilon}_T) - \psi(\nu_T)].
\end{split}
\end{align}
We divide the equation by $\epsilon$ and then take the limit as $\epsilon$ tends to zero. Owing to the continuity of the map $(x, v, g') \mapsto f(t, x, v, g')$ guaranteed by Assumption \ref{ass:mfl} \ref{it:continuity_cost_mfl} (which is in place by Assumption \ref{ass:spfe_joint}) as well as the growth condition on $f$ from Assumption \ref{ass:adjoint} \ref{it:cost_l2}, we can pass to the limit in the first expression on the right-hand side above. For the second term, we use Assumption \ref{ass:smp} \ref{it:smp_ext} and apply the fundamental theorem of calculus, whereby
\begin{align*}
    \tilde{f}_1\bigl(t, x, y, g^{\epsilon}_t(x, y)\bigr) - \tilde{f}_1\bigl(t, x, y, g_t(x, y)\bigr) = \epsilon\int_0^1 \partial_g f_1\bigl((t, x, g_t(x, y) + \epsilon \eta h_t(x, y)\bigr) h_t(x, y) \, \d \eta.
\end{align*}
Since $g' \mapsto \partial_g f_1(t, x, g')$ is continuous and bounded, we conclude by the dominated convergence theorem that the second expression tends to $\ev\int_0^T \bigl\langle \mu_t, \partial_g \tilde{f}_1(t, \cdot, \cdot, g_t)h_t\bigr\rangle \, \d t$. Next, by Assumption \ref{ass:adjoint} \ref{it:lfd} and Lemma \ref{eq:lfd_s}, for $(t, x, y) \in [0, T] \times \R \times [0, \infty)$ the map $\P^2(\R \times [0, \infty)) \ni m \mapsto \tilde{f}_0(t, x, y, \cal{S}(m))$ has the linear functional derivative $(x', y') \mapsto e^{- y'} D\tilde{f}_0(t, x, y, \cal{S}(m))(x')$, where $x' \mapsto D\tilde{f}_0(t, x, y, v)(x')$ denotes the linear functional derivative of the map $\cal{M}^2_{\leq 1}(\R) \ni v \mapsto D\tilde{f}_0(t, x, y, v)$. Hence, we can write
\begin{align*}
    \tilde{f}_0(t, x, y, \nu^{\epsilon}_t) - \tilde{f}_0(t, x, y, \nu_t) &= \int_0^1 \Bigl\langle \mu^{\epsilon}_t - \mu_t, e^{- \cdot}D\tilde{f}_0\bigl(t, x, y, \eta \nu^{\epsilon}_t + (1 - \eta)\nu_t\bigr)\Bigr\rangle \, \d \eta.
\end{align*}
Since $x' \mapsto D\tilde{f}_0(t, x, y, v)(x')$ is bounded by a function $h_f \in L^2(\R)$ by Assumption \ref{ass:adjoint} \ref{it:lfd_l2} and $\cal{M}^2_{\leq 1}(\R) \cap L^2(\R) \ni v \mapsto D\tilde{f}_0(t, x, y, v)(x')$ is continuous with respect to $\lVert \cdot \rVert_{L^2}$ by Assumption \ref{ass:smp} \ref{it:smp_cont}, divided by $\epsilon$ the quantity on the right-hand side above approaches $\bigl\langle \cal{V}_t, e^{- \cdot} D\tilde{f}_0(t, x, y, \nu_t)\bigr\rangle$. Consequently, the third summand in \eqref{eq:decomp_cost} converges to $\ev\int_0^T \int_{\R \times [0, \infty)} \bigl\langle \cal{V}_t, e^{- \cdot} D\tilde{f}_0(t, x, y, \nu_t)\bigr\rangle \, \d \mu_t(x, y) \, \d t$. The same reason allows us to pass to the limit as $\epsilon \to 0$ in the last term on the right-hand side of Equation \eqref{eq:decomp_cost}, so putting everything together, we obtain
\begin{align} \label{eq:gateaux_der_1}
\begin{split}
    \frac{\d}{\d \epsilon} &J_{\textup{cl}}(g + \epsilon h)\Big\vert_{\epsilon = 0} \\
    &= \ev\int_0^T \bigl\langle \cal{V}_t, \tilde{f}(t, \cdot, \cdot, \nu_t, g_t)\bigr\rangle \, \d t + \ev\int_0^T \bigl\langle \mu_t, \partial_g \tilde{f}_1(t, \cdot, \cdot, g_t)h_t\bigr\rangle \, \d t \\
    &\ \ \ + \ev\int_0^T \int_{\R \times [0, \infty)} \bigl\langle \cal{V}_t, e^{- \cdot} D\tilde{f}_0(t, x, y, \nu_t)\bigr\rangle \, \d \mu_t(x, y) \, \d t + \ev \bigl\langle \cal{V}_T, e^{-\cdot} D\psi(\nu_T)\bigr\rangle.
\end{split}
\end{align}

Next, we apply the generalisation of It\^o's formula from Theorem \ref{thm:ito_2_dim} with $\ell = -1$ to $\langle \cal{V}_t, \tilde{u}_t\rangle$. This implies that
% TODO: check signs
\begin{align} \label{eq:var_adjoint}
    \bigl\langle \cal{V}_T, e^{-\cdot} &D\psi(\nu_T)\bigr\rangle \notag \\
    &= \langle \cal{V}_0, \tilde{u}_0\rangle + \int_0^T \Bigl\langle b(t, \cdot, \nu_t, g_t)\cal{V}_t - \partial_x (a_t \cal{V}_t) + \partial_g b_1(t, \cdot) h_t \mu_t,  \partial_x \tilde{u}_t \Bigr\rangle \, \d t \notag \\
    &\ \ \ + \int_0^T \biggl(\int_{\R \times [0, \infty)} \bigl\langle \cal{V}_t, e^{- \cdot}Db_0(t, x, \nu_t)\bigr\rangle \partial_x \tilde{u}_t(x, y) \, \d \mu_t(x, y)\biggr) \, \d t \notag \\
    &\ \ \ - \int_0^T \Bigl(\bigl\langle b_t(t, \cdot, \nu_t, g_t) \partial_x\tilde{u}_t + \tilde{f}(t, \cdot, \cdot, \nu_t, g_t) + \tilde{F}^{\mu}_t(\cdot, \cdot, \partial_x \tilde{u}_t), \cal{V}_t\bigr\rangle\Bigr) \notag \\
    & \ \ \ + \int_0^T \Bigl(\langle \partial_x \tilde{u}_t, \partial_x (a_t\cal{V}_t)\rangle - \langle \tilde{q}_t, \sigma_0(t)\partial_x \cal{V}_t\rangle\Bigr) \, \d t + \int_0^T \langle \partial_x \cal{V}_t, \sigma_0(t)\tilde{q}_t\rangle \, \d t \notag \\
    &\ \ \ + \int_0^T \bigl(\langle \cal{V}_t, \sigma_0(t) \partial_x \tilde{u}_t\rangle + \langle \tilde{q}_s, \cal{V}_t\rangle\bigr) \, \d W_t + \langle \tilde{m}_T, \cal{V}_T\rangle  \notag \\
    &= \int_0^T \Bigl(\bigl\langle \mu_t, \partial_g b_1(t, \cdot) h_t \partial_x \tilde{u}_t \bigr\rangle - \langle \cal{V}_t, \tilde{f}(t, \cdot, \cdot, \nu_t, g_t)\rangle\Bigr) \, \d t \\
    &\ \ \ - \int_0^T \biggl(\int_{\R \times [0, \infty)} \bigl\langle \cal{V}_t, e^{- \cdot}D\tilde{f}_0\bigl(t, x, \nu_t, g_t(x, y)\bigr)\bigr\rangle \, \d \mu_t(x, y)\biggr) \, \d t \notag \\
    &\ \ \ + \int_0^T \bigl(\langle \cal{V}_t, \sigma_0(t) \partial_x \tilde{u}_t\rangle + \langle \tilde{q}_s, \cal{V}_t\rangle\bigr) \, \d W_t + \langle \tilde{m}_T, \cal{V}_T\rangle \notag,
\end{align}
where we used that $\cal{V}_0 = 0$ and that
\begin{align*}
    \tilde{F}^{\mu}_t(x', y', \partial_x \tilde{u}_t) - \int_{\R \times [0, \infty)} e^{-y'} &Db_0(t, x, \nu_t)(x')\partial_x \tilde{u}_t(x, y) \, \d \mu_t(x, y) \\
    &= \int_{\R \times [0, \infty)} e^{-y'} D\tilde{f}_0(t, x, y, \nu_t)(x')\partial_x \tilde{u}_t(x, y) \, \d \mu_t(x, y)
\end{align*}
in the second equality. Now, we take expectations on both sides of \eqref{eq:var_adjoint}, so that the martingale terms disappear, and then plug the expression for $\ev \bigl\langle \cal{V}_T, e^{-\cdot} D\psi(\nu_T)\bigr\rangle$ provided by the right-hand side of \eqref{eq:var_adjoint} into Equation \eqref{eq:gateaux_der_1}. This implies that
\begin{equation*}
    \frac{\d}{\d \epsilon} J_{\textup{cl}}(g + \epsilon h)\Big\vert_{\epsilon = 0} = \ev\int_0^T \Bigl\langle \mu_t, \partial_g b_1(t, \cdot, \nu_t) h_t \partial_x \tilde{u}_t + \partial_g \tilde{f}_1(t, \cdot, \cdot, \nu_t)h_t\Bigr\rangle \, \d t,
\end{equation*}
which yields the desired expression \eqref{eq:gateaux_derivative}.
\end{proof}

From \eqref{eq:gateaux_derivative} and the convexity of the Hamiltonian in the control argument, guaranteed by Assumption \ref{ass:convexity} (which is in place by Assumption \ref{ass:smp}), we can deduce the necessary SMP.

\begin{proof}[Proof of Theorem \ref{thm:smp}]
Let $g$, $h \in \cal{G}_{\bb{F}}^2$. Since $G$ is convex, for any $\epsilon > 0$ the random function $g^{\epsilon} = (1 - \epsilon) g + \epsilon h = g + \epsilon (h - g)$ maps into $G$. Hence, we can apply Proposition \ref{eq:cost_derivative} with $h$ replaced by $h - g$, which implies that
\begin{align*}
    \frac{\d}{\d \epsilon} J_{\textup{cl}}(g^{\epsilon})\Big\vert_{\epsilon = 0} &= \ev \int_0^T \bigl\langle \mu_t, \partial_g \tilde{K}^{\mu}_t(\cdot, \cdot, \partial_x \tilde{u}_t, g_t)(h_t - g_t)\bigr\rangle \, \d t \\
    &\leq \ev \int_0^T \bigl\langle \mu_t, \tilde{K}^{\mu}_t(\cdot, \cdot, \partial_x \tilde{u}_t, h_t) - \tilde{K}^{\mu}_t(\cdot, \cdot, \partial_x \tilde{u}_t, g_t)\bigr\rangle \, \d t,
\end{align*}
where we used that the function $h' \mapsto \tilde{K}^{\mu}_t\bigl(x, y, \partial_x \tilde{u}_t(x, y), h'\bigr)$ is convex by Assumption \ref{ass:convexity}. On the other hand, we know that $g$ is a minimiser of the cost functional $J_{\text{cl}}$, so that
\begin{equation*}
    \frac{\d}{\d \epsilon} J_{\textup{cl}}(g^{\epsilon})\Big\vert_{\epsilon = 0} = \lim_{\epsilon \to 0}\frac{1}{\epsilon} \bigl(J_{\textup{cl}}(g + \epsilon(h - g)) - J_{\textup{cl}}(g)\bigr) \geq 0.
\end{equation*}
Combining the previous two inequalities yields
\begin{equation} \label{eq:conv_ineq_cost}
    \ev \int_0^T \bigl\langle \mu_t, \tilde{K}^{\mu}_t(\cdot, \cdot, \partial_x \tilde{u}_t, h_t) - \tilde{K}^{\mu}_t(\cdot, \cdot, \partial_x \tilde{u}_t, g_t)\bigr\rangle \, \d t \geq 0.
\end{equation}
Now, for any $\bb{F}$-progressively measurable set $A \subset [0, T] \times \Omega \times \R^2$ and any $h_{\ast} \in G$ we can define an $h \in \cal{G}_{\bb{F}}^2$ by $h_t(x, y) = h_{\ast}$ if $(t, \omega, x, y) \in A$ and $h_t(x, y) = g_t(x, y)$ otherwise. Applying \eqref{eq:conv_ineq_cost} with this choice of $h$ implies that
\begin{equation*}
    \ev\int_0^T \int_{\R^2} \bf{1}_A(t, \cdot, x, y) \Bigl(\tilde{K}^{\mu}_t\bigl(x, y, \partial_x \tilde{u}_t(x, y), h_{\ast}\bigr) - \tilde{K}^{\mu}_t\bigl(x, y, \partial_x \tilde{u}_t(x, y), g_t(x, y)\bigr)\Bigr) \, \d \mu_t(x, y) \, \d t
\end{equation*}
is nonnegative. Since $A$ and $h_{\ast}$ were arbitrary, we conclude that for all $h_{\ast} \in G$ it holds that
\begin{equation*}
    \tilde{K}^{\mu}_t\bigl(x, y, \partial_x \tilde{u}_t(x, y), h_{\ast}\bigr) \geq \tilde{K}^{\mu}_t\bigl(x, y, \partial_x \tilde{u}_t(x, y), g_t(x, y)\bigr)
\end{equation*}
for $\mu_t$-a.e.\@ $(x, y) \in \R \times [0, \infty)$ and $\leb \otimes \pr$-a.e.\@ $(t, \omega) \in [0, T] \times \Omega$.
\end{proof}

% Mention that if there is unique minimiser or at least a measurable choice of a minimiser that we can write optimal semiclosed-loop control as function of this minimiser. In particular, we only need to look at minimiser for the stochastic Hamiltonian, because that will determine minmiser for deterministic Hamiltonian

% TODO: if we express $\tilde{g}_{\ast}$ in terms of $g_{\ast}$ we see that $y$ gets multiplied by e^p, so \tilde{g}_{\ast} is probably only locally Lipschitz continuous with constant depending on e^p. How does that change the statement of the proposition? 

\subsection{Existence and Uniqueness of the Semilinear BSPDEs \texorpdfstring{\eqref{eq:adjoint_1d}}{} and \texorpdfstring{\eqref{eq:bspde_2d}}{}} \label{sec:bspde_1_2}

In this section we establish the existence of a unique strong solution $(u, q, m)$ of BSPDE \eqref{eq:adjoint_1d}. From this process $(u, q, m)$ we construct the unique strong solution $(\tilde{u}, \tilde{q}, \tilde{m})$ of BSPDE \eqref{eq:bspde_2d} by setting $\tilde{u}_t(x, y) = e^{-y} u_t(x)$, $\tilde{q}_t(x, y) = e^{-y} q_t(x)$, and $\tilde{m}_t(x, y) = e^{-y} m_t(x)$ for $(t, \omega, x, y) \in [0, T] \times \Omega \times \R \times (0, \infty)$.

\begin{proof}[Proof of Proposition \ref{prop:bspde_1}]
We apply Theorem \ref{thm:bspde}, so we simply have to verify Assumption \ref{ass:bspde} for the coefficients $a$, $H$, $F$, $\alpha$, and $\beta$ defined by $H_t(x, r, p) = H^{\nu}_t(x, r, p)$, $F_t(x, u, v) = F^{\nu}_t(x, u, v)$, $\alpha_t = \sigma_0(t)$, and $\beta_t(x) = 0$ for $(t, \omega, x, r, p, u, v) \in [0, T] \times \Omega \times \R^3 \times L^2(\R)^2$. We only check the growth and Lipschitz conditions, Assumptions \ref{ass:bspde} \ref{it:growth_bspde} and \ref{it:lipschitz_bspde}, for $H$. The coefficient $F$ can be dealt with in the same manner as with $\tilde{F}$ in the proof of Proposition \ref{prop:adjoint_existence} and the properties of the remaining coefficients are obvious. For $(t, \omega, x, r, p) \in [0, T] \times \Omega \times \R^3$ it holds that
\begin{align*}
    \lvert H_t(x, r, p)\rvert &\leq \biggl\lvert \inf_{g \in G} \bigl(b(t, x, \nu_t, g)p + f(t, x, \nu_t, g)\bigr)\biggr\rvert + \lvert \lambda_t(x)r\rvert \\
    &\leq C_b \lvert p\rvert + h_f(x) + C_{\lambda}\lvert r\vert,
\end{align*}
where the boundedness of $b$ and $\lambda$ is guaranteed by Assumption \ref{ass:spfe_joint} \ref{it:spde_growth} and $h_f \in L^2(\R)$ by Assumption \ref{ass:adjoint} \ref{it:cost_l2}, so that $H$ satisfies Assumption \ref{ass:bspde} \ref{it:growth_bspde}. Next, if in addition $(r', p') \in \R^2$, then
\begin{align*}
    \lvert H_t&(x, r, p) - H_t(x, r', p')\rvert \\
    &\leq \biggl\lvert\inf_{g \in G} \bigl(b(t, x, \nu_t, g)p + f(t, x, \nu_t, g)\bigr) - \inf_{g \in G} \bigl(b(t, x, \nu_t, g)p' + f(t, x, \nu_t, g)\bigr)\biggr\rvert \\
    &\ \ \ + \lvert \lambda_t(x) (r - r')\rvert \\
    &\leq \sup_{g \in G} \Bigl\lvert b(t, x, \nu_t, g)p + f(t, x, \nu_t, g) - \bigl(b(t, x, \nu_t, g)p' + f(t, x, \nu_t, g)\bigr)\Bigr\rvert + C_{\lambda}\lvert r - r'\rvert \\
    &\leq C_b\lvert p - p'\rvert + C_{\lambda}\lvert r - r'\rvert,
\end{align*}
so that $H$ also satisfies the Lipschitz condition from Assumption \ref{ass:bspde} \ref{it:lipschitz_bspde}. Thus, we can apply Theorem \ref{thm:bspde} to obtain a unique strong solution to BSPDE \eqref{eq:adjoint}.
\end{proof}

\begin{proof}[Proof of Proposition \ref{prop:bspde_2_from_1}]
We first show that the process $(\tilde{u}, \tilde{q}, \tilde{m}) \in \bb{H}(\R \times (0, \infty))$ given by $\tilde{u}_t(x, y) = e^{-y}u_t(x)$, $\tilde{q}_t(x, y) = e^{-y}q_t(x)$, $\tilde{m}_t(x, y) = e^{-y}m_t(x)$ for $(t, \omega, x, y) \in [0, T] \times \Omega \times \R \times (0, \infty)$, where $(u, q, m)$ is the unique strong solution of BSPDE \eqref{eq:adjoint_1d}, solves BSPDE \eqref{eq:bspde_2d}. Let $\varphi \in H^1_0(\R \times (0, \infty))$ and define $\varphi_0 \in H^1(\R)$ by $\varphi_0(x) = \int_{(0, \infty)} e^{-y} \varphi(x, y) \, \d y$. Then, using BSPDE \eqref{eq:adjoint_1d}, we find
\begin{align} \label{eq:bspde_2_from_1}
    \langle \tilde{u}_t, \varphi\rangle &= \langle u_t, \varphi_0\rangle \notag \\
    &= \langle D\psi, \varphi_0\rangle + \int_t^T \Bigl\langle H^{\nu}_s\bigl(\cdot, u_s(\cdot), \partial_x u_s(\cdot)\bigr) + F^{\nu}_s(\cdot, \partial_x u_s), \varphi_0 \Bigr\rangle \, \d s \notag \\
    &\ \ \ - \int_t^T \bigl\langle \partial_x u_s, \partial_x (a_s \varphi_0)\bigr\rangle + \bigl\langle q_s, \sigma_0(s) \partial_x \varphi_0\bigr\rangle  \, \d s - \int_t^T \langle q_s, \varphi_0\rangle \, \d W_s - \langle m_T - m_t, \varphi_0\rangle \notag \\
    &= \langle e^{-\cdot} D\psi, \varphi\rangle + \int_t^T \Bigl\langle H^{\nu}_s\bigl(\cdot, u_s(\cdot), \partial_x u_s(\cdot)\bigr) + F^{\nu}_s(\cdot, \partial_x u_s), \varphi_0 \Bigr\rangle \, \d s \\
    &\ \ \ - \int_t^T \bigl\langle \partial_x \tilde{u}_s, \partial_x(a_s \varphi)\bigr\rangle + \langle \tilde{q}_s, \sigma_0(s) \partial_x \varphi\rangle \, \d s - \int_t^T \langle \tilde{q}_s, \varphi\rangle \, \d W_s - \langle \tilde{m}_T - \tilde{m}_t, \varphi\rangle \notag .
\end{align}
The terminal condition and the second line are already of the desired form, so it remains to compute the two terms involving $H^{\nu}$ and $F^{\nu}$. For the first expression, we write
\begin{align} \label{eq:rewriting_h}
\begin{split}
    &\bigl\langle H^{\nu}_s(\cdot, u_s(\cdot), \partial_x u_s(\cdot)\bigr), \varphi_0\bigr\rangle \\
    &= \int_{\R \times (0, \infty)} e^{-y}\Bigl(\inf_{h \in G}\bigl(b(s, x, \nu_s, h)\partial_x u_s(x) + f(s, x, \nu_s, h)\bigr) - \lambda_s(x)u_s(x)\Bigr) \varphi(x, y) \, \d x \d y.
\end{split}
\end{align}
Now, if $\mu_t(x, y) > 0$ then
\begin{align*}
    e^{-y}\inf_{h \in G}\bigl(b(s, x, \nu_s, h)&\partial_x u_t(x) + f(s, x, \nu_s, h)\bigr) \\
    &= \inf_{h \in G} \bigl(b(s, x, \nu_s, h)\partial_x \tilde{u}_t(x, y) + \tilde{f}(s, x, y, \nu_s, h)\bigr) \\
    &= \inf_{h \in G} \tilde{K}^{\mu}_s\bigl(x, y, \partial_x \tilde{u}_s(x, y), h\bigr).
\end{align*}
If, on the other hand, $\mu_t(x, y) = 0$ we compute
\begin{align*}
    e^{-y}&\inf_{h \in G}\bigl(b(s, x, \nu_s, h)\partial_x u_t(x) + f(s, x, \nu_s, h)\bigr) \\
    &= e^{-y}\Bigl(b\bigl(s, x, \nu_s, g_{\ast}\bigl(s, x, \partial_x u_s(x)\bigr)\bigr)\partial_x u_s(x, y) + f\bigl(s, x, \nu_s, g_{\ast}\bigl(s, x, \partial_x u_s(x)\bigr)\bigr)\Bigr) \\
    &= \Bigl(b\bigl(s, x, g_{\ast}\bigl(s, x, \nu_s, \partial_x u_s(x)\bigr)\bigr)\partial_x \tilde{u}_s(x, y) + \tilde{f}\bigl(s, x, y, g_{\ast}\bigl(s, x, \nu_s, \partial_x u_s(x)\bigr)\bigr)\Bigr) \\
    &= \tilde{K}^{\mu}_s\bigl(x, y, \partial_x \tilde{u}_s(x, y), g_{\ast}\bigl(s, x, \partial_x u_s(x)\bigr)\bigr),
\end{align*}
so together we get
\begin{align*}
    e^{-y}\Bigl(\inf_{h \in G}\bigl(b(s, x, \nu_s, h)&\partial_x u_t(x) + f(s, x, \nu_s, h)\bigr) \\
    &= \bf{1}_{\mu_s(x, y) > 0}\inf_{h \in G} \tilde{K}^{\mu}_s\bigl(x, y, \partial_x \tilde{u}_s(x, y), h\bigr) \\
    &\ \ \ + \bf{1}_{\mu_s(x, y) = 0} \tilde{K}^{\mu}_s\bigl(x, y, \partial_x \tilde{u}_s(x, y), g_{\ast}\bigl(s, x, \partial_x u_s(x)\bigr)\bigr) \\
    &= \tilde{H}^{\mu}_s\bigl(x, y, \partial_x \tilde{u}_s(x, y)\bigr),
\end{align*}
where in the last step we simply apply the definition of $\tilde{H}^{\mu}$ from \eqref{eq:hamiltonian_2d}. Inserting this equality into \eqref{eq:rewriting_h} yields
\begin{align*}
    \bigl\langle H^{\nu}_s(\cdot, u_s(\cdot), &\partial_x u_s(\cdot)\bigr), \varphi_0\bigr\rangle \\
    &= \int_{\R \times (0, \infty)} \Bigl(\tilde{H}^{\mu}_s\bigl(x, y, \partial_x \tilde{u}_s(x, y)\bigr) + \lambda_s(x)\partial_y\tilde{u}_s(x, y)\Bigr) \varphi(x, y) \, \d x \d y \\
    &= \bigl\langle \tilde{H}^{\mu}_s\bigl(\cdot, \cdot, \partial_x \tilde{u}_s(\cdot, \cdot)\bigr), \varphi\bigr\rangle - \langle \tilde{u}_s, \lambda_s \partial_y \varphi\rangle,
\end{align*}
where we used in the first equality that $-e^{-y}u_s(x) = \partial_y \tilde{u}_s(x, y)$ and applied integration by parts in the second equality, using that the boundary terms vanish since $\varphi(0) = 0$. For the second term, we get
\begin{align*}
    \bigl\langle F^{\nu}_s(&x, \partial_x u_s), \varphi_0\bigr\rangle \\
    &= \int_{\R} \Bigl(\bigl\langle \nu_s, Db_0(s, \cdot, \nu_s)(x)\partial_x u_s\bigr\rangle + \bigl\langle \nu_s, Df_0(s, \cdot, \nu_s)(x)\bigr\rangle\Bigr) \varphi_0(x) \, \d x \\
    &= \int_{\R \times (0, \infty)} \Bigl(\bigl\langle \mu_s, e^{-y}Db_0(s, \cdot, \nu_s)(x)\partial_x \tilde{u}_s\bigr\rangle + \bigl\langle \mu_s, e^{-y}D\tilde{f}_0(s, \cdot, \cdot, \nu_s)(x)\bigr\rangle\Bigr)\varphi(x, y) \, \d x \d y \\
    &= \bigl\langle \tilde{F}^{\mu}_s(\cdot, \cdot, \partial_x \tilde{u}_s), \varphi\bigr\rangle,
\end{align*}
where we used in the second equality that $\nu_s = \cal{S}(\mu_s)$. Inserting the previous two equalities into \eqref{eq:bspde_2_from_1} yields
\begin{align*}
    \langle \tilde{u}_t, \varphi\rangle &= \langle e^{-\cdot} D\psi, \varphi\rangle + \int_t^T \Bigl\langle \tilde{H}^{\mu}_s\bigl(\cdot, \cdot, \partial_x \tilde{u}_s(\cdot, \cdot)\bigr) + \tilde{F}^{\mu}_s(\cdot, \cdot, \partial_x \tilde{u}_s), \varphi \Bigr\rangle \, \d s \\
    &\ \ \ - \int_t^T \bigl\langle \partial_x \tilde{u}_s, \partial_x(a_s \varphi)\bigr\rangle + \langle \tilde{u}_s, \lambda_s \partial_y \varphi\rangle + \langle \tilde{q}_s, \sigma_0(s) \partial_x \varphi\rangle \, \d s \\
    &\ \ \ - \int_t^T \langle \tilde{q}_s, \varphi\rangle \, \d W_s - \langle \tilde{m}_T - \tilde{m}_t, \varphi\rangle.
\end{align*}
Since $\varphi \in H^1_0(\R \times (0, \infty))$ was arbitrary, $(\tilde{u}, \tilde{q}, \tilde{m}) \in \bb{H}(\R \times (0, \infty))$ satisfies BSPDE \eqref{eq:bspde_2d}.

To show that $(\tilde{u}, \tilde{q}, \tilde{m})$ is the unique solution to BSPDE \eqref{eq:bspde_2d}, we apply Theorem \ref{thm:bspde}. Hence, we need to verify that the coefficients of BSPDE \eqref{eq:bspde_2d} satisfy Assumption \ref{ass:bspde}. We already did that for all coefficients except $\tilde{H}^{\mu}$ in the proof of Proposition \ref{prop:adjoint_existence}. The assumptions on $\tilde{H}^{\mu}$ can be verified in the same manner as the assumptions on $H$ in the proof of Proposition \ref{prop:bspde_1}. This concludes the proof.
\end{proof}

From Proposition \ref{prop:bspde_2_from_1} and the necessary SMP, Theorem \ref{thm:smp}, we can then deduce Corollary \ref{cor:equivalence_l2} in the manner outlined above the statement of Corollary \ref{cor:equivalence_l2}. Hence, there exists an optimal control $g^{\ast} \in \cal{G}_{\bb{F}}^1$ which takes the form $g^{\ast}_t(x) = g_{\ast}\bigl(t, x, \partial_x u_t(x)\bigr)$ for $x \in \R$ and $\pr \otimes \leb$-a.e.\@ $(t, \omega) \in [0, T] \times \Omega$. Our next goal is to extend the existence of such an optimal semiclosed-loop control independent of the intensity to more general coefficients and cost functions.

\subsection{Extending Equivalence to General Coefficients and Cost Functions} \label{sec:extending}

Throughout this subsection Assumptions \ref{ass:mfl} and \ref{ass:convexity} are in place and we assume that $\L(\xi, \zeta) \in L^2(\R)$. We first establish a weak stability result for SPDE \eqref{eq:sfpe_joint} with varying coefficients and inputs. Let $(b^n)_n$, $(\lambda^n)_n$, $(f^n)_n$, and $(\psi^n)_n$ be sequences of functions such that
\begin{enumerate}[noitemsep, label = (\roman*)]
    \item \label{it:assumptions} $b^n$, $\lambda^n$, $f^n$, and $\psi^n$ satisfy the same properties as $b$, $\lambda$, $f$, and $\psi$ in Assumptions \ref{ass:spfe_joint}, \ref{ass:adjoint}, and \ref{ass:smp};
    \item \label{it:uniform_growth} there exists a $C > 0$ such that for $(t, x, v, g) \in [0, T] \times \R \times \cal{M}^2_{\leq 1}(\R) \times G$ we have
    \begin{align*}
        \lvert b^n(t, x, v, g)\rvert + \lvert \lambda^n(t, x)\rvert &\leq C(1 + \lvert x \rvert + M_2(v)), \\
        \lvert f^n(t, x, v, g)\rvert + \lvert \psi^n(v)\rvert &\leq C(1 + \lvert x \rvert^2 + M^2_2(v));
    \end{align*}
    \item \label{it:unif_local_convergence} $b^n(t, x, v, g)$, $\lambda^n(t, x)$, $f^n(t, x, v, g)$, and $\psi^n(v)$ tend to $b(t, x, v, g)$, $\lambda(t, x)$, $f(t, x, v, g)$, and $\psi(v)$ uniformly for $(t, x, v, g)$ in compact subsets of $[0, T] \times \R \times \cal{M}^2_{\leq 1}(\R) \times G$;
    \item \label{it:stict_convex_2} the map $G \ni g \mapsto f^n_1(t, x, g)$ is strictly convex for all $(t, x) \in [0, T] \times \R$.
\end{enumerate}
Since $b^n$, $\lambda^n$, $f^n$, and $\psi^n$ satisfy the same properties as $b$, $\lambda$, $f$, and $\psi$ in Assumptions \ref{ass:spfe_joint}, \ref{ass:adjoint}, and \ref{ass:smp}, we can apply Corollary \ref{cor:equivalence_l2}, where we replace the coefficients and costs $b$, $\lambda$, $f$, and $\psi$ of the control problem by $b^n$, $\lambda^n$, $f^n$, and $\psi^n$ and the initial condition $\L(\xi, \zeta)$ by $\L(\xi, \zeta/n) \in L^2(\R^2)$. The latter ensures that the cumulative intensity will be started from zero in the limit. We will use the assumptions formulated above, in particular the convergence condition \ref{it:unif_local_convergence}, to show that the existence of an optimal semiclosed-loop control independent of the intensity transfers from the control problem with coefficients and costs $b^n$, $\lambda^n$, $f^n$, and $\psi^n$ to the limiting control problem with coefficients and costs $b$, $\lambda$, $f$, and $\psi$.
% The maps $b$, $\lambda$, $f$, and $\psi$ satisfy Assumption \ref{ass:mfl}. 

For $n \geq 1$ we define the cost functionals
\begin{equation}
    J_{\text{cl}}^n(g) = \ev\biggl[\int_0^T \bigl\langle \mu^n_t, \tilde{f}^n(t, \cdot, \cdot, \nu^n_t, g_t)\bigr\rangle \, \d t + \psi^n(\nu^n_T)\biggr],
\end{equation}
for $g \in \cal{G}_{\bb{F}}^2$, where $\mu^n = (\mu^n_t)_{0 \leq t \leq T}$ satisfies SPDE \eqref{eq:sfpe_joint} with $b$ and $\lambda$ replaced by $b^n$ and $\lambda^n$, respectively, initial condition $\mu^n_0 = \L(\xi, \zeta/n)$, and control $g$. 
% By Corollary \ref{cor:equivalence_l2}, there exists an optimiser of $J_{\text{cl}}^n$ that does not depend of the intensity. 
For a control $g \in \cal{G}_{\bb{F}}^1$, which is independent of the intensity, we can rewrite $J_{\text{cl}}^n(g)$ as $\ev\bigl[\int_0^T \bigl\langle \nu^n_t, f^n(t, \cdot, \nu^n_t, g_t)\bigr\rangle \, \d t + \psi^n(\nu^n_T)\bigr]$, where $\nu^n = (\nu^n_t)_{0 \leq t \leq T}$ is a solution to SPDE \eqref{eq:sfpe} with $b$ and $\lambda$ replaced by $b^n$ and $\lambda^n$, respectively, initial condition $\nu^n_0 = \ev[e^{-\zeta/n}]\L(\xi)$, and control $g$. The definition of the initial condition is to be understood in the sense that for all $\varphi \in C_b(\R)$ it holds that $\langle \nu^n_0, \varphi\rangle = \ev[e^{-\zeta/n}] \ev[\varphi(\xi)]$.

Let $(E, d)$ be a complete separable metric space. We let $\bb{M}_{\leq T}^2(E)$ denote the space of square-integrable measures $m$ on $[0, T] \times E$ such that $m([s, t] \times E) \leq t - s$ for all $0 \leq s \leq t \leq T$. Here square-integrable means that $\int_E d^2(x, x_0) \, \d m(x)$ for a fixed $x_0 \in E$. 

% A sequence of measures $(m^n)_n$ in $\bb{M}_{\leq T}^2(E)$ converges to $m \in \bb{M}_{\leq T}^2(E)$ if $\langle m^n, \varphi\rangle \to \langle m, \varphi\rangle$ for all continuous functions $\varphi \define [0, T] \times \R \times G \to \R$ such that there is $C > 0$ such that for all $x \in E$ we have $\lvert \varphi(x)\rvert \leq C(1 + d^2(x, x_0))$. We equip $\bb{M}_{\leq T}^2(E)$ with the topology induced by this notion of convergence, which turns it into a Polish space (cf.\@ Lemma \ref{lem:measures}). 

We say that a random variable $\mathfrak{m}$ with values in $\bb{M}_{\leq T}^2(E)$ is $\bb{F}$-progressively measurable if for any $t \in [0, T]$, the random variable $\mathfrak{m}([0, s] \times A)$ is $\cal{B}([0, t]) \otimes \F_t$-measurable for all $s \in [0, t]$ and $A \in \cal{B}(E)$.

\begin{proposition} \label{prop:stability_sfpe}
Let the sequences $(b^n)$, $(\lambda^n)$, $(f^n)$, and $(\psi^n)$ satisfy Properties \ref{it:assumptions} to \ref{it:stict_convex_2} above.
Fix a sequence $(g^n)_{n \geq 1}$ in $\cal{G}_{\bb{F}}^1$ and denote the solution to SPDE \eqref{eq:sfpe} with coefficients $b$ and $\lambda$ replaced by $b^n$ and $\lambda^n$, respectively, control $g^n$, and initial condition $\nu^n_0 = \ev[e^{-\zeta/n}] \L(\xi)$ by $\nu^n = (\nu^n_t)_{0 \leq t \leq T}$. Define the $\bb{M}_{\leq T}^2(\R \times G)$-valued random variable $\mathfrak{m}^n$ by
\begin{equation*}
    \int_{[0, T] \times \R \times G} \varphi(t, x, g) \, \d \mathfrak{m}^n(t, x, g) = \int_{[0, T]} \langle \nu^n_t, \varphi(t, \cdot, g_n)\rangle\, \d t
\end{equation*}
for $\varphi \in C_b([0, T] \times \R \times G)$. Then the sequence $(\mathfrak{m}^n, \nu^n, W)$ is tight on $\bb{M}_{\leq T}^2(\R \times G) \times C([0, T]; \cal{M}^1_{\leq 1}(\R)) \times C([0, T])$ and any subsequential limit $(\mathfrak{m}, \nu, W)$, which we assume is defined on the same probability space with the same Brownian motion $W$,
% \begin{equation} \label{eq:decomposition}
%     \int_{[0, T] \times \R} \varphi(t, x) \, \d \mathfrak{m}(t, x, G) = \int_0^T \langle \nu_t, \varphi(t, \cdot)\rangle \, \d t
% \end{equation}
% for $\varphi \in C_b([0, T] \times \R)$ and 
solves the SPDE
\begin{equation} \label{eq:sfpe_relaxed}
    \d \langle \nu_t, \varphi\rangle = \int_{\R \times G} \L\varphi(t, x, \nu_t, g) \, \d \mathfrak{m}(t, x, g) + \langle \nu_t, \sigma_0(t) \partial_x \varphi\rangle \, \d W_t
\end{equation}
for $\varphi \in C^2_c(\R)$ with initial condition $\nu_0 = \L(\xi)$. Along the chosen subsequence the cost $J_{\textup{cl}}^n(g^n)$ converges to
\begin{equation*}
    \ev\biggl[\int_{[0, T] \times \R \times G} f(t, x, \nu_t, g) \, \d \mathfrak{m}(t, x, g) + \psi(\nu_T)\biggr].
\end{equation*}
\end{proposition}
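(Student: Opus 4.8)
The plan is to produce the subsequential limit by a compactness argument: uniform moment estimates, tightness of $(\mathfrak{m}^n,\nu^n,W)$, passage to an almost surely convergent subsequence, and then identification of the limiting relaxed Fokker--Planck equation together with the limit of the costs.

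\textbf{Step 1 (uniform estimates).} Testing SPDE \eqref{eq:sfpe} (with coefficients $b^n,\lambda^n$ and control $g^n$) against $\varphi(x)=|x|^2$ and using that $a$ and $\sigma_0$ are bounded (Assumption \ref{ass:mfl} \ref{it:growth_mfl}), that $\lambda^n\ge 0$ so the term $-\lambda^n(t,x)|x|^2$ is nonpositive, the uniform linear growth of $b^n$ in Property \ref{it:uniform_growth}, and that $M_2^2(\nu^n_0)=\ev[e^{-\zeta/n}]\,\ev[|\xi|^2]$ is a finite \emph{deterministic} number, a localisation--Gr\"onwall--Burkholder--Davis--Gundy argument, bootstrapped in the exponent, yields
\[
    \sup_{n\ge 1}\ev\sup_{0\le t\le T}M_2^{2p}(\nu^n_t) < \infty \qquad \text{for every } p\ge 1,
\]
with constants depending only on the uniform growth constant, $\lVert a\rVert_\infty$, $\lVert\sigma_0\rVert_\infty$, $T$ and $\ev[|\xi|^2]$. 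In particular $\sup_n\ev\int_{[0,T]\times\R\times G}|x|^2\,\d\mathfrak{m}^n<\infty$ (recall $G$ is compact), and all the random variables entering the cost are uniformly integrable, with uniformly controlled tails.

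\textbf{Step 2 (tightness).} Each $\mathfrak{m}^n$ has total mass $\le T$ and satisfies $\mathfrak{m}^n([s,t]\times\R\times G)\le t-s$, a property preserved under weak limits; combined with the uniform second-moment bound of Step 1 this gives tightness of $(\mathfrak{m}^n)$ on $\bb{M}_{\le T}^2(\R\times G)$ by Prokhorov's theorem. For $(\nu^n)$ I would fix a countable family $(\varphi_k)\subset C_c^2(\R)$ separating sub-probability measures, note that each real process $\langle\nu^n_\cdot,\varphi_k\rangle$ satisfies a Kolmogorov estimate $\ev\lvert\langle\nu^n_t,\varphi_k\rangle-\langle\nu^n_s,\varphi_k\rangle\rvert^4\le C_k\lvert t-s\rvert^2$ (the drift is controlled via the growth of $b^n,\lambda^n$ and Step 1, the martingale part is bounded and handled by Burkholder--Davis--Gundy), hence is tight in $C([0,T])$, and combine this with the compact-containment estimate $\pr\bigl(\sup_t M_2^2(\nu^n_t)>R\bigr)\le C/R$ — so that $\nu^n_\cdot$ stays, with probability $\ge 1-C/R$, in the $d_1$-compact set $\{v:M_2^2(v)\le R\}$ — to obtain tightness of $(\nu^n)$ on $C([0,T];\cal{M}^1_{\le 1}(\R))$. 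Tightness of $W$ being trivial, we pass to a subsequence along which $(\mathfrak{m}^n,\nu^n,W)$ converges in law and, as in the statement, realise the limit $(\mathfrak{m},\nu,W)$ on a probability space carrying the same Brownian motion $W$ with $(\mathfrak{m}^n,\nu^n)\to(\mathfrak{m},\nu)$ almost surely (the limiting $W$ being a Brownian motion for its natural filtration, to which $(\mathfrak{m},\nu)$ is adapted).

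\textbf{Step 3 (identification of the limit).} Writing $\L^n$ for the generator \eqref{eq:generator_sfpe} with $b,\lambda$ replaced by $b^n,\lambda^n$, SPDE \eqref{eq:sfpe} for $\nu^n$ reads, for $\varphi\in C_c^2(\R)$,
\[
    \langle\nu^n_t,\varphi\rangle = \langle\nu^n_0,\varphi\rangle + \int_{[0,t]\times\R\times G}\L^n\varphi(s,x,\nu^n_s,g)\,\d\mathfrak{m}^n + \int_0^t\langle\nu^n_s,\sigma_0(s)\partial_x\varphi\rangle\,\d W_s.
\]
In the drift I would add and subtract $\L\varphi(s,x,\nu_s,g)$: using $b^n\to b$ and $\lambda^n\to\lambda$ uniformly on compacts (Property \ref{it:unif_local_convergence}), the Lipschitz continuity of $b_0$ in $v$ with respect to $d_1$ (Assumption \ref{ass:mfl} \ref{it:continuity_coeff_mfl}, with the same constant for all $b^n$), the uniform convergence $\nu^n\to\nu$ in $C([0,T];\cal{M}^1_{\le 1}(\R))$, and that $\varphi$ has compact support, one checks that $\L^n\varphi(s,x,\nu^n_s,g)-\L\varphi(s,x,\nu_s,g)\to 0$ uniformly over $(s,x,g)\in[0,T]\times\supp\varphi\times G$, so its $\mathfrak{m}^n$-integral (over a mass $\le T$) vanishes; the remaining integral $\int_{[0,t]\times\R\times G}\L\varphi(s,x,\nu_s,g)\,\d\mathfrak{m}^n$ converges to $\int_{[0,t]\times\R\times G}\L\varphi(s,x,\nu_s,g)\,\d\mathfrak{m}$ by weak convergence of $\mathfrak{m}^n$, since the integrand is bounded on $[0,t]\times\supp\varphi\times G$ and, for $\leb$-a.e.\ $s$, continuous in $(x,g)$ — here one invokes the standard continuity of $\mathfrak{n}\mapsto\int h\,\d\mathfrak{n}$ on measures whose time-marginal is dominated by $\leb$ when $h$ is bounded and spatially continuous for a.e.\ time, which is precisely what absorbs the mere measurability of $b,\lambda$ in $t$. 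For the stochastic integral, $\langle\nu^n_s,\sigma_0(s)\partial_x\varphi\rangle\to\langle\nu_s,\sigma_0(s)\partial_x\varphi\rangle$ uniformly in $s$, and the driving Brownian motion is the same, so $\int_0^t\langle\nu^n_s,\sigma_0(s)\partial_x\varphi\rangle\,\d W_s\to\int_0^t\langle\nu_s,\sigma_0(s)\partial_x\varphi\rangle\,\d W_s$ in probability. Finally $\ev[e^{-\zeta/n}]\to 1$ by dominated convergence, whence $\nu^n_0\to\L(\xi)$ and $\nu_0=\L(\xi)$. Thus $\nu$ solves the relaxed SPDE \eqref{eq:sfpe_relaxed}.

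\textbf{Step 4 (convergence of the cost) and the main obstacle.} Rewriting $J_{\textup{cl}}^n(g^n)=\ev\bigl[\int_{[0,T]\times\R\times G}f^n(t,x,\nu^n_t,g)\,\d\mathfrak{m}^n+\psi^n(\nu^n_T)\bigr]$, the convergences $f^n\to f$ and $\psi^n\to\psi$ uniformly on compacts, the continuity of $f$ and $\psi$ (Assumption \ref{ass:mfl} \ref{it:continuity_cost_mfl}), the a.s.\ convergence $\nu^n\to\nu$ and $\mathfrak{m}^n\to\mathfrak{m}$, the same continuity lemma as in Step 3, and a truncation using the uniform tail bounds of Step 1 give a.s.\ convergence of the two summands; the uniform-in-$n$ higher-moment bounds of Step 1 combined with the growth estimates in Property \ref{it:uniform_growth} yield the uniform integrability needed to pass the expectation, producing the limit $\ev\bigl[\int_{[0,T]\times\R\times G}f(t,x,\nu_t,g)\,\d\mathfrak{m}+\psi(\nu_T)\bigr]$. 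I expect the principal difficulty to lie in Step 3: reconciling the weak convergence of the relaxed controls $\mathfrak{m}^n$ with the fact that $b,\lambda,f$ are only measurable — not continuous — in $t$, which forces a Carath\'eodory/stable-convergence continuity statement for integrals against measures with sub-Lebesgue time-marginal, and, relatedly, checking that the $d_1$-convergence of $\nu^n$ (the only topology in which tightness is available) suffices to run the continuity arguments for the nonlocal terms $b_0,f_0,\psi$; the uniform second-moment bounds of Step 1 are exactly what is used to upgrade $d_1$-convergence to convergence against the linearly- and quadratically-growing integrands appearing in $\L$ and in the cost, and, if needed, to upgrade $\nu^n\to\nu$ from $\cal{M}^1_{\le 1}$ to $\cal{M}^2_{\le 1}$ along the way.
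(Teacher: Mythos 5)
Your proposal follows a genuinely different route for the identification step (direct passage to the limit in the weak form of the SPDE, via Skorokhod representation) than the paper's proof (reformulation as a controlled martingale problem, stability of the local-martingale property under weak convergence, then Ikeda--Watanabe's Theorem II.7.1 to recover the stochastic integral). But there are two concrete gaps.

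First, in Step~1 you claim $\sup_n \ev\sup_{0\le t\le T}M_2^{2p}(\nu^n_t)<\infty$ for \emph{every} $p\ge 1$. That cannot be right under the standing assumptions: only $\L(\xi,\zeta)\in\cal P^2$ is assumed, so $\xi$ has no moments beyond the second. Since $M_2^2(\nu^n_t)\le\ev[\lvert X^n_t\rvert^2\,\vert\,\F_T]$ is just an $L^1$ random variable, raising it to a power $p>1$ does not leave it integrable. The paper instead establishes \emph{uniform integrability} of the family $\bigl(\ev[(\lvert X^n\rvert^{\ast}_T)^2\,\vert\,\F_T]\bigr)_n$ by isolating the bounded-volatility stochastic term $Z^n_T$ (whose conditional expectation is uniformly integrable because $\sigma,\sigma_0$ are bounded) and then closing the estimate with Gr\"onwall. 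Uniform integrability, not higher moments, is exactly what Lemma~\ref{lem:measures} and the cost passage in Step~4 require; your stated bound is not available.

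Second, in Step~3 you write that $\int_0^t\langle\nu^n_s,\sigma_0(s)\partial_x\varphi\rangle\,\d W_s\to\int_0^t\langle\nu_s,\sigma_0(s)\partial_x\varphi\rangle\,\d W_s$ because ``the driving Brownian motion is the same.'' After a Skorokhod representation the driving process changes with $n$: you obtain coupled copies $(\tilde{\mathfrak m}^n,\tilde\nu^n,\tilde W^n)$ with $\tilde W^n\to\tilde W$ a.s., but $\tilde W^n\ne\tilde W$. Passing to the limit in the stochastic integral then requires a convergence-of-stochastic-integrals theorem (Jakubowski--M\'emin--Pag\`es or Kurtz--Protter type), together with a verification that the limiting $W$ remains a Brownian motion for the enlarged filtration to which $\nu$ is adapted. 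The paper deliberately avoids this by working with the martingale problem: the local-martingale property \emph{is} stable under weak convergence without any compatibility condition on the drivers, and the stochastic-integral representation of $\nu$ is only recovered at the very end via Ikeda--Watanabe. Your direct approach can be made to work, but the stochastic-integral convergence is a real subtlety that your argument elides. Your observation about absorbing the measurability-in-$t$ via a Carath\'eodory continuity lemma for measures with sub-Lebesgue time-marginal is essentially the content of \cite[Lemma A.7]{hambly_mvcp_arxiv_2023}, which the paper invokes for exactly this purpose.
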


Note that assuming that $(\mathfrak{m}, \nu, W)$ is defined on $(\Omega, \F, \pr)$ means that we might have to enlarge the probability space. We do this in such a way that $(\mathfrak{m}, \nu, W)$ are independent of $(\xi, B)$.

\begin{proof}[Proof of Proposition \ref{prop:stability_sfpe}]
It is not difficult to see that $\nu^n$ is given by $\nu^n_t = \cal{S}(\mu^n_t)$, where $\mu^n = (\mu^n_t)_{0 \leq t \leq T}$ solves SPDE \eqref{eq:sfpe_joint} with input $g^n$, coefficients $b$ and $\lambda$ replaced by $b^n$ and $\lambda^n$, respectively, and initial condition $\mu^n = \L(\xi, \zeta/n)$. By Proposition \ref{prop:sfpe_joint}, extending the weak setup $\bb{S}^{\ast}$ if necessary, we can find a solution $(X^n, \Lambda^n)$ of McKean--Vlasov SDE \eqref{eq:mfl_weak} with initial condition $(X^n_0, \Lambda^n_0) = (\xi, \zeta/n)$ such that $\mu^n_t = \L(X^n_t, \Lambda^n_t \vert \F_T)$.

\textit{Step 1}: We first show that the sequence $(\mathfrak{m}^n)_{n \geq 1}$ is tight on $\bb{M}_{\leq T}^2(\R \times G)$. By \cite[Proposition B.1]{lacker_mfg_controlled_mgale_2015} it is enough to show that the sequence of measures $(m^n)_n$ on $[0, T] \times \R \times G$ defined by $m^n(A) = \ev[\mathfrak{m}^n(A)]$ for $A \in \cal{B}([0, T] \times \R \times G)$ is compact in $\bb{M}_{\leq T}^2(\R \times G)$ and that the family $(M_2^2(\mathfrak{m}^n))_{n \geq 1}$ is uniformly integrable. To establish the former, by Lemma \ref{lem:measures}, it is enough to show that the sequence $(m^n)_n$ is uniformly square-integrable. Now, we have that
\begin{align*}
    \sup_{n \geq 1}\int_{[0, T] \times \R \times G} \bf{1}_{\lvert x \rvert \geq c} \lvert x\rvert \, \d m^n(t, x, g) \leq \int_0^T \sup_{n \geq 1}\ev[\bf{1}_{\lvert X^n_t\rvert \geq c} \lvert X^n_t\rvert^2] \, \d t.
\end{align*}
Hence, if we can prove that the family $(X^n_t)_{n \geq 1}$ is uniformly square-integrable for all $t \in [0, T]$, the same holds for $(m^n)_n$. Before we establish the former, let us consider the second criterion above, namely that $(M_2^2(\mathfrak{m}^n))_{n \geq 1}$ is uniformly integrable. Since $[0, T]$ and $G$ are compact, uniform integrability of $(M_2^2(\mathfrak{m}^n))_{n \geq 1}$ is equivalent to uniform integrability of $\bigl(\int_0^T \langle \nu^n_t, \lvert \cdot\rvert^2\rangle \, \d t\bigr)_{n \geq 1}$. We can estimate
\begin{equation*}
    \int_0^T \langle \nu^n_t, \lvert \cdot\rvert^2\rangle \, \d t \leq \int_0^T \ev[\lvert X^n_t\rvert^2 \vert \F_T] \, \d t \leq T \ev\bigl[(\lvert X^n\rvert^{\ast}_T)^2 \big\vert \F_T\bigr],
\end{equation*}
which shows that uniform integrability of $\bigl(\int_0^T \langle \nu^n_t, \lvert \cdot\rvert^2\rangle \, \d t\bigr)_{n \geq 1}$ is implied by uniform integrability of $\ev\bigl[(\lvert X^n\rvert^{\ast}_T)^2 \big\vert \F_T\bigr]$. Now, both the uniform square-integrability of $(X^n_t)_{n \geq 1}$ for all $t \in [0, T]$ and the uniform integrability of $\bigl(\ev\bigl[(\lvert X^n\rvert^{\ast}_T)^2 \big\vert \F_T\bigr]\bigr)_{n \geq 1}$ are implied by the uniform square-integrability of $(\lvert X^n\rvert^{\ast}_T)_{n \geq 1}$, so that is what we will show. Using standard estimates for the McKean--Vlasov SDE satisfied by $(X^n, \Lambda^n)$ and, subsequently, applying Gr\"onwall's inequality, we find
\begin{equation} \label{eq:square_estimate}
    (\lvert X^n\rvert^{\ast}_t)^2 \leq C \lvert \xi\rvert^2 + C\int_0^t (1 + M_2^2(\nu^n_s)) \, \d s + C Z^n_t,
\end{equation}
where $Z^n_t = \sup_{0x \leq s \leq t} \bigl\lvert \int_0^s \sigma(r, X^n_r) \, \d B_r + \int_0^s \sigma_0(r) \, \d W_r\bigr\rvert^2$. Further estimating $M^2_2(\nu^n_s) \leq \ev[\lvert X^n_s \rvert^2 \vert \F_T] \leq \ev[(\lvert X^n\rvert^{\ast}_t)^2\vert \F_T]$ for $s \in [0, t]$, taking conditional expectation with respect to $\F_T$ on both sides of \eqref{eq:square_estimate}, and then applying Gr\"onwall's inequality again yields that $\ev[(\lvert X^n\rvert^{\ast}_T)^2\vert \F_T] \leq C + C \ev[Z^n_T \vert \F_T]$ for a possibly enlarged constant $C > 0$. Since the diffusion coefficients $\sigma$ and $\sigma_0$ are bounded, the sequence $(\ev[Z^n_T \vert \F_T])_{n \geq 1}$ is uniformly integrable, so that the same is true for $(\ev[(\lvert X^n\rvert^{\ast}_T)^2\vert \F_T])_{n \geq 1}$ and, therefore, $(\sup_{0 \leq t \leq T} M_2^2(\nu^n_t))_{n \geq 1}$. Thus, in view of \eqref{eq:square_estimate}, the family $(\lvert X^n\rvert^{\ast}_T)_{n \geq 1}$ is uniformly integrable. 

Next, we establish the tightness of $(\nu^n)_{n \geq 1}$ on $C([0, T]; \cal{M}^1_{\leq 1}(\R))$. By Theorem 8.6 of Chapter 3 in \cite{ethier_convergence_markov_1986} to prove tightness it is enough to show that (i) $(\nu^n_t)_n$ is tight on $\cal{M}^1_{\leq 1}(\R)$ for all $t \in [0, T]$ and (ii) there exists a sequence of random variables $(\zeta_n)_n$ with $\sup_n \bb{E} \zeta_n < \infty$ such that for all $0 \leq s \leq t \leq T$, we have $\bb{E}\bigl[d_1(\nu^n_t, \nu^n_s) \big\vert \F_s\bigr] \leq \lvert t - s \rvert^{1/2} \bb{E}[\zeta_n \vert \F_s]$, where we recall that $d_1$ is introduced in Subsection \ref{sec:main_results_application}. We already showed (i) and the second property is easily established, using the expression $d_1(v_1, v_2) = \sup_{\lVert \varphi\rVert_{\text{Lip}}} \langle v_1 - v_2, \varphi\rangle$. We do not provide the details here, but note that the required estimates can be deduced along the same lines as the proof of Proposition 3.4 in \cite{hambly_mvcp_arxiv_2023}.

\textit{Step 2}: Let $(\mathfrak{m}, \nu, W)$ be a weak subsequential limit of $(\mathfrak{m}^n, \nu^n, W)_{n \geq 1}$. We show that $(\mathfrak{m}, \nu, W)$ solves SPDE \eqref{eq:sfpe_relaxed}. To achieve this we reformulate the SPDEs satisfied by $(\mathfrak{m}^n, \nu^n, W)$ and $(\mathfrak{m}, \nu, W)$ as martingale problems and deduce the stability of those martingale problems. Let us define the generator $\L^n$ by
\begin{equation*}
    \L^n \varphi(t, x, v, g) = b^n(t, x, v, g)\partial_x \varphi(x) - \lambda^n(t, x) \varphi(x) + a_t(x) \partial_x^2 \varphi(x)
\end{equation*}
for $\varphi \in C_b^2(\R)$ and $(t, x, v, g) \in [0, T] \times \R \times \cal{M}^2_{\leq 1}(\R) \times G$. By assumption $\nu^n$ satisfies SPDE \eqref{eq:sfpe} with $b$ and $\lambda$ replaced by $b^n$ and $\lambda^n$, respectively, initial condition $\nu^n_0 = \ev[e^{-\zeta/n}]\L(\xi)$, and control $g^n$. We can rewrite this as
\begin{equation*}
    \d \langle \nu^n_t, \varphi\rangle = \int_{\R \times G} \L^n\varphi(t, x, \nu^n_t, g) \, \d \mathfrak{m}^n(t, x, g) + \langle \nu^n_t, \sigma_0(t) \partial_x \varphi\rangle \, \d W_t
\end{equation*}
for $\varphi \in C^2_c(\R)$. We formulate this SPDE as a martingale problem. For $k \geq 1$, let $\Phi \in C^2_c(\R^{k + 1})$ and $\varphi \in C^2_c(\R; \R^k)$, and let $\langle v, \varphi\rangle$ denote the vector $(\langle v, \varphi_1\rangle, \dots, \langle v, \varphi_k\rangle)$ for $v \in \cal{M}^2_{\leq 1}(\R)$. We define the differential operator $\bb{L}^n$ acting on $\Phi$ and $\varphi$ by
\begin{align*}
    &\bb{L}^n(\Phi, \varphi)(t, x, w, v, g) \\
    &= \sum_{i = 1}^k \partial_{y_i} \Phi(\langle v, \varphi\rangle, w) \L^n\varphi_i(t, x, v, g)  + \sum_{i, j = 1}^k \frac{1}{2}\partial^2_{y_i y_j} \Phi(\langle v, \varphi\rangle, w) \langle v, \sigma_0(t) \partial_x \varphi_j\rangle \sigma_0(t) \partial_x \varphi_i(x) \\
    &\ \ \ + \sum_{i = 1}^k \frac{1}{2}\partial^2_{y_i y_{k + 1}} \Phi(\langle v, \varphi\rangle, w) \sigma_0(t) \partial_x \varphi_i(x)
\end{align*}
for $(t, x, w, v, g) \in [0, T] \times \R^2 \times \cal{M}^2_{\leq 1}(\R) \times G$. 
% Note that $\bb{L}^n$ is the infinitesimal generator of the diffusion $(\nu^n, W)$. 
We define the operator $\bb{L}$ analogously, only replacing the appearance of $\L^n$ in the definition of $\bb{L}^n$ by $\L$. Now, applying It\^o's formula to $\Phi(\langle \nu^n_t, \varphi\rangle, W_t)$ implies that
\begin{align*}
    \d \Phi(\langle \nu^n_t, \varphi\rangle, W_t) &= \int_{\R \times G} \bb{L}^n(\Phi, \varphi)(t, x, W_t, \nu^n_t, g) \, \d \mathfrak{m}^n(t, x, g) + \frac{1}{2}\partial^2_{y_{k + 1}} \Phi(\langle \nu^n_t, \varphi\rangle, W_t) \, \d t \\
    &\ \ \ + \sum_{i = 1}^k \partial_{y_i} \Phi(\langle \nu^n_t, \varphi\rangle, W_t)\langle \nu^n_t, \sigma_0(t) \partial_x \varphi_i\rangle \, \d W_t + \partial_{y_{k + 1}} \Phi(\langle \nu^n_t, \varphi\rangle, W_t) \, \d W_t.
\end{align*}
In other words, the process $\cal{M}^n(\Phi, \varphi)$ defined by
\begin{align*}
    \cal{M}^n_t(\Phi, \varphi) &= \Phi(\langle \nu^n_t, \varphi\rangle, W_t) - \int_{[0, t] \times \R \times G} \bb{L}^n(\Phi, \varphi)(s, x, W_s, \nu^n_s, g) \, \d \mathfrak{m}^n(s, x, g) \\
    &\ \ \ - \int_0^t\frac{1}{2}\partial^2_{y_{k + 1}} \Phi(\langle \nu^n_s, \varphi\rangle, W_s) \, \d s
\end{align*}
is a martingale for any $\Phi \in C^2_c(\R^k)$ and $\varphi \in C^2_c(\R; \R^k)$. On the one hand, as in \cite[Lemma A.7]{hambly_mvcp_arxiv_2023}, we can show that for any convergent sequence $(m^n, v^n, w^n)_{n \geq 1}$ in $\bb{M}^2_{\leq T}(\R \times G) \times C([0, T]; \cal{M}^2_{\leq 1}(\R)) \times C([0, T])$ with limit $(m, v, w)$, the expression
\begin{align*}
    \int_{[0, t] \times \R \times G} \bb{L}^n(\Phi, \varphi)(s, x, w^n_s, v^n_s, g) \, \d m^n(s, x, g) + \int_0^t\frac{1}{2}\partial^2_{y_{k + 1}} \Phi(\langle v^n_s, \varphi\rangle, w^n_s) \, \d s
\end{align*}
converges to 
\begin{align*}
    \int_{[0, t] \times \R \times G} \bb{L}(\Phi, \varphi)(s, x, w_s, v_s, g) \, \d m(s, x, g) + \int_0^t\frac{1}{2}\partial^2_{y_{k + 1}} \Phi(\langle v_s, \varphi\rangle, w_s) \, \d s,
\end{align*}
and $\Phi(\langle v^n_t, \varphi\rangle, w^n_t)$ tends to $\Phi(\langle v_t, \varphi\rangle, w_t)$. On the other hand, it is not difficult to see that $(\cal{M}^n(\Phi, \varphi))_{n \geq 1}$ is tight on $C([0, T])$. Consequently, we see that along a further subsequence $\cal{M}^n(\Phi, \varphi)$ converges weakly to the process $\cal{M}(\Phi, \varphi)$ given by
\begin{align*}
    \cal{M}_t(\Phi, \varphi) &= \Phi(\langle \nu_t, \varphi\rangle, W_t) - \int_{[0, t] \times \R \times G} \bb{L}(\Phi, \varphi)(s, x, W_s, \nu_s, g) \, \d \mathfrak{m}(s, x, g) \\
    &\ \ \ - \int_0^t\frac{1}{2}\partial^2_{y_{k + 1}} \Phi(\langle \nu_s, \varphi\rangle, W_s) \, \d s.
\end{align*}
Moreover, since the local martingale property is stable under weak convergence and the expectation $\ev\lvert \cal{M}_T(\Phi, \varphi)\rvert^2$ is finite, it follows that $\cal{M}(\Phi, \varphi)$ is a martingale for all $\Phi \in C^2_c(\R^k)$ and $\varphi \in C^2_c(\R; \R^k)$. By Theorem II.7.1 from \cite{ikeda_sde_1989} we can conclude that
\begin{equation*}
    \langle \nu_t, \varphi_i\rangle =  \langle v_0, \varphi_i\rangle + \int_{[0, t] \times \R \times G} \L\varphi_i(s, x, \nu_s, g) \, \d \mathfrak{m}(s, x, g) + \int_0^t \langle \nu_s, \sigma_0(s) \partial_x \varphi_i\rangle \, \d W_s
\end{equation*}
holds a.s.\@ for any finite collection $\varphi_1$,~\ldots, $\varphi_k$ of function in $C^2_c(\R)$. Since $C^2_c(\R)$ is separable when equipped with the norm $\varphi \mapsto \lVert \varphi\rVert_{\infty} + \lVert \partial_x \varphi\rVert_{\infty} + \lVert \partial_x^2 \varphi\rVert_{\infty}$ and both sides of the above equation vary continuously as a function of $\varphi_i$, we conclude that $(\mathfrak{m}, \nu, W)$ satisfies SPDE \eqref{eq:sfpe_relaxed}.

\textit{Step 3}: It remains to establish the convergence of the costs $J^n_{\text{cl}}(g^n)$. Note that again proceeding as in \cite[Lemma A.7]{hambly_mvcp_arxiv_2023}, we can show that along the subsequence along which $(\mathfrak{m}^n, \nu^n, W)$ converges weakly to $(\mathfrak{m}, \nu, W)$, the integral
\begin{equation*}
    \int_0^T \bigl\langle \nu^n_t, f^n(t, \cdot, \nu^n_t, g^n_t)\bigr\rangle \, \d t + \psi(\nu^n_T) = \int_{0, T] \times \R \times G} f^n(t, x, \nu^n_t, g) \, \d \mathfrak{m}^n(t, x, g) + \psi(\nu^n_T)
\end{equation*}
converges weakly to $\int_{0, T] \times \R \times G} f(t, x, \nu_t, g) \, \d \mathfrak{m}(t, x, g) + \psi(\nu_T)$. Here, we crucially use the uniform convergence of $f^n$ and $\psi^n$ on compact subsets of $[0, T] \times \R \times \cal{M}^2_{\leq 1}(\R) \times G$, Property \ref{it:unif_local_convergence}, as well as the fact that convergent sequences in $\bb{M}_{\leq T}^2(\R \times G)$ are uniformly square-integrable together with the uniform quadratic growth of $f^n$ and $\psi^n$, Property \ref{it:uniform_growth}. Next, from Step 1 we know that the sequences $(M_2^2(\mathfrak{m}^n))_{n \geq 1}$ and $(M_2^2(\nu^n_T))_{n \geq 1}$ are uniformly integrable. Hence, using Property \ref{it:uniform_growth} again,
% the bound $\int_{0, T] \times \R \times G} f^n(t, x, \nu^n_t, g) \, \d \mathfrak{m}^n(t, x, g) + \psi(\nu^n_T) \leq C_f (T + M_2^2(\mathfrak{m}^n)) + C_{\psi} (1 + M_2^2(\nu^n_T))$ 
allows us to upgrade the weak subsequential convergence of $\int_0^T \bigl\langle \nu^n_t, f^n(t, \cdot, \nu^n_t, g^n_t)\bigr\rangle \, \d t + \psi(\nu^n_T)$ to subsequential convergence in mean. Thus, $J^n_{\text{cl}}(g^n)$ converges subsequentially to $\ev\bigl[\int_{0, T] \times \R \times G} f(t, x, \nu_t, g) \, \d \mathfrak{m}(t, x, g) + \psi(\nu_T)\bigr]$. This concludes the proof.
\end{proof}

Let $(\mathfrak{m}, \nu, W)$ be as in the statement of Proposition \ref{prop:stability_sfpe}. Firstly, let us note that the random variables $\mathfrak{m}$ and $\nu$ are not necessarily $\bb{F}$-progressively measurable, so we have to introduce a different filtration. We let $\bb{F}^{\mathfrak{m}, W}$ denote the completion of the filtration 
\begin{equation*}
    \Bigl(\sigma\bigl(\mathfrak{m}([0, s] \times A),\, W_s \define A \in \cal{B}(\R \times G),\, 0 \leq s \leq t\bigr)\Bigr)_{0 \leq t \leq T}.
\end{equation*}
% TODO: find more orignal references than the two Lacker reference below
The random measure $\mathfrak{m}$ is by definition $\bb{F}^{\mathfrak{m}, W}$-progressively measurable. Moreover, owing to Lemma \ref{lem:stability_of_total_mass} there is a family of random probability measures $(\Gamma_{t, x})_{(t, x) \in [0, T] \times \R}$ on $G$ such that $\d\mathfrak{m}(t, x, g) = \d \Gamma_{t, x}(g) \d \nu_t(x) \d t$. In particular, the process $\nu$ is $\bb{F}^{\mathfrak{m}, W}$-adapted as well. In addition, it follows from \cite[Lemma 3.2]{lacker_mfg_controlled_mgale_2015} that the map $\Gamma \define [0, T] \times \Omega \times \R \to \P(G)$ can be chosen in an $\bb{F}^{\mathfrak{m}, W}$-predictable manner. We now define $\bb{S}^{\ast} = (\Omega, \F, \pr, \bb{F}^{\mathfrak{m}, \xi, B, W}, \bb{F}^{\mathfrak{m}, W}, \xi, 0, B, W)$, where $\bb{F}^{\mathfrak{m}, \xi, W, B}$ is the completion of $\bb{F}^{\mathfrak{m}, W} \lor \bb{F}^{\xi, B}$ and $\bb{F}^{\xi, B}$ denotes the completion of the filtration generated by $\xi$ and $B$. One can check that $\bb{S}^{\ast}$ is a weak setup.

\begin{remark}
Let us note that Equation \eqref{eq:sfpe_relaxed} can be viewed as a relaxed formulation for the control of the stochastic Fokker--Planck equation \eqref{eq:sfpe}. Here a relaxed control is a progressively measurable $\bb{M}_{\leq T}^2(\R \times G)$-valued process $\mathfrak{m}$ such that the flow of measures $\nu = (\nu_t)_{0 \leq t \leq T}$ obtained through the disintegration $\d \mathfrak{m}(t, x, g) = \d \Gamma_{t, x}(g) \, \d \nu_t(x) \d t$ satisfies SPDE \eqref{eq:sfpe_relaxed}. This idea can be extended to arbitrary (stochastic) Fokker--Planck equations or, more generally, measure-valued (S)PDEs, such as those arising in filtering theory, and may be useful to establish the existence of optimal controls.
\end{remark}

Next, since Assumption \ref{ass:convexity} is in place, as in the proof of \cite[Theorem 3.7]{lacker_mfg_controlled_mgale_2015}, we can find $g^{\ast} \in \cal{G}_{\bb{F}^{\mathfrak{m}, W}}^1$ such that
\begin{equation} \label{eq:measurable_selection}
    \int_G b(t, x, \nu_t, g) \, \d \Gamma_{t, x}(g) = b\bigl(t, x, \nu_t, g^{\ast}_t(x)\bigr), \quad \int_G f(t, x, \nu_t, g) \, \d \Gamma_{t, x}(g) \geq f(t, x, \nu_t, g^{\ast}_t(x)).
\end{equation}
This allows us to rewrite SPDE \eqref{eq:sfpe_relaxed} as
\begin{equation} \label{eq:eq:sfpe_new}
    \d \langle \nu_t, \varphi\rangle = \bigl\langle \nu_t, \L\varphi(t, \cdot, \nu_t, g^{\ast}_t)\bigr\rangle \, \d t + \langle \nu_t, \sigma_0(t) \partial_x \varphi\rangle \, \d W_t
\end{equation}
for $\varphi \in C^2_c(\R)$. For the associated cost we obtain the upper bound
\begin{equation} \label{eq:cost_bound}
    \ev\biggl[\int_0^T \bigl\langle\nu_t, f(t, \cdot, \nu_t, g^{\ast}_t)\bigr\rangle \, \d t + \psi(\nu_T)\biggr] \leq \ev\biggl[\int_{[0, T] \times \R \times G} f(t, x, \nu_t, g) \, \d \mathfrak{m}(t, x, g) + \psi(\nu_T)\biggr].
\end{equation}

\begin{proposition} \label{prop:mv_representation}
Let $\nu$ and $g^{\ast}$ be as above. Then, extending the weak setup $\bb{S}^{\ast}$ if necessary, there exists a solution $(X, \Lambda)$ to McKean--Vlasov SDE \eqref{eq:mfl_weak} with initial conditions $X_0 = \xi$, $\Lambda_0 = 0$, subprobability distribution $\langle \nu_t, \varphi\rangle = \ev[e^{\Lambda_t}\varphi(X_t)\vert \F_T]$, and control $\gamma^{\ast} = (\gamma^{\ast}_t)_{0 \leq t \leq T}$ given by $\gamma^{\ast}_t = g^{\ast}_t(X_t)$. That is, $\gamma^{\ast}$ is a semiclosed-loop control with feedback function $g^{\ast}$ and $J_{\bb{S}^{\ast}}(\gamma^{\ast}) = \ev\bigl[\int_0^T \bigl\langle\nu_t, f(t, \cdot, \nu_t, g^{\ast}_t)\bigr\rangle \, \d t + \psi(\nu_T)\bigr]$.
% there exists a solution $\mu = (\mu_t)_{0 \leq t \leq T}$ to SPDE \eqref{eq:sfpe_joint} with initial condition $\mu_0 = \L(\xi) \otimes \delta_0$ such that $\cal{S}(\mu_t) = \nu_t$. In particular, we have $J_{\textup{cl}}(g^{\ast}) = \ev\bigl[\int_0^T \bigl\langle\nu_t, f(t, \cdot, \nu_t, g^{\ast}_t)\bigr\rangle \, \d t + \psi(\nu_T)\bigr]$.
\end{proposition}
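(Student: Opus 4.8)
The plan is to treat $\nu$ as a frozen random environment, to construct $(X,\Lambda)$ by solving the resulting SDE (which is no longer genuinely of McKean--Vlasov type, since the measure argument is prescribed), and then to identify its conditional subprobability with $\nu$ through a uniqueness argument for the associated linear stochastic Fokker--Planck equation. First I would fix $\nu = (\nu_t)_{0 \le t \le T}$, which is $\bb{F}^{\mathfrak{m},W}$-adapted with $\ev\sup_{0 \le t \le T} M_2^2(\nu_t) < \infty$, and consider the system $\d X_t = b\bigl(t, X_t, \nu_t, g^{\ast}_t(X_t)\bigr)\,\d t + \sigma(t, X_t)\,\d B_t + \sigma_0(t)\,\d W_t$ with $X_0 = \xi$, together with $\d \Lambda_t = \lambda(t, X_t)\,\d t$, $\Lambda_0 = 0$. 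Because $g^{\ast}$ is only Borel in $x$, the drift field $(t,x) \mapsto b(t,x,\nu_t,g^{\ast}_t(x))$ is merely Borel in $x$, with at most linear growth in $x$ (the $M_2(\nu_t)$-term being $\pr$-a.s.\ finite), while $\sigma$ is Lipschitz and uniformly elliptic ($\sigma^2 \ge c$) and $\sigma_0$ is bounded. A localised version of the Zvonkin--Veretennikov theorem — truncating the drift to bounded pieces, solving up to exit times of balls, and patching using the linear-growth moment bounds to rule out explosion — then yields a solution $(X,\Lambda)$ adapted to $\bb{F}^{\mathfrak{m},\xi,B,W}$; passing, if necessary, to an extension of $\bb{S}^{\ast}$ in the sense of Definition \ref{def:weak_setup} (keeping $\xi$, $B$, $W$, $\mathfrak{m}$ fixed), we may realise it on the given setup. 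Then $\gamma^{\ast}_t := g^{\ast}_t(X_t)$ is $\bb{S}^{\ast}$-admissible and, by construction, a semiclosed-loop control with feedback function $g^{\ast}$.

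Next I would identify the conditional subprobability. Set $\langle \bar\nu_t, \varphi\rangle := \ev[e^{-\Lambda_t}\varphi(X_t) \mid \F_T]$ for $\varphi \in C_b(\R)$, where $\F_T$ is the terminal $\sigma$-algebra of the (possibly extended) setup, generated up to completion by $\mathfrak{m}$ and $W$. Applying It\^o's formula to $e^{-\Lambda_t}\varphi(X_t)$ for $\varphi \in C_c^2(\R)$ — the term $\d \Lambda_t = \lambda(t, X_t)\,\d t$ producing the killing contribution $-\lambda\varphi$ — and then taking conditional expectation given $\F_T$ exactly as in the first part of Proposition \ref{prop:sfpe_joint} (the $\d B_t$-martingale part averages out since $B$ is independent of $\F_T$, while the $\d W_t$-part survives because $W$ is $\F_T$-measurable, using the compatibility property of the weak setup), one finds that $\bar\nu$ solves $\d \langle \bar\nu_t, \varphi\rangle = \langle \bar\nu_t, \L\varphi(t, \cdot, \nu_t, g^{\ast}_t)\rangle\,\d t + \langle \bar\nu_t, \sigma_0(t)\partial_x\varphi\rangle\,\d W_t$ with $\bar\nu_0 = \L(\xi)$ — precisely the \emph{linear} equation \eqref{eq:eq:sfpe_new} also satisfied by $\nu$, in which the measure slot $\nu_t$ is the frozen input in both cases. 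Since $\L(\xi) \in L^2(\R)$ and the Fokker--Planck operator is uniformly parabolic ($a_t \ge \tfrac12 c$), both $\nu_t$ and $\bar\nu_t$ admit $L^2(\R)$-densities lying in $C_{\bb{F}^{\mathfrak{m},W}}^2([0,T]; L^2(\R)) \cap L_{\bb{F}^{\mathfrak{m},W}}^2([0,T]; H^1(\R))$ — by a mollification argument together with the a priori $L^2$-estimates of the proof of Proposition \ref{prop:sfpe_joint}, after truncating the linear-growth drift — so the $L^2$-uniqueness established there for this linear stochastic Fokker--Planck equation (a consequence of the generalised It\^o formula of Theorem \ref{thm:ito} and Gr\"onwall's inequality) forces $\bar\nu = \nu$. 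Hence $(X,\Lambda)$ solves McKean--Vlasov SDE \eqref{eq:mfl_weak} with subprobability distribution $\nu$ and semiclosed-loop control $\gamma^{\ast}$.

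It remains to evaluate the cost. Conditioning on $\F_T$ and using that $x \mapsto f(t, x, \nu_t, g^{\ast}_t(x))$ is $\F_T$-measurable while $\langle \nu_t, \varphi\rangle = \ev[e^{-\Lambda_t}\varphi(X_t) \mid \F_T]$, we obtain $\ev\bigl[e^{-\Lambda_t} f(t, X_t, \nu_t, g^{\ast}_t(X_t))\bigr] = \ev\bigl[\langle \nu_t, f(t, \cdot, \nu_t, g^{\ast}_t)\rangle\bigr]$; integrating in $t$ by Fubini (justified by the quadratic growth of $f$ in Assumption \ref{ass:mfl} and $\ev\sup_{0 \le t \le T} M_2^2(\nu_t) < \infty$) and adding $\ev[\psi(\nu_T)]$ yields $J_{\bb{S}^{\ast}}(\gamma^{\ast}) = \ev\bigl[\int_0^T \langle \nu_t, f(t, \cdot, \nu_t, g^{\ast}_t)\rangle\,\d t + \psi(\nu_T)\bigr]$, as claimed.

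The hard part will be the identification step, and specifically reconciling the merely Borel feedback $g^{\ast}$ with the uniqueness class for the linear Fokker--Planck SPDE: under Assumption \ref{ass:mfl} alone the drift has genuine linear (not bounded) growth, so the $L^2$-well-posedness of Propositions \ref{prop:bspde_linear} and \ref{prop:sfpe_joint} does not apply off the shelf and must be re-derived by a truncation argument, and one has to verify that $\nu$ and $\bar\nu$ indeed land in a common solution space. A clean alternative, which sidesteps the separate weak-existence construction entirely, is to invoke a superposition/mimicking result in the spirit of \cite[Theorem 1.3]{lacker_mimicking_2020} to produce $(X,\Lambda)$ directly from a solution of \eqref{eq:eq:sfpe_new}; weak existence of $(X,\Lambda)$ on its own is comparatively routine given the uniform ellipticity of $\sigma$.
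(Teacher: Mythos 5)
Your primary route inverts the paper's argument: you propose to construct $(X,\Lambda)$ first by direct SDE theory and then identify the conditional subprobability with $\nu$ by uniqueness for the one-dimensional linear stochastic Fokker--Planck equation \eqref{eq:eq:sfpe_new}, whereas the paper first solves the random linear SPDE for the \emph{joint} conditional law $\tilde\mu_t$ on $\R^2$ with initial condition $\L(\xi,\zeta)\in L^2(\R^2)$, checks that $\ev[e^{-\zeta}]^{-1}\cal S(\tilde\mu_t)$ solves \eqref{eq:eq:sfpe_new} and hence equals $\nu_t$, and then applies the mimicking theorem \cite[Theorem 3.1]{lacker_mimicking_2020} to produce $(X,\tilde\Lambda)$ with $\L(X_t,\tilde\Lambda_t\mid\F_T)=\tilde\mu_t$, finally taking $\Lambda_t=\tilde\Lambda_t-\zeta$ and exploiting the independence of $\zeta$. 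Your identification step is essentially the paper's, carried out one dimension lower. The place where your route diverges materially is the construction of $(X,\Lambda)$, and this is also where the main gap sits: the drift $(t,\omega,x)\mapsto b\bigl(t,x,\nu_t(\omega),g^\ast_t(x,\omega)\bigr)$ is a random field adapted to $\bb F^{\mathfrak m,W}$, so Zvonkin--Veretennikov applies only conditionally on $\F^{\mathfrak m,W}_T$ (after the shift $Y_t=X_t-\int_0^t\sigma_0\,\d W$), and one then needs a nontrivial measurable-selection and compatibility argument to patch the conditional solutions into a single process respecting Definition \ref{def:weak_setup}. You flag this as "the hard part" but do not supply it; the paper's mimicking approach absorbs exactly this bookkeeping.

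Your suggested "clean alternative" — applying a mimicking result directly to a solution of the one-dimensional SPDE \eqref{eq:eq:sfpe_new} — does not work as stated, and the reason explains the structure of the paper's proof. The mimicking/superposition theorems of \cite{lacker_mimicking_2020} take as input a flow of conditional \emph{probability} measures, but $\nu$ is a flow of \emph{sub}probabilities; the killed mass is not recoverable from $\nu$ alone. One must lift to the joint probability flow $\tilde\mu_t=\L(X_t,\tilde\Lambda_t\mid\F_T)$ on $\R^2$, from which $\nu_t$ is recovered via $\cal S$. This in turn forces the initial condition $\tilde\Lambda_0=\zeta$ rather than $0$: $\L(\xi,0)$ carries a Dirac mass in the $y$-variable, is not in $L^2(\R^2)$, and so the $L^2$-well-posedness theory of Proposition \ref{prop:sfpe_joint} would not apply. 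The $\zeta$-trick and the final passage to $\Lambda_t=\int_0^t\lambda(s,X_s)\,\d s$ are therefore not cosmetic, and your sketch omits both.
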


\begin{proof}
Let us consider the random linear SPDE
\begin{equation*}
    \d \langle \tilde{\mu}_t, \varphi\rangle = \bigl\langle \tilde{\mu}_t, \L_2\varphi(t, \cdot, \nu_t, g^{\ast}_t)\bigr\rangle \, \d t + \langle \tilde{\mu}_t, \sigma_0(t) \partial_x \varphi\rangle \, \d W_t
\end{equation*}
for $\varphi \in C^2_c(\R^2)$ with initial condition $\tilde{\mu}_0 = \L(\xi, \zeta)$. Note that unlike in SPDE \eqref{eq:sfpe_joint}, the flow of subprobabilities $\nu$ is fixed and not determined by the SPDE. Hence, the SPDE is not nonlinear but linear with random coefficients. Since $\L(\xi, \zeta) \in L^2(\R^2)$, using similar arguments as in the proof of Proposition \ref{prop:sfpe_joint}, we can show that this SPDE has a unique strong solution. Moreover, if we define $\tilde{\nu} = (\tilde{\nu}_t)_{0 \leq t \leq T}$ by $\tilde{\nu}_t, = \ev[e^{-\zeta}]^{-1} \cal{S}(\tilde{\mu}_t)$, then $\tilde{\nu}$ solves the random linear SPDE
\begin{equation*}
    \d \langle \tilde{\nu}_t, \varphi\rangle = \bigl\langle \tilde{\nu}_t, \L\varphi(t, \cdot, \nu_t, g^{\ast}_t)\bigr\rangle \, \d t + \langle \tilde{\nu}_t, \sigma_0(t) \partial_x \varphi\rangle \, \d W_t
\end{equation*}
for $\varphi \in C^2_c(\R)$ with initial condition $\tilde{\nu}_0 = \L(\xi)$. This SPDE again has a unique solution, so since $\nu$ solves the same linear SPDE we obtain that $\tilde{\nu} = \nu$. In other words, it holds that $\nu_t = \ev[e^{-\zeta}]^{-1} \cal{S}(\tilde{\mu}_t)$. 

Now, by \cite[Theorem 3.1]{lacker_mimicking_2020}, extending the weak setup if necessary, we can find a solution $(X, \tilde{\Lambda})$ to the random SDE
\begin{equation*}
    \d X_t = b\bigl(t, X_t, \nu_t, g^{\ast}_t(X_t)\bigr) \, \d t + \sigma(t, X_t) \, \d B_t + \sigma_0(t) \, \d W_t, \quad \d \tilde{\Lambda}_t = \lambda(t, X_t) \, \d t
\end{equation*}
with initial conditions $X_0 = \xi$, $\tilde{\Lambda}_0 = \zeta$, such that $\L(X_t, \tilde{\Lambda}_t \vert \F_T) = \tilde{\mu}_t$. Note that $\nu$ is a fixed input and not determined by the SDE. We define $\Lambda_t = \int_0^t \lambda(s, X_s) \, \d s$ and set $\mu_t = \L(X_t, \Lambda_s \vert \F_T)$. Then it follows that 
% $\mu = (\mu_t)_{0 \leq t \leq T}$ solves the same random linear SPDE as $\tilde{\mu}$ but with initial condition $\mu_0 = \L(\xi) \otimes \delta_0$. Moreover, we have
\begin{equation*}
    \langle \cal{S}(\mu_t), \varphi\rangle = \ev[e^{-\Lambda_t}\varphi(X_t) \vert \F_T] = \ev[e^{-\zeta}]^{-1}\ev\bigl[e^{-\tilde{\Lambda}_t}\varphi(X_t) \big\vert \F_T\bigr] = \ev[e^{-\zeta}]^{-1}\cal{S}(\tilde{\mu}_t) = \nu_t,
\end{equation*}
where we used that $\zeta$ is independent of $X$, $\Lambda$, and $\F_T$. Hence, $(X, \Lambda)$ is a solution to McKean--Vlasov SDE \eqref{eq:mfl_weak}. This, in particular, implies that
\begin{equation*}
    J_{\bb{S}}(\gamma^{\ast}) = \ev\biggl[\int_0^T \bigl\langle\nu_t, f(t, \cdot, \cdot, \nu_t, g^{\ast}_t)\bigr\rangle \, \d t + \psi(\nu_T)\biggr],
\end{equation*}
where $\gamma^{\ast} = (\gamma^{\ast}_t)_{0 \leq t \leq T}$ is defined by $\gamma^{\ast}_t = g^{\ast}_t(X_t)$.
\end{proof}

We can use the subsequential convergence established in Proposition \ref{prop:stability_sfpe} and the representation from Proposition \ref{prop:mv_representation} to show that if the controls $g^n \in \cal{G}_{\bb{F}}^1$ in the statement of Proposition \ref{prop:stability_sfpe} are chosen optimally, then a random function $g^{\ast} \in \cal{G}_{\bb{F}}^1$ chosen according to the criterion \eqref{eq:measurable_selection} yields an optimal semiclosed-loop control which is independent of the intensity. 

For convenience, in the following we let $V^n$ denote the infimum of $J^n_{\textup{cl}}(g)$ over control $g \in \cal{G}_{\bb{F}}^2$. From our discussion in Section \ref{sec:main_results_application} we know that $V^n$ coincides with the value of the strong formulation introduced at the beginning of Section \ref{sec:main_results_application} with $b$, $\lambda$, $f$, and $\psi$ replaced by $b^n$, $\lambda^n$, $f^n$, and $\psi^n$, respectively, and initial condition $(\xi, \zeta/n)$.

\begin{proposition} \label{prop:cp_stability}
Choose the controls $g^n \in \cal{G}_{\bb{F}}^1$ in the statement of Proposition \ref{prop:stability_sfpe} optimally. 
% Let $(\mathfrak{m}^n, \nu^n, W)_{n \geq 1}$ be defined as in Proposition \ref{prop:stability_sfpe} and denote its subsequential weak limit defined on the weak setup $\bb{S}^{\ast}$ by $(\mathfrak{m}, \nu, W)$. 
Then the control $\gamma^{\ast}$ from Proposition \ref{prop:mv_representation} is an optimal semiclosed-loop control and the feedback function $g^{\ast} \in \cal{G}_{\bb{F}^{\mathfrak{m}, W}}^1$ does not depend on the intensity.
\end{proposition}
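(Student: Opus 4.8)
The plan is to show that $J_{\bb{S}^\ast}(\gamma^\ast)$ equals $V$, the value of the McKean--Vlasov control problem with coefficients $b,\lambda,f,\psi$ and intensity started from zero. Since $\bb{S}^\ast$ is a weak setup with null initial intensity and $V=V_{\bb{S}^\ast}$ by the setup-independence of the value recalled in Subsection \ref{sec:main_results_application}, while $\gamma^\ast$ is $\bb{S}^\ast$-admissible by Proposition \ref{prop:mv_representation}, the inequality $V=V_{\bb{S}^\ast}\le J_{\bb{S}^\ast}(\gamma^\ast)$ is automatic, and everything rests on the reverse bound. I would obtain it by sandwiching: first $J_{\bb{S}^\ast}(\gamma^\ast)\le\lim_n V^n$ along a subsequence, and then $\limsup_n V^n\le V$.

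For the first inequality, I would start from the observation that, since the $g^n$ are chosen optimally and Property \ref{it:stict_convex_2} makes $g\mapsto f^n_1(t,x,g)$ strictly convex, Corollary \ref{cor:equivalence_l2} applies to the $n$-th control problem and yields $g^n\in\cal{G}^1_{\bb{F}}$ with $J^n_{\textup{cl}}(g^n)=V^n$. Feeding this sequence into Proposition \ref{prop:stability_sfpe} produces a subsequence along which $(\mathfrak{m}^n,\nu^n,W)$ converges weakly to the limit $(\mathfrak{m},\nu,W)$ used in the construction of $\gamma^\ast$, and along which
\[
    V^n=J^n_{\textup{cl}}(g^n)\longrightarrow\ev\Bigl[\int_{[0,T]\times\R\times G}f(t,x,\nu_t,g)\,\d\mathfrak{m}(t,x,g)+\psi(\nu_T)\Bigr].
\]
By \eqref{eq:cost_bound} the right-hand side dominates $\ev\bigl[\int_0^T\langle\nu_t,f(t,\cdot,\nu_t,g^\ast_t)\rangle\,\d t+\psi(\nu_T)\bigr]$, which by Proposition \ref{prop:mv_representation} is exactly $J_{\bb{S}^\ast}(\gamma^\ast)$. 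Hence $J_{\bb{S}^\ast}(\gamma^\ast)\le\lim_n V^n$ along the chosen subsequence.

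The second inequality, $\limsup_n V^n\le V$, I would prove by stability of the control problem under the approximating coefficients. Recall that $V^n$ coincides with the value of the strong formulation with data $b^n,\lambda^n,f^n,\psi^n$ and initial law $\L(\xi,\zeta/n)$. Fix $\epsilon>0$ and pick an admissible control $\gamma$ for the limiting strong formulation (intensity started from zero, hence $\gamma$ progressively measurable for the filtration generated by $\xi,B,W$) with $J(\gamma)<V+\epsilon$; then $\gamma$ is admissible for each $n$-th strong formulation, so $V^n\le J^n(\gamma)$, where $J^n(\gamma)$ denotes its cost there. Using the locally uniform convergence $b^n\to b$, $\lambda^n\to\lambda$ from Property \ref{it:unif_local_convergence}, the uniform growth bounds of Property \ref{it:uniform_growth}, and a Gr\"onwall argument, the solutions $(X^n,\Lambda^n,\nu^n)$ of the $n$-th McKean--Vlasov SDE \eqref{eq:mfl} driven by $\gamma$ converge to the solution $(X,\Lambda,\nu)$ of the limiting one (the gap $\zeta/n\to0$ in the initial condition being harmless), with uniform-in-$n$ $L^2$-moment bounds. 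The locally uniform convergence of $f^n,\psi^n$ together with their uniform quadratic growth then gives $J^n(\gamma)\to J(\gamma)$ by dominated convergence, so $\limsup_n V^n\le J(\gamma)<V+\epsilon$; letting $\epsilon\downarrow0$ finishes the sandwich. Combining the two inequalities forces $V=J_{\bb{S}^\ast}(\gamma^\ast)=V_{\bb{S}^\ast}$, so $\gamma^\ast$ is optimal, and it is a semiclosed-loop control whose feedback function $g^\ast$, built from the $\bb{F}^{\mathfrak{m},W}$-predictable kernel $\Gamma$ via the selection \eqref{eq:measurable_selection}, lies in $\cal{G}^1_{\bb{F}^{\mathfrak{m},W}}$ and thus does not depend on the intensity.

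I expect the stability step $\limsup_n V^n\le V$ to be the main obstacle: it needs a quantitative continuity estimate for the McKean--Vlasov SDE \eqref{eq:mfl} (equivalently \eqref{eq:mfl_weak}) under simultaneous perturbation of the drift, the killing rate, and the initial distribution, carried out with only the locally uniform convergence afforded by Property \ref{it:unif_local_convergence}. The nonlocal dependence of $b^n$ on the conditional subprobability $\nu^n_t=\cal{S}(\mu^n_t)$ is what makes the passage to the limit delicate, and one likely has to invoke the moment and stability estimates of \cite{hambly_mvcp_arxiv_2023} to control it uniformly in $n$.
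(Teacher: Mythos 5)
Your proposal is correct and follows essentially the same route as the paper: establish $J_{\bb{S}^{\ast}}(\gamma^{\ast}) \le \lim_n V^n$ along the chosen subsequence via Proposition \ref{prop:stability_sfpe}, \eqref{eq:cost_bound}, and Proposition \ref{prop:mv_representation}; then establish $\limsup_n V^n \le V$ by passing to the strong formulation with a near-optimal $\bb{F}^{\xi, B, W}$-progressively measurable control $\gamma$ and showing $J^n(\gamma) \to J(\gamma)$ using the uniform growth and locally uniform convergence of $b^n, \lambda^n, f^n, \psi^n$ and the $L^2$-$\sup$ stability of the McKean--Vlasov SDE. The paper frames the first half as $\liminf_n V^n \ge V$ rather than as a bound directly on $J_{\bb{S}^{\ast}}(\gamma^{\ast})$, but the sandwich you close is the same, and you correctly identify the genuine technical load (the stability step, where the paper likewise appeals to \cite{hambly_mvcp_arxiv_2023} for the locally-Lipschitz-only $\cal{S}$ map via stopping-time arguments) rather than carrying it out in full.
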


Note that optimal controls $g^n \in \cal{G}_{\bb{F}}^1$, i.e.\@ controls for which $J^n_{\textup{cl}}(g^n) = V^n_{\textup{cl}}$, exist by Corollary \ref{cor:equivalence_l2}.
 
\begin{proof}[Proof of Proposition \ref{prop:cp_stability}]
Note that regardless of which weakly convergent subsequence $(\mathfrak{m}^{n_k}, \nu^{n_k}, W)_{k \geq 1}$ we choose in Proposition \ref{prop:stability_sfpe}, in view of Equation \eqref{eq:cost_bound} and Proposition \ref{prop:mv_representation}, we always obtain that
\begin{equation*}
    \lim_{k \to \infty} J_{\text{cl}}^{n_k}(g^{n_k}) = \ev\biggl[\int_{[0, T] \times \R \times G} f(t, x, \nu_t, g) \, \d \mathfrak{m}(t, x, g) + \psi(\nu_T)\biggr] \geq J_{\bb{S}^{\ast}}(\gamma^{\ast}) \geq V_{\bb{S}^{\ast}} = V.
\end{equation*}
Hence, we find that $\liminf_{n \to \infty} V^n \geq V$. 

Next, let us deduce the converse inequality. To achieve this we exploit the equivalence between the strong formulation introduced at the beginning of Section \ref{sec:main_results_application} and the SPDE formulation that we study here. Let us denote the cost functional for the strong formulation with $b$, $\lambda$, $f$, and $\psi$ replaced by $b^n$, $\lambda^n$, $f^n$, and $\psi^n$, respectively, and initial condition $(\xi, \zeta/n)$ by $J^n$. Then $J_{\textup{cl}}(g^n) = V^n = \inf_{\gamma} J^n(\gamma)$, where the infimum is over $\bb{F}^{\xi, \zeta, B, W}$-progressively measurable $G$-valued processes $\gamma = (\gamma_t)_{0 \leq t \leq T}$, and $V = \inf_{\gamma}J(\gamma)$, where the infimum is over $\bb{F}^{\xi, B, W}$-progressively measurable $G$-valued processes $\gamma = (\gamma_t)_{0 \leq t \leq T}$. Now, let us choose an $\bb{F}^{\xi, B, W}$-progressively measurable $G$-valued processes $\gamma = (\gamma_t)_{0 \leq t \leq T}$ with $J(\gamma) \leq V + \epsilon$. It holds that $J_{\textup{cl}}^n(g^n) = V^n \leq J^n(\gamma)$, so if we can show that $\lim_{n \to \infty}J^n(\gamma) = J(\gamma)$, then we have proved that
\begin{equation*}
    V \leq \limsup_{n \to \infty} V^n \leq \limsup_{n \to \infty} J^n(\gamma) = J(\gamma) \leq V + \epsilon.
\end{equation*}
Letting $\epsilon \to 0$, then yields the equality $V \leq J_{\bb{S}^{\ast}}(\gamma^{\ast}) \leq \lim_{n \to \infty} J_{\textup{cl}}^n(g^n) = \lim_{n \to \infty} V^n = V$. 
% Along the subsequence for which we have weak convergence of $(\mathfrak{m}^{n_k}, \nu^{n_k}, W)_{k \geq 1}$ to $(\mathfrak{m}, \nu, W)$, Proposition \ref{prop:stability_sfpe} and \eqref{eq:cost_bound} imply that
% \begin{equation*}
%      V = \lim_{k \to \infty} J^{n_k}(g^{n_k}) \geq \ev\biggl[\int_0^T \bigl\langle\nu_t, f(t, \cdot, \nu_t, g^{\ast}_t)\bigr\rangle \, \d t + \psi(\nu_T)\biggr].
% \end{equation*}
% Since the random function $g^{\ast} \in \cal{G}_{\bb{F}^{\mathfrak{m}, W}}^1$ induces an admissible control $\gamma^{\ast}$ for the weak formulation by Proposition \ref{prop:mv_representation} on some extension of the weak setup $\bb{S}^{\ast}$, we also have the lower bound
% \begin{equation*}
%     \ev\biggl[\int_0^T \bigl\langle\nu_t, f(t, \cdot, \nu_t, g^{\ast}_t)\bigr\rangle \, \d t + \psi(\nu_T)\biggr] = J_{\bb{S}}(\gamma^{\ast}) \geq V_{\bb{S}} = V.
% \end{equation*}
% Together this implies that $V = J_{\bb{S}}(\gamma^{\ast})$, so that 
Hence, $\gamma^{\ast}$ is an optimal semiclosed-loop control whose feedback function $g^{\ast}$ is independent of the intensity.

It remains to prove that $J^n(\gamma) \to J(\gamma)$ for the $\bb{F}^{\xi, B, W}$-progressively measurable $G$-valued process $\gamma$ from above. Let $(X^n, \Lambda^n)$ be the solution to McKean--Vlasov SDE \eqref{eq:mfl} with coefficients $b$ and $\lambda$ replaced by $b^n$ and $\lambda^n$, respectively, initial condition $(\xi, \zeta/n)$, and control $\gamma$. By $(X, \Lambda)$ we denote the solution to McKean--Vlasov SDE \eqref{eq:mfl} with initial condition $(\xi, 0)$ and control $\gamma$. Then owing to the uniform in $n$ linear growth of $b^n$ and $\lambda^n$ and the locally uniform convergence of $b^n$ to $b$ and $\lambda^n$ to $\lambda$ (cf.\@ Properties \ref{it:unif_local_convergence} and \ref{it:uniform_growth}), we can show that $\lim_{n \to \infty}\ev(\lvert X^n - X\rvert^{\ast}_T)^2 = 0$. The only difficulty in obtaining this convergence lies in the fact that the mapping $\cal{S}$ defined in \eqref{eq:subprobability_function} is only locally Lipschitz continuous as a map $\P^2(\R \times [0, \infty)) \to \cal{M}^2_{\leq 1}(\R)$, where $\P^2(\R \times [0, \infty))$ is equipped with the $2$-Wasserstein distance and $\cal{M}^2_{\leq 1}(\R)$ with the metric $d_1$. However, this hurdle can be overcome through stopping time arguments. We do not provide the details here and instead refer to \cite[Propositions 3.12 and A.2]{hambly_mvcp_arxiv_2023}. Once we know that $(X^n)_{n \geq 1}$ converges to $X$ in $L^2$-$\sup$, the convergence of the costs $J^n(\gamma)$ to $J(\gamma)$ follows straightforwardly from the uniform quadratic growth assumptions placed on $f^n$ and $\psi^n$ as well as the locally uniform convergence of $f^n$ to $f$ and $\psi^n$ to $\psi$ (cf.\@ Properties \ref{it:unif_local_convergence} and \ref{it:uniform_growth}). This concludes the proof.
\end{proof}

Finally, to prove Theorem \ref{thm:equivalence} we wish to apply Proposition \ref{prop:cp_stability}. Starting from the coefficients $b$ and $\lambda$ as well as cost functions $f$ and $\psi$ satisfying Assumptions \ref{ass:mfl} and \ref{ass:convexity}, we want to mollify and cut off these functions appropriately to obtain maps $b^n$, $\lambda^n$, $f^n$, and $\psi^n$ that satisfy Properties \ref{it:assumptions} to \ref{it:stict_convex_2} outlined at the beginning of this subsection. If we find such maps, then Proposition \ref{prop:cp_stability} implies the existence of an optimal semiclosed-loop control independent of the intensity. To construct the desired maps, we use Proposition \ref{prop:mollification_convex} and Proposition \ref{prop:mollificaton_lfd}.

\begin{proof}[Proof of Theorem \ref{thm:equivalence}]
% Still need: $b$ Lipschitz in $x$, boundedness of $b$; $f_0$, $f_1$ bounded by $L^2$-function; continuity of $Db_0$, $Df_0$, $D\psi$ in $L^2$; boundedness of $\partial_g f_1$
Owing to Assumption \ref{ass:mfl}, the functions $b$ and $f$ decompose as $b(t, x, v, g) = b_0(t, x, v) + b_1(t, x, g)$ and $f(t, x, v, g) = f_0(t, x, v) + f_1(t, x, g)$ for $(t, x, v, g) \in [0, T] \times \R \times \cal{M}^2_{\leq 1}(\R) \times G$. Now, we can apply Proposition \ref{prop:mollification_convex} to obtain a sequence $(f_1^n)_{n \geq 1}$ that converges locally uniformly to $f_1$. Further, we define the sequence $(b_1^n)_{n \geq 1}$ by $b_1^n(t, x, g) = b(t, -n \lor x \land n, g)$. Next, by Proposition \ref{prop:mollificaton_lfd} we can find sequences $(\tilde{b}_0^n)_{n \geq 1}$, $(\tilde{f}_0^n)_{n \geq 1}$, and $(\psi^n)_{n \geq 1}$ that converge locally uniformly to $b_0$, $f_0$, and $\psi$, respectively. We then set $b_0^n(t, x, v) = (\eta_{1/n} \ast \tilde{b}_0^n(t, \cdot, g))(x)$ and $f_0^n(t, x, v) = (\eta_{1/n} \ast \tilde{f}_0^n(t, \cdot, g))(x)$, where $\eta \define \R \to \R$ is the standard mollifier and $\eta_{\epsilon}(x) = \frac{1}{\epsilon}\eta(\frac{x}{\epsilon})$. This ensures the Lipschitz continuity of $(x, v) \mapsto b_0^n(t, x, v)$ uniformly in $t \in [0, T]$ and the uniform continuity of $(x, v) \mapsto f_0^n(t, x, v)$ for $(t, x, v)$ in compact subsets of $[0, T] \times \R \times \cal{M}^2_{\leq 1}(\R)$ without destroying Properties \ref{it:unif_conv} to \ref{it:func_der_bound} from Proposition \ref{prop:mollificaton_lfd}. Then, we define the functions $b^n$, $f^n$, and $\lambda^n$ by $b^n(t, x, v, g) = b_0^n(t, x, v) + b_1^n(t, x, g)$, $f^n(t, x, v, g) = f_0^n(t, x, v) + f_1^n(t, x, g)$, and $\lambda^n(t, x) = \lambda(t, x) \land n$ for $(t, x, v, g) \in [0, T] \times \R \times \cal{M}^2_{\leq 1}(\R) \times G$. It is straightforward to check that the functions $b^n$, $\lambda^n$, $f^n$, and $\psi^n$ satisfy Properties \ref{it:assumptions} to \ref{it:stict_convex_2}. Hence, as stated above, we can conclude with Proposition \ref{prop:cp_stability}.
\end{proof}

\appendix

\section{Appendix} \label{sec:appendix}

\subsection{Auxiliary Results for Section \ref{sec:bspde}}

\begin{lemma} \label{lem:stoch_int_conv}
Let $\cal{H}$ be a separable Hilbert space. Suppose that $(u^n)_{n \geq 1}$ is a sequence in $D_{\bb{F}}^2([0, T]; \cal{H})$ that converges to some process $u \in D_{\bb{F}}^2([0, T]; \cal{H})$, and that $(m^n)_{n \geq 1}$, $(\tilde{m}^n)_{n \geq 1}$ are sequences in $\cal{M}_{\bb{F}}^2([0, T]; \cal{H})$ that converge to processes $m$, $\tilde{m} \in \cal{M}_{\bb{F}}^2([0, T]; \cal{H})$, respectively. Next, let $F_1 \define \cal{H} \to \cal{H}$ and $F_2 \define \cal{H} \to B_2(\cal{H})$ be continuous and bounded on bounded sets of $\cal{H}$. Then
\begin{align} 
    \int_0^t \langle F_1(u^n_{s-}), \d m^n_s\rangle_{\cal{H}} &\to \int_0^t \langle F_1(u_{s-}), \d m_s\rangle_{\cal{H}}, \label{eq:conv_stoch_int_1} \\
    \int_0^t \trace\bigl(F_2(u^n_{s-}) \, \d [[m^n, \tilde{m}^n]]^c_s\bigr) &\to \int_0^t \trace\bigl(F_2(u_{s-}) \, \d [[m, \tilde{m}]]^c_s\bigr) \label{eq:conv_stoch_int_2}
\end{align}
uniformly in $t \in [0, T]$ in probability.
\end{lemma}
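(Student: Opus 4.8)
The plan is to treat \eqref{eq:conv_stoch_int_1} and \eqref{eq:conv_stoch_int_2} in parallel, combining the Hilbert-space Burkholder--Davis--Gundy inequality from Proposition \ref{prop:bdg}, the elementary bound $\langle h, h\cdot A\rangle_{\cal{H}}\le\lVert h\rVert_{\cal{H}}^2\trace A$ for symmetric trace-class $A$ recorded in Subsection \ref{sec:hilbert_space_martingale}, a Kunita--Watanabe inequality for the tensor quadratic covariation (cf.\ \cite[Section 26]{metivier_semimartingales_1982}), and Lenglart's domination inequality. First I would record what the hypotheses give. Since $m^n\to m$ in $\cal{M}_{\bb{F}}^2([0,T];\cal{H})$, Doob's inequality together with $\ev[m^n-m]_T=\ev\lVert m^n_T-m_T\rVert_{\cal{H}}^2$ yields $\ev\sup_{0\le t\le T}\lVert m^n_t-m_t\rVert_{\cal{H}}^2\to0$ and $\ev[m^n-m]_T\to0$; in particular $[m^n-m]_T\to0$ in probability, $\{[m^n]_T\}_n$ is bounded in $L^1$ hence tight, and the same holds for $(\tilde m^n)$. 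By the subsequence principle for convergence in probability it suffices to prove both convergences along an arbitrary subsequence, so I would pass to one along which in addition $\sup_{0\le t\le T}\lVert u^n_t-u_t\rVert_{\cal{H}}\to0$ and $[m^n-m]_T+[\tilde m^n-\tilde m]_T\to0$ hold $\pr$-a.s. Then $R_\infty:=\sup_n\sup_t\lVert u^n_t\rVert_{\cal{H}}\vee\sup_t\lVert u_t\rVert_{\cal{H}}$ is a.s.\ finite, and — using that $F_1$, $F_2$ are \emph{uniformly} continuous on bounded sets of $\cal{H}$, which is the form in which the lemma is applied in this paper (cf.\ Assumptions \ref{it:bounded_on_h_1} and \ref{it:hs_bounded}), so that no compactness of $\cal{H}$ is needed — one obtains $\sup_{0\le s\le T}\lVert F_1(u^n_{s-})-F_1(u_{s-})\rVert_{\cal{H}}\to0$ and $\sup_{0\le s\le T}\lVert F_2(u^n_{s-})-F_2(u_{s-})\rVert_{B_2}\to0$ a.s., while $\sup_s\lVert F_1(u_{s-})\rVert_{\cal{H}}$ and $\sup_s\lVert F_2(u_{s-})\rVert_{B_2}$ are a.s.\ finite random variables.

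For \eqref{eq:conv_stoch_int_1} I would write
\[
\int_0^t\langle F_1(u^n_{s-}),\d m^n_s\rangle_{\cal{H}}-\int_0^t\langle F_1(u_{s-}),\d m_s\rangle_{\cal{H}}=\int_0^t\langle F_1(u^n_{s-})-F_1(u_{s-}),\d m^n_s\rangle_{\cal{H}}+\int_0^t\langle F_1(u_{s-}),\d(m^n-m)_s\rangle_{\cal{H}}.
\]
The quadratic variation of the first term on the right is at most $\int_0^T\lVert F_1(u^n_{s-})-F_1(u_{s-})\rVert_{\cal{H}}^2\,\d[m^n]_s\le\bigl(\sup_s\lVert F_1(u^n_{s-})-F_1(u_{s-})\rVert_{\cal{H}}^2\bigr)[m^n]_T$, which tends to $0$ in probability because the first factor tends to $0$ a.s.\ and $\{[m^n]_T\}_n$ is tight; Lenglart's domination inequality then forces the running supremum of this term to $0$ in probability. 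Likewise the quadratic variation of the second term is at most $\bigl(\sup_s\lVert F_1(u_{s-})\rVert_{\cal{H}}^2\bigr)[m^n-m]_T\to0$ in probability, and Lenglart applies again.

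For \eqref{eq:conv_stoch_int_2} I would use bilinearity of the continuous part of the tensor quadratic covariation, $[[m^n,\tilde m^n]]^c-[[m,\tilde m]]^c=[[m^n-m,\tilde m^n]]^c+[[m,\tilde m^n-\tilde m]]^c$, to split the difference of the two $\trace$-integrals as
\[
\int_0^t\trace\bigl((F_2(u^n_{s-})-F_2(u_{s-}))\,\d[[m^n,\tilde m^n]]^c_s\bigr)+\int_0^t\trace\bigl(F_2(u_{s-})\,\d[[m^n-m,\tilde m^n]]^c_s\bigr)+\int_0^t\trace\bigl(F_2(u_{s-})\,\d[[m,\tilde m^n-\tilde m]]^c_s\bigr).
\]
Using $\lvert\trace(GB)\rvert\le\lVert G\rVert_{B_2}\lVert B\rVert_{B_1}$ and the Kunita--Watanabe bound $\lVert\d[[a,b]]^c_s\rVert_{B_1}\le(\d[a]^c_s)^{1/2}(\d[b]^c_s)^{1/2}\le\tfrac12\d[a]_s+\tfrac12\d[b]_s$ on the nuclear-norm variation, the running supremum of the first piece is at most $\tfrac12\bigl(\sup_s\lVert F_2(u^n_{s-})-F_2(u_{s-})\rVert_{B_2}\bigr)\bigl([m^n]_T+[\tilde m^n]_T\bigr)\to0$ in probability, while by Cauchy--Schwarz the running supremum of the second piece is at most
\[
\Bigl(\int_0^T\lVert F_2(u_{s-})\rVert_{B_2}^2\,\d[m^n-m]^c_s\Bigr)^{1/2}[\tilde m^n]_T^{1/2}\le\bigl(\sup_s\lVert F_2(u_{s-})\rVert_{B_2}^2\,[m^n-m]_T\bigr)^{1/2}[\tilde m^n]_T^{1/2}\to0
\]
in probability, since $\sup_s\lVert F_2(u_{s-})\rVert_{B_2}<\infty$ a.s., $[m^n-m]_T\to0$ in probability, and $\{[\tilde m^n]_T\}_n$ is tight; the third piece is handled symmetrically.

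The main obstacle — and the reason the argument is not quite immediate — is the $n$-dependence of the integrators $[m^n]$ and $[[m^n,\tilde m^n]]^c$: one cannot integrate the (pointwise-convergent but, in infinite dimensions, not uniformly small a priori) quantities $\lVert F_j(u^n_{s-})-F_j(u_{s-})\rVert^2$ against one fixed measure, so a plain dominated-convergence argument is unavailable. The device above circumvents this by factoring each error integral as a supremum of the ``variable'' part, which tends to $0$ a.s.\ along the chosen subsequence precisely because $F_1$, $F_2$ are uniformly continuous on bounded sets, times a total-mass factor $[m^n]_T$ or $[\tilde m^n]_T$ that is merely tight; Lenglart's domination inequality is then what converts these controls on the compensators into the asserted uniform-in-$t$ convergence in probability for the stochastic integrals in \eqref{eq:conv_stoch_int_1}, whereas the integrals in \eqref{eq:conv_stoch_int_2} are of finite variation and are bounded pathwise. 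A minor secondary point arises when the lemma is invoked, as in the proof of Theorem \ref{thm:ito}, with $F_1$ continuous on $H^1$ rather than on $L^2$: one would then extract, in addition, a subsequence along which $u^n_{t-}\to u_{t-}$ in $H^1$ for $\leb$-a.e.\ $t$ — available from the convergence $u^n\to u$ in $L^2_{\bb{F}}([0,T];H^1)$ — and replace the pathwise supremum bound by a dominated-convergence argument in $(t,\omega)$ against $\d[m^n]_s\otimes\d\pr$, localised so that the dominating constant is finite.
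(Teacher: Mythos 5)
Your proof is correct and reaches the same conclusion as the paper's, but by a genuinely different technical route. Both arguments begin with the same two-term splitting of the stochastic integral in \eqref{eq:conv_stoch_int_1}; after that you pass to an a.s.-convergent subsequence (a valid reduction for convergence in probability), observe tightness of $\{[m^n]_T\}_{n\geq 1}$, and close with Lenglart's domination inequality, whereas the paper truncates via projections $\pi_k$ onto centred balls of $\cal{H}$, absorbs the truncation error into a $\delta$ of probability, and then applies Proposition \ref{prop:bdg} and dominated convergence to the truncated quantity. Your route avoids the truncation bookkeeping at the cost of Lenglart; the paper's uses only the BDG estimate of Proposition \ref{prop:bdg} but pays for it with the $\pi_k$-layer. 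For \eqref{eq:conv_stoch_int_2} the paper first polarises to reduce to $\tilde m^n = m^n$ and splits into two terms, while you skip polarisation and split directly by bilinearity of $[[\cdot,\cdot]]^c$ into three symmetric terms; both then rest on the same trace/Hilbert--Schmidt duality and the Kunita--Watanabe estimate for the nuclear-norm variation of the tensor covariation. You are also right to flag the precision issue in the hypothesis: to pass from $\sup_t\lVert u^n_t-u_t\rVert_{\cal{H}}\to 0$ a.s.\ to $\sup_t\lVert F_j(u^n_{t-})-F_j(u_{t-})\rVert\to 0$ a.s.\ in an infinite-dimensional $\cal{H}$, one needs $F_j$ to be \emph{uniformly} continuous on bounded sets, not merely continuous; the paper's dominated-convergence step carries the same implicit requirement, and every invocation of the lemma is covered by conditions \ref{it:bounded_on_h_1} and \ref{it:hs_bounded}, so nothing downstream is affected, but the lemma's statement should be strengthened accordingly. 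Your closing remark about routing through $H^1$-convergence when $F_1 = D_iF$ is, however, not needed: condition \ref{it:hs_bounded} already makes $D_{ij}^2 F$ bounded on bounded sets of $L^2$, hence $D_iF$ is Lipschitz --- and therefore uniformly continuous and bounded --- on bounded sets of $L^2$, which is the Hilbert space in which the lemma is applied.
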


\begin{proof}
We prove the convergences in \eqref{eq:conv_stoch_int_1} and \eqref{eq:conv_stoch_int_2} separately. For the first one we write
\begin{align} \label{eq:decomp_first_integral}
\begin{split}
    \int_0^t \langle F_1(u^n_{s-})&, \d m^n_s\rangle_{\cal{H}} - \int_0^t \langle F_1(u_{s-}), \d m_s\rangle_{\cal{H}} \\
    &= \int_0^t \bigl\langle F_1(u^n_{s-}) - F_1(u_{s-}), \d m^n_s\bigr\rangle_{\cal{H}} + \int_0^t \bigl\langle F_1(u_{s-}), \d (m^n_s - m_s)\bigr\rangle_{\cal{H}}.
\end{split}
\end{align}
We will deal with both terms on the right-hand side in sequence. Our goal is to prove that both converge to zero in probability. For $k \geq 1$ let $\pi_k \define \cal{H} \to \cal{H}$ denote the projection onto the centred ball $B_k$ of radius $k$ in $\cal{H}$. Now, since $\ev \sup_{0 \leq s \leq t} \lVert u^n_s - u_s\rVert_{\cal{H}}^2 \to 0$, for any $\delta > 0$ we can find $K \geq 1$ such that for all $k \geq K$ it holds that 
\begin{align*}
    \pr\Bigl(\pi_k(u^n_{s-}) = u^n_{s-},\, \pi_k(u_{s-}) = u_{s-} \ \text{for } s \in (0, t]\Bigr) &\leq \pr\biggl(\sup_{0 \leq s \leq t}\lVert u^n_s\rVert_{\cal{H}}^2 \lor \sup_{0 \leq s \leq t}\lVert u_s\rVert_{\cal{H}}^2 \leq k\biggr) \\
    &\geq 1 - \delta.
\end{align*}
Hence, for any $\epsilon > 0$ we obtain
\begin{align} \label{eq:probability_bound}
\begin{split}
    \pr\biggl(\sup_{0 \leq t \leq T}\biggl\lvert &\int_0^t \bigl\langle F_1(u^n_{s-}) - F_1(u_{s-}), \d m^n_s\bigr\rangle_{\cal{H}}\biggr\rvert > \epsilon\biggr) \\
    &\leq \pr\biggl(\sup_{0 \leq t \leq T}\biggl\lvert \int_0^t \bigl\langle F_1(\pi_k(u^n_{s-})) - F_1(\pi_k(u_{s-})), \d m^n_s\bigr\rangle_{\cal{H}}\biggr\rvert > \epsilon\biggr) + \delta.
\end{split}
\end{align}
Next, using Proposition \ref{prop:bdg} we find that 
\begin{align} \label{eq:expectation_bound_conv}
    \pr\biggl(\sup_{0 \leq t \leq T}\biggl\lvert \int_0^t \bigl\langle F_1&(\pi_k(u^n_{s-})) - F_1(\pi_k(u_{s-})), \d m^n_s\bigr\rangle_{\cal{H}}\biggr\rvert > \epsilon\biggr) \notag \\
    &\leq \frac{1}{\epsilon}\ev\sup_{0 \leq t \leq T}\biggl\lvert \int_0^t \bigl\langle F_1(\pi_k(u^n_{t-})) - F_1(\pi_k(u_{t-})), \d m^n_s\bigr\rangle_{\cal{H}}\biggr\rvert \notag \\
    &\leq \frac{C_1}{\epsilon} \ev\biggl(\int_0^T \bigl\lVert F_1(\pi_k(u^n_{t-})) - F_1(\pi_k(u_{t-}))\bigr\rVert_{\cal{H}}^2 \, \d [m^n]_t\biggr)^{1/2} \notag \\
    &\leq \frac{C_1}{\epsilon} \ev\biggl[\sup_{0 \leq t \leq T} \bigl\lVert F_1(\pi_k(u^n_t)) - F_1(\pi_k(u_t))\bigr\rVert_{\cal{H}} \bigl([m^n]_T - [m^n]_0\bigr)^{1/2}\biggr] \notag \\
    &\leq \frac{C_1^2}{\delta \epsilon^2} \ev \sup_{0 \leq t \leq T} \bigl\lVert F_1(\pi_k(u^n_t)) - F_1(\pi_k(u_t))\bigr\rVert_{\cal{H}}^2 + \delta \ev\bigl([m^n]_T - [m^n]_0\bigr).
\end{align}
Now, since $m^n$ converges to $m$ in $\cal{M}_{\bb{F}}^2([0, T]; \cal{H})$, the quantity $\ev\bigl([m^n]_T - [m^n]_0\bigr)$ is bounded uniformly in $n \geq 1$ by some constant $C > 0$. Moreover, the map $F_1 \circ \pi_k$ is continuous and bounded, so since $\ev \sup_{0 \leq t \leq T} \lVert u^n_t - u_t\rVert_{\cal{H}}^2 \to 0$ by the dominated convergence theorem, we may choose $n \geq 1$ large enough so that $\ev \sup_{0 \leq t \leq T} \bigl\lVert F_1(\pi_k(u^n_t)) - F_1(\pi_k(u_t))\bigr\rVert_{\cal{H}}^2 \leq \frac{\delta^2 \epsilon^2}{C_1^2}$. Plugging this into \eqref{eq:expectation_bound_conv} and combining it with Equation \eqref{eq:probability_bound} implies that
\begin{equation*}
    \pr\biggl(\sup_{0 \leq t \leq T} \biggl\lvert \int_0^t \bigl\langle F_1(u^n_{s-}) - F_1(u_{s-}), \d m^n_s\bigr\rangle_{\cal{H}}\biggr\rvert > \epsilon\biggr) \leq (2 + C)\delta.
\end{equation*}
Since $\delta$ and $\epsilon$ were arbitrary this implies that $\int_0^t \bigl\langle F_1(u^n_{s-}) - F_1(u_{s-}), \d m^n_s\bigr\rangle_{\cal{H}}$ converges uniformly in $t \in [0, T]$ to zero in probability.

Next, let us consider the second term on the right-hand side of \eqref{eq:decomp_first_integral}. Arguing as above for any $\delta > 0$ we can choose $K \geq 1$ and $k \geq K$ such that for any $\epsilon > 0$,
\begin{align*}
    \pr\biggl(\biggl\lvert\int_0^t \bigl\langle F_1&(u_{s-}), \d (m^n_s - m_s)\bigr\rangle_{\cal{H}}\biggr\rvert > 
    \epsilon\biggr) \\
    &\leq \frac{1}{\epsilon}\ev\biggl\lvert\int_0^t \bigl\langle F_1(\pi_k(u_{s-})), \d (m^n_s - m_s)\bigr\rangle_{\cal{H}}\biggr\rvert + \delta \\
    &\leq \frac{\delta}{c_k^2} \ev \sup_{0 \leq t \leq T} \lVert F_1(\pi_k(u_t))\rVert_{\cal{H}}^2 + \frac{C_1^2 c_k^2}{\delta \epsilon^2} \ev\bigl[[m^n]_t - [m^{\epsilon}]_0 - ([m]_t - [m]_0)\bigr] + \delta \\
    &\leq 2\delta + \frac{C_1^2 c_k^2}{\delta \epsilon^2} \ev\bigl[[m^n]_t - [m^n]_0 - ([m]_t - [m]_0)\bigr],
\end{align*}
where $c_k = \sup_{h \in B_k}\lVert F_1(h)\rVert_{\cal{H}}$. Then we choose $\epsilon$ large enough such that $\ev\bigl[[m^n]_t - [m^n]_0 - ([m]_t - [m]_0)\bigr] \leq \frac{\delta^2 \epsilon^2}{C_1^2 c_n^2}$ to obtain $\pr\bigl(\sup_{0 \leq t \leq T}\bigl\lvert\int_0^t \bigl\langle F_1(u_{s-}), \d (m^n_s - m_s)\bigr\rangle_{\cal{H}}\bigr\rvert > \epsilon\bigr) \leq 3 \delta$, which means that $\int_0^t \bigl\langle F_1(u_{s-}), \d (m^n_s - m_s)\bigr\rangle_{\cal{H}}$ converges uniformly in $t \in [0, T]$ to zero in probability as well. This concludes our treatment of the convergence in \eqref{eq:conv_stoch_int_1}.

We continue with \eqref{eq:conv_stoch_int_2}. By appealing to the polarisation identity $[[m, \tilde{m}]] = \frac{1}{4}([[m + \tilde{m}]] - [[m - \tilde{m}]])$, we can reduce to the case $\tilde{m}^n = m^n$ and $\tilde{m} = m$. Then similarly to Equation \eqref{eq:decomp_first_integral}, we decompose
\begin{align*}
    \int_0^t \trace\bigl(&F_2(u^n_{s-}) \, \d [[m^n]]^c_s\bigr) - \int_0^t \trace\bigl(F_2(u_{s-}) \, \d [[m]]^c_s\bigr) \\
    &= \int_0^t \trace\bigl((F_2(u^n_{s-}) - F_2(u_{s-})) \, \d [[m^n]]^c_s\bigr) + \int_0^t \trace\bigl(F_2(u_{s-}) \, \d ([[m^n]]^c_s - [[m]]^c_s)\bigr).
\end{align*}
For the first term we apply the inequality
\begin{equation} \label{eq:bound_trace}
    \sup_{0 \leq t \leq T}\biggl\lvert \int_0^t \trace\bigl((F_2(u^n_{s-}) - F_2(u_{s-})) \, \d [[m^n]]^c_s\bigr) \biggr\rvert \leq \int_0^T \bigl\lVert F_2(u^n_{t-}) - F_2(u_{t-})\bigr\rVert \, \d [m^n]^c_t,
\end{equation}
where $\bigl\lVert F_2(u^n_{t-}) - F_2(u_{t-})\bigr\rVert$ denotes the operator norm of $F_2(u^n_{t-}) - F_2(u_{t-}) \define \cal{H} \to \cal{H}$ and then proceed as above to show that the right-hand side converges to zero in probability as $n \to \infty$. To deal with the second term, we write $[[m^n]]^c_s - [[m]]^c_s = [[m^n - m, m^n]]^c_s + [[m, m^n - m]]^c_s$ and then apply the Kunita--Watanabe inequality % TODO: reference to this inequality or prove it
to obtain
\begin{align*}
    \sup_{ 0 \leq t \leq T} \biggl\lvert \int_0^t \trace\bigl(F_2&(u_{s-}) \, \d ([[m^n]]^c_s - [[m]]^c_s)\bigr)\biggr\rvert \\
    &\leq \sqrt{2}\biggl(\int_0^T \lVert F_2(u_{t-})\rVert \, \d ([m^n]_t + [m]_t)\biggr)^{1/2}\biggl(\int_0^T \lVert F_2(u_{t-})\rVert \, \d [m^n - m]_t\biggr)^{1/2} \\
    &\leq \delta \int_0^T \lVert F_2(u_{t-})\rVert \, \d ([m^n]_t + [m]_t) + \frac{1}{2\delta} \int_0^T \lVert F_2(u_{t-})\rVert \, \d [m^n - m]_t.
\end{align*}
The first expression on the right-hand can be made arbitrarily small by choosing $\delta > 0$ appropriately. The second term tends to zero in probability as $n \to \infty$. Hence, $\sup_{ 0 \leq t \leq T} \bigl\lvert \int_0^t \trace\bigl(F_2(u_{s-}) \, \d ([[m^n]]^c_s - [[m]]^c_s)\bigr)\bigr\rvert \to 0$ in probability, which concludes the proof.
\end{proof}

\subsection{Auxiliary Results for Section \ref{sec:equivalence}}

Let $(E, d)$ be a complete separable metric space. We let $\bb{M}_{\leq T}^2(E)$ denote the space of square-integrable measures $m$ on $[0, T] \times E$, i.e.\@ those with $\int_{[0, T] \times E} d^2(x, x_0) \, \d m(x) < \infty$ for some fixed $x_0 \in E$, such that $m([s, t] \times E) \leq t - s$ for all $0 \leq s \leq t \leq T$. We equip $\bb{M}_{\leq T}^2(E)$ with the metric
\begin{equation*}
    d_2(m^1, m^2) = TW_2\bigl(\tfrac{1}{T}\tilde{m}^1, \tfrac{1}{T}\tilde{m}^2\bigr) + \bigl\lvert m^1([0, T] \times E) - m^2([0, T] \times E)\bigr\rvert,
\end{equation*}
where $\tilde{m}^i$ is defined by $\d \tilde{m}^i(t, x) = \d m^i(t, x) + (m^i_t(E) - 1) \d \delta_{x_0} \, \d t$, where $(m^i_t)_{0 \leq t \leq T} \in L^1([0, T])$ is the weak derivative of the map $t \mapsto m^i([0, t] \times E)$. Here $W_2$ denotes the $2$-Wasserstein distance on the space of square-integrable probability measures on $[0, T] \times E$.

% A sequence of measures $(m^n)_n$ in $\bb{M}_{\leq T}^2(E)$ converges to $m \in \bb{M}_{\leq T}^2(E)$ if $\langle m^n, \varphi\rangle \to \langle m, \varphi\rangle$ for all continuous functions $\varphi \define [0, T] \times E \to \R$ of at most quadratic growth. We equip $\bb{M}_{\leq T}^2(E)$ with the topology induced by this notion of convergence. 

\begin{lemma} \label{lem:measures}
% The space $\bb{M}_{\leq T}^2(E)$ is a Polish space. 
A sequence in $\bb{M}_{\leq T}^2(E)$ is compact if and only if it is tight and uniformly square-integrable.
\end{lemma}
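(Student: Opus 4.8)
The plan is to identify $\bb{M}_{\leq T}^2(E)$ with a closed subset of a space of probability measures so that the classical characterisation of relative compactness in Wasserstein space applies. First I would note that the map $m \mapsto \tilde{m}$, where $\d \tilde{m}(t, x) = \d m(t, x) + (m_t(E) - 1)\,\d\delta_{x_0}(x)\,\d t$, sends $\bb{M}_{\leq T}^2(E)$ injectively onto the set of square-integrable probability measures $\tilde{m}$ on $[0, T] \times E$ whose time-marginal is $\frac{1}{T}\,\leb|_{[0, T]}$ and which put no mass of the ``wrong'' sign at $x_0$; the inverse map recovers $m$ by subtracting the appropriate multiple of $\delta_{x_0}$ in each time fibre. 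The metric $d_2$ is, up to the additive total-mass term, just the $2$-Wasserstein distance of the normalised $\tilde{m}^i$, and the total-mass term is continuous for $W_2$-convergence with fixed time-marginal (since $m([0, T] \times E)$ can be read off from $\tilde m$ via $\int_{[0,T]\times E}\bf{1}_{x \neq x_0}\,\d\tilde m$, more carefully via a monotone class / weak-convergence argument), so $d_2$-convergence is equivalent to $W_2$-convergence of the $\tilde{m}^i$ together with convergence of total masses. Hence it suffices to characterise relatively compact subsets of the $W_2$-space of square-integrable probability measures on the product space $[0, T] \times E$ that have a common time-marginal.

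Next I would invoke the standard fact (e.g. \cite[Definition 6.8 and the surrounding discussion]{villani_ot_2009}) that a family of probability measures on a complete separable metric space is relatively compact for $W_2$ if and only if it is tight and uniformly $2$-integrable, i.e. $\sup_{m}\int_{d(x, x_0) \geq R} d^2(x, x_0)\,\d m \to 0$ as $R \to \infty$. Applying this on $[0, T] \times E$ with the product metric (the $[0, T]$ coordinate contributes a bounded term and is automatically tight and uniformly integrable), tightness and uniform $2$-integrability of $(\tilde{m}^n)$ reduce to the same properties for $(m^n)$ in the $E$-variable, which is exactly the hypothesis ``tight and uniformly square-integrable'' in the statement. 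The only genuine check is that the limit of a $W_2$-convergent sequence of such $\tilde{m}^n$ again has time-marginal $\frac{1}{T}\,\leb$ and again arises from some $m \in \bb{M}_{\leq T}^2(E)$: the former is clear since the time-marginal is preserved under $W_2$-limits, and for the latter one shows that the signed measures $m^n = \tilde{m}^n - (\tilde m^n_t(E) - 1)\delta_{x_0}\,\d t$ converge (the correction terms converge because the densities $t \mapsto \tilde m^n_t(E) \in L^1([0,T])$ converge, using that $m^n([0,t]\times E)\le t$ gives equicontinuity of $t\mapsto m^n([0,t]\times E)$ and hence, along a subsequence, uniform convergence of these functions), and that the defining inequality $m([s, t] \times E) \leq t - s$ passes to the limit because $(s, t) \mapsto m([s, t] \times E)$ is determined by the continuous functional $t \mapsto m([0, t] \times E)$, which is a locally uniform limit of the $m^n([0, t] \times E) \leq t$.

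For the converse direction — relatively compact implies tight and uniformly square-integrable — I would argue by contraposition in the usual way: if tightness fails there is $\epsilon > 0$ and a sequence escaping every compact set by mass $\geq \epsilon$, and no subsequence can converge in $d_2$ (the limit would have to be tight, contradicting the Portmanteau-type lower bound); similarly if uniform square-integrability fails, the $2$-moments cannot be equi-bounded along a convergent subsequence, contradicting the fact that $W_2$-convergence forces convergence of second moments. The main obstacle I anticipate is the bookkeeping in the correspondence $m \leftrightarrow \tilde{m}$: one must be careful that the subtraction of the time-dependent multiple of $\delta_{x_0}$ is well-defined (this needs the absolute continuity of $t \mapsto m([0, t] \times E)$, which follows from $m([s, t] \times E) \leq t - s$), that it is continuous in the relevant topologies, and that the image is genuinely closed in the $W_2$-space — i.e. no mass can ``leak'' into a negative coefficient at $x_0$ in the limit. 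Once that identification is set up cleanly, the compactness statement is an immediate transcription of the Wasserstein compactness criterion.
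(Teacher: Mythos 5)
Your approach matches the paper's exactly: the paper's entire proof is the single sentence ``This follows essentially from [Villani, Theorem~6.9]'', i.e.\ the $W_2$-compactness criterion, which is precisely what you deploy, so you are simply supplying the expansion that the paper leaves implicit. One inaccuracy in your bookkeeping is the claim that $m \mapsto \tilde m$ is injective and that $m([0,T]\times E)$ is recoverable as $\int \bf{1}_{x\neq x_0}\,\d\tilde m$: if $m$ is allowed to charge $\{x_0\}$, the correction term $(1 - m_t(E))\delta_{x_0}$ conflates pre-existing mass at $x_0$ with the artificial normalising mass, so two different $m$'s with different time profiles $t\mapsto m_t(\{x_0\})$ can produce the same $\tilde m$ and even the same total mass. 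Strictly speaking this makes $d_2$ as written only a pseudometric unless one tacitly takes $x_0$ to lie outside $E$ or outside the support of any measure under consideration. Since the paper's one-line proof glosses over exactly the same wrinkle, this is not a defect peculiar to your argument; apart from it, your reduction to Wasserstein compactness, the use of absolute continuity of $t\mapsto m([0,t]\times E)$ to define the correction, and the check that the time-marginal constraint and the inequality $m([s,t]\times E)\le t-s$ survive the $W_2$-limit are the right way to flesh out the cited reference.
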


\begin{proof}
This follows essentially from \cite[Theorem 6.9]{villani_ot_2009}.
% If $(m_n)_n$ is compact, then the same must be true for the measures $(\frac{1}{T}\tilde{m}_n)_n$ defined by $\d \tilde{m}_n(t, x) = \d m_n(t, x) + (m_n^t(E) - 1) \d \delta_{x_0} \, \d t$, where $(m_n^t)_{0 \leq t \leq T}$ comes from disintegrating $m_n$. Hence, they are tight and uniformly square-integrable by \cite[Theorem 6.9]{villani_ot_2009}, but then the same is true fore $(m_n)_n$. Conversely, assume that $(m_n)_n$ is tight and uniformly square-integrable. 
\end{proof}

% If $m$ is an element of $\bb{M}_{\leq T}^2(E)$, then since $m([s, t] \times E) \leq t - s$, it holds that we can find a flow of subprobability measures $(q_t)_{0 \leq t \leq T}$ on $E$ such that $\d m(t, x) = \d q_t(x) \d t$.

\begin{lemma} \label{lem:stability_of_total_mass}
Let $(E', d')$ be another complete separable metric space and let $(m^n)_n$ be a convergent sequence in $\bb{M}_{\leq T}^2(E \times E')$ with limit $m$. Assume that there exists a family of probability measures $(p^n_{t, x})_{(t, x) \in [0, T] \times E}$ such that $\d m^n(t, x, y) = \d p^n_{t, x}(y) \d \pi^{\#}m^n(t, x)$, where $\pi \define [0, T] \times E \times E' \to [0, T] \times E$ is the projection onto $[0, T] \times E$. Then the same holds for $m$, i.e.\@ we find a family of probability measures $(p_{t, x})_{(t, x) \in [0, T] \times E}$ such that $\d m(t, x, y) = \d p_{t, x}(y) \d \pi^{\#}m(t, x)$.
\end{lemma}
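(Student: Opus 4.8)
The assertion is, in essence, the existence of regular conditional (disintegration) measures, so the plan is simply to apply the disintegration theorem for finite Borel measures on Polish spaces to the limit measure $m$ relative to the projection $\pi$. The hypotheses on the approximating sequence $(m^n)_n$ and its convergence do not enter the argument in any essential way --- indeed \emph{every} element of $\bb{M}_{\leq T}^2(E \times E')$, and in particular $m$ itself, admits such a disintegration --- but they serve to record that this structural property is stable under passage to the limit, which is what is needed when $m$ is later identified with a limiting relaxed control.

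Concretely, I would proceed in four steps. \emph{First}, observe that $m \in \bb{M}_{\leq T}^2(E \times E')$ is a finite Borel measure, of total mass at most $T$, on the Polish space $[0, T] \times E \times E'$, which we view as the product of the two Polish spaces $[0, T] \times E$ and $E'$. \emph{Second}, apply the disintegration theorem to $m$ along the projection $\pi \colon [0, T] \times E \times E' \to [0, T] \times E$: this yields a Borel map $(t, x) \mapsto p_{t, x}$, defined for $\pi^{\#}m$-a.e.\ $(t, x)$ and valued in the space of probability measures on $E'$ equipped with the topology of weak convergence, such that
\[
    \int f(t, x, y) \, \d m(t, x, y) = \int \biggl( \int f(t, x, y) \, \d p_{t, x}(y) \biggr) \, \d \pi^{\#}m(t, x)
\]
for all bounded Borel $f \colon [0, T] \times E \times E' \to \R$. \emph{Third}, choosing a Borel representative of the exceptional $\pi^{\#}m$-null set and setting $p_{t, x} = \delta_{y_0'}$ there for a fixed $y_0' \in E'$ extends $(p_{t, x})$ to a Borel family indexed by all of $[0, T] \times E$ without affecting the displayed identity. \emph{Fourth}, the displayed identity is precisely the statement $\d m(t, x, y) = \d p_{t, x}(y) \, \d \pi^{\#}m(t, x)$, which is the conclusion.

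The single point that deserves a comment is that the kernel consists of genuine probability measures and not merely sub-probability measures: this is automatic, because for every Borel $A \subset [0, T] \times E$ one has $\pi^{\#}m(A) = m(A \times E') = \int_A p_{t, x}(E') \, \d \pi^{\#}m(t, x)$, forcing $p_{t, x}(E') = 1$ for $\pi^{\#}m$-a.e.\ $(t, x)$. If one insisted on using the hypotheses literally rather than invoking the general theorem, an alternative is to encode the known disintegrations $p^n_{t, x}$ of $m^n$ as measures on $[0, T] \times E \times \mathcal{P}(E')$ via $\d \delta_{p^n_{t, x}}(\rho) \, \d \pi^{\#}m^n(t, x)$, extract a tight subsequential limit by means of Lemma \ref{lem:measures} together with the tightness and uniform square-integrability of $(m^n)_n$, and identify that limit with $m$ through the barycentre map; but this route is more cumbersome and, for a general Polish $E'$, requires extra care with moments on $\mathcal{P}(E')$, so I would prefer the direct argument. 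Accordingly I do not anticipate any genuine obstacle: the only mild technicalities are the measurable selection of a null-set representative and citing the appropriate form of the disintegration theorem.
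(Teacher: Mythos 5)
Your observation that the lemma, as literally stated, is an immediate consequence of the disintegration theorem for finite Borel measures on Polish spaces is correct, and the reduction to probability kernels via $\pi^{\#}m(A) = \int_A p_{t,x}(E') \, \d \pi^{\#}m(t,x)$ is exactly right. For the statement as written, the hypotheses about $(m^n)_n$ indeed play no role, and your proof is both valid and shorter than the paper's.

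The paper's proof is, however, genuinely different and delivers something more. It first writes $\d \pi^{\#}m^n(t,x) = \d v^n_t(x)\,\d t$ (which is possible because $\pi^{\#}m^n([s,t]\times E) \leq t-s$), then pads $m^n$ by sending the residual mass $(1-v^n_t(E))\,\d t$ to an isolated point $\ast$ adjoined to $E$ and a fixed point $y_0 \in E'$, so that after normalisation the padded measures $\bar m^n$ on $[0,T]\times E_\ast\times E'$ are probability measures with a fixed Lebesgue time marginal. Extracting a subsequential limit $\bar m$ and disintegrating it yields the nested decomposition $\d m(t,x,y) = \d p_{t,x}(y)\,\d v_t(x)\,\d t$ — first in time, then in $x$, then in $y$. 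This double disintegration is what the paper actually uses right after the lemma, where it writes $\d \mathfrak{m}(t,x,g) = \d\Gamma_{t,x}(g)\,\d\nu_t(x)\,\d t$; the padding step is there to ensure the time marginal is genuinely Lebesgue so this splitting is available. Your argument proves the conclusion as stated (the single disintegration along $\pi$) but does not, as written, deliver the $\d v_t(x)\,\d t$ layer; to recover it via your route one would first disintegrate $\pi^{\#}m$ in time — which is licit because $m \in \bb{M}_{\leq T}^2(E\times E')$ implies $\pi^{\#}m([s,t]\times E) \leq t-s$ — and then compose with the kernel you constructed. Doing so makes your proof fully adequate for the paper's downstream needs while remaining more elementary than the padding-and-limit argument. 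The alternative you sketch (encoding the $p^n_{t,x}$ as measure-valued measures and passing to a tight limit) is indeed closer in spirit to what the paper does, but, as you anticipate, is more cumbersome, and the direct appeal to disintegration is the better route for the lemma as stated.
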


\begin{proof}
Let us define the space $E_{\ast} = E \times \{0\} \cup \{(x_0, 1)\}$, which we endow with the product metric $d_{\ast}((x, y), (x', y')) = d(x, x') + \lvert y - y'\rvert$ for $x$, $x' \in E$ and $y$, $y' \in \{0, 1\}$. We will identify the elements $x$ of $E$ with $(x, 0) \in E_{\ast}$ and view $\ast = (x_0, 1) \in E_{\ast}$ as an isolated point. Now, it holds that $\pi^{\#}m^n([s, t] \times E) \leq t - s$ for any $0 \leq s \leq t \leq T$, so there exists a flow of subprobability measures $(v^n_t)_{0 \leq t \leq T}$, such that $\d \pi^{\#}(t, x) = \d v^n_t(x) \d t$. Using this disintegration, to any $m^n$, we can associate a measure $\bar{m}^n$ on $[0, T] \times E_{\ast} \times E'$ by $\d \bar{m}^n(t, x, y) = \d m^n(t, x, y) + \d \delta_{y_0}(y) (1 - v^n_t(E)) \d \delta_{\ast}(x) \d t$ for some fixed $y_0 \in E'$. The family $(\bar{m}^n)_n$ is tight and we let $\bar{m}$ denote its limit along some convergent subsequence $(n_k)_k$. Since $\frac{1}{T}\bar{m}$ is a probability measure and $\bar{m}([s, t] \times E_{\ast} \times E') = t - s$ for $0 \leq s \leq t \leq T$, we can disintegrate $\bar{m}$ as $\d \bar{m}(t, x, y) = \d p_{t, x}(y) \d \bar{v}_t(x) \d t$ for families of probability measures $(p_{t, x})_{(t, x) \in [0, T] \times E_{\ast}}$ and $(\bar{v}_t)_{0 \in [0, T]}$. We define the flow of subprobability measures $(v_t)_{0 \in [0, T]}$ by $v_t(A) = \bar{v}_t(A)$ for $A \in \cal{B}(E)$ and claim that $\d m(t, x, y) = \d p_{t, x}(y) \d v_t(x) \d t$, which yields the desired disintegration of $m$. Let $\varphi \define [0, T] \times E \times E' \to \R$ be continuous and bounded and define $\bar{\varphi} \in C_b([0, T] \times E_{\ast} \times E')$ by $\bar{\varphi}(t, x, y) = \varphi(t, x, y)$ if $x \in E$ and $\bar{\varphi}(t, x, y) = 0$ if $x = \ast$. Then, it holds that
\begin{align*}
    \langle m, \varphi\rangle &= \lim_{k \to \infty} \langle m^{n_k}, \varphi\rangle = \lim_{k \to \infty} \langle \bar{m}^{n_k}, \bar{\varphi}\rangle = \langle \bar{m}, \bar{\varphi}\rangle \\
    &= \int_0^T \int_E \int_{E'} \bar{\varphi}(t, x, y) \, \d p_{t, x}(y) \, \d \bar{v}_t(x) \, \d t + \int_0^T \biggl(\int_{E'} \bar{\varphi}(t, \ast, y) \, \d p_{t, x}(y)\biggr) \bar{v}_t(\{\ast\}) \, \d t \\
    &= \int_0^T \int_E \int_{E'} \varphi(t, x, y) \, \d p_{t, x}(y) \, \d v_t(x) \, \d t.
\end{align*}
Since $\varphi$ was arbitrary, this proves the claim. 
\end{proof}

\subsection{Regularisation of Functions}

% TODO: put $n$ in superscript and $\epsilon$ in subscript everywhere

\begin{proposition} \label{prop:mollification_convex}
Let $\varphi \define [0, T] \times \R \times G \to \R$ satisfy the same conditions as $f_1$ in Assumptions \ref{ass:mfl} and \ref{ass:convexity}. Then we can find a sequence $(\varphi_n)_n$ of functions $\varphi_n \define [0, T] \times \R \times \R^d \to \R$ such that
\begin{enumerate}[noitemsep, label = \normalfont(\roman*)]
    \item \label{it:unif_conv_g} $\varphi_n(t, \cdot, \cdot)$ converges to $\varphi(t, \cdot, \cdot)$ uniformly on compact subsets of $\R \times G$ for all $t \in [0, T]$;
    \item \label{it:unif_quad_growth_g} $\lvert \varphi_n(t, x, g) \rvert \leq C'(1 + \lvert x\rvert^2)$ for a constant $C' > 0$ independent of $n$;
    \item \label{it:bounded_in_l2_g} there exists a function $h_n \in L^2(\R)$ with $\lvert \varphi_n(t, x, g)\rvert \leq h_n(x)$ for all $(t, x, g) \in [0, T] \times \R \times G$;
    \item \label{it:strictly_convex_cont_diff} the map $g \mapsto \varphi_n(t, x, g)$ is strictly convex and continuous differentiable, and $\partial_g \varphi_n$ is bounded.
\end{enumerate}
\end{proposition}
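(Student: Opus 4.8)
The plan is to construct $\varphi_n$ explicitly by first extending $\varphi$ appropriately, then convolving in the control variable with a mollifier, and finally adding a small strictly convex perturbation. Concretely, I would proceed as follows.

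\textbf{Step 1: Extension to an open neighbourhood of $G$.} By Assumption \ref{ass:smp} \ref{it:smp_ext} there is already an extension of $\varphi = f_1$ (also denoted $\varphi$) to $[0, T] \times \R \times O \to \R$ for some open $O \supset G$ with $g \mapsto \varphi(t, x, g)$ convex and continuously differentiable and $\partial_g \varphi$ bounded by $C$. Since $G$ is compact, we may shrink $O$ so that $\overline{O}$ is compact and contained in a slightly larger open set on which the extension still makes sense; in particular $\varphi$ is bounded on $[0, T] \times K \times \overline{O}$ for compact $K \subset \R$, and by the mean value theorem $\lvert \varphi(t, x, g)\rvert \leq \lvert \varphi(t, x, g_0)\rvert + C\,\mathrm{diam}(O)$ for a fixed $g_0 \in G$, so $\varphi$ inherits the quadratic growth and $L^2$-domination in $x$ (via $h_f$) on $[0, T] \times \R \times O$.

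\textbf{Step 2: Mollification in $g$ and cutoff in $x$.} Fix a standard mollifier $\rho$ on $\R^{d_G}$ supported in the unit ball and set $\rho_\delta(g) = \delta^{-d_G}\rho(g/\delta)$. For $\delta$ small enough that $G + \delta B \subset O$, define $\varphi^\delta(t, x, g) = \int_{\R^{d_G}} \varphi(t, x, g - \delta z)\rho(z)\,\mathrm{d}z$ on $[0, T] \times \R \times (G + \tfrac{\delta}{2}B)$; then extend $\varphi^\delta$ to all of $\R^{d_G}$ in $g$ by, e.g., composing with a smooth retraction onto $G + \tfrac{\delta}{4}B$ (or simply defining $\varphi_n$ only on a fixed open set containing $G$, which is all that is required by the statement, since $\varphi_n \define [0,T] \times \R \times \R^d \to \R$ only needs to be defined and to satisfy the listed properties; I will just take a smooth bounded extension). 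Convolution preserves convexity in $g$, yields $C^\infty$-dependence on $g$, and $\partial_g \varphi^\delta = \rho_\delta \ast \partial_g \varphi$ is bounded by $C$. Local uniform convergence $\varphi^\delta(t, \cdot, \cdot) \to \varphi(t, \cdot, \cdot)$ on compact subsets of $\R \times G$ as $\delta \to 0$ follows from uniform continuity of $\varphi$ on compacts (Assumption \ref{ass:mfl} \ref{it:continuity_cost_mfl}); the quadratic-growth bound \ref{it:unif_quad_growth_g} and the $L^2$-domination \ref{it:bounded_in_l2_g} pass through the convolution with the same (or slightly enlarged) constants and the same $h_f \in L^2(\R)$, since the convolution is only in the $g$-variable. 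To be safe about \ref{it:bounded_in_l2_g} I would also multiply by a cutoff $\chi_n(x)$ equal to $1$ on $[-n, n]$ and supported in $[-2n, 2n]$; this does not affect local uniform convergence and keeps all bounds.

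\textbf{Step 3: Strictly convexifying perturbation.} Set $\varphi_n(t, x, g) = \varphi^{1/n}(t, x, g) + \tfrac{1}{n}\lvert g\rvert^2$ (times the cutoff $\chi_n$ if used). The term $\tfrac{1}{n}\lvert g\rvert^2$ is strictly convex, $C^\infty$, has gradient $\tfrac{2}{n}g$ which is bounded on the compact set $G$ (uniformly bounded by $\tfrac{2}{n}\sup_{g \in G}\lvert g\rvert \to 0$), and tends to zero uniformly on $G$; hence $g \mapsto \varphi_n(t,x,g)$ is strictly convex and continuously differentiable with $\partial_g\varphi_n$ bounded, giving \ref{it:strictly_convex_cont_diff}, while \ref{it:unif_conv_g}, \ref{it:unif_quad_growth_g}, \ref{it:bounded_in_l2_g} are unaffected up to harmless constants.

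\textbf{Main obstacle.} The only genuinely delicate point is Step 1: one must make sure the \emph{convex} extension of $f_1$ off $G$ retains boundedness of $\partial_g$ and, after mollification, that strict convexity can be achieved while simultaneously keeping $\partial_g\varphi_n$ bounded and the extension globally defined without destroying the growth/$L^2$ bounds in $x$. Since Assumption \ref{ass:smp} \ref{it:smp_ext} already hands us a suitable convex $C^1$ extension to an open $O \supset G$ with bounded $g$-derivative, this obstacle is mild: mollification in $g$ on a slightly smaller neighbourhood, followed by the quadratic perturbation, handles everything, and the $x$-behaviour is untouched by $g$-convolution. I therefore expect the proof to be essentially a verification that each of the four properties survives each of the three operations, which is routine.
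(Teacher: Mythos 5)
Your Step 1 contains a circular citation that invalidates the argument. The hypotheses of the proposition are only that $\varphi$ satisfies the conditions of $f_1$ in Assumptions \ref{ass:mfl} and \ref{ass:convexity} — that is, uniform continuity on compacts, quadratic growth in $x$, and \emph{convexity} of $g \mapsto \varphi(t,x,g)$. Assumption \ref{ass:smp} \ref{it:smp_ext} (the pre-existing $C^1$ convex extension with bounded $\partial_g$) is \emph{not} among the hypotheses, and you cannot invoke it. Indeed, the entire purpose of this proposition, as used in the proof of Theorem \ref{thm:equivalence}, is to go from the weak standing Assumptions \ref{ass:mfl} and \ref{ass:convexity} to approximating functions $\varphi_n$ that \emph{do} satisfy the conditions in Assumption \ref{ass:smp}. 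Assuming the conclusion's-worth of regularity at the outset defeats the point. Your own ``main obstacle'' paragraph correctly flags the extension/regularity issue as the delicate step, but then waves it away by precisely this illegal appeal.

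The paper resolves the obstacle without any pre-existing extension by using inf-convolution (Moreau--Yosida regularisation): $\tilde{\varphi}_n(t,x,g) = \inf_{h \in G}\bigl(\varphi(t,x,h) + n\lvert g - h\rvert^2\bigr)$, which is automatically defined on all of $\R^{d_G}$, remains convex in $g$, is Lipschitz in $g$ (so that subsequent mollification in $g$ yields a $C^1$ function with bounded gradient), and satisfies $\lvert\tilde{\varphi}_n\rvert \leq \lvert\varphi\rvert$ pointwise, preserving the quadratic growth. Only after this extension-and-regularisation step does the paper mollify in $g$ and add the strictly convex perturbation $\tfrac{1}{n}\lvert g\rvert^2$, much as you propose in Steps 2 and 3. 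Your remaining ingredients (mollification preserves convexity, $\tfrac{1}{n}\lvert g\rvert^2$ gives strict convexity, cutoff or damping in $x$ gives the $L^2$ domination — the paper uses $e^{-x^2/n}$, you use a compactly supported $\chi_n$; either works) are fine once the extension is built by inf-convolution rather than assumed.
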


\begin{proof}
We construct an approximation of $\varphi$ in two steps. Let $n \geq 1$ and define $\tilde{\varphi}_n \define \R^d \to \R$ through $\inf$-convolution, i.e.\@ we set $\tilde{\varphi}_n(t, x, g) = \inf_{h \in G} \bigl(\varphi(t, x, h) + n\lvert g - h\rvert^2\bigr)$. Since $g \mapsto \varphi(t, x, g)$ is convex, the same holds for $g \mapsto \tilde{\varphi}_n(t, x, g)$. The second step is to mollify $\tilde{\varphi}_n$. Let $\eta \define \R^d \to \R$ be the standard mollifier and set $\eta_{\epsilon}(x) = \frac{1}{\epsilon^d}\eta(\frac{x}{\epsilon})$. We define $\tilde{\varphi}_n^{\epsilon}(t, x, g) = (\eta_{\epsilon} \ast \tilde{\varphi}_n(t, x, \cdot)(g)$ and finally set $\varphi_n^{\epsilon}(t, x, g) = e^{-x^2/n} \bigl(\tilde{\varphi}_n^{\epsilon}(t, x, g) + \frac{1}{n} \lvert g\rvert^2\bigr)$.
% , where $\kappa_n \in C(\R; [0, 1])$ is a cutoff functions such that $\kappa_n(x) = 1$ for $x \in [-n, n]$ and $\kappa_n(x) = 0$ outside of $[-(n + 1), n + 1]$. 
We will now choose $\epsilon = \epsilon_n$ to ensure the locally uniform convergence of $\varphi_n^{\epsilon_n}$. We estimate
\begin{align} \label{eq:decomp_convex}
\begin{split}
    \lvert \varphi_n^{\epsilon}(t, x, g) - \varphi(t, x, g)\rvert &\leq \lvert \varphi_n^{\epsilon}(t, x, g) - \tilde{\varphi}_n^{\epsilon}(t, x, g)\rvert + \lvert \tilde{\varphi}_n^{\epsilon}(t, x, g) - \tilde{\varphi}_n(t, x, g)\rvert \\
    &\ \ \ \lvert \tilde{\varphi}_n(t, x, g) - \varphi(t, x, g)\rvert.
\end{split}
\end{align}
We will estimate the three terms on the right-hand side above. Let us fix a compact subset $K$ of $\R$ and let $\omega_K$ be a modulus of continuity of $g \mapsto \varphi(t, x, g)$ which is uniform in $[0, T] \times K$, i.e.\@ $\lvert \varphi(t, x, g) - \varphi(t, x, g')\rvert \leq \omega_K(\lvert g - g'\rvert)$ for all $g$, $g' \in G$ and $(t, x) \in [0, T] \times K$. Let us start with the third term on the right-hand side of \eqref{eq:decomp_convex}. For $g \in G$, we estimate
\begin{align*}
    \lvert \tilde{\varphi}_n(t, x, g) - \varphi(t, x, g)\rvert &= \biggl\lvert \inf_{h \in G} \varphi(t, x, h) + n\lvert g - h\rvert^2 - \varphi(t, x, g)\bigg\rvert \\
    &\leq \inf_{\delta \in [0, \text{diam}(G)]} -\omega_K(\delta) + n \delta^2.
\end{align*}
Define $\delta_n = \sup\{\delta \in [0, \text{diam}(G)] \define -\omega_K(\delta) + n\delta^2 \leq 0\}$. Clearly, it holds that $\delta_n \to 0$, so that
\begin{equation*}
    \lvert \tilde{\varphi}_n(t, x, g) - \varphi(t, x, g)\rvert \leq \inf_{\delta \in [0, \text{diam}(G)]} -\omega_K(\delta) + n \delta^2 \leq \omega_K(\delta_n) \to 0
\end{equation*}
as $n \to \infty$. Moreover, from the definition of $\tilde{\varphi}_n$, it immediately follows that $\lvert \tilde{\varphi}_n(t, x, g)\rvert \leq \lvert \varphi(t, x, g)\rvert \leq C(1 + \lvert x\rvert^2)$ for $(t, x, g) \in [0, T] \times \R \times G$.

Next, we consider the second expression on the right-hand side of \eqref{eq:decomp_convex}. We have
\begin{align*}
    \lvert \tilde{\varphi}_n^{\epsilon}(t, x, g) - \tilde{\varphi}_n(t, x, g)\rvert &\leq \int_{\R^d} \eta_{\epsilon}(g - h) \bigl\lvert \tilde{\varphi}_n(t, x, h) - \tilde{\varphi}_n(t, x, g)\bigr\rvert \, \d h \\ 
    &\leq \sup_{h \in B_{\epsilon}(g)} \bigl\lvert \tilde{\varphi}_n(t, x, h) - \tilde{\varphi}_n(t, x, g)\bigr\rvert \\
    &\leq \sup_{h \in B_{\epsilon}(g)} \sup_{y \in G} n \bigl\lvert \lvert g - y\rvert^2 - \lvert h - y\rvert^2\bigr\rvert,
\end{align*}
where $B_{\epsilon}(h)$ denotes the ball of radius $\epsilon$ centered at $h \in \R^d$. Since $G$ is compact, the expression in the third line above tends to zero as $\epsilon \to 0$ uniformly over $g \in G$. Consequently, we may choose $\epsilon_n > 0$ such that $\epsilon_n \to 0$ and $\lvert \tilde{\varphi}_n^{\epsilon_n}(t, x, g) - \tilde{\varphi}_n(t, x, g)\rvert \leq \epsilon_n$ for all $(t, x, g) \in [0, T] \times \R \times G$.

Lastly, we bound the first term on the right-hand side of \eqref{eq:decomp_convex}. We have
\begin{equation} \label{eq:some_bound}
    \lvert \varphi_n^{\epsilon}(t, x, g) - \tilde{\varphi}_n^{\epsilon}(t, x, g)\rvert \leq (1 - e^{-x^2/n}) \lvert \tilde{\varphi}_n^{\epsilon}(t, x, g)\rvert + \frac{1}{n}\sup_{g \in G}\lvert g\rvert^2.
\end{equation}
Now, it holds that
\begin{equation} \label{eq:bound_of_mollified}
    \lvert \tilde{\varphi}_n^{\epsilon}(t, x, g)\rvert \leq \int_{\R} \eta_{\epsilon}(h - g) \lvert \tilde{\varphi}_n(t, x, h)\rvert \, \d h \leq \sup_{h \in B_{\epsilon}(g)} \lvert \tilde{\varphi}_n(t, x, h)\rvert \leq C(1 + \lvert x\rvert^2).
\end{equation}
Inserting this into \eqref{eq:some_bound} yields
\begin{equation*}
    \lvert \varphi_n^{\epsilon}(t, x, g) - \tilde{\varphi}_n^{\epsilon}(t, x, g)\rvert \leq C(1 - e^{-x^2/n})(1 + \lvert x \rvert^2) + \frac{1}{n}\sup_{h \in G}\lvert h\rvert^2
\end{equation*}
and the bound on the right-hand side converges to zero uniformly in $\epsilon > 0$ and $x \in K$. Putting everything together implies that $\varphi_n \define [0, T] \times \R \times G \to G$ defined by $\varphi_n^{\epsilon_n}(t, x, g) = \varphi_n(t, x, g)$ converges to $\varphi$ uniformly on compact subsets of $[0, T] \times \R \times G$, so Property \ref{it:unif_conv_g} holds. Moreover, by \eqref{eq:bound_of_mollified} we obtain that
\begin{align*}
    \lvert \varphi_n(t, x, g)\rvert &\leq e^{-x^2/n}\lvert \tilde{\varphi}_n^{\epsilon_n}(t, x, g)\rvert + e^{-x^2/n}\frac{1}{n}\lvert g\rvert^2 \leq Ce^{-x^2/n}(1 + \lvert x \rvert^2) + e^{-x^2/n}\frac{1}{n}\sup_{h \in G}\lvert h\rvert^2
\end{align*}
for all $(t, x, g) \in [0, T] \times \R \times G$, which establishes Property \ref{it:unif_quad_growth_g}. Next, note that the left-hand side is in $L^2(\R)$ as a function of $x$, so Property \ref{it:bounded_in_l2_g} is satisfied as well. Lastly, the strict convexity and continuous differentiability of $g \mapsto \varphi_n(t, x, g)$ follow by construction and the boundedness of $\partial_g \varphi_n(t, x, g)$ is a consequence of the prefactor $e^{-x^2/n}$. This establishes Property \ref{it:strictly_convex_cont_diff} and concludes the proof.
\end{proof}

\begin{proposition} \label{prop:mollificaton_lfd}
Let $\varphi \define [0, T] \times \R \times \cal{M}^2_{\leq 1}(\R) \to \R$ be a measurable map such that $(x, v) \mapsto \varphi(t, x, v)$ is uniformly continuous for $(t, x, v)$ in compact subsets of $[0, T] \times \R \times \cal{M}^2_{\leq 1}(\R)$ and $\lvert \varphi(t, x, v)\rvert \leq C(1 + \lvert x \rvert + M_2^p(v))$ for some $p \in [1, 2]$ and $C > 0$. Then there exists a sequence $(\varphi_n)_n$ of measurable function $\varphi_n \define [0, T] \times \R \times \cal{M}^2_{\leq 1}(\R) \to \R$ such that
\begin{enumerate}[noitemsep, label = \normalfont(\roman*)]
    \item \label{it:unif_conv} $\varphi_n(t, \cdot, \cdot)$ converges to $\varphi(t, \cdot, \cdot)$ uniformly on compact subsets of $\R \times \cal{M}^2_{\leq 1}(\R)$ for all $t \in [0, T]$;
    \item \label{it:unif_quad_growth} $\lvert \varphi_n(t, x, v)\rvert \leq C'(1 + \lvert x \rvert^p + M_2^p(v))$ for a constant $C' > 0$ independent of $n$;
    \item \label{it:bounded_in_l2} there exists a function $h_n \in C_c(\R)$ with $\lvert \varphi_n(t, x, v)\rvert \leq h_n(x)$ for all $(t, x, v) \in [0, T] \times \R \times \cal{M}^2_{\leq 1}(\R)$;
    \item \label{it:func_der_bound} $v \mapsto \varphi_n(t, x, v)$ admits a linear functional derivative $D \varphi_n \define [0, T] \times \R \times \cal{M}^2_{\leq 1}(\R) \times \R \to \R$ such that $\lvert D \varphi_n(t, x, v)(y)\rvert \leq h_n(y)$ for all $(t, x, v) \in [0, T] \times \R \times \cal{M}^2_{\leq 1}(\R)$ and $y \in \R$ and $\cal{M}^2_{\leq 1}(\R) \cap L^2(\R) \ni v \mapsto D\varphi_n(t, x, v)(y)$ is continuous with respect to $\lVert \cdot \rVert_{L^2}$.
\end{enumerate}
\end{proposition}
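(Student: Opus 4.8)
The plan is to approximate $\varphi$ by functions that are cut off to a bounded $x$-window and that depend on the measure variable $v$ only through finitely many smooth, compactly supported linear functionals $\langle v,\kappa\rangle$, in a manner that is moreover $C^1$ in those functionals. For such a function the linear functional derivative is read off from the chain rule and is automatically compactly supported and $\lVert\cdot\rVert_{L^2}$-continuous in $v$, so Properties \ref{it:bounded_in_l2} and \ref{it:func_der_bound} will come essentially for free; the real content is to build the approximation so that Properties \ref{it:unif_conv} and \ref{it:unif_quad_growth} hold, the latter with a constant independent of $n$.

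Concretely, I would fix a nonnegative $\kappa\in C_c^\infty(\R)$ with $\supp\kappa\subset(-1,1)$ and $\sum_{j\in\Z}\kappa(\cdot-j)\equiv1$, set $\kappa_j^n(x)=\kappa(nx-j)$ and $m_n=n^2$, and choose a cutoff $\chi_n\in C_c^\infty(\R)$ with $\mathbf 1_{[-n,n]}\le\chi_n\le\mathbf 1_{[-2n,2n]}$. To $v\in\cal{M}^2_{\leq 1}(\R)$ I associate the discrete measure $v^n=\sum_{|j|\le m_n}\langle v,\kappa_j^n\rangle\,\delta_{j/n}\in\cal{M}^2_{\leq 1}(\R)$; it is supported in $[-m_n/n,m_n/n]$ and, since the $\kappa_j^n$ form a partition of unity with supports of diameter $2/n$, it satisfies $M_2^2(v^n)\le\langle v,(|\cdot|+1/n)^2\rangle\le 2M_2^2(v)+2$. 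Because the atoms $\{\delta_{j/n}\}_{|j|\le m_n}$ are fixed, $v\mapsto\varphi(t,x,v^n)$ is a function $G_n(t,x,\vec a)$ of the vector $\vec a=(\langle v,\kappa_j^n\rangle)_{|j|\le m_n}$, which ranges in the simplex $\Delta_n=\{a_j\ge0,\ \sum_j a_j\le1\}$. I then extend $G_n$ to all of $\R^{2m_n+1}$ by composing with a continuous retraction onto $\Delta_n$ (so that the supremum bound of $G_n$ and the above second-moment bound, up to an $O(1)$ loss, are preserved), mollify the extension in $\vec a$ at a scale $\delta_n\le c\,n^{-4}$ chosen also small enough that $G_n^{\delta_n}$ differs from $G_n$ by at most $1/n$ on $[0,T]\times[-2n,2n]\times\Delta_n$ — possible since $\varphi$ is uniformly continuous on the compact set $[0,T]\times[-2n,2n]\times\{\sum_{|j|\le m_n}a_j\delta_{j/n}:\vec a\in\Delta_n\}$ — and finally define
\[
  \varphi_n(t,x,v)\ :=\ \chi_n(x)\,G_n^{\delta_n}\!\bigl(t,x,(\langle v,\kappa_j^n\rangle)_{|j|\le m_n}\bigr).
\]
No operation is performed in the $t$-variable, so joint measurability is inherited from $\varphi$.

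The verification then proceeds as follows. For \ref{it:func_der_bound}: since $G_n^{\delta_n}$ is $C^1$ and $v\mapsto\vec a(v)$ is linear, the chain rule shows $v\mapsto\varphi_n(t,x,v)$ has linear functional derivative $D\varphi_n(t,x,v)(y)=\chi_n(x)\sum_{|j|\le m_n}\partial_{a_j}G_n^{\delta_n}(t,x,\vec a(v))\,\kappa_j^n(y)$, which is supported in the fixed compact set $[-m_n/n-\tfrac1n,\,m_n/n+\tfrac1n]$, bounded for each fixed $n$ (continuity of $\partial_{a_j}G_n^{\delta_n}$ on the bounded range of $\vec a$), and $\lVert\cdot\rVert_{L^2}$-continuous in $v$ because each $v\mapsto\langle v,\kappa_j^n\rangle$ is. For \ref{it:bounded_in_l2}: $|\varphi_n(t,x,v)|\le\chi_n(x)(\lVert G_n\rVert_\infty+1)$, and one takes $h_n\in C_c^\infty(\R)$ dominating this together with the derivative bound. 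For \ref{it:unif_quad_growth}: by construction $|G_n^{\delta_n}(t,x,\vec a(v))|\le1+\sup\{|\varphi(t,x,\mu)|:M_2^2(\mu)\le 2M_2^2(v)+3\}$, which by the growth hypothesis on $\varphi$ is $\le C'(1+|x|^p+M_2^p(v))$ with $C'$ independent of $n$, and $\chi_n\le1$ only improves it. For \ref{it:unif_conv}: on a compact $[-R,R]\times K\subset\R\times\cal{M}^2_{\leq 1}(\R)$ and for $2n>R$ one has $\varphi_n(t,x,v)=\varphi(t,x,v^n)$ up to an error $\le1/n$, while $d_2(v^n,v)$ is bounded by $1/n$ (mass moved by at most $1/n$) plus $\int_{|x|\ge m_n/n-1/n}(1+|x|^2)\,dv(x)$ (mass discarded by truncation), which tends to $0$ uniformly over the tight and uniformly $2$-integrable set $K$; as the $v^n$ remain in one fixed compact subset of $\cal{M}^2_{\leq 1}(\R)$, a single modulus of continuity of $\varphi$ closes the argument.

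The hard part will be calibrating the three scales against one another: the partition-of-unity scale $1/n$, the truncation window $m_n/n$ (which must diverge for \ref{it:unif_conv}), and the finite-dimensional mollification scale $\delta_n$ (which must be small relative to $m_n$, of order $n^{-4}$, so that mollification does not reintroduce growth in $v$ and break \ref{it:unif_quad_growth}), all while keeping the discretised-and-truncated measures inside a single fixed compact subset of $\cal{M}^2_{\leq 1}(\R)$ so that one modulus of continuity of $\varphi$ suffices. Once these choices are pinned down the four properties follow as indicated; the degenerate case of $\psi$, which has no $x$-variable, is the special case obtained by dropping $\chi_n$.
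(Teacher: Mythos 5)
Your construction is essentially the same as the paper's: cut off in $x$ with a compactly supported bump, project the measure variable onto a finite-dimensional simplex via a scaled partition of unity, mollify the resulting finite-dimensional function, and read off the linear functional derivative from the chain rule. The only cosmetic differences are that you represent the discretised measure as a sum of Diracs rather than a weighted sum of bump densities, and you explicitly retract onto the simplex before mollifying instead of composing with $w\mapsto w^+$; both choices handle the same technical issue (keeping the intermediate measures in $\mathcal{M}^2_{\leq 1}(\R)$) and lead to the same calibration of the grid scale, truncation window, and mollification radius against one another.
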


\begin{proof}
We construct an approximation of $\varphi$ in three steps. First we pick $n \geq 1$ and $k \geq 1$ and set $x_i = -n + i 2^{-k}$ for $i = 0$,~\ldots, $2n 2^k = L$. Then we let $(\psi_i)_{i = 1, \dots, L - 1}$ be a partition of unity subordinated to the open cover $((x_i - 2^{-k}, x_i + 2^{-k}))_{i = 1, \dots, L - 1}$ and set $\kappa_n(x) = \sum_{i = 1}^{L - 1} \psi_i(x)$ for $x \in \R$. Now, for $v \in \mathcal{M}^2_{\leq 1}(\R)$ we define the subprobability measures $v_n \in \mathcal{M}^2_{\leq 1}(\R)$ by $\d v_n(x) = \kappa_n(x) \, \d v(x)$ and $v_{n, k} \in \mathcal{M}^2_{\leq 1}(\R)$ by $\d v_{n, k}(x) = \sum_{i = 1}^{L - 1} \varpi_i(v) \bar{\psi}_i(x) \, \d x$, where $\varpi_i(v) = \int_{\R} \psi_i(x) \, \d v(x)$ and $\bar{\psi}_i(x) = c_i \psi_i(x)$ with $c_i^{-1} = \int_{\R} \psi_i(x) \, \d x$. Next, for $w \in [0, \infty)^{L - 1}$ we let $m_w \in \cal{M}(\R)$ be defined by $\d m_w(x) = \sum_{i = 1} w_i \bar{\psi}_i(x) \, \d x$. Then, we introduce the function $\Phi_{n, k} \define [0, T] \times \R \times \R^{L - 1} \to \R$ given by $\Phi_{n, k}(t, x, w) = \varphi(t, x, m_{w^+})$, where $w^+ = (w_i \lor 0)_{i = 1, \dots, L - 1}$ for $w \in \R^{L - 1}$. Finally, for $\epsilon > 0$ we set $\Phi_{n, k}^{\epsilon}(t, x, w) = (\eta_{\epsilon} \ast \Phi_{n, k}(t, x, \cdot))(w)$ and $\varphi_{n, k}^{\epsilon}(t, x, v) = \Phi_{n, k}^{\epsilon}(t, x, \varpi(v))$, where $\eta \define \R^{L - 1} \to \R$ is the standard mollifier and $\eta_{\epsilon}(x) = \frac{1}{\epsilon^{L - 1}}\eta(\frac{x}{\epsilon})$ for $x \in \R^{L - 1}$. Our goal is to show that choosing first $\epsilon = \epsilon_{n, k}$ small enough and then $k = k_n$ sufficiently large, the resulting sequence $(\varphi_n)_{n \geq 1}$, defined by $\varphi_n(t, x, v) = \kappa_n(x)\varphi_{n, k_n}^{\epsilon_{n, k_n}}(t, x, v)$, satisfies Properties \ref{it:unif_conv}, \ref{it:unif_quad_growth}, and \ref{it:func_der_bound}. 

Let us start with Property \ref{it:unif_conv}. Note that for $n \geq 1$, on the set $\cal{K}_n$ of measures $v \in \cal{M}^2_{\leq 1}(\R)$ such that $\supp v \subset [-n, n]$ the metrics $d_0$ and $d_2$ are strongly equivalent, so there exists a constant $c_n > 0$ such that $d_2 \leq c_n d_0$. Now, for $k \geq 1$, and $\epsilon > 0$, we can write
\begin{align} \label{eq:decomp_approx}
\begin{split}
    \lvert \varphi_{n, k}^{\epsilon}(t, x, v) - \varphi(t, x, v)\rvert &\leq \lvert \varphi_{n, k}^{\epsilon}(t, x, v) - \kappa_n(x)\varphi(t, x, v_{n, k})\rvert + \lvert \varphi(t, x, v_{n, k}) - \varphi(t, x, v_n)\rvert \\
    &\ \ \ + \lvert \kappa_n(x)\varphi(t, x, v_n) - \varphi(t, x, v)\rvert.
\end{split}
\end{align}
We inspect the three terms in turn. For the first, we obtain
\begin{align} \label{eq:distance_to_moll}
    \lvert \varphi_{n, k}^{\epsilon}(t, x, v&) - \kappa_n(x)\varphi(t, x, v_{n, k})\rvert \notag \\
    &\leq \int_{\R^{L - 1}} \eta_{\epsilon}(\varpi(v) - w) \bigl\lvert \Phi_{n, k}(t, x, w) - \Phi_{n, k}(t, x, \varpi(v))\bigr\rvert \, \d w \notag \\
    &\leq \sup_{w \in B_{\epsilon}(\varpi(v))} \lvert \varphi_{n, k}(t, x, m_{w^+}) - \varphi(t, x, v_{n, k})\vert,
\end{align}
where $B_{\epsilon}(w)$ is the ball of radius $\epsilon$ centered at $w \in \R^{L - 1}$. For $w \in B_{\epsilon}(\varpi(v))$, we estimate
\begin{align} \label{eq:d_2_bound}
    d_2(m_{w^+}, v_{n, k}) &\leq c_n d_0(m_{w^+}, v_{n, k}) \notag \\
    &= c_n\sup_{\lVert \Phi\rVert_{\text{bLip} \leq 1}} \langle m_{w^+} - v_{n, k}, \Phi\rangle \notag \\
    &= c_n\sup_{\lVert \Phi\rVert_{\text{bLip} \leq 1}} \sum_{i = 1}^{L - 1} \int_{\R} ((w_i \lor 0) - \varpi_i(v)) \Phi(x) \bar{\psi}_i(x) \, \d x \notag \\
    &\leq c_n\sum_{i = 1}^{L - 1} \lvert w_i - \varpi_i(v)\rvert \notag \\
    &\leq c_n(L - 1)\epsilon.
\end{align}
For $t \in [0, T]$, let us define the modulus of continuity $\omega_n$ by
\begin{equation*}
    \omega_n(\delta) = \sup\biggl\{\sup_{(t, x) \in [0, T] \times [-n, n]}\lvert \varphi(t, x, v) - \varphi(t, x, v')\rvert \define v, v' \in \cal{K}_n,\, d_2(v, v') \leq \delta\biggr\}.
\end{equation*}
By assumption, this modulus is continuous at zero, i.e.\@ $\omega_n(\delta) \to 0$ as $\delta \to 0$. Inserting \eqref{eq:d_2_bound} into \eqref{eq:distance_to_moll} and applying $\omega_n$ yields
\begin{equation*}
    \bigl\lvert \varphi_{n, k}^{\epsilon}(t, x, v) - \kappa_n(x)\varphi(t, x, v_{n, k})\bigr\rvert \leq \omega_n\bigl(c_n (2n2^k - 1)\epsilon\bigr).
\end{equation*}
Now, we chose $\epsilon_{n, k} > 0$ such that $\omega_n\bigl(c_n (2n2^k - 1)\epsilon_{n, k}\bigr) \leq \frac{1}{n}\bigr\}$, whence $\omega_n\bigl(c_n (2n2^k - 1)\epsilon_{n, k}\bigr) = \frac{1}{n} \to 0$. This finishes our analysis of the first summand of \eqref{eq:decomp_approx}. 

We estimate the second term in \eqref{eq:decomp_approx} by
\begin{align*}
    \lvert \varphi(t, x, v_{n, k}) - \varphi(t, x, v_n)\rvert \leq \omega^n_t(d_2(v_{n, k}, v_n)) \leq \omega_n(c_nd_0(v_{n, k}, v_n)).
\end{align*}
We bound the $d_0$-metric appearing on the right-hand side by
\begin{align*}
    d_0(v_{n, k}, v_k) &= \sup_{\lVert \Phi\rVert_{\text{bLip} \leq 1}} \langle v_{n, k} - v_k, \Phi\rangle \\
    &= \sup_{\lVert \Phi\rVert_{\text{bLip} \leq 1}} \sum_{i = 1}^{L - 1} \biggl(\int_{\R} \varpi_i(v) \Phi(x) \bar{\psi}_i(x) \, \d x - \int_{\R} \Phi(x)\psi_i(x) \, \d v(x)\biggr) \\
    &\leq \sup_{\lVert \Phi\rVert_{\text{bLip} \leq 1}} \sum_{i = 1}^{L - 1} \biggl(\int_{\R} \varpi_i(v) \Phi(x_i) \bar{\psi}_i(x) \, \d x - \int_{\R} \Phi(x_i)\psi_i(x) \, \d v(x)\biggr) \\
    &\ \ \ + \sum_{i = 1}^{L - 1} \biggl(\int_{\R} \varpi_i(v) \lvert x_i - x\rvert \bar{\psi}_i(x) \, \d x + \int_{\R} \lvert x_i - x\rvert \psi_i(x) \, \d v(x)\biggr) \\
    &\leq 2^{-k} v(\R),
\end{align*}
where we note that the expression in the third line vanishes. In view of the previous display, we obtain $\lvert \varphi(t, x, v_{n, k}) - \varphi(t, x, v_n)\rvert \leq \omega_n(c_n 2^{-k})$. We set $k_n = \inf\{k \geq 1 \define \omega_n(c_n 2^{-k}) \leq \frac{1}{n}\}$.

Let us now study the last term in \eqref{eq:decomp_approx}. Fix compact subsets $K$ and $\cal{K}$ of $\R$ and $\cal{M}_{\leq 1}^2(\R)$, respectively and let $\omega_t$ be a modulus of continuity of $K \times \cal{K} \ni (x, v) \mapsto \varphi(t, x, v)$. We estimate
\begin{equation} \label{eq:distance_truncation}
    d_2^2(v_n, v) \leq \int_{\R} (1 + \lvert x \rvert^2) (1 - \kappa_n(x)) \, \d v(x),
\end{equation}
where we use the coupling $\pi \in \P^2(\R^2)$ of $\tilde{v} = v + (1 - v(\R)) \delta_0$ and $\tilde{v}_n = v_n + (1 - v_n(\R)) \delta_0$ defined by
\begin{align*}
    \int_{\R^2} h(x, y) \, \d \pi(x, y) &= \int_{\R} h(x, x) \, \d v_n(x) + (1 - v_n(\R))\int_{\R} h(x, 0) (1 - \kappa_n(x)) \, \d v(x) \\
    &\ \ \ + (1 - v(\R))(1 - v_n(\R)) h(0, 0)
\end{align*}
for $h \in C_b(\R^2)$ to estimate the $2$-Wasserstein distance between $\tilde{v}$ and $\tilde{v}_n$. The right-hand side of Equation \eqref{eq:distance_truncation} vanishes uniformly in $v \in \cal{K}$ as $n \to \infty$. Thus, we obtain that
\begin{align*}
    \lvert \kappa_n(x)\varphi(t, x, &v_n) - \kappa_n(x)\varphi(t, x, v)\rvert + \lvert \kappa_n(x)\varphi(t, x, v) - \varphi(t, x, v)\rvert \\
    &\leq \omega_t(d_2(v_n, v)) + C\bigl(1 + \lvert x\rvert^p + M_2^p(v)\bigr)\bf{1}_{x\notin [-(n - 1), n - 1]} \to 0
\end{align*}
uniformly over $(x, v) \in K \times \cal{K}$. Setting $\varphi_n(t, x, v) = \varphi_{n, k_n}^{\epsilon_n}(t, x, v)$ with $\epsilon_n = \epsilon_{n, k_n}$ and putting everything together yields $\bigl\lvert \varphi_n(t, x, v) - \varphi(t, x, v)\bigr\rvert \to 0$ uniformly over $(x, v)$ in compact subsets of $\R \times \cal{M}^2_{\leq 1}(\R)$ as $n \to \infty$.

For Property \ref{it:unif_quad_growth}, similarly to \eqref{eq:decomp_approx}, we write
\begin{align*}
    \lvert \varphi_n(t, x, v)\rvert &\leq \lvert \varphi_n(t, x, v) - \kappa_n(x)\varphi(t, x, v_n)\rvert + \lvert \kappa_n(x)\varphi(t, x, v_n)\rvert \\
    &\leq \bigl\lvert \varphi_n(t, x, v) - \kappa_n(x)\varphi(t, x, v_{n, k_n})\bigr\rvert + \bigl\lvert \varphi(t, x, v_{n, k_n}) - \varphi(t, x, v_n)\bigr\rvert \\
    &\ \ \ + C\bigl(1 + \lvert x \rvert^p + M_2^p(v_n)\bigr) \\
    &\leq \frac{2}{n} + C\bigl(1 + \lvert x \rvert^2 + M_2^p(v_n)\bigr) \\
    &\leq (C + 2)(1 + \lvert x \rvert^p + M_2^p(v_n)\bigr),
\end{align*}
so choosing $C' = C + 2$ gives the uniform growth bound. Moreover, we can further estimate the right-hand side above by $(C + 2)(1 + \lvert x \rvert^p + n^p\bigr)$. Then noting that $\lvert \varphi_n(t, x, v)\rvert$ vanishes outside of $[-n, n]$, immediately gives Property \ref{it:bounded_in_l2}.

It remains to establish Property \ref{it:func_der_bound}. Let us first prove that $v \mapsto \varphi_n(t, x, v)$ has a linear functional derivative. For $v$, $v' \in \cal{M}^2_{\leq 1}(\R)$, setting $\Phi_n(t, x, w) = \Phi_{n, k_n}^{\epsilon_n}(t, x, w)$, we compute
\begin{align*}
    \varphi_n(t, x, v) - \varphi_n(t, x, v') &= \sum_{i = 1}^{L - 1}\int_0^1 \partial_{w_i}\Phi_n\bigl(t, x, s \varpi(v) + (1 - s)\varpi(v')\bigr) (\varpi(v) - \varpi(v')) \, \d s \\
    &= \sum_{i = 1}^{L - 1} \int_0^1 \partial_{w_i}\Phi_n\bigl(t, x, w(s v + (1 - s)v')\bigr) \int_{\R} \psi_i(y) \, \d (v - v')(y) \, \d s \\
    &= \int_0^1 \bigl\langle v - v', D\varphi_n\bigl(t, x, s v + (1 - s)v'\bigr)\bigr\rangle \, \d s,
\end{align*}
where we use in the second equality that the function $v \mapsto \varpi(v)$ is linear and where we defined $D\varphi_n(t, x, v)(y) = \sum_{i = 1}^{K - 1} \partial_{w_i}\Phi_n(t, x, \varpi(v)) \psi_i(y)$ for $y \in \R$ and $v \in \cal{M}^2_{\leq 1}(\R)$. For $y \in \R$, we have that $\lvert D\varphi_n(t, x, v)(y)\rvert \leq \sum_{i = 1}^{L - 1} \lvert \partial_{w_i} \Phi_n(t, x, \varpi(v))\rvert\psi_i(y)$ and for $i = 1$,~\ldots, $K$ it holds with $\epsilon_n = \epsilon_{n, k_n}$ that
\begin{align*}
     \lvert \partial_{w_i}& \Phi_n(t, x, \varpi(v))\rvert \\
     &\leq \int_{\R^{L - 1}} \bigl\lvert \partial_{w_i}\eta_{\epsilon_n}(\varpi(v) - w)\bigr\rvert \kappa_n(x) \lvert \Phi_{n, k_n}(t, x, w)\rvert \, \d w \\
     &\leq \lVert \partial_{w_i}\eta_{\epsilon_n}\rVert_{\infty}\kappa_n(x) \sup_{w \in B_{\epsilon_n}(\varpi(v))} \lvert \Phi_{n, k_n}(t, x, w)\rvert \\
     &\leq \lVert \partial_{w_i}\eta_{\epsilon_n}\rVert_{\infty} \kappa_n(x) \biggl(\sup_{w \in B_{\epsilon_n}(\varpi(v))} \omega_n(d_2(m_{w^+}, v_{n, k})) + \omega_n(d_2(v_{n, k}, v_n)) + \lvert \varphi(t, x, v_n)\rvert\biggr) \\
     &\leq \lVert \partial_{w_i}\eta_{\epsilon_n}\rVert_{\infty} \bigl(\tfrac{2}{n} + \kappa_n(x) C(1 + \lvert x \rvert^p + n^p)\bigr).
\end{align*}
Hence, $\lvert D\varphi_n(t, x, v)(y)\rvert \leq \lVert \partial_{w_i}\eta_{\epsilon_n}\rVert_{\infty} \bigl(\tfrac{2}{n} + C(1 + 2n^p)\bigr)\sum_{i = 1}^{L - 1} \psi_i(y) = h_n(y)$. Clearly, the function $h_n$ has compact support and $\lVert h_n\rVert_{\infty} \leq C_n(1 + C)$ for an appropriately chosen $C_n > 0$. Lastly, the continuity of $\cal{M}^2_{\leq 1}(\R) \cap L^2(\R) \ni v \mapsto D\varphi(t, x, v)(y)$ with respect to the norm $\lVert \cdot \rVert_{L^2}$ is a simple consequence of the truncation and mollification.
\end{proof}

\subsection{Control of Conditional Processes} \label{app:cond_proc}

We briefly describe the setup of the control of conditional processes. On a filtered probability space $(\Omega, \F, \bb{F}, \pr)$ equipped with a $d$-dimensional $\bb{F}$-Brownian motion $W = (W_t)_{t \geq 0}$ and a $d$-dimensional $\F_0$-measurable random variable $\xi$, we consider the SDE
\begin{equation}
    \d X_t = \alpha_t \, \d t + \d W_t, \quad X_0 = \xi
\end{equation}
for a $d$-dimensional Brownian motion $W = (W_t)_{t \geq 0}$ and an $\bb{F}$-progressively measurable $\R^d$-valued process $\alpha = (\alpha_t)_{t \geq 0}$ such that $\ev \int_0^T \lvert \alpha_t\rvert^2 \, \d t < \infty$. We call $\mathfrak{a} = (\Omega, \F, \bb{F}, \pr, W, \alpha)$ a \textit{weak control} and denote the space of all such objects by $\bb{A}$. The \textit{cost functional} is given by
\begin{equation}
    J(\mathfrak{a}) = \int_0^T \ev[f(t, X_t, \alpha_t) \vert \tau > t] \, \d t + \ev[g(X_T) \vert \tau > T],
\end{equation}
where $\tau = \inf\{t > 0 \define X_t \notin D\}$ for an open subset $D \subset \R^d$ with $C^1$-boundary and $\bb{E}$ denotes the expectation with respect to $\pr$. We let the \textit{value} $V$ be the infimum of $J(\mathfrak{a})$ over weak controls $\mathfrak{a} \in \bb{A}$. We could consider an extended setup with a drift and diffusion coefficient that depend on time and the location of the diffusion or even its conditional law $\mu_t = \pr(X_t \in \cdot \vert \tau > t)$. The general case introduces additional difficulties, so for the purpose of illustration we stick to this simple setting. 

\begin{assumption} \label{ass:conditional}
Let $f \define [0, T] \times \R^d \times \R^d \to \R$ and $g \define \R^d \to \R$ be measurable and assume there exists $C > 0$ such that
\begin{enumerate}[noitemsep, label = (\roman*)]
    \item for all $(t, x, a)$ we have
    \begin{equation*}
        \lvert f(t, x, a)\rvert + \lvert g(x)\rvert \leq C(1 + \lvert x\rvert^2 + \lvert a\rvert^2);
    \end{equation*}
    \item \label{it:cont_cond} for all $t$ the maps $(x, a) \mapsto f(t, x, a)$ and $x \mapsto g(x)$ are continuous;
    \item \label{it:convex_cond} for all $(t, x)$ the map $a \mapsto f(t, x, a)$ is convex.
\end{enumerate}
\end{assumption}

Our first goal is to prove the equivalence between the weak formulation just introduced and the closed-loop formulation. A \textit{closed-loop control} is a measurable function $a \define [0, \infty) \times \R^d \to \R^d$ such that the SDE
\begin{equation} \label{eq:cond_weak}
    \d X_t = a(t, X_t) \, \d t + \d W_t, \quad X_0 = \xi
\end{equation}
has a weak solution on some filtered probability space $(\Omega, \F, \bb{F}, \pr)$ equipped with a $d$-dimensional $\bb{F}$-Brownian motion $W$ and an $\F_0$-measurable random variable $\xi$, for which $\ev \int_0^T \lvert a(t, X_t)\rvert^2 \, \d t < \infty$. Clearly, SDE \eqref{eq:cond_weak} exhibits uniqueness in law for any measurable function $a \define [0, T] \times \R^d \to \R^d$, so we can introduce the cost functional
\begin{equation}
    J_{\text{cl}}(a) = \int_0^T \ev\bigl[f\bigl(t, X_t, a(t, X_t)\bigr) \big\vert \tau > t\bigr] \, \d t + \ev[g(X_T) \vert \tau > T]
\end{equation}
as well as the associated value $V_{\text{cl}}$, which is simply the infimum of $J_{\text{cl}}(a)$ over all closed-loop controls. It is clear that any closed-loop control $a$ induces a weak control $\mathfrak{a}$ through the assignment $\alpha_t = a(t, X_t)$, so that $V \leq V_{\text{cl}}$. We obtain the reverse inequality through a mimicking argument.

\begin{proposition}
Let Assumption \ref{ass:conditional} be satisfied. For any $K > 0$ and any weak control $\mathfrak{a} = (\Omega, \F, \bb{F}, \pr, W, \alpha)$ with $\lvert \alpha_t\rvert \leq K$ for $\leb \otimes \pr$-a.e.\@ $(t, \omega) \in [0, T] \times \Omega$, there exists a closed-loop control $a$ with corresponding weak solution $(\tilde{\Omega}, \tilde{\F}, \tilde{\bb{F}}, \tilde{\pr}, \tilde{W}, \tilde{\xi}, \tilde{X})$ to SDE \eqref{eq:cond_weak} such that $\pr(X_t \in \cdot \vert \tau > t) = \tilde{\pr}(\tilde{X}_t \in \cdot \vert \tilde{\tau} > t)$ for all $t \in [0, T]$ and $J_{\textup{cl}}(a) \leq J(\mathfrak{a})$, where $\tilde{\tau} = \inf\{t > 0 \define \tilde{X}_t \notin D\}$. In particular, it holds that $V_{\textup{cl}} = V$.
\end{proposition}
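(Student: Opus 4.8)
The plan is to apply a conditional mimicking theorem in the spirit of \cite{lacker_mimicking_2020} to the process $X$ associated to the bounded weak control $\mathfrak{a}$, but working under the \emph{conditioned} measures $\pr(\cdot \mid \tau > t)$ rather than $\pr$ itself. First I would record the basic objects: under $\mathfrak{a}$ the state $X$ solves $\d X_t = \alpha_t \, \d t + \d W_t$, the killing time is $\tau = \inf\{t > 0 : X_t \notin D\}$, and the conditional flow is $\mu_t = \pr(X_t \in \cdot \mid \tau > t)$, which is well defined for $t \in [0,T]$ provided $\pr(\tau > t) > 0$ (if $\pr(\tau > t) = 0$ for some $t$, the cost is vacuous on $[t,T]$ and the statement is trivial there, so one may assume $\pr(\tau > T) > 0$). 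The key observation, exactly as in the absorbing case discussed in the introduction, is that because the state process is \emph{stopped} at $\tau$ — equivalently, one may replace $X$ by $X^{\tau}$ without changing any of the conditional expectations appearing in $J(\mathfrak{a})$ — the event $\{\tau > t\}$ is measurable with respect to $\sigma(X_s : s \le t)$, so conditioning on $\{\tau > t\}$ is conditioning on a functional of the path. This is what makes the location alone sufficient to read off the ``alive/killed'' status.

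Next I would construct the closed-loop drift. Define, for $\leb \otimes \mu_t$-a.e.\ $(t,x)$,
\begin{equation*}
    a(t,x) = \ev\bigl[\alpha_t \bigm| X_t = x,\ \tau > t\bigr],
\end{equation*}
interpreted via a regular conditional distribution; extend $a$ arbitrarily (say by $0$) off the support. Boundedness of $\alpha$ gives $\lvert a(t,x)\rvert \le K$, so $a$ is a bounded measurable function and SDE \eqref{eq:cond_weak} has a weak solution $(\tilde\Omega,\tilde\F,\tilde{\bb{F}},\tilde\pr,\tilde W,\tilde\xi,\tilde X)$, unique in law, with $\tilde\tau = \inf\{t>0:\tilde X_t\notin D\}$. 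The heart of the argument is to show $\tilde\pr(\tilde X_t \in \cdot \mid \tilde\tau > t) = \mu_t$ for all $t \in [0,T]$. Here I would run the mimicking/Markovian-projection machinery on the \emph{sub-probability} flow $\bar\mu_t(\cdot) = \pr(X_t \in \cdot,\ \tau > t)$ rather than the normalised $\mu_t$: by It\^o's formula applied to $\varphi(X^{\tau}_t)$ for $\varphi \in C_c^2(D)$ and taking expectations, $\bar\mu$ satisfies the (killed) Fokker--Planck equation $\partial_t \langle \bar\mu_t,\varphi\rangle = \langle \bar\mu_t, a(t,\cdot)\cdot\nabla\varphi + \tfrac12\Delta\varphi\rangle$ with the tower property collapsing $\ev[\alpha_t \cdot \nabla\varphi(X_t)\mathbf 1_{\tau>t}]$ to $\ev[a(t,X_t)\cdot\nabla\varphi(X_t)\mathbf 1_{\tau>t}]$; the process $\tilde X^{\tilde\tau}$ produces a sub-probability flow solving the \emph{same} linear equation with the \emph{same} initial datum $\L(\xi)$, and one invokes uniqueness for this (degenerate-free, since the diffusion is the identity) killed Fokker--Planck equation — this is precisely the content of \cite[Theorem 1.3]{lacker_mimicking_2020} adapted to the stopped setting, or can be proved directly by a duality/Gronwall argument using the smoothing of the heat semigroup. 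Dividing by the total masses $\bar\mu_t(D) = \pr(\tau>t)$ and $\tilde{\bar\mu}_t(D) = \tilde\pr(\tilde\tau>t)$, which agree since the masses solve the same scalar ODE driven by the boundary flux, yields $\mu_t = \tilde\pr(\tilde X_t\in\cdot\mid\tilde\tau>t)$.

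Finally, the cost comparison is a convexity argument. Using the matching conditional laws,
\begin{align*}
    J_{\textup{cl}}(a) &= \int_0^T \ev\bigl[f\bigl(t,\tilde X_t, a(t,\tilde X_t)\bigr) \bigm| \tilde\tau>t\bigr]\, \d t + \ev[g(\tilde X_T)\mid\tilde\tau>T] \\
    &= \int_0^T \ev\bigl[f\bigl(t, X_t, \ev[\alpha_t\mid X_t,\tau>t]\bigr) \bigm| \tau>t\bigr]\, \d t + \ev[g(X_T)\mid\tau>T] \\
    &\le \int_0^T \ev\bigl[\ev[f(t,X_t,\alpha_t)\mid X_t,\tau>t] \bigm| \tau>t\bigr]\, \d t + \ev[g(X_T)\mid\tau>T] = J(\mathfrak{a}),
\end{align*}
where the inequality is conditional Jensen applied to the convex map $a\mapsto f(t,x,a)$ from Assumption \ref{ass:conditional}\ref{it:convex_cond}, and the terminal term is untouched. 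This gives $J_{\textup{cl}}(a) \le J(\mathfrak{a})$. For the last assertion $V_{\textup{cl}} = V$: the inequality $V \le V_{\textup{cl}}$ is immediate since every closed-loop control induces a weak control; conversely, given $\epsilon>0$ pick a weak control with $J(\mathfrak{a}) \le V+\epsilon$, truncate $\alpha$ to $\alpha\wedge K \vee(-K)$ componentwise and use the growth/continuity in Assumption \ref{ass:conditional} together with dominated convergence to see that for $K$ large the truncated control has cost at most $V+2\epsilon$, then apply the proposition to produce a closed-loop control of cost at most $V+2\epsilon$; let $\epsilon\to0$.

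I expect the main obstacle to be the uniqueness step for the killed Fokker--Planck equation — one must be careful that conditioning on $\{\tau>t\}$ genuinely corresponds to the Dirichlet (killed) generator on $D$ and that the sub-probability flow determines, and is determined by, that linear PDE with no extra boundary input beyond the $C^1$-regularity of $\partial D$; the stopping of $X$ at $\tau$ is what licenses the tower-property collapse of the drift, so this reduction has to be stated cleanly before invoking \cite{lacker_mimicking_2020}. A secondary technical point is the existence of a jointly measurable version of $(t,x)\mapsto \ev[\alpha_t\mid X_t=x,\tau>t]$, which follows from standard disintegration but should be noted.
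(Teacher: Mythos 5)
Your overall strategy — define the closed-loop drift as the conditional expectation of $\alpha_t$ given $X_t$ on $\{\tau > t\}$, compare costs via conditional Jensen/convexity, and close with a truncation to remove the boundedness restriction on $\alpha$ — matches the paper's. The substantive difference is in how you establish that the conditional laws of $X_t$ and $\tilde X_t$ given survival agree. The paper does \emph{not} work with the conditioned or killed flow directly. Instead it applies the Brunick--Shreve mimicking theorem \cite{brunick_mimicking_2013} to the \emph{stopped} process $X^{\tau}$, which solves an SDE on all of $\R^d$ with the degenerate coefficients $\chi_D(\cdot)\alpha$ and $\chi_D(\cdot)I$. The mimicking theorem delivers a stopped process $\tilde Y$ with matching one-dimensional marginals \emph{by construction}, so no uniqueness statement for any PDE is needed. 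The remaining step, which is absent from your write-up, is a Girsanov change of measure that converts $\tilde Y$ (which only solves the stopped SDE, hence is not a weak solution of SDE \eqref{eq:cond_weak}) into a genuine weak solution $\tilde X$ of SDE \eqref{eq:cond_weak} whose stopped version $\tilde X^{\tilde\tau}$ has the same law as $\tilde Y$ on $\tilde{\F}_{\tilde\tau\land T}$.

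You instead construct $\tilde X$ directly (fine, since $a$ is bounded measurable) and then want to identify the killed sub-probability flows $\bar\mu_t$ and $\tilde{\bar\mu}_t$ by arguing that both solve the same killed Fokker--Planck equation on $D$. That reduces the problem to a \emph{uniqueness} statement for a killed linear FPE with bounded measurable drift and unit diffusion — a statement the paper never has to prove. You are right that this uniqueness is true (e.g.\ by duality with the Dirichlet backward equation, or by a superposition principle combined with uniqueness in law for the killed diffusion), but it is \emph{not} ``precisely the content of \cite[Theorem 1.3]{lacker_mimicking_2020}'': that result is a mimicking theorem, not a Fokker--Planck uniqueness theorem, and ``adapted to the stopped setting'' is doing a lot of unexamined work. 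As written, the law-matching step is not closed, and you correctly identify it as the obstacle in your final paragraph. Your approach can be repaired either by supplying the killed FPE uniqueness proof, or, more economically, by switching to the paper's route: mimic the stopped SDE with degenerate coefficients on all of $\R^d$ (no domain, no Dirichlet subtleties), then apply the Girsanov change of measure to manufacture a weak solution of SDE \eqref{eq:cond_weak}.
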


\begin{proof}
Let $\mathfrak{a}$ be as in the statement of the theorem and define $X^{\tau}$ by $X^{\tau}_t = X_{\tau \land t}$. The process $X^{\tau}$ satisfies the SDE
\begin{equation*}
    \d X^{\tau}_t = \chi_D(X^{\tau}_t) \alpha_t \, \d t + \chi_D(X^{\tau}_t) \, \d W_t, \quad X^{\tau}_0 = \xi,
\end{equation*}
where $\chi_D \define \R^d \to \{0, 1\}$ is given by $\chi_D(x) = \bf{1}_{x \in D}$. Now, proceeding as in the proof of Theorem 3.7 in \cite{lacker_mfg_controlled_mgale_2015}, using the convexity of $f$ in the control argument guaranteed by Assumption \ref{ass:conditional} \ref{it:convex_cond}, we can find a measurable function $a \define [0, \infty) \times \R^d \to \R^d$ bounded by $K$ such that
\begin{align} \label{eq:meas_sel_cond}
\begin{split}
    \ev\bigl[\chi_D(X^{\tau}_t)\alpha_t \big\vert X^{\tau}_t\bigr] &= \chi_D(X^{\tau}_t)a(t, X^{\tau}_t), \\ \ev\bigl[\chi_D(X^{\tau}_t)f(t, X^{\tau}_t, \alpha_t) \big\vert X^{\tau}_t\bigr] &\geq \chi_D(X^{\tau}_t)f\bigl(t, X^{\tau}_t, a(t, X^{\tau}_t)\bigr)
\end{split}
\end{align}
for $t \in [0, T]$. We may assume that $a(t, x) = 0$ for all $x \in \R^d$ if $t > T$. Then the mimicking theorem by Brunick and Shreve \cite{brunick_mimicking_2013} yields a filtered probability space $(\tilde{\Omega}, \tilde{\F}, \tilde{\bb{F}}, \tilde{\bb{Q}})$ equipped with a $d$-dimensional $\tilde{\bb{F}}$-Brownian motion $\tilde{B}$ and a $d$-dimensional $\tilde{\bb{F}}$-adapted process $\tilde{Y}$ such that
\begin{equation*}
    \d \tilde{Y}_t = \chi_D(\tilde{Y}_t) a(t, \tilde{Y}_t) \, \d t + \chi_D(\tilde{Y}_t) \, \d \tilde{B}_t, \quad \tilde{Y}_0 \sim \L(\xi)
\end{equation*}
and $\L^{\tilde{\pr}}(\tilde{Y}_t) = \L(X^{\tau}_t)$. The latter implies together with \eqref{eq:meas_sel_cond} that
\begin{align} \label{eq:cond_cost_bound}
    J(\mathfrak{a}) &= \int_0^T \frac{\ev\bigl[\chi_D(X^{\tau}_t) f(t, X^{\tau}_t, \alpha_t)\bigr]}{\ev\chi_D(X^{\tau}_t)} \, \d t + \frac{\ev[\chi_D(X^{\tau}_T)g(X^{\tau}_T)]}{\ev\chi_D(X^{\tau}_T)} \notag \\
    &\geq \int_0^T \frac{\ev\bigl[\chi_D(X^{\tau}_t) f\bigl(t, X^{\tau}_t, a(t, X^{\tau}_t)\bigr)\bigr]}{\ev\chi_D(X^{\tau}_t)} \, \d t + \frac{\ev[\chi_D(X^{\tau}_T)g(X^{\tau}_T)]}{\ev\chi_D(X^{\tau}_T)} \notag \\
    &= \int_0^T \frac{\ev^{\tilde{\bb{Q}}}\bigl[\chi_D(\tilde{Y}_t) f\bigl(t, \tilde{Y}_t, a(t, \tilde{Y}_t)\bigr)\bigr]}{\ev^{\tilde{\bb{Q}}}\chi_D(\tilde{Y}_t)} \, \d t + \frac{\ev^{\tilde{\bb{Q}}}[\chi_D(\tilde{Y}_T)g(\tilde{Y}_T)]}{\ev^{\tilde{\bb{Q}}}\chi_D(\tilde{Y}_T)}.
\end{align}

We wish to extend $\tilde{Y}$ to a solution of SDE \eqref{eq:cond_weak}. For that, we perform the change of measure 
\begin{equation*}
    \frac{\d \tilde{\pr}}{\d \tilde{\bb{Q}}}\bigg\vert_{\tilde{\F}} = \cal{E}\biggl(\int_0^{\cdot} (1 - \chi_D(\tilde{Y}_t)) a(t, \tilde{X}_t) \cdot \d \tilde{B}_t\biggr)_{\infty},
\end{equation*}
where the process $\tilde{X} = (\tilde{X}_t)_{t \geq 0}$ is given by $\tilde{X}_t = \tilde{Y}_t$ if $t < \tilde{\tau}$ and $\tilde{X}_t = \tilde{Y}_{\tilde{\tau}} + \tilde{B}_t - \tilde{B}_{\tilde{\tau}}$ otherwise. By Girsanov's theorem, the process $\tilde{W} = (\tilde{W}_t)_{t \geq 0}$ defined by $\tilde{W}_t = \tilde{B}_t - \int_0^t (1 - \chi_D(\tilde{Y}_s)) a(s, \tilde{X}_s) \, \d s$ is a $d$-dimensional $\tilde{\bb{F}}$-Brownian motion under $\tilde{\pr}$. Moreover, we claim that $\tilde{X}$ is a weak solution to SDE \eqref{eq:cond_weak} on $(\tilde{\Omega}, \tilde{\F}, \tilde{\bb{F}}, \tilde{\pr})$ with Brownian motion $\tilde{W}$ and initial condition $\tilde{\xi}$. Indeed, $\tilde{X}$ clearly satisfies SDE \eqref{eq:cond_weak} on $[0, \tilde{\tau})$, since $\tilde{W} = \tilde{B}$ on $[0, \tilde{\tau})$ and $\tilde{Y}$ solves SDE \eqref{eq:cond_weak} on $[0, \tilde{\tau})$ with Brownian motion $\tilde{B}$. Next, assume that $t > \tilde{\tau}$. Then, it holds that
\begin{align*}
    \tilde{X}_t &= \tilde{Y}_{\tilde{\tau}} + \tilde{B}_t - \tilde{B}_{\tilde{\tau}} \\
    &= \tilde{Y}_0 + \int_0^{\tilde{\tau}} \chi_D(\tilde{Y}_s) a(s, \tilde{Y}_s) \, \d s + \tilde{B}_{\tilde{\tau}}  + \tilde{B}_t - \tilde{B}_{\tilde{\tau}} \\
    &= \tilde{Y}_0 + \int_0^{\tilde{\tau}} a(s, \tilde{X}_s) \, \d s + \int_0^t (1 - \chi_D(\tilde{Y}_s)) a(s, \tilde{X}_s) \, \d s + \tilde{W}_t \\
    &= \tilde{Y}_0 + \int_0^t a(s, \tilde{X}_s) \, \d s + \tilde{W}_t,
\end{align*}
so $\tilde{X}$ also solves SDE \eqref{eq:cond_weak} for $t > \tilde{\tau}$. Moreover, in view of Equation \eqref{eq:cond_cost_bound} we have that
\begin{align*}
    J(\mathfrak{a}) &\geq \int_0^T \frac{\ev^{\tilde{\pr}}\bigl[\chi_D(\tilde{X}_t) f\bigl(t, \tilde{X}_t, a(t, \tilde{X}_t)\bigr)\bigr]}{\ev^{\tilde{\pr}}\chi_D(\tilde{X}_t)} \, \d t + \frac{\ev^{\tilde{\pr}}[\chi_D(\tilde{X}_T)g(\tilde{X}_T)]}{\ev^{\tilde{\pr}}\chi_D(\tilde{X}_T)} = J_{\text{cl}}(a),
\end{align*}
where we used that $\tilde{\bb{Q}}$ and $\tilde{\pr}$ coincide on $\tilde{\F}_{\tilde{\tau} \land T}$. It remains to show that the conditional laws of $X_t$ and $\tilde{X}_t$ coincide. This follows from $\L^{\tilde{\pr}}(\tilde{Y}_t) = \L(X^{\tau}_t)$, since for $t \in [0, T]$ we have
\begin{align*}
    \pr(X_t \in A \vert \tau > t) &= \frac{\ev[\bf{1}_{\tau > t} \bf{1}_{X_t \in A}]}{\ev\bf{1}_{\tau > t}} = \frac{\ev[\chi_D(X^{\tau}_t )\bf{1}_{X^{\tau}_t \in A}]}{\ev\chi_D(X^{\tau}_t)} = \frac{\ev^{\tilde{\bb{Q}}}[\chi_D(\tilde{Y}_t )\bf{1}_{\tilde{Y}_t \in A}]}{\ev^{\tilde{\bb{Q}}}\chi_D(\tilde{Y}_t)} \\
    &= \frac{\ev^{\tilde{\pr}}[\chi_D(\tilde{X}^{\tilde{\tau}}_t )\bf{1}_{\tilde{X}^{\tilde{\tau}}_t \in A}]}{\ev^{\tilde{\pr}}\chi_D(\tilde{X}^{\tilde{\tau}}_t)} = \tilde{\pr}(\tilde{X}_t \in A \vert \tilde{\tau} > t),
\end{align*}
where $\tilde{\tau} = \inf\{t > 0 \define \tilde{Y} \notin D\} = \inf\{t > 0 \define \tilde{X} \notin D\}$.

The last statement follows from a simple limit procedure. Let us choose a sequence of weak controls $\mathfrak{a}^k = (\Omega^k, \F^k, \bb{F}^k, W^k, \alpha^k)$ such that $\lvert \alpha_t^k\rvert \leq k$ and $\lim_{k \to \infty} J(\mathfrak{a}^k) = V$. Such a sequence exists owing to the continuity condition from Assumption \ref{ass:conditional} \ref{it:cont_cond}. According to what we proved above, for each $k \geq 1$, there exists a closed-loop control $a^k$ such that $J_{\text{cl}}(a^k) = J(\mathfrak{a}^k)$. Consequently, we get
\begin{equation*}
    V_{\text{cl}} \leq \limsup_{k \to \infty} J_{\text{cl}}(a^k) = \limsup_{k \to \infty} J(\mathfrak{a}^k) = V,
\end{equation*}
which concludes the proof.
\end{proof}

% Let us now consider the regularised framework, where the absorption is replaced by killing at a state-dependent rate. We let $\lambda \define \R^d \to [0, \infty)$ be a Lipschitz continuous function such that $\lambda(x) = 0$ if $x$ lies in the closure of $D$ and $\lambda(x) > 0$ otherwise. Then let $(X, \Lambda) = (X_t, \Lambda_t)_{t \geq 0}$ solve the SDE
% \begin{equation} \label{eq:conditional_sde}
%     \d X_t = b(t, X_t, \mu_t) \, \d t + \sigma(t, X_t, \mu_t) \, \d W_t, \quad \Lambda_t = \lambda(t, X_t, \mu_t) \, \d t
% \end{equation}
% with initial condition $X_0 = \xi$ and $\Lambda_0 = 0$. The data $W$, and $\xi$ are as in the beginning of the section and $\mu_t = \pr(X_t \in \cdot \vert \theta > \Lambda_t)$, where $\theta$ is an exponential random variable independent of $W$ and $\xi$. Without discussion existence and uniqueness of the \textit{conditional SDE} \eqref{eq:conditional_sde}, we would like to propose a particle representation with matching marginal distributions. 

% We need some preparation. 

\section*{Acknowledgement}
This research has been supported by the EPSRC Centre for Doctoral Training in Mathematics of Random Systems: Analysis, Modelling and Simulation (EP/S023925/1).

\printbibliography

\end{document}